\tikzset{>={Stealth[scale=1.2]}}
\tikzset{->-/.style={decoration={
			markings,
			mark=at position #1 with {\arrow{>}}},postaction={decorate}}}
\tikzset{-w-/.style={decoration={
			markings,
			mark=at position #1 with {\arrow{Stealth[fill=white,scale=1.4]}}},postaction={decorate}}}
\tikzset{->-/.default=0.65}
\tikzset{-w-/.default=0.65}
\tikzstyle{bullet}=[circle,fill=black,inner sep=0.5mm]
\tikzstyle{circ}=[circle,draw=black,fill=white,inner sep=0.5mm]
\tikzstyle{vertex}=[circle,draw=black,thick,inner sep=0.5mm]
\tikzset{darrow/.style={double distance = 4pt,>={Implies},->},
	darrowthin/.style={double equal sign distance,>={Implies},->},
	tarrow/.style={-,preaction={draw,darrow}},
	qarrow/.style={preaction={draw,darrow,shorten >=0pt},shorten >=1pt,-,double,double
		distance=0.2pt}}
\newcommand{\tikzfig}[1]{\begin{tikzpicture}[auto,baseline={([yshift=-.5ex]current bounding box.center)}]#1\end{tikzpicture}}
\tikzset{partial ellipse/.style args={#1:#2:#3}{insert path={+ (#1:#3) arc (#1:#2:#3)}}}
\newtheorem{theorem}{Theorem}[section]
\newtheorem{lemma}[theorem]{Lemma}
\newtheorem{proposition}[theorem]{Proposition}
\newtheorem{corollary}[theorem]{Corollary}
\newtheorem{lemma-definition}[theorem]{Lemma-Definition}
\theoremstyle{definition}
\newtheorem{definition}[theorem]{Definition}
\newtheorem{example}[theorem]{Example}
\newtheorem{remark}[theorem]{Remark}
\numberwithin{theorem}{section}
\newcommand{\R}{\mathbb{R}}
\newcommand{\F}{\mathbb{F}}
\newcommand{\Z}{\mathbb{Z}}
\newcommand{\LL}{\mathbb{L}}
\newcommand{\N}{\mathbb{N}}
\newcommand{\calV}{\mathcal{V}}
\newcommand{\kk}{\Bbbk}
\newcommand{\A}{\mathop{\overline{A}}\nolimits}
\newcommand{\Hom}{\mathop{\mathrm{Hom}}\nolimits}
\newcommand{\Cone}{\mathop{\mathrm{Cone}}\nolimits}
\newcommand{\id}{\mathop{\mathrm{id}}\nolimits}
\newcommand{\Tor}{\mathop{\mathrm{Tor}}\nolimits}
\newcommand{\HH}{\mathop{\mathrm{HH}}\nolimits}
\newcommand{\del}{\partial}
\newcommand{\Ahatinfty}{\widehat{A}_\infty}
\newcommand{\ev}{\mathop{\mathrm{ev}}\nolimits}
\newcommand{\co}{\mathop{\mathrm{co}}\nolimits}
\newcommand{\pt}{\mathop{\mathrm{pt}}\nolimits}
\mathchardef\mh="2D
\DeclareRobustCommand{\SkipTocEntry}[5]{}
\begin{document}

\title{Algebraic string topology from the neighborhood of infinity}
\author{Manuel Rivera, Alex Takeda \and Zhengfang Wang}

\newcommand{\Addresses}{{ 
  \bigskip
  \footnotesize

  M.~Rivera, \textsc{Purdue University, Department of Mathematics, 150 N. University St. West Lafayette, IN 47907}\par\nopagebreak
  \textit{E-mail address}: \texttt{manuelr@purdue.edu}
  
   \medskip
   A.~Takeda, \textsc{Uppsala University, Matematiska Institutionen, L\"{a}gerhyddsv\"{a}gen 1, 752 37 Uppsala, Sweden}\par\nopagebreak
   \textit{E-mail address}: \texttt{alex.takeda@math.uu.se}

   \medskip 
   Z.~Wang, \textsc{Department of Mathematics, Nanjing University, Nanjing 210093, Jiangsu, PR China
}\par\nopagebreak
  \textit{E-mail address}: \texttt{zhengfangw@gmail.com}

 }}

\begin{abstract}
    We construct and study an algebraic analogue of the loop coproduct in string topology, also known as the Goresky-Hingston coproduct. Our algebraic setup, which under this analogy takes the place of the complex of chains on the free loop space of a possibly non-simply connected oriented manifold, is the Hochschild chain complex of a smooth $A_{\infty}$-category equipped with a pre-Calabi-Yau structure and a trivialization of a version of the Chern character of its diagonal bimodule.  The algebraic analogue of the loop coproduct is part of a more general mapping cone construction, which we describe in terms of the categorical formal punctured neighborhood of infinity associated to the underlying smooth $A_\infty$-category. We use a graphical formalism for $A_\infty$-categories and bimodules to describe explicit models for the operations and homotopies involved. We also compute explicitly the algebraic coproduct in the context of string topology of spheres.
	\\
	\\
	{\it Mathematics Subject Classification} (2020). 16E40, 55P50; 18G10, 16S38. \\
	\emph{Keywords.} Hochschild (co)homology, string topology, Calabi-Yau structure
\end{abstract}

\maketitle

\vspace{-0.5cm}
\tableofcontents

\section{Introduction}
String topology is the study of algebraic structures defined in terms of intersecting, cutting, and reconnecting families (or more precisely, \textit{chains}) of strings or loops on a manifold \cite{CS99}. One of the principal themes in string topology since its inception has been the characterization of the algebraic topology of manifolds through chain-level manifestations of Poincaré duality. In this article, we construct and study algebraic operations on the Hochschild chains of a smooth $A_\infty$-category equipped with some additional structure, inspired by the constructions in string topology, with emphasis on an algebraic analogue of the \textit{Goresky-Hingston (loop) coproduct}. We propose a construction that combines algebraic analogues for the Goresky-Hingston loop coproduct and Chas-Sullivan loop product. Our construction is inspired by the \textit{categorical formal punctured neighborhood of infinity}, a non-commutative analogue of the category of perfect complexes on the formal punctured neighborhood of the divisor at infinity for a compactification of a smooth variety \cite{efimov2017categorical}.

The Goresky-Hingston loop coproduct was originally defined geometrically as an operation on the homology of the free loop space of a manifold relative to constant loops \cite{GorHin}, \cite{HinWah0}. It is a type of secondary invariant and its construction requires choices that are more subtle than those involved in other string topology operations such as the Chas-Sullivan loop product. In fact, the Goresky-Hingston loop coproduct, in contrast to the Chas-Sullivan loop product, is sensitive to structure beyond the homotopy type of the underlying manifold \cite{Nae21} \cite{naef2023string}, \cite{NaeWil19}. The algebraic formalism proposed in the present article makes explicit the choices required to construct such a coproduct and provides a framework to analyze its properties and compatibilities with other operations.  

\subsection{Motivation} Let $M$ be a closed oriented manifold of dimension $n$. One of the first operations considered in string topology was the \textit{loop product}
\[\wedge \colon H_*(LM) \otimes H_*(LM) \to H_*(LM)\]
(Chas-Sullivan product in the literature) \cite{CS99}, where $LM = \mathrm{Map}(S^1, M)$ is the free loop space of $M$. This is a degree $-n$ product combining the intersection product on the chains on $M$ with the concatenation product of loops with the same base point. The circle action on $LM$ given by rotation of loops gives rise to an operator $B \colon H_*(LM)\to H_{*+1}(LM)$. One of the first theorems in string topology is that $(H_*(LM)[-n], \wedge, B)$ satisfies the axioms of a $BV$-\textit{algebra} structure. This algebraic structure (as well as chain level lifts taking the form of a framed $E_2$-algebra) has been constructed rigorously through different perspectives  \cite{CohJon},\cite{Iri17}, \cite{Tr02}, \cite{TrZe06}, \cite{TraZei07b}, \cite{WahWes08}, \cite{kontsevich2021precalabiyau}, \cite{BR23}, \cite{HinWah0}. The loop product has shown to be an invariant of the oriented homotopy type of the underlying manifold \cite{CKS}.

Another major string topology operation is the \textit{loop coproduct} 
\[\vee \colon H_*(LM,M) \to H_*(LM,M) \otimes H_*(LM,M)\] (Goresky-Hingston coproduct in the literature) \cite{GorHin}, \cite{HinWah0}. This is a degree $1-n$ coproduct defined by considering a $1$-parameter family of self intersections in a single chain of loops and then splitting at the points of intersection to obtain a formal sum of pairs of chains of loops. In order to obtain a well-defined operation on homology, one must work relative to constant loops or relative to a choice of base point. Constructing the loop coproduct rigorously, understanding all the necessary choices, and analyzing its properties and compatibilities with other operations has proven to be a delicate endeavor.  

The first and third named authors of this paper constructed in \cite{RivWan19} a framework combining algebraic models for the loop product and loop coproduct into a single algebra structure on the mapping cone of a map connecting the Hochschild chain and cochain complexes of a connected differential graded (dg) Frobenius algebra. The construction was coined the \textit{Tate-Hochschild complex} since it resembles Tate cohomology of finite groups. One can derive an explicit formula for the algebraic analogue of the loop coproduct in this context, which agrees with a formula given previously by \cite{Abb16}. The Tate-Hochschild construction has a counterpart in symplectic topology known as \textit{Rabinowitz-Floer homology}, a theory combining symplectic homology and cohomology of symplectic manifolds \cite{CieHinOan2}, \cite{CieHinOan1}, \cite{ganatra2022rabinowitz}

The setting of Frobenius algebras (or proper Calabi Yau algebras) is suitable for studying string topology in the \textit{simply connected} setting: the Tate-Hochschild cohomology perspective was used to deduce that, working over rational coefficients, two homotopy-equivalent simply connected closed manifolds have isomorphic Goresky-Hingston loop coalgebras \cite{RiWa22}.  
In the \textit{non-simply connected} setting, F.\ Naef observed that the loop coproduct can distinguish homotopy-equivalent but non-homeomorphic lens spaces \cite{Nae21}. This was strengthened in \cite{naef2023string} by proving two lens spaces $L_{p,q}$ and $L_{p,q'}$ are homeomorphic if and only if their Goresky-Hingston coalgebras are isomorphic. These results realized one of the original goals of string topology of constructing operations that can detect more geometric information than just the oriented homotopy type of the underlying manifold. 

The motivation for the present article is to devise an algebraic framework for string topology that 1.\ allows to keep track of the choices necessary to construct chain level operations for non-simply connected manifolds, 2.\ makes transparent the dependency of the string operations on the underlying manifold, and 3.\ is suitable for describing explicit formulas, compatibilities between operations, and carrying out computations. With this motivation in mind, we describe a construction that we believe to be a Koszul dual version of the Tate-Hochschild complex; like the latter, this construction combines algebraic models for the loop product and loop coproduct, but is now applicable to the setting of non-simply connected manifolds. 

We model the complex of chains on the free loop space of a closed manifold as the Hochschild chain complex of a smooth dg (or $A_{\infty}$) category $A$ playing the role of the dg category of paths on the underlying manifold. To recover the relevant string operations, the category $A$ is equipped with the additional data of a \textit{pre-Calabi Yau} structure manifesting chain level Poincaré duality together with certain ``trivialization" of (a version of) the Chern character of $A$. This approach is grounded on combining the perspectives and results of \cite{Goo85}, \cite{rivera2024algebraic}, \cite{kontsevich2021precalabiyau}, \cite{RivWan19}, and \cite{efimov2017categorical}. In a subsequent article, we will describe how our algebraic formalism may be used to construct string topology operations of non-simply connected manifolds directly from a triangulation.

\subsection{Summary of results}
We briefly summarize our main constructions and results while describing how the paper is organized. We outline the construction of the \textit{algebraic loop coproduct}, which is described in detail in \cref{sec:chernCharAndCoprod} building upon the graphical formalism recalled in \cref{sec:graphicalCalculus}.  Suppose $A$ is a connective (i.e.\ non-negatively graded) smooth dg category over a ring $\kk$. We also suppose that all its morphism complexes are cofibrant over $\kk$; this condition, which holds automatically when $\kk$ is a field, is called $\kk$-cofibrancy\footnote{Not to be confused with cofibrancy in any particular model structure for the category of dg categories.} in \cite{lowen2005hochschild} (see also \cite{keller2003derived}), and implies that the Hochschild complexes we write compute the appropriate derived tensors and homs. We will assume all our dg categories are $\kk$-cofibrant throughout the whole article.

Denote $A^e=A \otimes A^{op}$ and let $A^!$ be any $A$-bimodule modeling the inverse dualizing bimodule complex $\R\! \Hom_{A^e}(A, A^e)$. For any $A$-bimodule $N$, denote by $C_*(A, N)$ and $C^*(A, N)$ the Hochschild chain and cochain complexes of $A$ with values on $N$, respectively, and by $D^*(A \otimes A^{!}, A^e)$ the chain complex calculating $\R\!\Hom_{A^e \otimes A^e} (A \otimes A^!, A^e)$ obtained through appropriate bar resolutions. Smoothness implies the existence of natural quasi-isomorphisms
\[C_*(A, A^{!}) \xrightarrow{\simeq} C^*(A,A) \xleftarrow{\simeq} D^*(A \otimes A^{!}, A^e).\]
Recall that $C_*(A, A^{!})$ models the derived tensor product $A \otimes^{\mathbb{L}}_{A^e} A^{!}$ and $C^*(A,A)$ models the derived mapping complex $\mathbb{R}\!\Hom_{A^e}(A, A)$. Choose cycles $\co \in C_*(A, A^{!})$ and $\eta \in D^*(A \otimes A^{!}, A^e)$ representing the cohomology class of the identity (bimodule) morphism 
\[ [\mathrm{id}_A] \in H^0(\R\! \Hom_{A^e}(A, A)) \cong H^0(C^*(A,A))\]
under the above quasi-isomorphisms. 

We have an evaluation map
\[e \colon D^*(A \otimes A^{!}, A^e)  \otimes C_*(A, A^{!}) \to C_*(A,A) \otimes C_*(A,A),\]
with which we make the following definition.
\begin{definition}(\cref{def:ChernCharacter})
	The \emph{chain-level Chern character (of the diagonal bimodule of)} $A$ is the element
	\[ E = e(\eta, \co) \in C_*(A,A) \otimes C_*(A,A).\]
\end{definition}
This definition is a special case of a more general definition that makes sense for any perfect $A$-bimodule. Though $E$ depends on the representatives we chose for $\co$ and $\eta$, its homology class $[E] \in HH_*(A,A)\otimes HH_*(A,A)$ is well defined and may be thought of as a version of \textit{Chern character}, or of the \textit{Hattori-Stallings trace} \cite{Hat65}, for smooth dg categories. 

Using $\co$ and $\eta$ one can also define a map of graded complexes, see \cref{sec:coproduct}
\[G \colon C^*(A,A) \to C_*(A,A) \otimes C_*(A,A)[-1].\]
This is not in general a map of complexes; instead it is a chain homotopy between capping with either factor of $E$. That is, $G$ satisfies
\[ [d,G]\varphi = (\varphi \frown E') \otimes E'' - (-1)^{\deg(\varphi)\deg(E')} E'\otimes (\varphi\frown E'') \]
where $\varphi \in C^*(A,A)$, $\frown \colon C^*(A,A) \otimes C_*(A,A) \to C_*(A,A)$ denotes the classical cap product between Hochschild cochains and chains, and we have written $E= \sum E' \otimes E''=E' \otimes E''$. One can correct $G$ to a map of complexes, by choosing some extra data, as follows.
\begin{definition}(\cref{def:trivializationEnonzero})
    Suppose $W$ is some subcomplex of $C_*(A, A)$. A \emph{trivialization of $E$ onto $W$} is a pair $(E_0,H)$, where
	\[ E_0 \in W \otimes C_*(A,A) \cap C_*(A,A) \otimes W \]
	and $H \in C_*(A,A) \otimes C_*(A,A)$, such that $dH = E- E_0$ in $C_*(A,A) \otimes C_*(A,A)$.
\end{definition}

Given a trivialization of $E$ onto $W$ as above, let us denote $\overline{C}_*(A, A) = C_*(A,A)/W$ and define
\[ \widetilde{\lambda}_H  \colon C^*(A,A) \to \overline{C}_*(A,A) \otimes \overline{C}_*(A,A)[-1] \]
by the formula
\[ \widetilde\lambda_H(\varphi) = G(\varphi) - (-1)^{\deg(\varphi)}((\varphi \frown H')\otimes H'' - (-1)^{\deg(\varphi)\deg(H')} (H' \otimes (\varphi \frown H'')). \]
This is a map of complexes, and induces a map on homology
\[ \widetilde{\lambda}_H  \colon HH^*(A,A) \to H_*(\overline{C}_*(A,A) \otimes \overline{C}_*(A,A))[-1]. \]
We note that this map is defined independent on having any sort of duality structure on $A$. We now include such a duality structure, in the form of a \textit{pre-Calabi-Yau} (pre-CY) structure of dimension $n$. The formalism of pre-CY structures was developed in \cite{kontsevich2021precalabiyau} and will be recalled in \cref{subsec:preCY}. The data of a pre-CY structure of dimension $n$, in particular, gives rise to a map of $A$-bimodules $\alpha \colon A \to A^![n]$ and a map of complexes
\[ g_\alpha \colon C_*(A,A) \to C^*(A,A)[n] \]
\begin{definition}(\cref{def:chainAlgebraicLoopCoproduct})
	The \emph{chain-level loop coproduct} associated to $(A,\co,\eta,H,\alpha)$ is the map of complexes
	\[ \lambda_H \colon C_*(A,A)  \to \overline{C}_*(A,A) \otimes \overline{C}_*(A,A)[n-1] \] 
	given by the composition $\lambda_H= \widetilde{\lambda}_H \circ g_{\alpha}$. We denote equally by $\lambda_H$ the operation induced on homology
	\[ \lambda_H: HH_*(A,A) \to H_*(\overline{C}_*(A,A) \otimes \overline{C}_*(A,A))[n-1] \]
	which we call the \emph{homology loop coproduct}.
\end{definition}

In the special case $[E]=0$, one can choose $W=0$ to get a coproduct on the whole of $HH_*(A,A)$. The pre-CY structure on $A$ also defines a product $\pi$ on this complex, part of a framed $E_2$-structure, which is an algebraic model of the Chas-Sullivan loop coproduct. We show in \cref{thm:sullivan} that, for any choice of $H$, the homology coproduct $\lambda_H$ and homology loop product $\pi$ satisfy the \textit{infinitesimal bialgebra} relation, that is, $\lambda_H$ satisfies a version of the Leibniz rule with respect to $\pi$.

\vspace{1cm}

Without further assumptions, $\lambda_H$ may not have other nice properties one encounters in coproducts coming from string topology or Floer theory, namely, coassociativity and cocommutativity. A great portion of this paper is dedicated to studying what conditions one needs in order to guarantee that the homology loop coproduct has these properties.

We start by imposing a non-degeneracy condition on the pre-CY structure: such a structure is non-degenerate if $\alpha$ is a quasi-isomorphism of $A$-bimodules.  The literature on pre-CY structures has been developed over a field of characteristic zero; in that case, for example, non-degenerate pre-CY structures can be produced on any $A$ endowed with a smooth Calabi-Yau structure, using the procedure of \cite{KTV23} (see also \cite{yeung2018pre,pridham2017shifted}). Nonetheless, the definition of pre-CY structures, and the nondegeneracy condition, still make sense over any ring $\kk$.

We would also like to pick the subcomplex $W$ appropriately; we have some freedom of choice, depending on what quotient complex we want to define the homology coproduct on, but some of those choices will lead to better-behaved coproducts. 
\begin{definition}(\cref{def:balancedText})\label{def:balancedIntro}
	The triple $(W,H,\alpha)$ is \emph{balanced} if the image of $W$ under the chain-level map $\lambda_H$ is contained in $W \subset C_*(A,A) \cap C_*(A,A) \otimes W$, implying that $\lambda_H$ descends to a map of complexes
    \[ \overline{\lambda}_H \colon \overline{C}_*(A,A) \to \overline{C}_*(A,A) \otimes \overline{C}_*(A,A)[n-1]. \]
\end{definition}
If $[E]=0$, among the possible choices we have $W=0$, which always gives a balanced triple. Moreover, if $A$ is supported in non-negative homological degrees, and $H$ is a trivialization onto a subcomplex $W_0$ concentrated in degree zero, as long as the pre-CY dimension satisfies $n \ge 2$ we always have a balanced triple (\cref{prop:coproductFactors}).\\

The space of choices for the trivialization $H$, modulo exact terms, has the structure of a torsor over $H_1(C_*(A,A) \otimes C_*(A,A))$. The resulting coproduct $\lambda_H$ is not natural with respect to maps of smooth dg categories and depends on the choice of $H$.  There is a certain symmetry condition on the space of such choices for $H$, depending on $\alpha,\co,\eta$ and our choice of $W$; we say that $H$ is $\alpha$-\emph{symmetric modulo} $W$ if it satisfies the condition in \cref{def:appropriatelySymmetric}. 

This symmetry condition simplifies in some examples (including all the ones we study in this paper) to $H$ being $(-1)^n$-symmetric under the $\Z/2$ action swapping the two $C_*(A, A)$ factors, up to exact terms and terms in $W\otimes C_*(A,A)\cap C_*(A,A)\otimes W)$. We then study the resulting coproduct $\lambda_H$ under this symmetry assumption. Summarizing our results, we find:
\begin{theorem}[see \cref{thm:commCoproduct}] \label{thm:thm1intro}
    Let $A$ be a smooth connective dg (or $A_{\infty}$) category over a field $\kk$ equipped with a nondegenerate pre-Calabi Yau structure of dimension $n \geq 3$. Suppose $(E_0,H)$ is a trivialization of $E$ onto a subcomplex $W$ such that $(W,H,\alpha)$ is balanced. If $H$ is $\alpha$-symmetric modulo $W$, then $(H_*(\overline{C}_*(A, A))[n-1], \lambda_H)$ is a graded cocommutative and coassociative coalgebra.
\end{theorem}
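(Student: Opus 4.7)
The plan is to prove cocommutativity and coassociativity separately, with both properties ultimately flowing from symmetry properties encoded in the data $(\alpha, \co, \eta, H)$ together with higher coherences provided by the pre-Calabi-Yau structure. Throughout, I would work consistently in the graphical formalism of \cref{sec:graphicalCalculus}, since the string-diagrammatic presentation of $g_\alpha$, $G$, and the capping with $H$ makes manifest where each symmetry appears and where a homotopy is needed.

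For cocommutativity, the idea is to analyze the two contributions to $\lambda_H = \widetilde{\lambda}_H \circ g_\alpha$ separately. The map $G$ is built symmetrically from the pair $(\co, \eta)$, both representing the identity $[\id_A]$ under the natural quasi-isomorphisms of \cref{sec:chernCharAndCoprod}; the corresponding diagram is $(-1)^n$-symmetric up to a natural chain homotopy that one extracts from the cylinder interpolating the two quasi-isomorphisms. Composing with $g_\alpha$ converts this into the expected symmetry for $\lambda_H$. The remaining $H$-correction in $\widetilde{\lambda}_H$ is literally $(-1)^n$-symmetric precisely when $H$ is $\alpha$-symmetric modulo $W$, in the sense of \cref{def:appropriatelySymmetric}. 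The discrepancy in both cases lands in the subcomplex $(W \otimes C_*(A,A)) + (C_*(A,A) \otimes W)$ plus boundaries, which is exactly what is killed upon passing to $\overline{C}_*(A,A) \otimes \overline{C}_*(A,A)$ on homology.

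For coassociativity, the plan is to model both iterates
\[ (\overline{\lambda}_H \otimes \id) \circ \overline{\lambda}_H \quad\text{and}\quad (\id \otimes \overline{\lambda}_H) \circ \overline{\lambda}_H \]
as diagrams built from two copies of each of $g_\alpha$, $G$, and the $H$-correction, and then to use the graphical calculus to construct an explicit chain homotopy between them. Each of the two compositions corresponds to a ribbon/string diagram with two ``splitting events''; coassociativity amounts to sliding one splitting past the other. Locally this slide is controlled by the higher operations of the pre-CY structure: nondegeneracy of $\alpha$ allows one to replace copies of $\alpha$ by the inverse bimodule quasi-isomorphism (up to explicit homotopies), and the higher components $m_{k,\ell}$ of the pre-CY structure supply the homotopies one needs to commute evaluations of $\eta$ against cycles of $\co$. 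The balanced condition on $(W, H, \alpha)$ is exactly what is required so that each step of this procedure descends to $\overline{C}_*(A,A)^{\otimes 3}$, and the $\alpha$-symmetry of $H$ is used to cancel the mixed terms arising when an $H$-correction meets a $G$-term.

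The main obstacle, as usual with coassociativity results of this kind, is the explicit construction of the homotopy for coassociativity: writing it down cleanly, and checking that each of its pieces lands in $W$ in the appropriate tensor slot (so that it vanishes in $\overline{C}_*^{\otimes 3}$). The dimension hypothesis $n \ge 3$ enters here in two related ways. First, the iterated coproduct has degree $2(n-1)$, so the sought-after homotopy has degree $2n-1 \ge 5$, and there is enough room for the higher pre-CY components to produce the needed null-homotopies without colliding with low-degree constraints. Second, in the connective setting the subcomplexes $W$ onto which $H$ is trivialized are concentrated in low degrees (cf.\ \cref{prop:coproductFactors}), and $n \ge 3$ ensures that all boundary corrections produced in the graphical calculation are pushed into degrees where the balanced condition takes over. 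Once the homotopy is produced at the chain level, passing to homology gives both cocommutativity and coassociativity of $\overline{\lambda}_H$, establishing the cocommutative coassociative coalgebra structure on $H_*(\overline{C}_*(A,A))[n-1]$.
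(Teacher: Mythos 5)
Your outline takes a genuinely different route from the paper --- a direct chain-level construction of homotopies witnessing cocommutativity and coassociativity of $\lambda_H$ --- but as written it has a real gap at exactly the step you flag as ``the main obstacle.'' The homotopy between $(\overline\lambda_H\otimes\id)\circ\overline\lambda_H$ and $(\id\otimes\overline\lambda_H)\circ\overline\lambda_H$ is not something that exists unconditionally and merely needs to be written down: the paper's own computations in \cref{subsection:failure} exhibit, for $S^1$ and $S^2$ (i.e.\ $n=1,2$), trivializations $H$ that are $\alpha$-symmetric yet give a non-coassociative $\lambda_H$. So there is a genuine obstruction class, and your justification for why it dies when $n\ge 3$ --- that the homotopy has large degree so ``there is enough room'' for the higher pre-CY components --- is not an argument; it neither locates the obstruction nor shows it vanishes. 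The same issue, in milder form, affects your cocommutativity step: the asserted $(-1)^n$-symmetry of the $G$-part ``up to a natural chain homotopy extracted from the cylinder'' is precisely the nontrivial content, and the paper never proves such a symmetry of $G$ directly.

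For contrast, the paper avoids constructing these homotopies altogether by dualizing. It builds an associative, $(-1)^n$-commutative product $\pi_{M_\alpha}$ on $C_*(A,M_\alpha)$ (associativity coming from $\mu^3_{M_\alpha}$ and the pre-CY relations), lifts it via homotopies $h_1,h_2$ of $F_\alpha$ to a dual product $\pi_{h_1,h_2}$ on the subcomplex $K\subseteq C_*(A,A^\vee)[-n]$, and shows in \cref{prop:compatibility} that this dual product is adjoint to $\lambda_H$ under the nondegenerate pairing. The associativity defect of the lift is identified concretely as the terms $p\,\pi_{M_\alpha}(\rho_{h_i}\pi_{M_\alpha}(\cdots),\cdots)$ in \cref{prop:dualProdAssociativity}, and these vanish by a support argument: connectivity puts $HH_*(A,A)$ in nonnegative degrees while the relevant image of $\pi_{M_\alpha}$ sits in degrees $\le 2-n$, so $n\ge 3$ forces the defect to zero. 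Commutativity reduces, via \cref{prop:appropriatelysymmetrich}, to $h_1|_K$ and $h_2|_K$ differing by exact terms, which is where the $\alpha$-symmetry of $H$ (through $J_\alpha$, $L_\alpha$ and \cref{lem:squareLemma}) is actually consumed. If you want to salvage a direct approach, you would need to produce the analogue of this degree argument on the coproduct side --- i.e.\ identify the coassociativity defect as an explicit cycle and show it is exact (or lands in the $W$-ideal) when $A$ is connective and $n\ge 3$ --- rather than appealing to the existence of higher pre-CY coherences.
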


Let us mention the relation to string topology, which we are developing in subsequent articles (\cite{RivTak} and follow-ups). Let $M$ be an oriented closed manifold with a triangulation (that is, a `combinatorial manifold'); this defines a certain dg category $A$, whose morphism spaces model the space of chains of $\kk$-modules on path spaces. This dg category is quasi-equivalent to $C^\mathrm{sing}_*(\mathcal{P}M;\kk)$, the dg category of singular chains on the topological category of (Moore) paths in $M$, and its Hochschild chains $C_*(A, A)$ is quasi-isomorphic to $\mathcal{C}_*(LM;\kk)$, the chains on the free loop space $LM$ of $M$ \cite{Goo85}.

Using the simplicial structure on $M$ given by the triangulation, in \cite{RivTak} we give explicit representatives for the $\co,\eta$ vertices, and therefore for $E$, whose class in $HH_*(A,A)^{\otimes 2} \cong H_*(LM,\kk)^{\otimes 2}$ agrees with $\chi(M) [\pt_0 \times \pt_0]$, that is, the Euler characteristic of $M$ times the square of the class of a point in the `identity connected component' of $LM$. We have some freedom in choosing the subcomplex $W$; among possibly other options, we have:
\begin{enumerate}
	\item take $W$ to be a subcomplex modeling chains on the image of the inclusion of constant loops $M \hookrightarrow LM$, or
	\item take $W$ to be the submodule of $C_0(A, A)$ generated by $\chi(M)\cdot \id_{b}$, for some choice of base point $b \in M$.
\end{enumerate}
For option (1), we will show in \emph{op.cit.} that for an appropriate choice of $H$ (in fact, $H =0$), the triple $(H,W,\alpha)$ is balanced and $H$ is $\alpha$-symmetric modulo $W$, and that the resulting coproduct $\lambda_H$ agrees on reduced homology with the geometrically defined Goresky-Hingston coproduct. Furthermore, we conjecture this coproduct defines a $BV$-\textit{coalgebra} with respect to the operator given by rotation of loops and may be lifted to a \textit{framed $E_2$-coalgebra} structure at the level of reduced Hochschild chains. 

On the other hand, a trivialization onto the choice of $W$ for option (2) involves more choices; the resulting coproduct should be thought of as the lift of the Goresky-Hingston coproduct to $H_*(LM)/\chi(M)\cdot [\pt]$ using some vector field with a single zero at a point $\pt \in M$; the data of this vector field is then encoded in the trivialization $H$.

\subsection{Outline of the proof}
\cref{thm:thm1intro} is obtained by analyzing the relationship (see \cref{thm:thm3intro} (3) below) between the coproduct $\lambda_H$ and a product $\pi_H$ on a dual complex. This product is obtained by lifting a product defined on a mapping cone construction. As a result, the coassociativity (resp.\ cocommutativity) of $\lambda_H$ is deduced from the associativity (resp.\ commutativity) of $\pi_H$. The mapping cone construction is inspired by the \textit{categorical formal punctured neighborhood of infinity} as developed by Efimov in \cite{efimov2017categorical} and recalled in \cref{sec:catFormal}. It is similar to the Tate-Hochschild complex considered in \cite{RivWan19}, but suitable for smooth categories and, consequently, for non-simply connected string topology.  

Given any smooth dg (or $A_{\infty}$) category $A$, which we may think of the non-commutative analog of a smooth but possibly non-proper variety $X$, the categorical formal punctured neighborhood of infinity is another dg category $\Ahatinfty$ serving as the analog of the category of perfect complexes on the formal punctured neighborhood of the divisor at infinity for some compactification of $X$. On `the other side of mirror symmetry', when $A$ is the wrapped Fukaya category of a Weinstein domain, a relation between $\Ahatinfty$ and the \emph{Rabinowitz Fukaya category}, an open-string analogue of the Rabinowitz Floer homology, has been recently established in \cite{ganatra2022rabinowitz}.   
As an object of the derived category of $A$-bimodules, $\Ahatinfty$ sits in a distinguished triangle
\[ A^! \otimes^\LL_A A^\vee \to A \to \Ahatinfty, \]
where $A^\vee$ denotes the \textit{linear dual} of $A$. Applying Hochschild chains we obtain a distinguished triangle
\begin{equation}\label{eq:distinguishedTriangle}
    C^*(A,A^\vee) \cong C_*(A,A)^\vee \xrightarrow{\ ^\sharp E} C_*(A,A) \to C_*(A,\Ahatinfty)
\end{equation}
in the derived category of complexes of $\kk$-modules. As suggested by the notation, we calculate that the first map $\ ^\sharp E$ is given by pairing against the element $E \in C_*(A,A)^{\otimes 2}$ we previously defined. 

When $A$ is equipped with a non-degenerate pre-CY structure of dimension $n$, we may consider another model for $\widehat{A}_{\infty}$. This model is an $A$-bimodule $M_{\alpha}$ defined as the mapping cone of certain $A$-bimodule morphism \[f_\alpha: A^\vee[-n] \to A\] induced by the  the pre-CY structure of $A$, as discussed in \cref{sec:coneoff}. Non-degeneracy of the pre-CY structure means that the structure map $\alpha \colon A \to A^!$ is a quasi-isomorphism, giving a quasi-isomorphism of $A$-bimodules $\Ahatinfty \simeq M_{\alpha}$ (\cref{prop:homotopyAlpha}). The Hochschild chain complex $C_*(A, M_{\alpha})$ is the analog of the Tate-Hochschild complex but now in the setting of smooth pre-CY categories, being given by the mapping cone of the map of complexes
\[F_\alpha: C_*(A,A^\vee)[-n] \to C_*(A, A)\] induced by $f_\alpha$. In \cref{def:productOnChains}, we construct an algebra structure on  $C_*(A, M_{\alpha})$ with product
\[ \pi_{M_{\alpha}}: C_*(A,M_{\alpha}) \otimes C_*(A,M_{\alpha}) \to C_*(A, M_{\alpha})[n] \]
extending the algebraic analogue of the loop product 
\[\pi \colon C_*(A, A) \otimes C_*(A, A) \to C_*(A, A)[n].\]
The product $\pi$ is associated to the ``pair of pants" in the graphical formalism of \cite{kontsevich2021precalabiyau}. The following statement, which is the main result of \cref{sec:extendingproducts}, summarizes the homological algebra meaning, as well as the main properties, of the product $\pi_{M_\alpha}$. 

\begin{theorem}\label{thm:thm2intro}
Let $A$ be a smooth dg (or $A_{\infty}$) category equipped with a pre-CY structure of dimension $n$. The structure map $\alpha \colon A \to A^{!}$ of the pre-CY structure determines a quasi-isomorphism \[g^{M_{\alpha}} \colon C_*(A, M_{\alpha}) \xrightarrow{\simeq} C^*(A, M_{\alpha})\] of degree $-n$ inducing a map of algebras on (co)homology, where the products on $HH_*(A, M_{\alpha})$ and $HH^*(A, M_{\alpha})$ are induced by $\pi_{M_{\alpha}}$ and an extension $\smile_{M_{\alpha}}$ of the classical Hochschild cup product, respectively. Consequently, $\pi_{M_{\alpha}}$ induces an associative and (graded) commutative product on $HH_*(A, M_{\alpha})$.
\end{theorem}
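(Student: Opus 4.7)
The plan is to establish, in order, (i) the existence of a chain-level quasi-isomorphism $g^{M_\alpha}$, (ii) the construction of the cochain-level extended cup product $\smile_{M_\alpha}$, and (iii) the compatibility $g^{M_\alpha}(\pi_{M_\alpha}(x,y)) \simeq g^{M_\alpha}(x)\smile_{M_\alpha} g^{M_\alpha}(y)$ up to explicit chain homotopy. Granting these, associativity and graded commutativity of $\pi_{M_\alpha}$ on $HH_*(A, M_\alpha)$ will transfer, through the isomorphism $g^{M_\alpha}$ on (co)homology, from the corresponding classical properties of $\smile_{M_\alpha}$.

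\emph{Stage (i).} Since $M_\alpha = \Cone(f_\alpha\colon A^\vee[-n]\to A)$, the Hochschild chain and cochain functors yield distinguished triangles
\[ C_*(A,A^\vee)[-n] \xrightarrow{F_\alpha} C_*(A,A) \to C_*(A,M_\alpha), \]
\[ C^*(A,A^\vee)[-n] \xrightarrow{F_\alpha^*} C^*(A,A) \to C^*(A,M_\alpha). \]
I build $g^{M_\alpha}$ as a morphism between these triangles: on the $C_*(A,A)$ summand it is assembled from the $g_\alpha$ arising from the pre-CY structure, and on the $C_*(A,A^\vee)[-n]$ summand it comes from the smoothness-based duality quasi-isomorphism between Hochschild chains with $A^\vee$-coefficients and the appropriate dual cochain complex, realized graphically as in \cref{sec:graphicalCalculus}. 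The homotopy filling the comparison square is produced by the order-two component of the pre-CY structure. The five lemma applied to the long exact sequences of the two cones then forces $g^{M_\alpha}$ to be a quasi-isomorphism; the cone construction defining $M_\alpha$ is tailored so that the outer comparison maps collectively become quasi-isos even without assuming $\alpha$ itself is one.

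\emph{Stages (ii) and (iii).} The product $\smile_{M_\alpha}$ is assembled along the cone decomposition of $C^*(A, M_\alpha)$: on $C^*(A, A)$ it is the classical Hochschild cup product, and the mixed pieces involving $C^*(A, A^\vee)[-n]$ are built from the bimodule actions $A \otimes A^\vee \to A^\vee$, organized graphically. Strict associativity of the cup product and its graded commutativity up to Gerstenhaber-type chain homotopies promote to analogous statements for $\smile_{M_\alpha}$, so on $HH^*(A, M_\alpha)$ this product is associative and graded commutative. Multiplicativity of $g^{M_\alpha}$ is then a calculation in the graphical calculus: the two compositions in the proposed formula correspond to distinct graphs built from the pair-of-pants, the pre-CY copairing, and the duality evaluation, and the required chain homotopy is produced by interpolating between these two graphical patterns. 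Once verified at the chain level, the induced map on (co)homology is an algebra map, and the desired properties of $\pi_{M_\alpha}$ follow by transport.

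The principal obstacle is the explicit chain-homotopy verification in Stage (iii). Neither $\pi_{M_\alpha}$ nor $\smile_{M_\alpha}$ is strictly associative or commutative at the chain level, so the transfer of structure is not automatic: it rests on the pre-CY axioms at order $\geq 2$ to close the interpolating homotopies and on the compatibility between the duality pairings on $A \otimes A^\vee$ and the bimodule structures underlying both products. The graphical formalism is indispensable here, as the combinatorics of the required homotopies would otherwise be unwieldy to manage term by term.
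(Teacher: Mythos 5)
There are two genuine gaps, and they sit at the heart of the argument. First, your transfer of commutativity runs in the wrong direction. The product $\smile_{M_\alpha}$ on $C^*(A,M_\alpha)$ is \emph{not} the diagonal-bimodule cup product, and for coefficients in a general bimodule there is no Gerstenhaber/brace-type homotopy making it graded commutative on cohomology; the paper says explicitly that $\smile_{M_\alpha}$ is in general non-commutative. In the paper, commutativity is obtained the other way around: one first proves the compatibility homotopy $g_\alpha(\pi_{M_\alpha}(-,-)) \simeq g_\alpha(-)\smile_{M_\alpha} g_\alpha(-)$ (\cref{prop:cupProductCompatibility}), then shows that the induced bilinear map $HH_*(A,M_\alpha)^{\otimes 2} \to HH^*(A,M_\alpha)$ is $(-1)^n$-commutative by an explicit input-swapping homotopy whose sign comes from the cyclic $(-1)^n$-symmetry of $\alpha \in C^n_{(2,n)}(A)$, and only when $g^{M_\alpha}_\alpha$ is a quasi-isomorphism does one deduce commutativity of $\pi_{M_\alpha}$ and of $\smile_{M_\alpha}$ (\cref{cor:productCommutative}). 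So the ultimate source of commutativity is the symmetry of the pre-CY tensor, not a Deligne-conjecture-type structure on $C^*(A,M_\alpha)$; as written, your Stage (ii) asserts exactly the statement the paper has to work to obtain, and asserts it without the hypothesis under which it is true.

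Second, your Stage (i) claim that the comparison of the two cone triangles is a quasi-isomorphism ``even without assuming $\alpha$ itself is one'' is false: $m_{(2)}=0$ (with all higher terms zero) is a legitimate pre-CY structure, and then $g^{M_\alpha}_\alpha$ is the zero map, which is not a quasi-isomorphism. In the paper $g^{M_\alpha}$ is not assembled from two maps on the summands with a correcting homotopy; it is the single graphical map $g^{M}_\alpha$ of \cref{ex:galpha} applied to $M=M_\alpha$, and the quasi-isomorphism statement rests on non-degeneracy of $\alpha$ via \cref{remark:nondegeneratequasiisomorphism} (non-degenerate $\alpha$ makes $g^M_\alpha$ a quasi-isomorphism for \emph{every} bimodule $M$, since it realizes tensoring with the bimodule quasi-isomorphism $A \to A^![n]$). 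Relatedly, the paper does not get associativity of $\pi_{M_\alpha}$ by transport through $g^{M_\alpha}$: it is proved directly (\cref{prop:associativityProductCone}) by an explicit homotopy built from the extension vertices $\mu^2_{M_\alpha}$, $\mu^3_{M_\alpha}$, $\psi$ of \cref{prop:extension}, whose very construction uses the higher components $m_{(3)}$, $m_{(4)}$ of the pre-CY structure — the algebra structure on $M_\alpha$ is not obtainable from the bimodule actions $A\otimes A^\vee \to A^\vee$ alone. This direct route is what makes associativity of $\pi_{M_\alpha}$ valid even for degenerate $\alpha$, a case your transport argument cannot reach.
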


Subsequently, we explain the sense in which the map $\pi_{M_{\alpha}}$ relates to algebraic loop coproduct $\lambda_H$. We have a map of complexes $g^{A^\vee}_\alpha \colon C_*(A,A^\vee)[-n] \to C^*(A,A^\vee)$, which is a quasi-isomorphism when $\alpha$ is non-degenerate. We note that the target of this map is the linear dual of $C_*(A,A)$ and consequently a natural setting to consider the linear dual of the coproduct $\lambda_H$. The product $\pi_{M_{\alpha}}$ does not require the additional data of a trivialization $(E_0, H)$.  However, in the presence of such additional structure we can compare this product with the previously defined coproduct. For that, let us define a subcomplex
\[ K \coloneqq \{x \in C_*(A,A^\vee)[-n]\ |\ dx=0, \forall w \in W\ \text{such that}\ dw=0, \langle g^{A^\vee}_\alpha(x), w \rangle = 0 \} \]
If we have any element $E_0\in W\otimes C_*(A,A) \cap C_*(A,A) \otimes W$ closed of degree zero, denoting
\[ q \coloneqq \ ^\sharp E_0\circ g_\alpha^{A^{\vee}} \colon C_*(A, A^{\vee})[-n] \to C_*(A,A) \]
defines a subspace $\ker(q)$ satisfying the property that $K \subseteq \ker(q)$.

\begin{theorem}[see Propositions \ref{prop:productCone} and \ref{prop:compatibility}] \label{thm:thm3intro}
    Let $A$ be a connective smooth dg (or $A_{\infty}$) category equipped with a pre-Calabi Yau structure of dimension $n$. Suppose  $(E_0,H)$ is a trivialization of $E$ to a subcomplex $W$. Then there are two maps of complexes $\iota_H^1,\  \iota_H^2  \colon \emph{ker}(q)[1] \to C_*(A, M_{\alpha})$, both sections of the projection $p\colon C_*(A,M_{\alpha}) \to C_*(A, A^{\vee})[-n+1]$, such that the map of complexes
    \[ \pi_H \coloneqq p \circ \pi_M \circ (\iota^1_H|_K \otimes \iota^2_H|_K) \colon K[1] \otimes K[1] \to C_*(A,A^\vee)[1] \]
    satisfies the equation
    \[\langle \pi_H(x_1 \otimes x_2), g_{\alpha}(y)\rangle = (-1)^{\deg(x_1)\deg(x_2)}\langle x_2 \otimes x_1, (\id \otimes g_{\alpha})(g_{\alpha} \otimes \id) \lambda_H(y)\rangle, \]
    for any closed elements $x_1,x_2 \in K$ and $y \in C_*(A,A)$.
\end{theorem}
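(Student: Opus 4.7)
The plan is to exploit the cone structure $C_*(A,M_\alpha) = C_*(A,A^\vee)[-n+1] \oplus C_*(A,A)$ explicitly. A section of $p$ has the form $x \mapsto (x, s(x))$, and because the mixed piece of the differential on the cone is $F_\alpha$, this is a chain map on closed $x$ precisely when $F_\alpha(x)$ admits a primitive $-s(x)$ in $C_*(A,A)$. The first step is to identify, on closed elements, $F_\alpha$ with $\ ^\sharp E \circ g_\alpha^{A^\vee}$; the relation $dH = E - E_0$ then implies that for $x \in \ker(q)$ the element $F_\alpha(x)$ is exact with an explicit primitive obtained by pairing $H \in C_*(A,A)\otimes C_*(A,A)$ against $g_\alpha^{A^\vee}(x)$. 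Since $H$ lives in a two-tensor complex, there are two inequivalent ways to perform this pairing, contracting against the first or the second tensor factor; these produce the two sections $\iota^1_H$ and $\iota^2_H$, naturally defined on $\ker(q)$ (not all of $C_*(A,A^\vee)[-n]$) so that the $E_0$-part of the primitive plays no role.

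Next I would compute $\pi_H = p \circ \pi_M \circ (\iota^1_H|_K \otimes \iota^2_H|_K)$ by unpacking the graphical description of $\pi_M$ furnished by \cref{thm:thm2intro} and \cref{sec:graphicalCalculus}. The product $\pi_M$ decomposes with respect to the cone grading, and after applying $p$ only the cross-terms survive: namely those that route one input of a given pair-of-pants vertex through $g_\alpha^{A^\vee}$ into an $x_i$, while the other inputs are filled by the $H$-correction legs supplied by the \emph{other} section. The outcome is an explicit formula for $\pi_H(x_1\otimes x_2) \in C_*(A,A^\vee)[1]$ consisting of two kinds of terms: a ``central'' contribution coming from a pair-of-pants graph, and two $H$-correction contributions.

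To match this with the coproduct, I would pair both sides of the asserted identity against $g_\alpha(y)$ and unwind $\lambda_H = \widetilde\lambda_H \circ g_\alpha$. The defining formula of $\widetilde\lambda_H$ also splits into two classes of contributions: the map $G$, whose graphical datum is the same pair-of-pants used to build $\pi$, and the two $H$-correction terms, each supported on one of the two output factors. Under the evaluation pairing, and after applying $(g_\alpha \otimes g_\alpha)$ to $\lambda_H(y)$, the two legs of $G$ become the two $C_*(A,A^\vee)$ slots into which $x_1$ and $x_2$ are inserted; the transposition $x_1 \otimes x_2 \mapsto x_2 \otimes x_1$ on the right-hand side is the standard swap that identifies a coproduct with the transpose of the dual product. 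The two $H$-corrections on the coproduct side then match, term by term, with the two $H$-corrections coming from the sections $\iota^1_H$ and $\iota^2_H$ on the product side.

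The hardest step is this last matching. The obstacle is not conceptual but combinatorial: one must track Koszul signs introduced by the shifts $[n-1]$ and $[-n+1]$, and one must verify that the assignment ``first factor of $H$'' versus ``second factor of $H$'' to $\iota^1_H$ versus $\iota^2_H$ is made in precisely the way compatible with the transposition on the right-hand side, so that both $G$-contributions and both $H$-contributions align. The graphical calculus of \cref{sec:graphicalCalculus} reduces this to a finite collection of local moves on decorated trees which can be checked directly, as both $\pi_M$ and $\widetilde\lambda_H$ admit graphical descriptions sharing the same pair-of-pants vertex and the same $\eta$-decorated leaves.
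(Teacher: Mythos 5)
There is a genuine gap at the very first step of your construction, and it propagates. You assume that $F_\alpha$ can be identified, on closed elements, with $\ ^\sharp E \circ g^{A^\vee}_\alpha$ at the chain level, so that $dH = E - E_0$ hands you an explicit primitive of $F_\alpha(x)$ for $x \in \ker(q)$ by contracting $H$ against $g^{A^\vee}_\alpha(x)$, the two contractions giving the two sections. This identification does not hold at the chain level: what is true is that $g_\alpha \circ F_\alpha$ equals pairing against the element $D_\alpha$, and $D_\alpha$ agrees with $(\id\otimes g_\alpha)(g_\alpha\otimes\id)E$ only up to the explicit homotopy $J_\alpha$ of \cref{lem:Jhomotopy} (and with its transpose only after the further homotopy $L_\alpha$ of \cref{lem:Lhomotopy}), all of which involve the auxiliary bimodule homotopy $\beta$ relating $\co$ and $\eta$. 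Consequently maps of the form $x \mapsto (x, -\langle g^{A^\vee}_\alpha(x), H'\rangle H'')$ (or the contraction against the other factor) are not chain maps; the failure is measured exactly by these correction terms. The paper instead defines cochain-level homotopies $\widetilde h_1 = \left((\id\otimes g_\alpha)(g_\alpha\otimes\id)H + J_\alpha + L_\alpha\right)^\sharp$ and $\widetilde h_2 = \ ^\sharp\!\left((\id\otimes g_\alpha)(g_\alpha\otimes\id)H + J_\alpha\right)$, with the asymmetric choices $E_{01} = (-1)^n E_0^T$ and $E_{02} = E_0$, and then uses non-degeneracy of $\alpha$ (an assumption present in \cref{prop:compatibility} though suppressed in the introductory statement) to lift these through the quasi-isomorphism $g_\alpha$ to genuine homotopies $h_1,h_2$ of $F_\alpha$, via \cref{lem:chainToCochainHomotopy}; only the lifted $h_i$ give the sections $\iota^1_H,\iota^2_H$. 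Your proposal contains neither the correction homotopies nor this lifting step, and it does not explain why the comparison must pass through the cochain side, which is precisely why the asserted identity involves $(\id\otimes g_\alpha)(g_\alpha\otimes\id)\lambda_H(y)$ rather than $\lambda_H(y)$ itself.

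Relatedly, the final matching is not a finite check of Koszul signs on a shared pair-of-pants vertex. After expanding $\langle \pi_{h_1,h_2}(x_1,x_2), g_\alpha(y)\rangle$ and $\langle x_2\otimes x_1, (\id\otimes g_\alpha)(g_\alpha\otimes\id)\widetilde\lambda_H(g_\alpha(y))\rangle$, the two sides agree only up to homotopies that must be exhibited explicitly, and producing them is the bulk of the proof: the square-filling element $N_\alpha$ of \cref{lem:squareLemma} (a 53-term diagram written out in \cref{app:squareLemma}), the intermediate operation $\Lambda_\alpha$, and the further homotopies of \cref{app:compatibility}. These are new algebraic inputs rather than reorganizations of the same graphs, so even with the sections repaired, the plan as stated would stall at the term-matching stage.
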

Each of the maps $\iota_H^1$ and $\iota_H^2$ is constructed by ``perturbing" the inclusion map of graded complexes $\ker(q)[1] \hookrightarrow C_*(A,M_{\alpha})$ to a map of complexes by using homotopies, induced by $(E_0,H)$. These homotopies deform the map $F_\alpha$ into the map given by pairing against $E_0$.

\begin{corollary}
    With the same assumptions and notation as in the theorem above, if $(W,H,\alpha)$ is balanced, then the image of $\pi_H$ is contained in $K$, defining thus a `homology dual product' $\pi_H \colon K/B_*(\ker(q)) \otimes K/B_*(\ker(q))  \to K/B_*(\ker(q)) [n-1]$. If moreover $\alpha$ is non-degenerate, and we are working over a field, then the homology loop coproduct $\lambda_H \colon \overline{HH}_*(A,A) \to \overline{HH}_*(A,A) \otimes \overline{HH}_*(A,A) [n-1]$ is determined by the homology dual product $\pi_H$.
\end{corollary}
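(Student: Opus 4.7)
The plan is to derive both claims from the pairing identity of \cref{thm:thm3intro}: the first by specializing $y$ to lie in $W$, and the second by inverting the induced pairing using non-degeneracy of $\alpha$.

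For the containment $\pi_H(K \otimes K) \subseteq K$, I fix closed $x_1, x_2 \in K$ and substitute $y = w$ in the formula of \cref{thm:thm3intro}, for an arbitrary closed $w \in W$:
\[ \langle \pi_H(x_1 \otimes x_2), g_\alpha(w)\rangle = (-1)^{\deg(x_1)\deg(x_2)} \langle x_2 \otimes x_1, (g_\alpha \otimes g_\alpha)\lambda_H(w)\rangle. \]
The balanced hypothesis forces $\lambda_H(w) \in W \otimes C_*(A,A) \cap C_*(A,A) \otimes W$. Since we work over a field, picking a graded complement to $W$ inside $C_*(A,A)$ identifies this intersection with $W \otimes W$, so $\lambda_H(w)$ is a cycle in $W \otimes W$. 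The Künneth theorem then yields a decomposition $\lambda_H(w) = \sum_i u_i \otimes v_i + d\beta$ with all $u_i, v_i$ closed cycles in $W$. Rewriting each pairing $\langle x_j, g_\alpha(u)\rangle$ as $\pm \langle g_\alpha^{A^\vee}(x_j), u\rangle$ via the adjunction of the two duality pairings, every main-term factor vanishes by the definition of $K$, while the boundary term $d\beta$ dies against the cycles $x_1 \otimes x_2$. Hence $\pi_H(x_1 \otimes x_2) \in K$. The descent to $K/B_*(\ker(q))$ comes from extending $\pi_H$ to a chain map $\tilde{\pi}_H = p \circ \pi_M \circ (\iota^1_H \otimes \iota^2_H)$ on $\ker(q)[1] \otimes \ker(q)[1]$: applying $\tilde{\pi}_H$ to a boundary $dz$ with $z \in \ker(q)$ in either slot produces a boundary, which combined with the containment in $K$ just established lies in $B_*(\ker(q))$.

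For the second claim, assume $\alpha$ is non-degenerate and $\kk$ is a field, so that $g_\alpha$ and $g_\alpha^{A^\vee}$ are quasi-isomorphisms. The identity of \cref{thm:thm3intro} may then be read as saying that $\pi_H$ is the transpose of $\overline{\lambda}_H$ (up to the swap of tensor factors and the Koszul sign) with respect to the pairing $H_*(K/B_*(\ker(q))) \otimes \overline{HH}_*(A,A) \to \kk$ induced by $g_\alpha$. Non-degeneracy of $\alpha$, combined with working over a field, ensures this pairing is perfect on the relevant homologies, so $\pi_H$ uniquely determines $\overline{\lambda}_H$.

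The main obstacle I anticipate is making the perfect-pairing statement above precise: one must check that, after passing through the quotients by $W$ and $B_*(\ker(q))$ and the chain-level maps $g_\alpha, g_\alpha^{A^\vee}$, the induced pairing between $H_*(K/B_*(\ker(q)))$ and $\overline{HH}_*(A,A)$ is in fact non-degenerate. This should follow from non-degeneracy of $\alpha$ (turning $g_\alpha$ into a Poincaré-type duality isomorphism $HH_*(A,A) \cong HH^*(A,A)[n]$) together with the field hypothesis (allowing us to dualize across the quotient by $W$ cleanly), but the bookkeeping between the subspaces $W, K, \ker(q)$ is nontrivial and is where I expect most of the work in turning this plan into a rigorous argument to lie.
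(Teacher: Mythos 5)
The first half of your argument is essentially the paper's own proof: specialize the identity of \cref{thm:thm3intro} to a closed $w \in W$, use the balanced condition together with the field hypothesis (so that $W \otimes C_*(A,A) \cap C_*(A,A)\otimes W$ is identified with $W \otimes W$) and K\"unneth to write $\lambda_H(w)$, up to a boundary, as a sum of tensors of closed elements of $W$, and then conclude from the defining property of $K$ that $\pi_H(x_1\otimes x_2)$ pairs to zero with all closed elements of $W$. One small slip in your descent step: a boundary lying in $K$ is not thereby in $B_*(\ker(q))$ --- in fact \emph{all} boundaries of $C_*(A,A^\vee)[-n]$ lie in $K$ --- so your phrase ``which combined with the containment in $K$ just established lies in $B_*(\ker(q))$'' does not follow; membership in $B_*(\ker(q))$ would require the primitive $p\,\pi_{M_\alpha}(\iota_H z \otimes \iota_H x)$ itself to lie in $\ker(q)$. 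The paper avoids this issue by first defining the homology dual product with target $HH_*(A,A^\vee)[1]$ (\cref{def:chainDualProduct}), where boundaries automatically vanish.

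The genuine gap is in the second claim. You reduce it to the assertion that the pairing between $H_*(K/B_*(\ker(q)))$ and $\overline{HH}_*(A,A)$ induced by $g_\alpha$ is \emph{perfect}, and you explicitly defer the verification, which is exactly where the content of the statement lies. Moreover, perfection is the wrong target: it fails in general on the $K/B_*(\ker(q))$ side (for instance, boundaries $dz$ with $z \notin \ker(q)$ give classes there that pair trivially with everything), so chasing a perfect pairing would lead you to a false intermediate statement. What is true, and all that is needed, is one-sided: the map $K/B_*(\ker(q)) \to \overline{HH}_*(A,A)^\vee$ is \emph{surjective}, so functionals realized by elements of $K$ separate points of $\overline{HH}_*(A,A)$; combined with the isomorphism $HH_*(A,A^\vee)[-n] \cong (HH_*(A,A))^\vee$ coming from nondegeneracy of $\alpha$ over a field, the compatibility equation of \cref{prop:compatibility} (which holds for all closed elements) then determines $\lambda_H$ from $\pi_H$. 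This surjectivity is precisely \cref{lem:surjectionToDual}, whose proof lifts a functional on $\overline{HH}_*(A,A)$ killing $H_*(W)$ to a closed element of $C_*(A,A^\vee)[-n]$ using the surjectivity of $HH_*(A,A^\vee)[-n]\to (HH_*(A,A))^\vee$ and the injectivity of $H_*(W)\to HH_*(A,A)$, and checks that the lift lands in $K$. Without supplying this lemma (or an equivalent separation-of-points argument), your proposal for the second half remains a plan rather than a proof.
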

\cref{thm:thm1intro} then follows by analyzing the properties of the product $\pi_H$. We emphasize our main point: \textit{\cref{thm:thm3intro} provides an interpretation of the dual of the algebraic loop coproduct $\lambda_H$ as a part of homological algebra operation $\pi_{M_{\alpha}}$, extending the interpretation of the algebraic loop product as the classical Hochschild cup product.} 

Finally, \cref{sec:examples} is devoted to computing explicit examples in the context of string topology of spheres. Our computations confirm that the structure studied in this article is non-trivial in general and coincides partially with other geometrically defined constructions in the literature.

\subsection{Relation to existing work}
String topology has been studied in the literature through different perspectives. In particular, the loop coproduct was originally constructed in \cite{GorHin} using Morse theoretic methods, in \cite{Abb16} using algebraic methods applicable to simply connected manifolds, in \cite{HinWah0} using Thom-Pontryagin theory and a Riemannian metric, in \cite{NaeWil19} using configuration spaces, and in \cite{CieHinOan1, CieHinOan2} using symplectic Floer theory. See also \cite{naef2023string} for a survey on three of these perspectives.
Some of the constructions and computations in this paper have direct analogs in the symplectic setting \cite{CieHinOan2}. The recent works \cite{naef2024simple,kenigsberg2024obstructions} have been studying the loop coproduct and its relationship to Whitehead torsion and simple homotopy following other approaches; in particular, in \cite{naef2023torsion} Naef and Safronov give a definition of an \emph{Euler structure}, which in the Euler characteristic zero case should be related to our trivializations, and describe how these give rise to volume forms on derived mapping stacks. Our algebraic perspective has been influenced and guided by their talks and by discussions we have had with them. However, we come to it from a different angle: we have arrived to explicit formulas for the algebraic loop coproduct by first considering a mapping cone construction inspired by Efimov's categorical formal punctured neighborhood of infinity (taking a sort of Koszul pre-dual of the main construction of \cite{RivWan19}) and using the formalism of pre-Calabi Yau structures (developed in \cite{kontsevich2021precalabiyau}) to incorporate chain-level Poincaré duality.

\addtocontents{toc}{\SkipTocEntry}
\section*{Acknowledgments}
We would like to thank Damien Calaque, Kai Cieliebak, Florian Naef, Sheel Ganatra, Ezra Getzler, Ralph Kaufmann, Maxim Kontsevich, Alexandru Oancea, Pavel Safronov, Bruno Vallette, Yiannis Vlassopoulos, and Nathalie Wahl for helpful conversations. AT would like to thank IHES for providing great working conditions. This project is partly supported by the Simons Collaboration on Homological Mirror Symmetry.

\addtocontents{toc}{\SkipTocEntry}
\subsection*{Conventions} We will use \textit{homological} grading convention throughout the article. Given a commutative ring $\kk$, we denote by $(\mathrm{Ch}(\kk), \otimes)$ the symmetric monoidal category of (unbounded) chain complexes of $\kk$-modules, that is, with a differential of degree $-1$. If $(V,d) \in \mathrm{Ch}(\kk)$ then $(V^{\vee}, d)$ is the chain complex with $V^{\vee}_n=\Hom_{\kk}(V_{-n}, \kk)$ and $d(f)(x)= -(-1)^{\deg(f)}f(dx)$. To unclutter notation, we will often use the generic notation $d$ for the differential on any chain complex, such as the Hochschild chain complex or the cochain complex (both with \emph{homological} grading).

A dg category is defined as a category enriched over  $(\mathrm{Ch}(\kk), \otimes)$. A connective dg category is a category enriched over the symmetric monoidal category of non-negatively graded chain complexes. We refer to \cite{Tab10, Kel06} for further generalities on dg categories. The symbol $X \cong Y$ means $X$ and $Y$ are ``isomorphic" and $X \simeq Y$ means $X$ and $Y$ are ``quasi-isomorphic". 

Given any $M=(M_*,d) \in (\mathrm{Ch}(\kk), \otimes)$, we denote by $M[n]$ the complex obtained by shifting $M$ down by $n$, namely, the complex given by $M[n]_p= M_{p-n}$ with differential $(-1)^nd$. In particular, maps $M \to N$ of degree $-n$ are equivalent to maps $M \to N[n]$ of degree $0$, products $M \otimes M \to M$ of degree $-n$ are equivalent to products $M[-n] \otimes M[-n] \to M[-n]$ of degree $0$, and coproducts $M \to M\otimes M$ of degree $-n$ are equivalent to coproducts $M[n] \to M[n] \otimes M[n]$ of degree $0$.

\section{Pre-CY categories and their graphical calculus}\label{sec:graphicalCalculus}
Throughout this paper we will be working in the setting of strictly unital $\Z$-graded $A_\infty$-categories; for ease of exposition in this section we will restrict to the case of a categories with a single object, that is, the case of $A_\infty$-algebras. Moreover, we consider differential graded algebras (dg algebras) as special types of $A_\infty$-algebras with trivial structure maps $\mu^{\ge 3}$. In the literature there are many different notations and sign conventions for such objects; we will be mostly following \cite{kontsevich2008notes, Kel06}. 

\subsection{The graphical calculus}
We will need to write formulas for morphisms and homotopies of such objects, Hochschild cycles and cocycles etc. whose formulas are often too complicated to be enlightening.

For that, instead of formulas we will use a certain graphical calculus that was developed to deal with $A_\infty$-algebras/categories, bimodules over them and Hochschild co/chains. An early version of this graphical calculus, including only Hochschild cochains, appeared in Kontsevich and Soibelman's proof \cite{kontsevich2000deformations} of the Deligne conjecture; this was extended to include Hochschild chains in \cite{kontsevich2008notes}. In \cite{kontsevich2021precalabiyau} this graphical calculus is explained more systematically in terms of `ribbon quivers', that is, acyclic oriented ribbon graphs with markings, to include what is called in \textit{op.cit.} `higher Hochschild cochains', used to define pre-Calabi-Yau structures.

\subsubsection{Hochschild chains and cochains}\label{subsubsection:chainsandcochains}
We will use the normalized complexes for Hochschild chains and cochains. Let $(A,\mu)$ be a strictly unital $A_\infty$-algebra over $\kk$, and $M$ an $A$-bimodule. Denoting $\A = A/(\kk\cdot 1_A)$, the Hochschild chain complex of $A$ with values on $M$ will be denoted by
\[ C_*(A,M) = \bigoplus_{k \ge 0} M \otimes \A[1]^{\otimes k}. \]
We will use the notation $m[a_1|\dots|a_k]$ to denote an element of this complex. The Hochschild cochain complex will be denoted by
\[ C^*(A,M) = \prod_{k \ge 0} \Hom_\kk(\A[1]^{\otimes k},M), \]
and we will assume homological convention, meaning that $C^p(A,M)$ denotes the $\kk$-module of maps of degree $-p$ in $\prod_{k \ge 0} \Hom_\kk(\A[1]^{\otimes k},M)$. 

We visualize a Hochschild cochain $\varphi$ as living in a disc with one output on the bottom, and inputs running along the top; we denote this by the diagram
\[\tikzfig{
    \node [vertex] (a) at (0,0) {$\varphi$};
    \draw [->-,orange] (a) to (0,-1);
}\]
We visualize this vertex as receiving any number $k$ of $\A[1]$-arrows, each carrying a factor of 
\[ a_1 \otimes \dots \otimes a_k \in (\A[1])^{\otimes k} \]
from left to right, and outputting $\varphi(a_k,\dots,a_k) \in M$ along the bottom $M$-arrow, which we distinguish by color.

As for Hochschild chains, we will visualize an element $m[a_1|\dots|a_k] \in M \otimes \A[1]^{\otimes k}$ as traveling down a cylinder with one $M$ line and $k$ $\A[1]$ lines running down along its length.
\[\tikzfig{ 
    \draw (0,2) [partial ellipse=0:360:1.5 and 0.3];
    \draw (0,0) [partial ellipse =0:-180:1.5 and 0.3];
    \draw [dashed] (0,0) [partial ellipse =0:180:1.5 and 0.3];
    \node [bullet, orange] (topM) at (0,1.73) {};
    \node [bullet, orange] (botM) at (0,-0.27) {};
    \node at (0,2) {$m$};
    \draw [->-,orange] (topM) to (botM);
    \draw (-1.5,2) to (-1.5,0);
    \draw (1.5,2) to (1.5,0);
    \draw [->-=1] (0.5,1.1) to (0.5,0);
    \draw [->-=1] (1,1.3) to (1,0.1);
    \draw [->-=1] (-0.5,1.1) to (-0.5,0);
    \draw [->-=1] (-1,1.3) to (-1,0.1);
    \node at (0.5,1.3) {$a_1$};
    \node at (1,1.5) {$a_2$};
    \node at (-0.5,1.3) {$a_{k}$};
    \node at (-1,1.5) {$a_{k-1}$};
}\]

\subsubsection{Higher Hochschild cochains}
We also have vertices with multiple outgoing edges; in a vertex with $k$ outgoing arrows, we insert an element of the graded complex
\begin{align}\label{align:C(n)}
    C^*_{(k)}(A) = \prod_{\{(n_1,\dots,n_k)\mid n_i \ge 0\}} \Hom_\kk(\A[1]^{\otimes n_1} \otimes \dots \otimes \A[1]^{\otimes n_k}, A^{\otimes k}) 
\end{align} 
which was called the complex of $k$th higher Hochschild cochains in \cite{kontsevich2021precalabiyau}. We visualize an element $\varphi$ of this complex, when $k=3$ for example, as a vertex
\[\tikzfig{
    \node [vertex] (a) at (0,0) {$\varphi$};
    \draw [->-] (a) to (1,0.6);
    \draw [-w-] (a) to (0,-1);
    \draw [->-] (a) to (-1,0.6);
}\]
where the first outgoing factor of $A$ is marked by the white arrowhead.

\subsubsection{Oriented ribbon quivers}\label{sec:orientedRibbon}
A ribbon quiver is a ribbon graph whose edges are directed; we always require that there are no directed cycles. We add some markings on this quiver: some of the valence one sources of this quiver, which we label by an $\times$, are places where we can input a Hochschild chain. In each other vertex with $k$ outgoing legs we can insert a $k$th higher Hochschild cochain, and at each sink vertex, which we denote by $\circ$, we can read off a Hochschild chain. Note that each sink may have more than one incoming arrow; in that case, we mark one of its incident edges with a white arrowhead, meaning that we read the output Hochschild chain starting from it.

We make the convention that on vertices labeled with a black dot we will use the $A_\infty$-structure map $\mu$. All the other vertices are either inputs or outputs; in order to fix a sign ambiguity we will consider all the inputs/outputs of the same type totally ordered
\begin{example}\label{ex:ribbonGraph}
    Consider the ribbon quiver below:
    \[\tikzfig{
	\node (v1) at (-1.5,0) {$\times$};
        \node at (-1.5,-0.3) {I};
	\node [bullet] (v4) at (0.75,0) {};
	\node (v2) at (0,0) {$\times$};
        \node at (0,-0.3) {II};
	\node [bullet] (v5) at (0,-0.75) {};
	\node [circ] (v6) at (0,-2) {};
	\node [bullet] (v7) at (-0.75,0) {};
        \node [vertex] (v8) at (2,0) {I};
	\draw [->-,shorten <=-3.5pt] (v1) to (v7);
	\draw [->-,shorten <=-3.5pt] (v2) to (v4);
	\draw [->-] (0,0.75) arc (90:0:0.75);
	\draw [->-] (0.75,0) arc (0:-90:0.75);
	\draw [-w-] (0,0.75) arc (90:180:0.75);
	\draw [->-] (-0.75,0) arc (180:270:0.75);
        \draw [->-] (v8) to (v4);
        \node [vertex,fill=white,inner sep=4pt] (v3) at (0,0.75) {};
	\draw [->-=0.4] (v5) to (v6);
        \node [vertex,fill=white] (v7) at (-1,-2) {II};
        \draw [->-] (v7) to (v6);
        \draw [-w-=1] (v5) to (v6);
    }\]
    to which we give a ribbon structure by the embedding onto the page. The ribbon quiver above lives on a surface of genus zero, with two incoming $\times$ boundary components (on which the $\times$ vertices live) and one outgoing $\circ$ boundary component (on which the $\circ$ vertex lives).

    Into the circle vertex on the top, we will input an element of $C^*_{(2)}(A)$, that is, a higher Hochschild cochain with two outputs. The white arrowhead marks which arrow receives the first factor of $A$. Into the circle vertices on the right and bottom left, we will input an element of $C^*(A,A)$, that is, a (usual) Hochschild cochain, and into the $\times$ vertices, we will input Hochschild chains. Since there are two of them we consider an ordering of the $\times$ vertices to be part of the data of the ribbon quiver; we indicate this on the diagram with Roman numerals. Out of the little circle on the bottom we read the outgoing Hochschild chain; the arriving white arrowhead indicates which factor of $A$ is to be read first.
\end{example}

In order to interpret a ribbon quiver like the one in the above example as an operation between Hochschild co/chains, with a coherent choice of signs, one must also pick an \textit{orientation}. There are many different ways this notion of orientation can be phrased; here we choose to follow a slight variation on the conventions of \cite{kontsevich2021precalabiyau}, which brings it closer to the definition of orientation on the usual ribbon graph complex, and will be more appropriate to our ends.

For each ribbon quiver $\vec \Gamma$, we consider the set
\[ \mathrm{Or}_\Gamma = \mathrm{Span}_{\Z/2\Z}(\{\text{total orderings of all edges of }\Gamma\})/X \]
where the $\Z/2\Z$-submodule $X$ is spanned by $o_1 - (-1)^{s(o_1,o_2)}o_2$ for any two total orderings $o_1, o_2$ of edges, with $s(o_1,o_2)$ being the sign of the permutation. Each oriented ribbon quiver gives a map between appropriate tensor products of complexes of (higher) Hochschild co/chains.
\begin{example}
    Returning to \cref{ex:ribbonGraph}, upon labeling all the edges we can specify an orientation, representing it by a total ordering of the edges. For example, the following data gives an oriented ribbon quiver
    \[(\vec \Gamma, o) = \tikzfig{
	\node (v1) at (-1.5,0) {$\times$};
        \node at (-1.5,-0.3) {I};
	\node [bullet] (v4) at (0.75,0) {};
	\node (v2) at (0,0) {$\times$};
        \node at (0,-0.3) {II};
	\node [bullet] (v5) at (0,-0.75) {};
	\node [circ] (v6) at (0,-2) {};
	\node [bullet] (v7) at (-0.75,0) {};
        \node [vertex] (v8) at (2,0) {I};
	\draw [->-,shorten <=-3.5pt] (v1) to (v7);
	\draw [->-,shorten <=-3.5pt] (v2) to (v4);
	\draw [->-] (0,0.75) arc (90:0:0.75);
	\draw [->-] (0.75,0) arc (0:-90:0.75);
	\draw [->-] (0,0.75) arc (90:180:0.75);
	\draw [->-] (-0.75,0) arc (180:270:0.75);
        \draw [->-] (v8) to (v4);
        \node [vertex,fill=white,inner sep=4pt] (v3) at (0,0.75) {};
	\draw [->-=0.4] (v5) to (v6);
        \node [vertex,fill=white] (v7) at (-1,-2) {II};
        \draw [->-] (v7) to (v6);
        \draw [-w-=1] (v5) to (v6);
        \node at (-0.8,0.8) {$1$};
        \node at (0.8,0.8) {$2$};
        \node at (-0.8,-0.8) {$6$};
        \node at (0.8,-0.8) {$7$};
        \node at (-1,0.25) {$3$};
        \node at (0.3,0.25) {$4$};
        \node at (1.3,0.25) {$5$};
        \node at (0.3,-1.2) {$8 $};
        \node at (-0.5,-1.7) {$9$};
    } \quad, \quad (9\ 8\ 7\ 6\ 5\ 4\ 3\ 2\ 1)\]
    for any $A_\infty$-algebra, the oriented ribbon quiver $(\vec\Gamma,o)$ gives a map
    \[ C_*(A,A) \otimes C_*(A,A) \otimes C^*(A,A) \otimes C^*(A,A) \otimes C^*_{(2)}(A) \to C_*(A,A). \]
    To get the appropriate sign, one follows the prescriptions of \cite[Subsection 2.2]{kontsevich2021precalabiyau}, using the given orientation $o$ to determine the sign of each term. We also refer to \cite[Section 6.1.4]{kontsevich2021precalabiyau} for a detailed example of how to get a map from an oriented ribbon quiver. 
\end{example}

\subsubsection{Differential on oriented ribbon quivers}
Not every map given by an oriented ribbon quiver is a map of complexes, but the failure of such a map to intertwine the differentials on the source and target can be encoded itself by a differential acting on the complex spanned by oriented ribbon quivers. This is described precisely in \cite[Section 2]{kontsevich2021precalabiyau}. For sake of example, let us just illustrate a familiar example in our language, using the operations appearing in the \textit{Tamarkin-Tsygan calculus} of Hochschild (co)chains \cite{TaTs05}.
\begin{example}\label{ex:capProducts}
    The following oriented ribbon quiver gives a `cap product' between a Hochschild cochain and a Hochschild chain:
    \[ (\vec\Gamma_1,o_1) = \tikzfig{
        \node (in) at (0,2) {$\times$};
        \node [bullet] (mid) at (0,1) {};
        \node [vertex,inner sep=4pt] (phi) at (-1,1) {};
        \node [circ] (out) at (0,0) {};
        \draw [->-,shorten <=-6 pt] (in) to node [auto] {$2$} (mid);
        \draw [->-] (phi) to node [auto,swap] {$1$} (mid);
        \draw [->-] (mid) to node [auto] {$3$} (out);
     }, (3\ 2\ 1)
    \]    
    We will denote $\varphi \frown x = (\vec\Gamma_1,o_1)(\varphi,x)$.

    There is another version of the cap product, given by the diagram
    \[ (\vec\Gamma_2,o_2) = \tikzfig{
        \node (in) at (0,2) {$\times$};
        \node [bullet] (mid) at (0,1) {};
        \node [vertex,inner sep=4pt] (phi) at (1,1) {};
        \node [circ] (out) at (0,0) {};
        \draw [->-,shorten <=-6 pt] (in) to node [auto,swap] {$1$} (mid);
        \draw [->-] (phi) to node [auto] {$2$} (mid);
        \draw [->-] (mid) to node [auto,swap] {$3$} (out);
     } , (3\ 2\ 1)
    \]
    whose corresponding operation we will denote $x \frown \varphi = (\vec\Gamma_1,o_1)(\varphi,x)$. Both of these oriented ribbon quivers are $d$-closed, and the corresponding operations descend to cohomology, giving maps $HH^*(A,A) \otimes HH_*(A,A) \to HH_*(A,A)$, interpreted as `contraction' of a vector field with a form.

    These operations, seen as maps $C^*(A,A) \otimes C_*(A,A) \to C_*(A,A)$, are homotopic. We can write down a homotopy by giving a $d$-primitive of the difference, for example:
    \[ (\vec\Gamma_3,o_3) = \tikzfig{
        \node (in) at (0,2) {$\times$};
        \node [vertex,inner sep=4pt] (mid) at (0,1) {};
        \node [circ] (out) at (0,0) {};
        \draw [->-,shorten <=-6 pt] (in) to node [auto] {$1$} (mid);
        \draw [->-] (mid) to node [auto] {$2$} (out);
     }\ ,\ (2\ 1)
    \]
    whose differential is $d(\vec\Gamma_3,o_3) = (\vec\Gamma_1,o_1) -(\vec\Gamma_2,o_2)$; this means that for any $\varphi,x$ we have
    \[ d((\vec\Gamma_3,o_3)(\varphi,x)) + (\vec\Gamma_3,o_3)(\delta\varphi,x)+(-1)^{|\varphi|}(\vec\Gamma_3,o_3)(\varphi,d x) = (\vec\Gamma_1,o_1)(\varphi,x) - (\vec\Gamma_2,o_2)(\varphi,x)\]

    We can write down another homotopy:
    \[ (\vec\Gamma_4,o_4) = \tikzfig{
        \node (in) at (0,2) {$\times$};
        \node [vertex,inner sep=4pt] (bot) at (0,-0.5) {};
        \node [circ] (out) at (0,0.5) {};
        \draw [-w-,shorten <=-6 pt] (in) to node [auto] {$1$} (out);
        \draw [->-] (bot) to node [auto] {$2$} (out);
     }\ ,\ (2\ 1)
    \]
    whose differential is $d(\vec\Gamma_4,o_4) = (\vec\Gamma_2,o_2)-(\vec\Gamma_1,o_1)$. Thus the combination $(\vec\Gamma_3,o_3)+(\vec\Gamma_4,o_4)$ is $d$-closed: we denote the corresponding map of complexes $L_\varphi = (\vec\Gamma_3,o_3)(\varphi,x)+(\vec\Gamma_4,o_4)(\varphi,x)$. This map descends to homology and gives a map $HH^*(A,A) \otimes HH_*(A,A) \to HH_*(A,A)[-1]$, interpreted as the `Lie derivative', see e.g.\ \cite[Section 3.3]{TaTs05}. The Hochschild chain differential itself is $\partial=L_\mu$.
\end{example}

We will also consider a slightly different class of diagrams, each of which outputs is instead a Hochschild \emph{cochain}. To distinguish these types of outputs, we write these diagrams inside of a large circle with one outgoing $M$-arrow at the bottom, for some $A$-bimodule $M$. The diagrams where $M=A$ and whose vertices all have exactly one output represent exactly the operations appearing in \cite{kontsevich2000deformations}, and giving the $E_2$-algebra structure on the Hochschild cochains $C^*(A,A)$.
\begin{example}
    Let $M$ be some $A$-bimodule, and let us color orange the arrows carrying factors of $M$. The oriented ribbon quiver
    \[\tikzfig{
	\draw (0,0.7) circle (1.2);
	\node [vertex] (a) at (-0.6,1) {I};
	\node [vertex] (b) at (0.6,1) {II};
	\node [bullet] (c) at (0,0.3) {};
	\draw [->-] (a) to (c);
	\draw [->-,orange] (b) to (c);
	\draw [->-,orange] (c) to (0,-0.5);
        \node at (-0.5,0.4) {$1$};
        \node at (0.5,0.4) {$2$};
        \node at (0.3,-0.1) {$3$};
    }\ ,\ (3\ 2\ 1) \]
    is $d$-closed, and thus gives a map of complexes $\smile: C^*(A,A) \otimes C^*(A,M) \to C^*(A,M)$, which can be verified to be the familiar \textit{cup product} of Hochschild cochains when $M = A$.
\end{example}

\begin{example}\label{ex:galpha}
    Let $M$ be some $A$-bimodule, and let us color orange the arrows carrying factors of $M$. The oriented ribbon quiver below
    \[\tikzfig{
        \draw (0,0) circle (1.5);
        \node [vertex,inner sep=4pt] (v3) at (0,0.8) {};
        \node (v2) at (0,0) {$\times$};
        \node [bullet] (v4) at (0.8,0) {};
        \node [bullet] (v5) at (0,-0.8) {};
        \node (v6) at (0,-1.6) {};
        \draw [->-,orange,shorten <=-6pt] (v2) to (v4);
        \draw [->-,shorten <=6pt] (0,0.8) arc (90:0:0.8);
        \draw [->-,shorten <=6pt] (0,0.8) arc (90:270:0.8);
        \draw [->-,orange] (0.8,0) arc (0:-90:0.8);
        \draw [->-,orange] (v5) to (v6);
        \node at (-1.2,0) {$1$};
        \node at (0.8,0.8) {$2$};
        \node at (0.4,-0.2) {$3$};
        \node at (0.8,-0.8) {$4$};
        \node at (0.4,-1.2) {$5$};
    }\ , (5\ 4\ 3\ 2\ 1) \]
    gives a map of complexes $C^*_{(2)}(A) \otimes C_*(A,M) \to C^*(A,M)$. For any fixed $\alpha \in C^n_{(2)}(A)$ which is closed under the differential $\delta = [\mu,-]$, we will denote the corresponding map of complexes by \[g^M_\alpha: C_*(A,M) \to C^*(A,M)[n]\] and simply by $g_\alpha$ when $M$ is the diagonal bimodule $A$.
\end{example}

We can compose oriented ribbon quivers that have compatible outputs/inputs, by identifying the outgoing arrow of $\vec\Gamma_1$ and the incoming arrow of $\vec\Gamma_2$, as a new arrow $f$ of the combined ribbon quiver $\vec\Gamma$. As for the orientation, we permute the sequences of edges (keeping track of the sign) so that the two arrows to be composed are adjacent \footnote{For more details about the composition operation in general see \cite[Section 6]{kontsevich2021precalabiyau}.}.

\begin{remark}
    The attentive reader that consults \cite{kontsevich2021precalabiyau} will note that the definitions of degree and differential that we use in this paper are slightly different from the ones presented in \textit{op.cit.} The reason for this difference is that, in this article, we are only fixing the $A_\infty$ structure maps in the $\bullet$ vertices and regard the other vertices (including all the ones with $\ge 2$ outgoing arrows) as places where we can insert possibly other (higher) Hochschild cochains, which have to be ordered. The comparison between our conventions here and the ones in \textit{op.cit.} is straightforward from the definition of the pre-CY structure equations.
\end{remark}

\subsection{Pre-CY structures}\label{subsec:preCY}
The notion of a pre-Calabi-Yau structure on an $A_\infty$-algebra or category was given by Kontsevich and Vlassopoulos and later developed in more detail by those authors together with the second-named author of this paper in \cite{kontsevich2021precalabiyau}. Let us recall the relevant definitions from that paper.

Let $(A, \mu)$ be an $A_\infty$-category; the conditions on the $A_\infty$ structure maps $\mu = \mu^1 + \mu^2 + \dots$ say exactly that the element $\mu \in C^2(A)$ satisfies the equation $\mu \circ \mu = 0$, where $\circ$ is the Gerstenhaber product. The commutator bracket  $[-,-]$  associated to the Gerstenhaber product endows $C^*(A,A)[1]$ with the structure of a dg Lie algebra. The cochain complex $(C^*(A,A), \delta=[\mu,-])$ calculates the Hochschild cohomology of $(A,\mu)$.

One can consider vertices with more outputs, and evaluate them in a similar way; the Gerstenhaber bracket $[-,-]$ then extends to vertices with more outputs. In order to discuss these extended brackets, we need to include certain shifts and signs depending on a chosen integer $n$. For each $k \ge 1$, we denote by $C^*_{(k)}(A)$ the complex spanned by vertices with $k$ outputs, see \cref{align:C(n)}. This complex has an action by the cyclic group $\Z/k\Z$; we twist this action by a sign, declaring that, besides the Koszul sign, the generator of $\Z/k\Z$ acts with an extra sign $(n-1)(k-1)$. We denote by $C^*_{(k,n)}(A) = C^*_{(k)}(A)^{\Z/k\Z}[(n-2)(k-1)]$ its (appropriately shifted) subcomplex of invariants. We can assemble all of these complexes as $C^*_{[n]}(A) := \prod_{k\ge 1} C^*_{(k,n)}(A)$; naturally we have $C^*(A,A) \subset C^*_{[n]}(A)$.

\begin{proposition}[{\cite[Proposition 10]{kontsevich2021precalabiyau}}]
    For any $k,l \ge 1$, there is a `necklace product' $\circ: C^*_{(k,n)}(A)[1] \otimes C^*_{(l,n)}(A)[1] \to C^*_{(k+l-1,n)}(A)[1]$ which agrees with the Gerstenhaber product when $k=l=1$, and whose associated `necklace bracket' gives $C^*_{[n]}(A)[1]$ the structure of a dg Lie algebra.
\end{proposition}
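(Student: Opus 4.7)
The plan is to define the necklace product $\circ$ explicitly via the graphical calculus, check it reduces to Gerstenhaber in the classical case, and then verify the dg Lie identities by a combinatorial sign analysis.

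First, I would fix the construction of $\circ$. Given $\varphi \in C^*_{(k,n)}(A)$ and $\psi \in C^*_{(l,n)}(A)$, represented graphically as vertices with $k$ and $l$ outgoing $A$-arrows respectively, I would define $\varphi \circ \psi$ as a signed sum of oriented ribbon quivers built by choosing one outgoing edge of $\varphi$ and identifying it with a chosen ``gap'' between two consecutive outputs of $\psi$ (i.e., inserting the output into a string of inputs to $\psi$). The sum runs over all such choices of gap; the resulting vertex has $k+l-1$ outputs, and the inherited cyclic order on its outputs is read off from the ribbon structure. This gives an element of $C^*_{(k+l-1)}(A)$, and the sign convention is dictated by requiring that the operation, when built from oriented ribbon quivers of the type in Subsection \ref{sec:orientedRibbon}, yields a map of complexes of the claimed degree $(n-2)(k+l-2) - ((n-2)(k-1) + (n-2)(l-1)) = 0$ after the shifts in $C^*_{(k,n)}(A)[1]$.

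Second, I would check the base case $k=l=1$: here $\varphi,\psi$ are ordinary Hochschild cochains with a single output, there is only one gap, and the definition reduces (after tracking signs) to the standard Gerstenhaber brace/product, hence to the familiar formula for $\varphi \circ \psi$ on $C^*(A,A)$. I would then define the necklace bracket as
\[ [\varphi,\psi] = \varphi \circ \psi - (-1)^{(|\varphi|+1)(|\psi|+1)} \psi \circ \varphi \]
with degrees taken in the shifted complex. The next step is to verify that $\circ$ descends to the cyclic invariants $C^*_{(k,n)}(A)$: the twist by $(n-1)(k-1)$ in the $\Z/k\Z$-action on $C^*_{(k)}(A)$ is precisely what makes the insertion operation cyclically equivariant, because relabeling the chosen output of $\varphi$ (respectively the chosen gap of $\psi$) by the generator of the cyclic group shifts the sum by the prescribed sign.

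Third, to obtain the dg Lie structure, I would prove the (graded) Jacobi identity for $[-,-]$ and compatibility of $\circ$ with $\delta = [\mu,-]$. The Jacobi identity reduces, as in Gerstenhaber's classical argument, to showing that the pre-Lie failure of $\circ$ to be associative, namely
\[ (\varphi \circ \psi) \circ \chi - \varphi \circ (\psi \circ \chi), \]
is graded symmetric in $\psi$ and $\chi$ modulo sign. Graphically, both sides enumerate oriented ribbon quivers in which $\psi$ and $\chi$ are attached to $\varphi$; the ``non-nested'' terms (where neither $\psi$ nor $\chi$ is inserted into the other) appear symmetrically on both sides and cancel, leaving only the nested configurations, whose combinatorics manifestly match. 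Compatibility with $\delta$ is verified similarly, by expanding $d(\varphi \circ \psi)$ in the ribbon quiver complex and identifying the result with $(d\varphi)\circ\psi \pm \varphi \circ (d\psi)$; this amounts to the Leibniz rule for the differential on ribbon quivers introduced in Subsection \ref{sec:orientedRibbon}.

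The main obstacle I anticipate is the sign bookkeeping: ensuring that the $(n-2)(k-1)$ shift, the $\Z/k\Z$-twist by $(-1)^{(n-1)(k-1)}$, and the orientation signs of the gluings in oriented ribbon quivers all line up consistently so that (i) the product preserves cyclic invariants, (ii) the bracket has the advertised degree, and (iii) the Jacobi terms cancel cleanly. I would handle this by working throughout with oriented ribbon quivers and using the orientation formalism of \cite{kontsevich2021precalabiyau} rather than trying to write out component formulas, so that the differential and composition are manifestly sign-coherent and the identities reduce to combinatorial statements about configurations of insertions.
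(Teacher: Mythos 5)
The paper offers no proof of this statement: it is imported verbatim from \cite[Proposition 10]{kontsevich2021precalabiyau}, so there is nothing internal to compare against. Your overall strategy --- define $\circ$ by summing over gluings of an output of one higher cochain into an input angle of the other, check that $(n-2)(k+l-2)=(n-2)(k-1)+(n-2)(l-1)$ so the shifted degrees match, use the twisted $\Z/k\Z$-action to see that the sum over gluings lands in the cyclic invariants, and deduce the Lie structure from a graded pre-Lie identity --- is exactly the route of the cited reference, and the degree and equivariance bookkeeping you outline is correct.

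The one step carrying real content, however, is stated backwards. In the associator $(\varphi\circ\psi)\circ\chi-\varphi\circ(\psi\circ\chi)$, the second term consists \emph{only} of nested configurations ($\chi$ glued into $\psi$, and the result glued into $\varphi$); these nested configurations also occur among the terms of $(\varphi\circ\psi)\circ\chi$, and it is \emph{they} that cancel. What survives is precisely the set of non-nested configurations, in which $\psi$ and $\chi$ are glued into distinct input positions of $\varphi$, and it is these surviving terms that are graded symmetric under $\psi\leftrightarrow\chi$, which is the pre-Lie identity and hence Jacobi. You assert the opposite (``the non-nested terms \dots\ cancel, leaving only the nested configurations''); as written this proves nothing, since the nested configurations are manifestly \emph{not} symmetric in $\psi$ and $\chi$, and the non-nested ones cannot cancel against $\varphi\circ(\psi\circ\chi)$, which contains none. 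Two smaller points to repair: (i) your convention of inserting an output of $\varphi$ into an input of $\psi$ computes $\psi\circ\varphi$ in the usual Gerstenhaber orientation, harmless for the bracket but backwards for the claim that $\circ$ restricts to the Gerstenhaber product at $k=l=1$; (ii) for $k,l\ge 2$ the identification of the surviving non-nested terms of $(\varphi\circ\psi)\circ\chi$ with those of $(\varphi\circ\chi)\circ\psi$ involves comparing angle decompositions that differ by rotations of the output legs, so you should verify explicitly that this matching only closes up on the cyclic invariants with the $(n-1)(k-1)$ twist --- this is where the twist does genuine work, beyond the equivariance of the target that you invoke.
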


\begin{definition}
    A pre-CY structure of dimension $n$ on $(A,\mu)$ is an element $m = \mu + m_{(2)} + \dots \in C^2_{[n]}(A)$ such that $m_{(k)} \in C^2_{(k,n)}$  and $m \circ m=0$.
\end{definition}
That is, $m_{(k)} \in C^2_{(k,n)}(A)$ denotes the part of the pre-CY structure with $k$ outgoing legs. The part of the equation $m\circ m=0$ with one output is just the usual $A_\infty$-structure equation $\mu\circ \mu = 0$.  

\subsubsection{Truncated pre-CY structures}
For the purposes of this paper, it will be unnecessary to consider all the maps $m_{(k)}$; for that reason, we will use truncated versions of these structures. The complex $C^*_{[n]}(A)$ on which the necklace bracket acts has a decreasing filtration given by $F^\ell C^*_{[n]}(A) = \prod_{k\ge \ell} C^*_{(k,n)}(A)$. The necklace product and bracket descend to the quotients $C^*_{[n]}(A)/F^\ell C^*_{[n]}(A)$, so we can make the definition:
\begin{definition}
    A $\ell$-truncated pre-CY structure of dimension $n$ on $A$ is a solution $m$ of the equation $m \circ m=0$ on $C^*_{[n]}(A)/F^{\ell+1} C^*_{[n]}(A)$.
\end{definition}
For example, an $1$-truncated pre-CY structure of any dimension is just an $A_\infty$-structure $\mu$; a $2$-truncated pre-CY structure of dimension $n$ has, in addition, a $[\mu,-]$-closed element $m_{(2)} \in C^n_{(2)}(A)$.

\subsubsection{Product on Hochschild homology of pre-CY} \label{sssec:algebraicloopproduct}
In \cite{kontsevich2021precalabiyau}, it is described how, given a pre-CY structure $m$ of dimension $n$ on $A$, there is an action of a PROP of chains on moduli spaces of Riemann surfaces with framed punctures on the Hochschild homology of $A$. More specifically, for any $g \ge 0$, $m \ge 1,k \ge 1$, there is a map of complexes
\[ C_*(A,A)^{\otimes m} \otimes C^\mathrm{cell}_*(\mathscr{M}_{g,\vec m,\vec k}) \to C_*(A,A)^{\otimes k}[(2g+m-1)n] \]
where $C^\mathrm{cell}_*(\mathscr{M}_{g,\vec m,\vec k})$ is a certain cellular chain complex coming from a stratification of the corresponding moduli space.

Restricting attention to genus zero, and looking at operations with one output, that is, with $n=1$, we get the action of the operad of chains on the framed little $2$-disk operad; that is, $C_*(A,A)[n]$ is a framed $E_2$-algebra and thus $HH_*(A,A)[n]$ is a BV algebra. We will choose a particular representative for the chain-level product
\begin{align}\label{align:productonchains}
    \pi: C_*(A,A) \otimes C_*(A,A) \to C_*(A,A)[n] 
\end{align} 
given by the following oriented ribbon quiver:
\[\tikzfig{
        \draw [->-] (0,1.5) arc (90:0:1.5);
        \draw [->-] (1.5,0) arc (0:-90:1.5);
        \draw [-w-] (0,1.5) arc (90:180:1.5);
        \draw [->-] (-1.5,0) arc (180:270:1.5);
        \node [inner sep=0pt] (v1) at (-3,0) {$\times$};
        \node [bullet] (v4) at (1.5,0) {};
        \node [inner sep=0pt] (v2) at (0,0) {$\times$};
        \node [bullet] (v5) at (0,-1.5) {};
        \node [circ] (v6) at (0,-3) {};
        \node [bullet] (v7) at (-1.5,0) {};
        \draw [->-,shorten <=-3.5pt] (v1) to (v7);
        \draw [->-,shorten <=-3.5pt] (v2) to (v4);
        \draw [->-] (v5) to (v6);
        \node at (0,-0.4) {II};
        \node at (-3,-0.4) {I};
        \node at (-1.3,1.3) {$1$};
        \node at (1.3,1.3) {$2$};;
        \node at (-1.3,-1.3) {$6$};
        \node at (1.3,-1.3) {$7$};
        \node at (-2.25,-0.4) {$3$};
        \node at (0.75,-0.4) {$4$};
        \node at (0.5,-2.25) {$8$};
		\node [vertex,fill=white] (v3) at (0,1.5) {$m_{(2)}$};
    }\ ,\ (7\ 6\ 5\ 4\ 3\ 2\ 1).\]

As the diagram suggests, into the vertex on the top we input the $m_{(2)}$ component of the pre-CY structure on $A$. We call the induced product on Hochschild homology the \textit{algebraic loop product}; this is an associative operation, as a consequence of the statements of \emph{op.cit.} We note that we only need a $2$-truncated pre-CY structure to define this product.

\subsubsection{Relations to other notions of Calabi-Yau}
Let us now recall some of the relations between the notion of pre-CY structure and other notions of Calabi-Yau categories. For this section we assume that $\kk$ is a field of characteristic zero.

\begin{theorem}[{\cite[Proposition 14]{kontsevich2021precalabiyau}}]\label{thm:comparisonCyclic}
If the category $A$ has finite-dimensional hom spaces, then the data of a pre-CY structure of dimension $n$ on $A$ is equivalent to the data to a cyclic $A_\infty$-structure with pairing of dimension $1-n$ on $A \oplus A^\vee[1-n]$, such that $A$ is an $A_\infty$-subcategory.
\end{theorem}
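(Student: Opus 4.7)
The plan is to construct explicit maps between the two data sets in both directions and verify they are mutually inverse. Starting with a pre-CY structure $m = \mu + m_{(2)} + \dots$ on $A$, I would build an $A_\infty$-structure on $B = A \oplus A^\vee[1-n]$ by dualizing components. Each $m_{(k)}$ is a sum of maps $A^{\otimes n_1} \otimes \cdots \otimes A^{\otimes n_k} \to A^{\otimes k}$ that is cyclically symmetric in the $k$ output factors. Using the natural evaluation pairing between $A$ and $A^\vee$, such a map dualizes to a multilinear form on $B^{\otimes N}$ (with $N = k + \sum n_i$) having $\sum n_i$ inputs in $A$ and $k$ inputs in $A^\vee$. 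Because hom spaces are finite-dimensional, this cyclically invariant form corresponds --- via the nondegenerate pairing on $B$ --- to an $A_\infty$-operation $\mu^{N-1}_B \colon B^{\otimes (N-1)} \to B$, and summing over all ways of placing the $A^\vee$-slot as the distinguished output gives the genuine $A_\infty$-operations on $B$.

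The next step is to translate the structure equation. The necklace product $m \circ m$ counts all diagrams in which the output of one $m_{(k)}$-vertex is plugged into an input of another. Under dualization, plugging an $A$-output of one operation into an $A$-input of another is identified (via the pairing) with the composition of $A_\infty$-operations on $B$ in which the output $A^\vee$-slot of one and the input $A^\vee$-slot of another are paired off. The equation $m \circ m = 0$ therefore becomes exactly the $A_\infty$-associativity for $\mu_B$. The twisted $\mathbb{Z}/k$-invariance defining $C^*_{(k,n)}(A)$, with the sign twist $(n-1)(k-1)$ and shift $(n-2)(k-1)$, is designed to match the cyclic symmetry $\langle \mu^N_B(b_1,\dots,b_N),b_{N+1}\rangle = \pm \langle \mu^N_B(b_2,\dots,b_{N+1}),b_1\rangle$ required for a cyclic $A_\infty$-structure with pairing of degree $1-n$. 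The subcategory condition is automatic: the $k=1$ component is $m_{(1)} = \mu$, so the induced $\mu^N_B$ restricted to inputs in $A$ reproduces the original $A_\infty$-structure on $A$.

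For the reverse direction, given a cyclic $A_\infty$-structure on $B$ for which $A$ is a subcategory, I would decompose each $\mu^N_B$ according to the number of $A^\vee$-arguments among its inputs. The cyclic symmetry allows freely permuting $A^\vee$-slots into any desired cyclic position, so one can canonically package the pieces with exactly $k-1$ input $A^\vee$'s and one output $A^\vee$ into a single cyclically invariant map, and then dualize all $k$ of the $A^\vee$'s back to outputs in $A^{\otimes k}$ to recover $m_{(k)}$. The subcategory hypothesis guarantees that this decomposition recovers the original $A_\infty$ structure $\mu$ of $A$ in the $k=1$ part. Checking that these two constructions are mutually inverse is then a matter of tracking that dualizing then re-dualizing, with the pairing nondegenerate on $B$, yields the identity.

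The main obstacle, and what the bulk of the actual proof would consist of, is the sign bookkeeping. Each translation above involves Koszul signs from permuting factors past each other, signs coming from the shift $[1-n]$ in $A^\vee[1-n]$, the twist $(n-1)(k-1)$ in the definition of $C^*_{(k,n)}$, and the standard signs in the $A_\infty$-axioms and in cyclic invariance. These must be verified to match term by term, most efficiently by comparing the orientations on the ribbon quiver diagrams of \cref{sec:orientedRibbon} with the analogous oriented planar trees used in the cyclic $A_\infty$ graphical calculus; the dimensional shift $n-1$ in the pairing is exactly what aligns the two orientation conventions.
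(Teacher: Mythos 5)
This statement is quoted in the paper directly from \cite[Proposition 14]{kontsevich2021precalabiyau} with no proof supplied, so there is no in-paper argument to compare against; your sketch follows the standard dualization argument of that reference (and of Tradler--Zeinalian's $\calV_\infty$-algebras), namely identifying a $\Z/k$-invariant element of $C^*_{(k,n)}(A)$ with the component of a cyclically invariant form on $(A\oplus A^\vee[1-n])^{\otimes\bullet}$ having exactly $k$ slots in $A^\vee$, matching the necklace equation with the cyclic $A_\infty$-equation, and using finite-dimensionality so that $A^{\otimes k}\cong((A^\vee)^{\otimes k})^\vee$. The outline is correct at the level of detail given; as you note, the substantive work is the sign and shift bookkeeping, which is exactly what occupies the proof in the cited source.
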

By $(-)^\vee$ we denote the linear dual category (same objects as $A$ and morphisms given by $A^\vee(x,y) = (A(y,x))^\vee$). The case of a finite-dimensional algebra appeared first in the work of Tradler-Zeinalian \cite{TraZei07a}, under the name of `$\calV_\infty$-algebra'.

However, this characterization will be of limited utility for us, since we will be interested in categories that are not of finite-dimension over $\kk$. Instead, our categories will be smooth; recall that $A$ is smooth if its diagonal bimodule $A$ is perfect \cite{kontsevich2008notes}. Then, its bimodule dual $A^!$ is also perfect, and there is a quasi-isomorphism $C_*(A,A) \simeq \Hom_{A^e}(A^!,A)$ between the Hochschild chains and $A_\infty$-bimodule morphisms between $A^!$ and $A$.

We recall the definition of a smooth Calabi-Yau structure on $A$. Let $n$ be an integer, the `dimension'; a $n$-CY structure on $A$ is a negative cyclic chain of degree $n$
\[ \tilde\omega = \omega + \omega_1 u + \omega_2 u^2 + \dots \in C_*(A,A)[[u]] \]
closed under the differential $\partial+uB$, whose image $\omega \in C_n(A,A)$ corresponds to a quasi-isomorphism between $A^!$ and $A[n]$ under the natural isomorphism $C_*(A,A) \simeq \R\!\Hom_{A^e}(A^!, A)$. On the other hand, there is also a quasi-isomorphism \cite[Proposition 19]{kontsevich2021precalabiyau}
\begin{equation}\label{quasi-iso-pCY} C^*_{(2)}(A) \simeq \R\!\Hom_{A^e}(A, A^!),
\end{equation}
leading to the following definition. 
\begin{definition}[{\cite[Sec.4.3]{kontsevich2021precalabiyau}}]\label{non-degenerate}
We say $\alpha \in C^n_{(2)}(A)$ is \textit{nondegenerate} if it corresponds to a quasi-isomorphism $A \xrightarrow{\simeq} A^![n]$ under the identification \cref{quasi-iso-pCY}. We say a \textit{pre-CY structure is nondegenerate} if its element $m_{(2)}$ is non-degenerate.   
\end{definition}

\begin{remark}\label{remark:nondegeneratequasiisomorphism}
    Note that for any nondegenerate cycle $\alpha \in C^n_{(2)}(A)$ the map $g_\alpha^M$ in \cref{ex:galpha} is a quasi-isomorphism. 
\end{remark}

If $\tilde\omega$ is an $n$-CY structure, there exists a closed element $\alpha \in C^n_{(2)}(A)$ whose image under the identification \cref{quasi-iso-pCY} is a quasi-inverse to the image of $\omega$. The main result of \cite{KTV23} (also appearing in \cite{pridham2017shifted} in a slightly different formulation) is that, when $\omega$ comes from a closed negative cyclic chain $\tilde\omega$, one can choose such an $\alpha$ that extends to a full pre-CY structure of dimension $n$. Conversely, given a non-degenerate pre-CY structure, there is a compatible smooth CY structure. Paraphrasing \cite{KTV23}, we have:
\begin{theorem}
    If $(A,\mu)$ is a smooth $A_\infty$-category and $\tilde\omega$ a smooth $n$-dimensional CY structure on it, then there is a pre-CY structure $m = \mu + \alpha + m_{(3)} + \dots$ such that $g_\alpha(\omega)$ is homologous to the unit cochain $1 \in C^0(A,A)$. Here $g_\alpha$ is given in \cref{ex:galpha}. Conversely, if we have such an $m$ that is non-degenerate, then there is a smooth CY structure $\tilde\omega$ whose component $\omega$ satisfies the same condition.
\end{theorem}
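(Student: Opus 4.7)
The plan is to build $\alpha = m_{(2)}$ from the leading component $\omega$ of the cyclic chain via linear duality, and then extend $\mu + \alpha$ to a full pre-CY structure by an inductive obstruction argument along the filtration $F^\bullet C^*_{[n]}(A)$. Smoothness of $A$ gives the identification $C_*(A,A) \simeq \R\!\Hom_{A^e}(A^!, A)[n]$, under which the cycle $\omega \in C_n(A,A)$ represents a quasi-isomorphism $\phi_\omega$ between $A^!$ and $A[n]$. Choose a homotopy inverse and transfer it through the quasi-isomorphism \cref{quasi-iso-pCY}; the result is a closed cochain $\alpha \in C^n_{(2)}(A)$. By inspection of the ribbon-quiver definition of $g_\alpha$ in \cref{ex:galpha}, the composition of the two quasi-isomorphisms $\phi_\omega \circ \phi_\alpha$ is chain-level represented by $g_\alpha(\omega)$, so the statement $[g_\alpha(\omega)] = [1] \in HH^0(A,A)$ amounts precisely to $\phi_\omega \circ \phi_\alpha \simeq \id$.

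\textbf{Extension to higher $m_{(k)}$ by obstruction theory.} Suppose inductively that we have $m^{\le \ell} = \mu + \alpha + m_{(3)} + \dots + m_{(\ell)}$ with $m^{\le \ell} \circ m^{\le \ell} \equiv 0$ modulo $F^{\ell+1} C^*_{[n]}(A)$. The leading piece $o_{\ell+1} \in C^3_{(\ell+1,n)}(A)$ of $m^{\le \ell} \circ m^{\le \ell}$ is automatically $[\mu,-]$-closed by the Jacobi identity for the necklace bracket, and its cohomology class is the obstruction to extending to $m^{\le \ell+1}$. The main input is that this class can be identified with a class manufactured from $\omega_0 = \omega, \omega_1, \dots, \omega_{\ell-1}$ together with the $B$-differential and the cyclic $\Z/(\ell+1)$-symmetrization present in $C^*_{(\ell+1,n)}$. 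Once this identification is in place, the cocycle equation $(\partial + uB)\tilde\omega = 0$ says exactly that $\omega_\ell$ provides a primitive for $o_{\ell+1}$, and choosing $m_{(\ell+1)}$ to be that primitive (mapped across the identification) continues the induction.

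\textbf{Converse direction and main obstacle.} For the converse, nondegeneracy of $\alpha$ (\cref{remark:nondegeneratequasiisomorphism}) lets one invert $g_\alpha$ up to quasi-isomorphism and pull the unit $1 \in C^0(A,A)$ back to a closed $\omega \in C_n(A,A)$ satisfying $g_\alpha(\omega) \sim 1$. Extending $\omega$ to $\tilde\omega = \omega + u\omega_1 + u^2\omega_2 + \dots$ closed under $\partial + uB$ is again by inductive obstruction theory, this time along the $u$-adic filtration of $C_*(A,A)[[u]]$, and the higher components $m_{(k)}$ of the pre-CY structure furnish exactly the primitives needed to kill the successive $B$-obstructions. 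The main obstacle in both directions is identical: one must match the Lie cohomology of $(C^*_{[n]}(A)[1], \{-,-\}_{\mathrm{neck}})$ that controls deformations of $m$ with the $u$-adic tower of the negative cyclic complex that controls deformations of $\tilde\omega$. Conceptually this is an equivalence of formal moduli problems, which is the content of \cite{pridham2017shifted}; combinatorially it can be performed by explicit ribbon-quiver manipulations matching the necklace bracket to Connes' $S^1$-action on Hochschild chains, as in \cite{KTV23}. Granted this matching, both directions follow by standard Maurer-Cartan obstruction theory.
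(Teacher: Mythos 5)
The paper does not actually prove this statement: it is presented as a paraphrase of the main result of \cite{KTV23} (see also \cite{pridham2017shifted}), so there is no internal proof to compare yours against. Judged on its own terms, your outline correctly identifies the shape of the argument in those references — produce $\alpha$ as a transferred quasi-inverse of $\phi_\omega$, then extend along the filtration $F^\bullet C^*_{[n]}(A)$ by matching obstruction towers — but as written it is not a proof, for two concrete reasons.

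First, you never arrange the cyclic symmetry that the definition of a pre-CY structure requires: $m_{(2)}$ must lie in $C^*_{(2,n)}(A) = C^*_{(2)}(A)^{\Z/2\Z}[n-2]$, and each $m_{(k)}$ in the $\Z/k\Z$-invariants $C^*_{(k,n)}(A)$, with the sign-twisted action. A homotopy inverse of $\phi_\omega$ transferred through \cref{quasi-iso-pCY} is only a closed element of $C^n_{(2)}(A)$ with no reason to be invariant, and naive symmetrization need not preserve the property of being a quasi-inverse at the cochain level nor interact well with the Maurer--Cartan equation at higher stages. Handling this is exactly where the full negative cyclic lift $\tilde\omega$ (rather than just the Hochschild class $\omega$) enters, and your sketch does not engage with it. Second, the step you label as "the main input" — identifying the obstruction class $o_{\ell+1}$ in the necklace Lie algebra with a class built from $\omega_0,\dots,\omega_{\ell-1}$ and the $B$-operator, so that $(\partial+uB)\tilde\omega=0$ supplies primitives — is precisely the theorem being proven; deferring it to \cite{KTV23} and \cite{pridham2017shifted} makes the argument circular as a standalone proof. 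What you have is a correct reading of the strategy of the cited works, not an independent verification; if the goal were to supply a proof for this paper, the honest move is the one the authors make, namely to cite \cite{KTV23}.
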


\section{Chain-level Chern character and coproduct}\label{sec:chernCharAndCoprod}
As argued by Shklyarov in \cite{shklyarov2013hirzebruch}, a perfect $A$-bimodule $M$ should give a certain distinguished class $[E_M] \in HH_*(A,A)\otimes HH_*(A,A)$ of degree zero, its \textit{Chern character}. We now propose, using the graphical calculus explained in the previous section, what we believe to be an explicit representative $E_M$ for this class (\cref{def:ChernCharacter}). Moreover, we  show that given a smooth $A$ satisfying appropriate conditions, a pre-CY structure $m$ together with a  trivialization of $E_A$ onto a subcomplex $W \subseteq C_*(A,A)$ give rise to a chain-level coproduct on $C_*(A,A)/W$ \cref{def:chainAlgebraicLoopCoproduct}; this will be the \textit{algebraic Goresky-Hingston loop coproduct}. 

\subsection{Chern character}\label{sec:chernCharacter}
Let $A$ be an $A_\infty$-category. Given any $A$-bimodule $M$, there is an $A$-bimodule $M^!$ called its \textit{bimodule dual}, see e.g.\ \cite[Section 8]{kontsevich2008notes}; if $M$ is a perfect bimodule then $M^!$ is also perfect, there are quasi-isomorphisms $M^{!!} \simeq M$ and 
\[ N \otimes^\LL_{A^e} M^! \simeq \R\!\Hom_{A^e}(M,N), \quad N \otimes^\LL_{A^e}  M \simeq \R\!\Hom_{A^e}(M^!,N), \]
for any $A$-bimodule $N$, see \cite[Remark 8.2.4]{kontsevich2008notes}.

\subsubsection{Coevaluation vertex}
Let $M$ be a perfect bimodule.
\begin{definition}\label{def:coevaluation}
    A \textit{coevaluation vertex} $\co_M \in M \otimes^\LL_{A^e} M^!$ is any element representing the class corresponding to the identity morphism $\mathrm{id}_M$ under the quasi-isomorphism $M \otimes^\LL_{A^e} M^! \simeq \R\!\Hom_{A^e}(M,M)$.
\end{definition}
We call this element $\co_M$ a `vertex' since we visualize it as a drawing
\begin{align}\label{align:coevfigure}
    \tikzfig{
    \node [vertex] (center) at (0,0) {$\co_M$};
    \node (left) at (-1.3,0) {$M$};
    \node (right) at (1.3,0) {$M^!$};
    \draw [->-,orange] (center) to (-1,0);
    \draw [->-,cyan] (center) to (1,0);
    \draw [->-] (0.35,-0.35) to (0.7,-0.7);
    \draw [->-] (0,-0.5) to (0,-1);
    \draw [->-] (-0.35,-0.35) to (-0.7,-0.7);
    \draw [->-] (-0.15,-0.43) to (-0.4,-0.86);
    \draw [->-] (0.15,-0.43) to (0.4,-0.86);
    \draw [->-] (-0.35,0.35) to (-0.7,0.7);
    \draw [->-] (-0.15,0.43) to (-0.4,0.86);
    \draw [->-] (0.15,0.43) to (0.4,0.86);
    \draw [->-] (0.35,0.35) to (0.7,0.7);
    \draw [->-] (0,0.5) to (0,1);
}
\end{align}
where the black arrows schematically denote outgoing factors of the bar complex $B\overline{A}[1]$. Explicitly, $\co_M$ is an element of the complex $\bigoplus_{i, j \geq 0}M \otimes \A[1]^{\otimes i} \otimes M^! \otimes \A[1]^{\otimes j}$  modeling the two-sided derived tensor product $M \otimes^\LL_{A^e} M^!$.

It follows from the quasi-isomorphisms above that composition with a coevaluation vertex gives a map realizing the quasi-isomorphism
\[ \R\!\Hom_{A^e}(M^!,N) \xrightarrow{\simeq} N \otimes^\LL_{A^e} M. \]

\subsubsection{The canonical pairing vertex}
Consider the complex of vertices with one $M$ input into one side, one $M^!$ input into the other, and two $A$ outputs on either side of the inputs, with $A[1]$ inputs into the four corners, as in the following diagram:
\[\tikzfig{
    \node [rectangle,draw,thick,inner sep=6pt] (center) at (0,0) {};
    \node (left) at (-1.3,0) {$A$};
    \node (right) at (1.3,0) {$A$};
    \node (top) at (0,1.3) {$M$};
    \node (bottom) at (0,-1.3) {$M^!$};
    \draw [->-] (center) to (-1,0);
    \draw [->-] (center) to (1,0);
    \draw [->-,orange] (0,1) to (center);
    \draw [->-,cyan] (0,-1) to (center);
    \draw [->-] (0.86, 0.5) to (0.43,0.25);
    \draw [->-] (0.5, 0.86) to (0.25,0.43);
    \draw [->-] (0.7,0.7) to (0.35,0.35);
    \draw [->-] (-0.86, 0.5) to (-0.43,0.25);
    \draw [->-] (-0.5, 0.86) to (-0.25,0.43);
    \draw [->-] (-0.7,0.7) to (-0.35,0.35);
    \draw [->-] (0.86, -0.5) to (0.43,-0.25);
    \draw [->-] (0.5, -0.86) to (0.25,-0.43);
    \draw [->-] (0.7,-0.7) to (0.35,-0.35);
    \draw [->-] (-0.86, -0.5) to (-0.43,-0.25);
    \draw [->-] (-0.5,-0.86) to (-0.25,-0.43);
    \draw [->-] (-0.7,-0.7) to (-0.35,-0.35);
}\]
We use a square to distinguish which incoming arrows carry bimodules, and which incoming arrows carry factors of the bar complexes. The complex of vertices as above is the complex $\R\!\Hom_{A^e \otimes A^e}(M\otimes M^!,A^{op}\otimes A)$ calculated by using the bar resolution of the bimodules $M,M^!$, namely the complex $\prod_{i, j, k, l\geq 0} \Hom(\A[1]^{\otimes i} \otimes M \otimes \A[1]^{\otimes j} \otimes \A[1]^{\otimes k} \otimes M^! \otimes \A[1]^{\otimes l}, A \otimes A)$.

The differential on this complex can be deduced from the differential on this derived complex of homs. When part of a larger diagram, this differential acts by splitting a multiplication away from the square, staying within each angle. For example:
\[\tikzfig{
	\node [rectangle,draw,thick,inner sep=6pt] (center) at (0,0) {};
	\node (left) at (-1.3,0) {$3$};
	\node (right) at (1.3,0) {$4$};
	\node (top) at (0,1.3) {$1$};
	\node (bottom) at (0,-1.3) {$2$};
	\draw [->-] (center) to (-1,0);
	\draw [->-] (center) to (1,0);
	\draw [->-,orange] (0,1) to (center);
	\draw [->-,cyan] (0,-1) to (center);
	\draw [->-] (0.86, 0.5) to (center.north east);
	\draw [->-] (0.5, 0.86) to (center.north east);
	\node at (1,0.6) {$6$};
	\node at (0.6,1) {$5$};
} \quad \mapsto \quad \tikzfig{
\node [rectangle,draw,thick,inner sep=6pt] (center) at (0,0) {};
\node (left) at (-1.3,0) {$3$};
\node (right) at (1.3,0) {$4$};
\node (top) at (0,1.3) {$1$};
\node (bottom) at (0,-1.3) {$2$};
\node [bullet,orange] (mu) at (0,0.6) {}; 
\draw [->-] (center) to (-1,0);
\draw [->-] (center) to (1,0);
\draw [->-,orange] (0,1) to (mu);
\draw [->-,orange] (mu) to (center);
\draw [->-,cyan] (0,-1) to (center);
\draw [->-] (0.86, 0.5) to (center.north east);
\draw [->-] (0.5, 0.86) to (mu);
\node at (1,0.6) {$6$};
\node at (0.6,1) {$5$};
\node at (-0.3,0.4) {$7$};
} \quad + \quad \tikzfig{
\node [rectangle,draw,thick,inner sep=6pt] (center) at (0,0) {};
\node (left) at (-1.3,0) {$3$};
\node (right) at (1.3,0) {$4$};
\node (top) at (0,1.3) {$1$};
\node (bottom) at (0,-1.3) {$2$};
\node [bullet] (mu) at (0.6,0.6) {};
\draw [->-] (center) to (-1,0);
\draw [->-] (center) to (1,0);
\draw [->-,orange] (0,1) to (center);
\draw [->-,cyan] (0,-1) to (center);
\draw [->-] (1, 0.5) to (mu);
\draw [->-] (0.5, 1) to (mu);
\draw [->-] (mu) to (center.north east);
\node at (1.3,0.6) {$6$};
\node at (0.6,1.3) {$5$};
\node at (0.3,0.7) {$7$};
} \quad + \quad \tikzfig{
\node [rectangle,draw,thick,inner sep=6pt] (center) at (0,0) {};
\node (left) at (-1.3,0) {$3$};
\node (right) at (1.3,0) {$4$};
\node (top) at (0,1.3) {$1$};
\node (bottom) at (0,-1.3) {$2$};
\node [bullet] (mu) at (0.6,0) {}; 
\draw [->-] (center) to (-1,0);
\draw [->-=.8] (center) to (mu);
\draw [->-=.8] (mu) to (1,0);
\draw [->-,orange] (0,1) to (center);
\draw [->-,cyan] (0,-1) to (center);
\draw [->-] (0.86, 0.5) to (mu);
\draw [->-] (0.5, 0.86) to (center.north east);
\node at (1,0.6) {$6$};
\node at (0.6,1) {$5$};
\node at (0.5,-0.3) {$7$};
}\]
with orientation $(\dots\ 6\ 5\ 4\ 3\ 2\ 1\ \dots)$ on the left-hand side and $(7\ \dots 6\ 5\ 4\ 3\ 2\ 1\ \dots)$ on the right-hand side. Consider now the quasi-isomorphisms
\begin{equation*}
    \R\!\Hom_{A^e \otimes A^e}(M\otimes M^!,A^{op}\otimes A) \xrightarrow{\sim} \R\!\Hom_{A^e}(M, A\otimes^\LL_A M\otimes^\LL_A A) \xrightarrow{\sim} \R\!\Hom_{A^e}(M,M)
\end{equation*}
given by composition on the bottom with the coevaluation vertex, followed by the isomorphism $A\otimes^\LL_A M\otimes^\LL_A A \xrightarrow{\sim} M$: 
\begin{align}\label{align:pairingvertex}
    \tikzfig{
    \node [rectangle,draw,thick,inner sep=6pt] (center) at (0,0) {};
    \node (left) at (-1.3,0) {$A$};
    \node (right) at (1.3,0) {$A$};
    \node (top) at (0,1.3) {$M$};
    \node (bottom) at (0,-1.3) {$M^!$};
    \draw [->-] (center) to (-1,0);
    \draw [->-] (center) to (1,0);
    \draw [->-,orange] (0,1) to (center);
    \draw [->-,cyan] (0,-1) to (center);
} \quad \mapsto \quad \tikzfig{
    \node [rectangle,draw,thick,inner sep=6pt] (center) at (0,0) {};
    \node (left) at (-1.3,0) {$A$};
    \node (right) at (1.3,0) {$A$};
    \node (top) at (0,1.3) {$M$};
    \node [vertex] (coev) at (0,-1.3) {$\co_M$};
    \node (bottom) at (0,-2.8) {$M$};
    \draw [->-] (center) to (-1,0);
    \draw [->-] (center) to (1,0);
    \draw [->-,orange] (0,1) to (center);
    \draw [->-,cyan] (coev) to (center);
    \draw [->-,orange] (coev) to (bottom);
} \quad \mapsto \quad \tikzfig{
    \node [bullet] (center) at (0,0) {};
    \node (top) at (0,1.3) {$M$};
    \node [vertex] (coev) at (0,-1.3) {$\co_M$};
    \node [bullet] (mu1) at (0,-2.4) {};
    \node [bullet] (mu2) at (0,-3) {};
    \node (bottom) at (0,-3.8) {$M$};
    \draw [->-,orange] (0,1) to (center);
    \draw [->-,cyan] (coev) to (center);
    \draw [->-,orange] (coev) to (mu1);
    \draw [->-] (center) arc (90:270:1.2);
    \draw [->-] (center) arc (90:-90:1.5);
    \draw [->-,orange] (mu1) to (mu2);
    \draw [->-,orange] (mu2) to (bottom);
    \node [rectangle,draw,thick,inner sep=6pt,fill=white] at (0,0) {};
} 
\end{align}
\begin{definition}\label{def:canonicalPairing}
    A \emph{pairing vertex} $\eta_M$ of a perfect bimodule $M$ is a closed element of degree zero in the complex $\R\!\Hom_{A^e\otimes A^e}(M\otimes M^!,A^{op}\otimes A)$ which maps, in homology, to the identity morphism in $\R\!\Hom_{A^e}(M,M)$.
\end{definition}

Strictly speaking, only the cohomology classes of $\co_M$ and $\eta_M$ are well-defined from the data of $A$ and $M$, while the chain-level representatives depend on choices. We will just fix a choice for these vertices, and refer to them as \textit{the} coevaluation and pairing vertices.

\subsubsection{Definition of chain-level Chern character}
Let $A$ be any $A_\infty$-category, not necessarily smooth, and $M$ a perfect bimodule. In this subsection we give a chain-level expression for a certain element associated to $M$, using (a choice of) the vertices $\co_M,\eta_M$.
\begin{definition}\label{def:ChernCharacter}
    The \textit{chain-level Chern character of the perfect bimodule $M$ with respect to $\co_M$ and $\eta_M$} is the element of degree zero
    \[ E_M \in C_*(A,A) \otimes C_*(A,A)\]
    given by the evaluating the following oriented diagram on the `elbow' (genus 0 surface with two outputs):
    \[\tikzfig{
        \draw (-2,0) [partial ellipse =0:-180:1.5 and 0.3];
        \draw [dashed] (-2,0) [partial ellipse =0:180:1.5 and 0.3];
        \draw (2,0) [partial ellipse =0:-180:1.5 and 0.3];
        \draw [dashed] (2,0) [partial ellipse =0:180:1.5 and 0.3];
        \draw (0.5,0) arc (0:180:0.5);
        \draw (3.5,0) arc (0:180:3.5);
        \node [vertex] (coev) at (-0.5,2.5) {$\co_M$};
        \node (eta) at (-0.5,1) {};
        \draw [->-,orange] (coev) to (eta);
        \draw (0,2) [->-,partial ellipse=90:-90:0.3 and 1.5,cyan,dashed];
        \draw [->-,bend left=20,cyan] (coev) to (0,3.5);
        \draw [bend left=20,cyan] (0,0.5) to (eta);
        \draw [->-] (eta) arc (115:2.5:1.5); 
        \draw [->-] (eta) arc (100:184.5:1.2); 
        \node [rectangle,draw,thick,fill=white] at (-0.5,1) {$\eta_M$};
        \node at (-1,2) {$1$};
        \node at (0.8,2.5) {$2$};
        \node at (-1.5,0.7) {$3$};
        \node at (2,1) {$4$}; 
    } \quad , \quad (4\ 3\ 2\ 1) \]
\end{definition}
Since $\co_M$ and $\eta_M$ are closed, so is $E_{M} \in C_*(A,A) \otimes C_*(A,A)$. We can define the Chern character class of $M$ as the class $[E_{M}] \in H_0(C_*(A,A) \otimes C_*(A,A))$. The element $E_M$ is only well-defined at the chain level once we make choices for $\eta_M$ and $\co_M$, but its homology class is canonically well-defined since those two vertices have canonically well-defined classes.  
Recall that the $A_\infty$-category $A$ is \emph{smooth} if its diagonal bimodule $A$ is perfect; in that case we denote simply $\co = \co_A, \eta = \eta_A$ and $E = E_A$.

\begin{remark}
    For a general smooth $A$, there is no reason for the Chern character $E$ to have any sort of symmetry under the $\Z/2\Z$ action on $C_*(A,A) \otimes C_*(A,A)$, \textit{even at the homology level}. On the other hand, we will see later in \cref{thm:symmetry} that the existence of a Calabi-Yau structure on $A$ imposes (skew)symmetry at the homology level.
\end{remark}

\begin{remark}
    We may see from \cite[Proposition 1.2.4]{PolishchukVaintrob} that the definition of $E_M$ above coincides with the Chern character of $M$ of Shklyarov \cite{shklyarov2013hirzebruch}.
\end{remark}

\subsection{Coproduct}\label{sec:coproduct}
Let $A$ be a smooth $A_\infty$-category, and let us fix representatives for the vertices $\co$ and $\eta$. We define a map of graded complexes
\[ G: C^*(A,A) \to C_*(A,A) \otimes C_*(A,A)[-1] \]
where $G(\varphi)$ is given by evaluating the following combination of diagrams:
\begin{align*}
    &+ \tikzfig{
        \draw (-2,0) [partial ellipse =0:-180:1.5 and 0.3];
        \draw [dashed] (-2,0) [partial ellipse =0:180:1.5 and 0.3];
        \draw (2,0) [partial ellipse =0:-180:1.5 and 0.3];
        \draw [dashed] (2,0) [partial ellipse =0:180:1.5 and 0.3];
        \draw (0.5,0) arc (0:180:0.5);
        \draw (3.5,0) arc (0:180:3.5);
        \node [vertex] (coev) at (-0.5,2.5) {$\co$};
        \node [vertex] (phi) at (-1.1,2) {$\varphi$};
        \node (eta) at (-0.5,1) {};
        \draw [->-] (coev) to (eta);
        \draw [->-] (phi) to (eta);
        \draw (0,2) [->-,partial ellipse=90:-90:0.3 and 1.5,cyan,dashed];
        \draw [->-,bend left=20,cyan] (coev) to (0,3.5);
        \draw [bend left=20,cyan] (0,0.5) to (eta);
        \draw [->-] (eta) arc (115:2.5:1.5); 
        \draw [->-] (eta) arc (100:184.5:1.2); 
        \node [rectangle,draw,fill=white] at (-0.5,1) {$\eta$};
        \node at (-0.2,1.7) {$1$};
        \node at (0.6,2) {$2$};
        \node at (-1.5,1.5) {$3$};
        \node at (-1.6,0.7) {$4$};
        \node at (1.6,1) {$5$};
    } + \tikzfig{
        \draw (-2,0) [partial ellipse =0:-180:1.5 and 0.3];
        \draw [dashed] (-2,0) [partial ellipse =0:180:1.5 and 0.3];
        \draw (2,0) [partial ellipse =0:-180:1.5 and 0.3];
        \draw [dashed] (2,0) [partial ellipse =0:180:1.5 and 0.3];
        \draw (0.5,0) arc (0:180:0.5);
        \draw (3.5,0) arc (0:180:3.5);
        \node [vertex] (coev) at (-0.5,3) {$\co$};
        \node [vertex] (phi) at (-0.5,2) {$\varphi$};
        \node (eta) at (-0.5,1) {};
        \draw [->-] (coev) to (phi);
        \draw [->-] (phi) to (eta);
        \draw (0,2) [->-,partial ellipse=90:-90:0.3 and 1.5,cyan,dashed];
        \draw [->-,bend left=20,cyan] (coev) to (0,3.5);
        \draw [bend left=20,cyan] (0,0.5) to (eta);
        \draw [->-] (eta) arc (115:2.5:1.5); 
        \draw [->-] (eta) arc (100:184.5:1.2); 
        \node [rectangle,draw,fill=white] at (-0.5,1) {$\eta$};
        \node at (-1,2.5) {$1$};
        \node at (0.8,2.5) {$2$};
        \node at (-1,1.5) {$3$};
        \node at (-1.6,0.7) {$4$};
        \node at (1.6,1) {$5$};
    } \\
    &+ \tikzfig{
        \draw (-2,0) [partial ellipse =0:-180:1.5 and 0.3];
        \draw [dashed] (-2,0) [partial ellipse =0:180:1.5 and 0.3];
        \draw (2,0) [partial ellipse =0:-180:1.5 and 0.3];
        \draw [dashed] (2,0) [partial ellipse =0:180:1.5 and 0.3];
        \draw (0.5,0) arc (0:180:0.5);
        \draw (3.5,0) arc (0:180:3.5);
        \node [vertex] (coev) at (-0.5,2.5) {$\co$};
        \node [vertex] (phi) at (0.8,2.2) {$\varphi$};
        \node (eta) at (-0.5,1) {};
        \draw [->-] (coev) to (eta);
        \draw [->-] (phi) to (eta);
        \draw (0,2) [->-=0.3,partial ellipse=90:-90:0.3 and 1.5,cyan,dashed];
        \draw [->-,bend left=20,cyan] (coev) to (0,3.5);
        \draw [bend left=20,cyan] (0,0.5) to (eta);
        \draw [->-] (eta) arc (115:2.5:1.5); 
        \draw [->-] (eta) arc (100:184.5:1.2); 
        \node [rectangle,draw,fill=white] at (-0.5,1) {$\eta$};
        \node at (-0.2,2) {$1$};
        \node at (0.6,2.8) {$2$};
        \node at (0.8,1.5) {$3$};
        \node at (-1.6,0.7) {$4$};
        \node at (1.6,1) {$5$};
    } + \tikzfig{
        \draw (-2,0) [partial ellipse =0:-180:1.5 and 0.3];
        \draw [dashed] (-2,0) [partial ellipse =0:180:1.5 and 0.3];
        \draw (2,0) [partial ellipse =0:-180:1.5 and 0.3];
        \draw [dashed] (2,0) [partial ellipse =0:180:1.5 and 0.3];
        \draw (0.5,0) arc (0:180:0.5);
        \draw (3.5,0) arc (0:180:3.5);
        \node [vertex] (coev) at (-0.5,2.5) {$\co$};
        \node (phi) at (1,0.8) {};
        \node (eta) at (-0.5,1) {};
        \draw [->-] (coev) to (eta);
        \draw (0,2) [->-=0.3,partial ellipse=90:-90:0.3 and 1.5,cyan,dashed];
        \draw [->-,bend left=20,cyan] (coev) to (0,3.5);
        \draw [bend left=20,cyan] (0,0.5) to (eta);
        \draw [->-] (eta) arc (115:60:1.5);
        \draw [->-] (phi) arc (60:10:1.5); 
        \draw [->-] (eta) arc (100:184.5:1.2); 
        \node [vertex,fill=white] at (1,0.8) {$\varphi$};
        \node [rectangle,draw,fill=white] at (-0.5,1) {$\eta$};
        \node at (-0.2,2) {$1$};
        \node at (0.6,2) {$2$};
        \node at (0.2,0.8) {$3$};
        \node at (-1.6,0.7) {$4$};
        \node at (1.8,0.8) {$5$};
    }
\end{align*}
all four with orientation $(5\ 4\ 3\ 2\ 1)$. Note that $G$ is \textit{not} a map of complexes. Instead, computing the differential gives
\begin{align} \label{align:homotopyG}
d(G(\varphi)) + G(d\varphi) = (\varphi \frown \otimes \id)E  - (\id \otimes \varphi \frown)E.
\end{align}
where the cap product is the operation we discussed in \cref{ex:capProducts}.

\begin{definition}\label{def:trivializationEnonzero}
    Suppose $W \subseteq C_*(A,A)$ is a subcomplex of $\kk$-modules, such that the induced map on homology $H_*(W) \to HH_*(A,A)$ is injective. A \textit{trivialization of $E$ onto $W$} consists of a pair $(E_0, H)$ such that 
    \[ E_0 \in W \otimes C_*(A,A) \cap C_*(A,A) \otimes W \subseteq C_*(A,A) \otimes C_*(A,A) \]
    and $H \in C_*(A,A) \otimes C_*(A,A)$ satisfies $dH = E - E_0$.
\end{definition}

Note that since $E$ is closed, if $(E_0,H)$ is a trivialization then $E_0$ is automatically closed. It follows from the definitions and \cref{align:homotopyG} that the map of graded $\kk$-modules $\widetilde\lambda_H$ defined by
\[ \widetilde\lambda_H(\varphi) \coloneqq G(\varphi) - (-1)^{\deg(\varphi)}((\varphi \frown \otimes \id)H - (\id \otimes \varphi \frown)H) \]
gives a map of complexes
\[ \widetilde\lambda_H: C^*(A,A)\to \frac{C_*(A,A) \otimes C_*(A,A)}{(W \otimes C_*(A,A)) + (C_*(A,A) \otimes W)}[n-1], \]
that is, a map intertwining the differentials and thus giving a map on homology. Recall now that a closed element $\alpha \in C^n_{(2)}(A)$ defines a map of complexes $g_\alpha: C_*(A,A) \to C^*(A,A)[n]$.

\begin{definition}\label{def:chainAlgebraicLoopCoproduct}
    Let $A$ be a smooth $A_{\infty}$-category, a trivialization $H$ of $E$ onto $W$, and a closed element $\alpha \in C^n_{(2)}(A)$. The \emph{chain-level algebraic loop coproduct} associated to this data is the map of complexes
    \[ \lambda_H \colon C_*(A,A) \to \frac{C_*(A,A) \otimes C_*(A,A)}{(W \otimes C_*(A,A)) + (C_*(A,A) \otimes W)}[n-1] \]
    given by $\lambda_H \coloneqq \widetilde\lambda_H \circ g_\alpha$. The \emph{homology algebraic loop coproduct} is the map induced by $\lambda_H$ on homology.
\end{definition}

\begin{proposition}
    The homology algebraic loop coproduct $\lambda_H$ only depends on $H$ up to exact terms and up to terms living in $(W \otimes C_*(A,A)) \cap (C_*(A,A) \otimes W)$.
\end{proposition}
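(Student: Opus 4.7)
The plan is to isolate the dependence on $H$ inside $\lambda_H$ and then show that each of the two allowed perturbations of $H$ (by an exact term, or by an element in $(W\otimes C_*(A,A))\cap (C_*(A,A)\otimes W)$) contributes something that vanishes in $H_*$ of the quotient $Q := C_*(A,A)^{\otimes 2}/((W\otimes C_*(A,A))+(C_*(A,A)\otimes W))$. The map $G$ and the operator $g_\alpha$ do not involve $H$, so only the summand
\[ \rho_H(\varphi) := -(-1)^{\deg(\varphi)}\bigl((\varphi\frown\otimes\id)H - (\id\otimes \varphi\frown)H\bigr) \]
of $\widetilde\lambda_H$ sees the choice of $H$. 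It therefore suffices, for any closed chain $y\in C_*(A,A)$ with corresponding cocycle $\varphi = g_\alpha(y)$, to compare $\rho_H(\varphi)$ with $\rho_{H+dK+J}(\varphi)$ in $Q$.

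First I will handle the $J$-term, where $J\in (W\otimes C_*(A,A))\cap (C_*(A,A)\otimes W)$. Because the cap product with a cochain on one factor is the identity on the other factor, $(\varphi\frown\otimes\id)J$ lies in $C_*(A,A)\otimes W$ (as $J\in C_*(A,A)\otimes W$) and $(\id\otimes\varphi\frown)J$ lies in $W\otimes C_*(A,A)$ (as $J\in W\otimes C_*(A,A)$); both images are quotiented out in $Q$, so the $J$-term vanishes already at the chain level.

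Second I will handle the $dK$-term. The key observation is that for a cocycle $\varphi$ the linear endomorphism $\varphi\frown\otimes\id$ of $C_*(A,A)^{\otimes 2}$ is a chain map, by the Leibniz rule for $\frown$ (the only surviving term is the one with $d\varphi=0$ discarded), and likewise for $\id\otimes \varphi\frown$. Hence $(\varphi\frown\otimes\id)(dK) = \pm d\bigl((\varphi\frown\otimes\id)K\bigr)$ and similarly for the other summand, so $\rho_{H+dK}(\varphi)-\rho_H(\varphi)$ is a boundary in $C_*(A,A)^{\otimes 2}$, a fortiori in $Q$. Combining the two observations and precomposing with $g_\alpha$, the induced map $\lambda_H$ on $HH_*(A,A)$ is unchanged.

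I do not expect any serious obstacle: the proof is essentially an application of the Leibniz rule together with the definition of the quotient $Q$. The only point requiring a modicum of care is the sign bookkeeping in the Leibniz identity in homological convention, but this does not affect the conclusion since we only need that $(\varphi\frown\otimes\id)$ and $(\id\otimes\varphi\frown)$ are chain maps when $\varphi$ is a cocycle.
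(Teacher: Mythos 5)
Your argument is correct and is essentially the argument the paper has in mind (the paper states this proposition without proof, and the same reasoning appears implicitly in its covariance discussion, where changing $H$ by an exact term is said to give chain-homotopic maps $\widetilde\lambda_H$): the only $H$-dependence is in the cap-product correction terms, a shift of $H$ by $dK$ sends cycles to boundaries since $\varphi\frown\otimes\id$ and $\id\otimes\varphi\frown$ are chain maps for closed $\varphi=g_\alpha(y)$, and a shift by an element of $(W\otimes C_*(A,A))\cap(C_*(A,A)\otimes W)$ dies in the quotient because each of the two correction terms preserves the untouched tensor factor in $W$. Your use of the intersection (rather than the sum) to control both terms is exactly the point of the hypothesis, so no gap remains.
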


\cref{def:chainAlgebraicLoopCoproduct} makes sense for any ring $\kk$ over which the data $(A,\co,\eta,\alpha,H)$ is defined. When $\kk$ is a field, the K\"unneth map is an isomorphism and with the appropriate shift we get a map
\[ \lambda_H \colon HH_*(A,A)[n-1] \to \overline{HH}_*(A,A)[n-1] \otimes \overline{HH}_*(A,A)[n-1], \]
where $\overline{HH}_*(A,A) = H_*(C_*(A,A)/W) \cong HH_*(A,A)/H_*(W)$ since we required $H_*(W) \hookrightarrow \HH_*(A,A)$. Let us now define a certain compatibility condition which will be important for our study of this operation.
\begin{definition}\label{def:balancedText}
    The triple $(W,H,\alpha)$ is \emph{balanced} when the image of the subcomplex $W$ under the map of graded $\kk$-modules
    \[ \widetilde\lambda_H \circ g_\alpha \colon C_*(A,A) \to C_*(A,A) \otimes C_*(A,A)[n-1] \]
    is contained in $C_*(A,A) \otimes W \cap W \otimes C_*(A,A)$.
\end{definition}
In particular, if $(W,H,\alpha)$ is balanced, the map of complexes $\lambda_H$ descends to a map of complexes
\[ \overline\lambda_H \colon \frac{C_*(A,A)}{W} \to \frac{C_*(A,A) \otimes C_*(A,A)}{(W \otimes C_*(A,A)) + (C_*(A,A) \otimes W)}[n-1], \]
whose induced map on homology, over a field $\kk$, gives an honest coproduct on reduced Hochschild homology $\overline{HH}_*(A,A)[n-1]$. We will be interested in this coproduct when $\alpha = m_{(2)}$ comes from a (truncated) pre-CY structure on $A$; it will not be coassociative or cocommutative in general, but will behave nicely for some choices of $H$, compare \cref{thm:commCoproduct}.

\begin{remark}
    Note that in \cref{def:balancedText}, we really do mean the image is contained in the smaller subcomplex $W \otimes C_*(A,A) \cap C_*(A,A) \otimes W $, and not just in $W \otimes C_*(A,A) + C_*(A,A) \otimes W$; this is a stronger condition than saying that $\lambda_H$ descends to the quotient, but it turns out to be necessary for our later results.
\end{remark}

We do not require $W$ to be concentrated in degree zero, since in some applications we may want to take quotients in other degrees as well. Nonetheless, since $[E]$ is a class of degree zero, one can always find trivializations onto some $W_0$ concentrated in degree zero; under some extra assumptions this is always a balanced choice.
\begin{proposition}\label{prop:coproductFactors}
   If $A$ is connective, $n \ge 2$, and $(E_0,H)$ is a trivialization onto a subcomplex $W$ concentrated in degree zero, then the triple $(W,H,\alpha)$ is balanced.
\end{proposition}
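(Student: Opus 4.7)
The plan is to establish the proposition by a direct homological degree argument: under the hypotheses, the composition $\widetilde\lambda_H \circ g_\alpha$ will vanish identically on $W$, so its image lies trivially in $W \otimes C_*(A,A) \cap C_*(A,A) \otimes W$.

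The first step is to observe that $C_*(A,A)$ is concentrated in non-negative homological degrees. Since $A$ is connective, both $A$ and $\A = A/(\kk \cdot 1_A)$ live in non-negative degrees; consequently $\A[1]^{\otimes k}$ is supported in degrees $\ge k$, and each summand $A \otimes \A[1]^{\otimes k}$ of $C_*(A,A)$ is non-negatively graded. Taking a tensor product preserves this, so $C_*(A,A) \otimes C_*(A,A)$ also vanishes in negative degrees. I would then track the total homological degree of $\widetilde\lambda_H \circ g_\alpha$: since $g_\alpha \colon C_*(A,A) \to C^*(A,A)[n]$ is a degree-zero chain map, as a map into the unshifted $C^*(A,A)$ it lowers homological degree by $n$; similarly, $\widetilde\lambda_H \colon C^*(A,A) \to C_*(A,A) \otimes C_*(A,A)[-1]$ is a degree-zero chain map and so raises homological degree by $1$. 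The composition therefore has total homological degree $1-n$ as a map $C_*(A,A) \to C_*(A,A) \otimes C_*(A,A)$, matching the shift $[n-1]$ in \cref{def:chainAlgebraicLoopCoproduct}.

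To conclude: any $w \in W$ lies in homological degree $0$ by hypothesis, so $(\widetilde\lambda_H \circ g_\alpha)(w)$ lies in degree $1-n$ of $C_*(A,A) \otimes C_*(A,A)$. Since $n \ge 2$, this is a strictly negative degree, and the first observation forces $(\widetilde\lambda_H \circ g_\alpha)(w) = 0$. Because $0$ belongs to every submodule, in particular to $W \otimes C_*(A,A) \cap C_*(A,A) \otimes W$, the triple $(W,H,\alpha)$ is balanced. There is essentially no real obstacle: the proposition is a degree accident that uses only the connectivity of $A$, the concentration of $W$ in degree zero, and the bound $n \ge 2$.
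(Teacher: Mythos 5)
Your proof is correct and follows exactly the paper's argument: the paper's own (one-line) proof is the same degree count — since $W$ sits in degree zero and $\lambda_H$ has total degree $1-n \le -1$, the image of $W$ lands in negative degrees of $C_*(A,A)\otimes C_*(A,A)$, which vanish by connectivity of $A$. Your write-up simply spells out the degree bookkeeping (non-negativity of the Hochschild chains, $g_\alpha$ lowering degree by $n$, $\widetilde\lambda_H$ raising it by $1$) that the paper leaves implicit.
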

\begin{proof}
    Since the subcomplex $W$ is concentrated in degree zero and $n \ge 2$, the image $\lambda_H(W)$ is in negative degrees so it vanishes since by the connectivity assumption.
\end{proof}

When the homology class of $E$ is zero, we can pick $W = 0$, in which case we will just call some $H$ of degree $1$ in $C_*(A,A)\otimes C_*(A,A)$ such that $d H = E$ a \textit{trivialization of $E$}. For any such choice, and any closed $\alpha \in C^*_{(2)}(A)$, the triple $(W,H,\alpha)$ is automatically balanced, and we get a map of complexes $\lambda_H: C_*(A,A) \to C_*(A,A) \otimes C_*(A,A)[n-1]$ giving a product on Hochschild homology $HH_*(A,A)[n-1]$.

\subsection{Covariance of the space of trivializations with respect to choices}\label{sec:covariance}
The map of complexes $\widetilde\lambda_H$ in \cref{def:chainAlgebraicLoopCoproduct} is well-defined on homology for a given equivalence class of the data $(\co,\eta, H)$.  In fact, let us be more precise and use subscripts to write
\[ \widetilde\lambda_{\co,\eta,H}(\varphi) = G_{\co,\eta}(\varphi) - (-1)^{\deg(\varphi)}((\varphi \frown \otimes \id - \id \otimes \varphi \frown)H). \] 
If $H-H'$ is exact, then $\widetilde\lambda_{\co,\eta,H}$ and $\widetilde\lambda_{\co,\eta,H'}$ are chain homotopic maps of complexes and that space of choices of $H$ modulo exact terms is a torsor over $H_1(C_*(A,A)\otimes C_*(A,A))$. Suppose now that we have two representatives for the coevaluation element, differing by an exact term, that is $\co'= \co + dz$ for some $z \in A \otimes^\LL_{A^e} A^!$. We then have
\[ G_{\co',\eta}(\varphi) \simeq G_{\co,\eta}(\varphi)-(-1)^{\deg(\varphi)}((\varphi \frown \otimes \id- \id \otimes \varphi \frown)H_z)  \]
where $H_z$ is an element of degree $1$ in $C_*(A,A) \otimes C_*(A,A)$, given by evaluating the same diagram as for the Chern character $E$ (\cref{def:ChernCharacter}) but with $z$ in the place of $\co$. This element satisfies  $dH_z = E_{\co',\eta} - E_{\co,\eta}$.

In other words, changing $\co$ by an exact term shifts the space of partial trivializations. We can compensate by changing $H$ accordingly to $H-H_z$; there is a homotopy $\lambda_{\co+dz,\eta,H-H_z} \simeq \lambda_{\co,\eta,H}$. An analogous description applies to changing the chain-level representative for $\eta$. We summarize these observations in the following
\begin{proposition}
    At the homology level, $\lambda_{\co,\eta,H}$ only depends on the class of the data $(\co,\eta,H)$ under the equivalence relation generated by
    \[ (\co,\eta,H) \sim (\co + dz,\eta+dw, H-H_{z,w}) \]
    where $H_{z,w}$ is an element of degree $1$ in $C_*(A,A)\otimes C_*(A,A)$ satisfying $dH_{z,w} = E_{\co+dz,\eta+dw} - E_{\co,\eta}$.
\end{proposition}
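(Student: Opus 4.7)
The plan is to verify that each of three elementary generators of the equivalence relation gives a chain-homotopic map of complexes $\widetilde\lambda_{\co,\eta,H} \circ g_\alpha$: namely, $(\co,\eta,H)\sim(\co,\eta,H+dK)$ (shifting $H$ by an exact term), $(\co,\eta,H)\sim(\co+dz,\eta,H-H_z)$, and $(\co,\eta,H)\sim(\co,\eta+dw,H-H_w)$. Once these three are established, the general equivalence follows by composing them and defining $H_{z,w}$ accordingly.

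For the first move, the dependence of $\widetilde\lambda_{\co,\eta,H}$ on $H$ enters only through the strictly linear operation $\varphi \mapsto (\varphi\frown\otimes\id-\id\otimes\varphi\frown)H$. If $H-H'=dK$, then the difference $\widetilde\lambda_{\co,\eta,H}-\widetilde\lambda_{\co,\eta,H'}$ equals the same operation applied to $dK$, which is the chain-level boundary of the analogous operation with $K$ (using that the cap products are chain maps up to $\delta\varphi$). For the second move, I would follow the sketch already made above the statement: each of the four ribbon-quiver diagrams defining $G_{\co,\eta}(\varphi)$ has a single $\co$-vertex, and replacing $\co$ by $dz$ inside any one of these diagrams, via the Leibniz rule for the ribbon-quiver differential recalled in \cref{sec:orientedRibbon}, produces an exact term in $C_*(A,A)\otimes C_*(A,A)$ plus precisely the cap-product contribution $-(-1)^{|\varphi|}(\varphi\frown\otimes\id-\id\otimes\varphi\frown)H_z$. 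The element $H_z$ is defined by substituting $z$ for $\co$ in the Chern-character diagram of \cref{def:ChernCharacter}; the identical Leibniz computation applied to $E$ itself (with no $\varphi$ inserted) shows $dH_z=E_{\co+dz,\eta}-E_{\co,\eta}$. Hence the simultaneous substitution $\co\mapsto\co+dz$, $H\mapsto H-H_z$ preserves $\widetilde\lambda_{\co,\eta,H}$ up to chain homotopy, and the same holds for $\widetilde\lambda_{\co,\eta,H}\circ g_\alpha$. The third move, with $\eta\mapsto\eta+dw$ and $H\mapsto H-H_w$, is entirely analogous.

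Composing the three moves requires care, because the change in $E$ effected by $\co\mapsto\co+dz$, $\eta\mapsto\eta+dw$ also contains a bilinear cross term obtained by inserting both $z$ and $w$ in the Chern-character diagram. Denoting this cross term by $H_{z,w}^{\mathrm{mix}}$, one sets $H_{z,w}=H_z+H_w+H_{z,w}^{\mathrm{mix}}$, and a direct calculation—the same Leibniz bookkeeping as above, but applied to both vertices simultaneously—yields $dH_{z,w}=E_{\co+dz,\eta+dw}-E_{\co,\eta}$. Iterating the individual homotopies from the preceding paragraph then shows $\lambda_{\co+dz,\eta+dw,H-H_{z,w}}$ and $\lambda_{\co,\eta,H}$ are chain-homotopic, hence induce the same map on homology. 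The main obstacle is the sign and orientation bookkeeping for the cross term $H_{z,w}^{\mathrm{mix}}$ and the verification that the homotopies from the three elementary moves compose consistently; these follow from the graphical-calculus conventions of \cref{sec:orientedRibbon} and introduce no new conceptual input beyond what already underlies equation \eqref{align:homotopyG}.
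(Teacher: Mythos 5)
Your proposal is correct and follows essentially the same route as the paper: the paper's Section on covariance establishes exactly these three elementary moves (chain homotopy for an exact shift of $H$, the shift of $\co$ by $dz$ compensated by the diagrammatically defined $H_z$ with $dH_z = E_{\co+dz,\eta}-E_{\co,\eta}$, and the analogous statement for $\eta$) and then states the proposition as a summary of those observations. Your extra care with the mixed term in $H_{z,w}$ when composing the $\co$- and $\eta$-moves is just a slightly more explicit version of the same bookkeeping the paper leaves implicit.
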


\subsection{Infinitesimal bialgebra relation}
We will now study the relation between the algebraic loop coproduct we described above and the product which is part of the framed $E_2$-algebra structure induced by the pre-CY structure.

Let us start by stating and proving a lemma about the operation $\widetilde\lambda_H$. Recall the cup product operation $\smile$ on Hochschild cochains, which is commutative on homology. We can pick an explicit homotopy realizing this commutativity: let $Q:C^*(A,A)^{\otimes 2} \to C^*(A,A)[-1]$ be the operation given by evaluating the oriented diagram
\[
    \tikzfig{
	\draw (0,0.7) circle (1.2);
	\node [vertex] (a) at (0,1.3) {I};
	\node [vertex] (b) at (0,0.3) {II};
	\draw [->-] (a) to (b);
	\draw [->-] (b) to (0,-0.5);
        \node at (-0.3,0.8) {$1$};
        \node at (-0.3,-0.1) {$2$};
    }\ ,\ (2\ 1)
\]
giving the relation $[d,Q](\varphi,\psi) = \varphi \smile \psi - (-1)^{\deg(\varphi)\deg(\psi)} \psi \smile \varphi$.

Let us pick, as before, an element $H$ such that $dH = E - E_0$. 
\begin{lemma}\label{lem:compatibilityWithCup}
    For any $\varphi, \psi \in C^*(A,A)$, we have the relation
    \begin{align*}
        \widetilde\lambda_H(\varphi \smile \psi) &\simeq (-1)^{\deg(\varphi)} (\varphi \frown \otimes \id) \widetilde\lambda_H(\psi) + (-1)^{\deg(\varphi)\deg(\psi) + \deg(\psi)} (\id \otimes \psi \frown)\widetilde\lambda_H(\varphi)\\
        &-(\id \otimes Q(\varphi,\psi)\frown)E_0
    \end{align*}
\end{lemma}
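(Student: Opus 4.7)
The plan is to expand both sides of the equation using the definition $\widetilde\lambda_H(\varphi) = G(\varphi) - (-1)^{\deg\varphi}((\varphi \frown \otimes \id)H - (\id \otimes \varphi \frown)H)$ and then to produce the required chain homotopy directly in the graphical calculus, by assembling standard ``sliding'' homotopies of the type exhibited in \cref{ex:capProducts}.

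The first step is to analyze $G(\varphi \smile \psi)$. Diagrammatically, the cup product $\varphi \smile \psi$ is realized by a vertex splitting into adjacent $\varphi$ and $\psi$ insertions joined by an $A_\infty$ multiplication. Substituting this pattern into each of the four diagrams defining $G$ yields a collection of diagrams, each with $\varphi$ and $\psi$ inserted consecutively along the loop that runs from $\co$ to $\eta$. The second step is to transport these two insertions separately: sliding $\varphi$ back around the loop (using homotopies analogous to $(\vec\Gamma_3,o_3)$ and $(\vec\Gamma_4,o_4)$ from \cref{ex:capProducts}) until it exits the diagram as a cap product on the left output produces, up to a $d$-exact term, the expression $(-1)^{\deg\varphi}(\varphi \frown \otimes \id)\, G(\psi)$; symmetrically, sliding $\psi$ forward around the loop until it exits as a cap product on the right output produces $(-1)^{\deg\varphi\deg\psi+\deg\psi}(\id \otimes \psi \frown)\, G(\varphi)$.

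The key subtlety is that the two sliding homotopies must be combined coherently, because they amount to moving $\varphi$ and $\psi$ past each other along the loop. The obstruction to this exchange is the Gerstenhaber commutator $\varphi \smile \psi - (-1)^{\deg\varphi\deg\psi}\psi\smile\varphi$, which equals $[d,Q](\varphi,\psi)$; consequently, up to $d$-exact terms, the discrepancy between the two orderings contributes a term in which $Q(\varphi,\psi)$ acts on the resulting pairing diagram, producing $(\id\otimes Q(\varphi,\psi)\frown)\, E$. Using the trivialization equation $dH = E - E_0$, this term becomes $(\id\otimes Q(\varphi,\psi)\frown)\, E_0$ modulo exact terms plus additional terms involving $H$. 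The latter terms combine with the $H$-dependent corrections appearing in $\widetilde\lambda_H(\varphi\smile\psi)$ on the left and in $\widetilde\lambda_H(\psi),\widetilde\lambda_H(\varphi)$ on the right, and then cancel after invoking the standard associativity of cap products together with the relation $\varphi \frown (\psi \frown x) \simeq (\varphi \smile \psi)\frown x$.

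The main obstacle I expect is the sign and orientation bookkeeping: each sliding homotopy must be realized as a specific oriented ribbon quiver, and the Koszul signs introduced by the quiver orientations must be tracked carefully against the signs declared in the statement. The structural pattern is transparent from the graphical calculus, but verifying that all the homotopies and corrections combine to match the precise coefficients $(-1)^{\deg\varphi}$, $(-1)^{\deg\varphi\deg\psi+\deg\psi}$, and $-1$ requires a systematic case analysis in the conventions of \cref{sec:graphicalCalculus}.
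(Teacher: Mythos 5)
Your proposal follows essentially the same route as the paper's proof: first a homotopy between $G(\varphi\smile\psi)$ and the combination $(-1)^{\deg\varphi}(\varphi\frown\otimes\id)G(\psi) + (-1)^{\deg\varphi\deg\psi+\deg\psi}(\id\otimes\psi\frown)G(\varphi) - (\id\otimes Q(\varphi,\psi)\frown)E$ built from explicit sliding diagrams on the elbow, and then the $H$-terms handled by the relations $((\varphi\smile\psi)\frown\otimes\id)H \simeq (\varphi\frown\otimes\id)(\psi\frown\otimes\id)H$ and the $Q$-homotopy applied to $dH = E - E_0$, with a final add-and-subtract of the cross term $(\varphi\frown\otimes\id)(\id\otimes\psi\frown)H$. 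The only content you defer — the explicit oriented ribbon quivers realizing the sliding homotopy and the sign bookkeeping — is exactly what the paper itself relegates to its appendix, so the argument is correct and structurally identical.
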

\begin{proof}
    Let us start with the terms involving the map $G$. We consider the two maps $C^*(A,A)^{\otimes 2} \to C_*(A,A)[-1]$ which take $(\varphi,\psi) \mapsto G(\varphi\smile\psi)$ and
    \[ (\varphi,\psi) \mapsto (-1)^{\deg(\varphi)}(\varphi\frown\otimes\id)G(\psi) + (-1)^{\deg(\varphi)\deg(\psi)+\deg(\psi)}(\id\otimes \psi\frown)G(\psi) -(\id\otimes Q(\varphi,\psi)\frown)E \]
    and find a homotopy between them, given in \cref{app:ibl}.

    As for the terms involving $H$, we have easier homotopies (which we omit) giving relations
    \begin{align*}
        ((\varphi\smile\psi)\frown\otimes \id)H &\simeq (\varphi\frown\otimes\id)(\psi\frown\otimes\id)H \\
        (\id\otimes(\varphi\smile\psi - (-1)^{\deg(\varphi)\deg(\psi)} \psi\smile\varphi)\frown)H &\simeq (-1)^{\deg(\varphi)+\deg(\psi)} (\id\otimes Q(\varphi\smile\psi))(E-E_0)
    \end{align*}
    Combining the three relations above, and then adding and subtracting the term
    \[ (-1)^{\deg(\varphi)+\deg(\psi)}(\varphi\frown\otimes\id)(\id\otimes\psi\frown)H \]
    on the right-hand side gives us the desired relation.
\end{proof}

Given a pre-CY structure on $A$, in \cref{sssec:algebraicloopproduct} we described a map of complexes
\[ \pi: C_*(A,A)[-n] \otimes C_*(A,A)[-n] \to C_*(A,A)[-n] \]
inducing an associative and $(-1)^n$-commutative product on $HH_*(A,A)$. We establish a compatibility relation between $\pi$ and $\lambda_H$ when $E_0 = 0$.
\begin{theorem}\label{thm:sullivan}
   Suppose $A$ is an $A_{\infty}$-category over a field $\kk$ equipped with a ($2$-truncated) pre-CY structure $m$ of dimension $n$. Furthermore, suppose $[E]=0$ and $H$ is a trivialization of $E$. The algebraic loop product
   \[ \pi: HH_*(A,A)[-n] \otimes HH_*(A,A)[-n] \to HH_*(A,A)[-n] \]
   defined by $m$ and the algebraic loop coproduct 
   \[ \lambda_H \colon HH_*(A,A)[n-1] \to HH_*(A,A) [n-1] \otimes HH_*(A,A) [n-1] \]
   satisfy the following relation, known as the \emph{infinitesimal bialgebra equation} (also called \emph{Sullivan's relation} in \cite{CieHinOan2}):
   \[ \lambda_H \circ \pi = (\pi \otimes \id)\circ(\id\otimes \lambda_H) + (\id \otimes \pi) \circ (\lambda_H \otimes \id).\]
\end{theorem}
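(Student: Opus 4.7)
My plan is to reduce the Sullivan relation to \cref{lem:compatibilityWithCup} by using the fact that the structure map $g_\alpha$ intertwines the chain-level loop product $\pi$ with the Hochschild cup product $\smile$ up to chain homotopy. More precisely, I will first establish two diagrammatic identities at the chain level, and then feed them into \cref{lem:compatibilityWithCup} with $E_0 = 0$.

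The first identity is that for closed $x, y \in C_*(A,A)$ one has
\[ g_\alpha(\pi(x,y)) \simeq g_\alpha(x) \smile g_\alpha(y), \]
and the second is that for $x$ closed in $C_*(A,A)$ and any $z \in C_*(A,A)$ one has
\[ g_\alpha(x) \frown z \;\simeq\; \pm\, \pi(x,z), \qquad z \frown g_\alpha(x) \;\simeq\; \pm\, \pi(z,x), \]
with explicit signs dictated by $\deg(x)$ and $n$. Both identities amount to producing an oriented ribbon quiver whose boundary is the difference of the two oriented ribbon quivers representing the two operations in question; in each case the interpolating quiver is obtained by sliding the $m_{(2)}$ vertex appearing in $g_\alpha$ (see \cref{ex:galpha}) along the arrows of the pair-of-pants ribbon quiver defining $\pi$ in \cref{sssec:algebraicloopproduct} (respectively, of the cap-product quivers in \cref{ex:capProducts}). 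The boundary of the interpolating quiver produces exactly the $A_\infty$-relations needed to identify the two sides.

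Granted these identities, the computation is now mechanical. Starting from $\lambda_H = \widetilde\lambda_H \circ g_\alpha$ and applying identity one,
\[ \lambda_H(\pi(x,y)) \;=\; \widetilde\lambda_H(g_\alpha(\pi(x,y))) \;\simeq\; \widetilde\lambda_H\bigl(g_\alpha(x) \smile g_\alpha(y)\bigr). \]
\cref{lem:compatibilityWithCup} with $E_0 = 0$ then yields
\[ \widetilde\lambda_H(g_\alpha(x) \smile g_\alpha(y)) \simeq \pm\bigl(g_\alpha(x) \frown \otimes \id\bigr)\widetilde\lambda_H(g_\alpha(y)) \;\pm\; \bigl(\id \otimes g_\alpha(y) \frown\bigr)\widetilde\lambda_H(g_\alpha(x)). \]
Using identity two to rewrite the outer cap products as $\pi$, together with $\widetilde\lambda_H \circ g_\alpha = \lambda_H$, gives
\[ \lambda_H(\pi(x,y)) \simeq \pm\bigl(\pi \otimes \id\bigr)\bigl(\id \otimes \lambda_H\bigr)(x\otimes y) \;\pm\; \bigl(\id \otimes \pi\bigr)\bigl(\lambda_H \otimes \id\bigr)(x\otimes y), \]
which on homology is the asserted infinitesimal bialgebra equation.

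\textbf{Main obstacle.} The real work is in the two auxiliary identities and the sign bookkeeping. The content of each identity is purely graphical, so the existence of a chain homotopy is essentially forced by the ribbon-quiver differential of \cref{sec:orientedRibbon}; however, the signs depend delicately on the orderings of edges and on the shift $(n-2)(k-1)$ appearing in $C^*_{(k,n)}(A)$. An alternative (and in some respects cleaner) route, which would bypass identity one, is to construct a single oriented ribbon quiver whose boundary computes $\lambda_H \circ \pi - (\pi \otimes \id)\circ(\id \otimes \lambda_H) - (\id \otimes \pi) \circ (\lambda_H \otimes \id)$ directly; this is the standard Deligne-type argument adapted to the pre-CY context, but it requires an explicit construction of a $6$-vertex interpolating quiver interleaving the $m_{(2)}$ vertex of $\pi$, the $\co$ and $\eta$ vertices defining $G$, and the trivialization $H$, which is combinatorially heavier than the factorization approach above.
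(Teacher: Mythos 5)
Your proposal is correct and follows essentially the same route as the paper: the paper's proof likewise reduces the statement to \cref{lem:compatibilityWithCup} applied to $\varphi = g_\alpha(x)$, $\psi = g_\alpha(y)$ (with $E_0=0$), using the easily constructed chain homotopy $\pi(x,y)\simeq g_\alpha(x)\frown y$ to translate between the loop product and the cap/cup products. Your two auxiliary identities are just a slightly more explicit packaging of that same translation step.
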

\begin{proof}
    The product $\pi$ and the cap product $\frown$ are related by means of the map $g_\alpha$ of \cref{ex:galpha}; we can easily write down a homotopy giving the relation $\pi(x,y) \simeq g_\alpha(x) \frown y$. The desired result then follows by using this relation combined with the relation in  \cref{lem:compatibilityWithCup}, applied to $\varphi=g_\alpha(x),\psi=g_\alpha(y)$.
\end{proof}

\section{Categorical formal punctured neighborhood of infinity}\label{sec:catFormal}
In this section we recall the notion of the \emph{categorical formal punctured neighborhood of infinity} $\Ahatinfty$ of an $A_\infty$-category $A$. This notion was first defined by Efimov for dg categories in \cite{efimov2017categorical}, as a noncommutative analog of the category of perfect complexes on the formal neighborhood of the divisor at infinity in a compactification of a noncompact smooth variety $X$. We also establish a relationship between $\Ahatinfty$ and the Chern character $E_A$ of $A$ defined in \cref{def:ChernCharacter}.

\subsection{Definition}
The object $\Ahatinfty$ has recently appeared `on the other side of mirror symmetry' in \cite{ganatra2022rabinowitz} where it is proven to give an algebraic model for the `Rabinowitz Fukaya category' of a noncompact symplectic manifold. We will refer the reader to Section 2 of that paper for the precise definition of $\Ahatinfty$.  
For the purposes of this article we follow Proposition 2.11 in there and define $\Ahatinfty$ as follows.
\begin{definition}\label{Definition:Psi}
   Let $A$ be a smooth $A_{\infty}$-category. The bimodule $\Ahatinfty$ is the cone of the canonical map
    \[
    \Psi: C^*(A,A^\vee \otimes A) \to C^*(A,\Hom_\kk(A,A)), 
    \]
\end{definition}
Here $A^\vee$ is the bimodule given by the linear dual of $A$. We \emph{do not} assume that $A$ is finite-dimensional or proper; note that, without this assumption, $A^\vee$ will be `bigger' than $A$, in the sense that its own linear dual $A^{\vee\vee}$ will not be equivalent to $A$. In particular, the natural map $A^\vee \otimes A \to \Hom_\kk(A,A)$ is not a (quasi-)isomorphism and thus neither is $\Psi$ in general. 

Even though $A$ is not proper, there is a canonical evaluation map $\ev: A \otimes A^\vee \to \kk$. The map $\ev$ gives a map of complexes
\[ A \otimes^\LL_{A^e} A^\vee \to \kk, \]
which we represent in the graphical calculus as a vertex taking one arrow carrying the $A$ bimodule and one arrow carrying the $A^\vee$ bimodule.
\[\tikzfig{
    \node [vertex] (center) at (0,0) {$\ev$};
    \node (left) at (-1,0.5) {$A$};
    \node (right) at (1,0.5) {$A^\vee$};
    \draw [->-] (-1.5,0) to (center);
    \draw [->-,red] (1.5,0) to (center);
}\]
The $A^\vee$ arrows will always be colored red from now on, to distinguish them more easily.

\subsubsection{Graphical calculus definition}
Let us describe the map of bimodules $\Psi$ in our graphical calculus. The $A$-bimodule structure on the complex $C^*(A, A^\vee \otimes A)$ comes from the structure of an tetramodule on $A^\vee \otimes A$; the two actions of $A$ on $C^*(A, A^\vee \otimes A)$ come from acting inside, on either side of $\otimes$. Again, to avoid discussing tetramodules, we just visualize an element in $C^*(A, A^\vee \otimes A)$ as a vertex of the following form.
\[\tikzfig{
    \node [vertex,inner sep=4pt] (center) at (0,0) {};
    \node (left) at (-1.3,0) {$A$};
    \node (right) at (1.3,0) {$A^\vee$};
    \node at (0,-1.3) {$B\overline{A}[1]$};
    \draw [->-] (center) to (-1,0);
    \draw [->-,red] (center) to (1,0);
    \draw [->-] (0.7,-0.7) to (0.35,-0.35);
    \draw [->-] (0,-1) to (0,-0.5);
    \draw [->-] (-0.7,-0.7) to (-0.35,-0.35);
    \draw [->-] (-0.4,-0.86) to (-0.15,-0.43);
    \draw [->-] (0.4,-0.86) to (0.15,-0.43);
}\]
The map $\Psi$ in \cref{Definition:Psi} is then given by the diagram
\[\tikzfig{
    \node [vertex,inner sep=4pt] (beta) at (-1,2) {};
    \node [vertex] (ev) at (0,1) {$\ev$};
    \draw [->-,red,bend left=40] (beta) to (ev);
    \draw [->-,bend right=40] (beta) to (-2,1);
    \draw [->-] (-2,1) to (-2,0);
    \draw [->-] (0,0) to (ev);
    \draw [->-] (-1.2,0) to (-1.2,0.6);
    \draw [->-] (-1,0) to (-1,0.6);
    \draw [->-] (-0.8,0) to (-0.8,0.6);
}\]
Here, the three little arrows at the bottom just schematically indicate the direction of the $\overline{A}[1]$-arrows at the bottom (the inputs of a Hochschild cochain in $C^*(A,\Hom_\kk(A,A))$).

\subsubsection{Cup product}
The differential and composition that endow $\Ahatinfty$ with the structure of a dg category are maps of $A$-bimodules.
\[ \mu^1_\infty: \Ahatinfty \to \Ahatinfty[1]\quad \text{and} \quad \mu^2_\infty: \Ahatinfty \otimes^\LL_A \Ahatinfty \to \Ahatinfty \]
We note that these structure maps give a cup product on Hochschild cochains valued in this bimodule:
\begin{definition}
    The cup product $\smile_\infty: C^*(A,\Ahatinfty) \otimes C^*(A,\Ahatinfty) \to C^*(A,\Ahatinfty)$ is defined by the oriented ribbon quiver
    \[\tikzfig{
	\draw (0,0.7) circle (1.2);
	\node [vertex,inner sep=4pt] (a) at (-0.6,1) {};
	\node [vertex,inner sep=4pt] (b) at (0.6,1) {};
	\node [bullet,orange] (c) at (0,0.3) {};
        \node at (-0.6,1.4) {$1$};
        \node at (+0.6,1.4) {$2$};
	\draw [->-,orange] (a) to (c);
	\draw [->-,orange] (b) to (c);
	\draw [->-,orange] (c) to (0,-0.5);
        \node at (-0.5,0.4) {$e_1$};
        \node at (0.5,0.4) {$e_2$};
        \node at (0.3,-0.1) {$e_3$};
    }\ ,\ (e_3\ e_2\ e_1) \]
    where the orange lines carry $\Ahatinfty$ and the orange dot in the middle represents $\mu^2_\infty$.
\end{definition}

\begin{remark}
    The only difference between the conventions we use here and the definition in \cite{ganatra2022rabinowitz} is that what we call $\Ahatinfty$ in their notation would technically be called $\widehat{(A^{op})}_\infty$, meaning the $A^{op}$-bimodule corresponding to the opposite $A_\infty$-category, but seen as an $A$-bimodule by reversing the two actions. This difference is mainly due to the fact that \textit{op.cit.} (and also \cite{ganatra2013symplectic}, for example) uses a convention for $A_\infty$ maps that is opposite to the one we use following \cite{kontsevich2021precalabiyau}.
\end{remark}

\subsection{Relation to Chern character of diagonal bimodule}
The definition for the bimodule $\Ahatinfty$ above makes sense for any $A_\infty$-category, smooth or not. But from now on, we will assume $A$ is smooth. We show that using the vertices $\co,\eta$ we defined before, it is possible to give an explicit relation between the categorical formal punctured neighborhood of infinity and the Chern character we defined in \cref{sec:chernCharacter}. 

\subsubsection{Another description of $\widehat{A}_\infty$}\label{sec:anotherDescr}
Let $A$ be a smooth $A_\infty$-category. We will define another bimodule $\Ahatinfty'$ associated to $A$ that will be quasi-isomorphic to $\Ahatinfty$. We pick vertices $\co$ and $\eta$ as in \cref{sec:chernCharacter}, and also let us pick a bimodule morphism $\beta: A \to A$ of degree $1$ which witnesses the relation between $\co$ and $\eta$, that is,
\[ [d,\beta] = \tikzfig{
    \node [vertex] (top) at (0,0.8) {$\eta$};
    \node [vertex] (mid) at (0,0) {$\co$};
    \node [bullet] (left) at (-0.8,0) {};
    \node [bullet] (down) at (0,-0.8) {};
    \draw [->-=0.8,cyan] (mid) to (top);
    \draw [->-] (mid) to (left);
    \draw [->-,shorten <=6pt] (0,0.8) arc (90:180:0.8);
    \draw [->-,shorten <=6pt] (0,0.8) arc (90:-90:0.8);
    \draw [->-] (-0.8,0) arc (180:270:0.8);
    \draw [->-] (down) to (0,-1.6);
    \draw [->-] (0,1.6) to (top);
} - \id_A \]

We define the bimodule $\Ahatinfty'$ to be the cone of the morphism
\[ \Psi': C^*(A, A^\vee \otimes A) \otimes^\LL_A A \to A \]
given by the following diagram, with some appropriate orientation:
\[\tikzfig{
    \node [vertex,inner sep=4pt] (beta) at (-1,2) {};
    \node [vertex] (ev) at (0,1) {$\ev$};
    \node [vertex] (eta) at (0,0) {$\eta$};
    \node [vertex] (coev) at (-1,0) {$\co$};
    \node [bullet] (left) at (-2,0) {};
    \node [bullet] (down) at (-1,-1.5) {};
    \draw [->-,red,bend left=40] (beta) to (ev);
    \draw [->-,bend right=40] (beta) to (-2,1);
    \draw [->-] (-2,1) to (left);
    \draw [->-,bend right=45] (left) to (down);
    \draw [->-,bend left=45] (eta) to (down);
    \draw [->-] (down) to (-1,-2.5);
    \draw [->-] (1,2) to (1,1);
    \draw [->-,bend left=45] (1,1) to (eta);
    \draw [->-,cyan] (coev) to (eta);
    \draw [->-] (coev) to (left);
    \draw [->-] (eta) to (ev);
}\]

\begin{proposition}\label{prop:isoOfBimodules}
   Let $A$ be a smooth $A_\infty$-category. Then the bimodule $\Ahatinfty'$ is quasi-isomorphic to $\Ahatinfty$.
\end{proposition}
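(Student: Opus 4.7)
My plan is to identify both $\Ahatinfty$ and $\Ahatinfty'$ as mapping cones of the same bimodule morphism $A^!\otimes^\LL_A A^\vee \to A$ (up to canonical quasi-isomorphism of sources), and thereby deduce the quasi-isomorphism $\Ahatinfty' \simeq \Ahatinfty$.

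First, I would exhibit the source $C^*(A, A^\vee\otimes A) \otimes^\LL_A A$ of $\Psi'$ as a bimodule quasi-isomorphic to $A^! \otimes^\LL_A A^\vee$. For smooth $A$, the identity $[\id_A]$ in $\R\!\Hom_{A^e}(A,A)$ represented by the vertex $\co$ provides a canonical quasi-isomorphism $C^*(A,N) \simeq N\otimes^\LL_{A^e} A^!$ for any $A$-bimodule $N$ in the image; applied to $N=A^\vee\otimes A$ (viewed as a tetramodule) and pairing with $A^!$ via its two $A$-actions, one gets a bimodule quasi-isomorphism $C^*(A, A^\vee\otimes A)\simeq A^\vee\otimes^\LL_A A^!\otimes^\LL_A A$. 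Tensoring once more with $A$ on the right absorbs via the action to yield $A^\vee \otimes^\LL_A A^!$, which is interchangeable with $A^! \otimes^\LL_A A^\vee$ in this picture (both being models for the `inverse dualizing' object paired with the linear dual).

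Second, I would check that under this identification, the map $\Psi'$ corresponds to the canonical evaluation-and-coevaluation composition $A^!\otimes^\LL_A A^\vee \to A$ whose cone defines $\Ahatinfty$ in the triangle $A^!\otimes^\LL_A A^\vee \to A \to \Ahatinfty$ recalled in the introduction. This is a graphical verification: $\Psi'$ is assembled out of the $\co$, $\eta$, and $\ev$ vertices in exactly the way that turns a copy of $A^\vee$ paired with $A^!$ into an element of $A$, namely via the canonical evaluation pairing.

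Third, having realized both $\Ahatinfty$ and $\Ahatinfty'$ as cones of the same bimodule morphism up to a zigzag of bimodule quasi-isomorphisms between the sources, the induced map on cones is itself a quasi-isomorphism. To assemble this into an explicit bimodule morphism $\Phi\colon \Ahatinfty' \to \Ahatinfty$, I would use the data of $\beta$: the relation $[d,\beta]=\eta\circ\co - \id_A$ produces precisely the chain homotopy that fills in the square comparing $\Psi'$ with its image under the identification above, so that $\beta$ appears as the `homotopy component' in the bimodule morphism of cones.

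The main obstacle is Step 1, keeping track of the bimodule structures through the smoothness identifications and the tetramodule structure on $A^\vee\otimes A$. In particular one must be careful that the $\co$-induced identification preserves the two remaining $A$-actions in the correct sides and that the signs from the orientations of the ribbon quivers representing the various absorbing vertices are consistent. Once the identification is properly normalized, Steps 2 and 3 are essentially combinatorial verifications in the graphical calculus, with the relation $[d,\beta]=\eta\circ\co - \id_A$ providing the only nontrivial homotopical input.
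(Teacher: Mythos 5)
Your Steps 1 and 3 are reasonable in spirit, but Step 2 is where the essential gap lies. The distinguished triangle $A^!\otimes^\LL_A A^\vee \to A \to \Ahatinfty$ that you quote from the introduction is only a derived-category summary: it does not specify the first map, and a triangle of that shape does not determine $\Ahatinfty$ up to quasi-isomorphism — two different bimodule maps $A^!\otimes^\LL_A A^\vee \to A$ (say the zero map and a nonzero one) have inequivalent cones. So to run your argument you must in addition know that $\Ahatinfty$ \emph{as it is defined here}, namely the cone of the chain-level map $\Psi\colon C^*(A,A^\vee\otimes A)\to C^*(A,\Hom_\kk(A,A))$, becomes the cone of the canonical evaluation-type map under the smoothness identifications. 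That identification is not available at this point of the paper; it is a verification of exactly the same nature as the statement you are proving (and it is only extracted afterwards, feeding into \cref{thm:EulerCharacter}). As written, your proof is therefore either circular or silently defers the main content, namely matching $\Psi$ itself with the canonical map.

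The paper's proof avoids the detour through $A^!\otimes^\LL_A A^\vee$ altogether and compares $\Psi'$ with $\Psi$ directly: take the canonical quasi-isomorphism $R\colon C^*(A,A^\vee\otimes A)\otimes^\LL_A A \to C^*(A,A^\vee\otimes A)$ on sources and the natural quasi-isomorphism $\varphi\colon A \to C^*(A,\Hom_\kk(A,A))$ on targets, and exhibit an explicit homotopy $\varphi\circ\Psi' \simeq \Psi\circ R$; the only homotopical input is the element $\beta$ with $[d,\beta]$ equal to the $\eta$--$\co$ composition minus $\id_A$, which is precisely the role you anticipated for $\beta$. The resulting map of cones is then a quasi-isomorphism. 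If you wish to keep your route, you would have to carry out the analogous graphical comparison for $\Psi$ in addition to the one for $\Psi'$, i.e.\ do the same kind of work twice; the direct comparison is both shorter and stays within the chain-level models actually used in the paper.
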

\begin{proof}
The natural quasi-isomorphism of bimodules $\varphi: A \to C^*(A,\Hom_\kk(A, A))$ can be given explicitly by the diagram 
\[\tikzfig{
        \node at (0,1.2) {$A$};
        \node [bullet] (bottom) at (0,0) {};
        \node at (-1,-1.2) {$A$};
        \node at (1,-1.2) {$A$};
        \draw [->-] (0,1) to (bottom);
        \draw [->-] (1,-1) arc (0:90:1);
        \draw [->-] (0,0) arc (90:180:1);
        \draw [->-] (-0.2,-1.2) to (-0.2,-0.6);
        \draw [->-] (0,-1.2) to (0,-0.6);
        \draw [->-] (0.2,-1.2) to (0.2,-0.6);
}\]
Again, the three small arrows on the bottom just indicate schematically where the inputs are in the Hochschild cochain structure. In order to see that $\Ahatinfty'$ is quasi-isomorphic to $\Ahatinfty$, it suffices to exhibit a homotopy between the maps $\varphi \circ \Psi'$ and $\Psi \circ R$, where $R$ is the canonical quasi-isomorphism given by $\otimes^\LL_A A$. In diagrams, we need to find a homotopy
\[ \tikzfig{
    \node [vertex,inner sep=4pt] (beta) at (-1,2) {};
    \node [vertex] (ev) at (0,1) {$\ev$};
    \node [vertex] (eta) at (0,0) {$\eta$};
    \node [vertex] (coev) at (-1,0) {$\co$};
    \node [bullet] (left) at (-2,0) {};
    \node [bullet] (down) at (-1,-1.5) {};
    \node [bullet] (down2) at (-1,-2.5) {};
    \draw [->-,red,bend left=40] (beta) to (ev);
    \draw [->-,bend right=40] (beta) to (-2,1);
    \draw [->-] (-2,1) to (left);
    \draw [->-,bend right=45] (left) to (down);
    \draw [->-,bend left=45] (eta) to (down);
    \draw [->-] (down) to (down2);
    \draw [->-] (1,2) to (1,1);
    \draw [->-,bend left=45] (1,1) to (eta);
    \draw [->-,cyan] (coev) to (eta);
    \draw [->-] (coev) to (left);
    \draw [->-] (eta) to (ev);
    \draw [->-] (0,-3.5) arc (0:90:1);
    \draw [->-] (-1,-2.5) arc (90:180:1);
} \quad \simeq \quad \tikzfig{
    \node [vertex,inner sep=4pt] (beta) at (-1,3) {};
    \node [vertex] (ev) at (0,1) {$\ev$};
    \node [bullet] (mu) at (0,2) {};
    \draw [->-,red,bend left=40] (beta) to (mu);
    \draw [->-,red] (mu) to (ev);
    \draw [->-,bend left=40] (1,3) to (mu);
    \draw [->-,bend right=40] (beta) to (-2,2);
    \draw [->-] (-2,2) to (-2,0);
    \draw [->-] (0,0) to (ev);
}\]
for some appropriate orientation; we do this by using the element $\beta$ in \cref{app:AhatinftyBimodules}.
\end{proof}

\subsubsection{Triangle of Hochschild chain complexes}
Consider the distinguished triangle of complexes
\[ C_*(A, C^*(A,A^\vee \otimes A)) \xrightarrow{F} C_*(A,A) \to C_*(A,\Ahatinfty) \]
obtained from applying Hochschild chains to the distinguished triangle of bimodules defining $\Ahatinfty$. Since $A$ is smooth, we have quasi-isomorphisms of complexes
\[ C^*(A,A^\vee \otimes A) \simeq A^! \otimes^\LL_{A^e}(A^\vee \otimes A) \simeq A^! \otimes^\LL_A A^\vee \]
and also $C_*(A, A^! \otimes^\LL_A A^\vee) = A^! \otimes^\LL_{A^e} A^\vee \simeq C^*(A,A^\vee)$. Picking any inverse for these quasi-isomorphisms, we call the resulting map 
\[
F:C^*(A,A^\vee) \to C_*(A,A);\] then we must have a quasi-isomorphism $C^*(A, \Ahatinfty) \simeq \Cone(F)$. Note also that for any $A$, smooth or not, we have an isomorphism of complexes $C^*(A, A^\vee) \cong (C_*(A,A))^\vee$ induced by the canonical pairing
\[ \langle-,-\rangle: C^*(A, A^\vee) \otimes C_*(A,A) \to \kk .\]
 
\subsubsection{Comparison}
The following result is stated by Efimov in \cite[Subsection 10.2]{efimov2017categorical}, and also mentioned as a `folklore' lemma in \cite[Lemma 6.5]{ganatra2022rabinowitz}.
\begin{theorem}\label{thm:EulerCharacter}
    Let $A$ be smooth. The map $ ^\sharp E: C^*(A, A^\vee) \to C_*(A,A)$ defined by pairing with the first component of $E$, that is,
    \[  ^\sharp E(\varphi) = \langle \varphi, E' \rangle E'' \]
    (where we use Sweedler's notation $E = E' \otimes E''$) is homotopic to $F$. In other words, there is a quasi-isomorphism $C_*(A,\Ahatinfty) \simeq \Cone( ^\sharp E)$.
\end{theorem}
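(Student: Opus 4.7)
My approach is to bypass the original model of $\Ahatinfty$ and work throughout with the alternative model $\Ahatinfty'$ provided by \cref{prop:isoOfBimodules}, whose defining map $\Psi'$ is already explicitly built from the vertices $\co$ and $\eta$ used to define $E$. Since $\Ahatinfty\simeq\Ahatinfty'$ as $A$-bimodules, applying $C_*(A,-)$ to the cone triangle
\[
C^*(A,A^\vee\otimes A)\otimes^\LL_A A \xrightarrow{\ \Psi'\ } A \longrightarrow \Ahatinfty'
\]
yields a distinguished triangle of complexes, and it suffices to identify the induced map $C_*(A,\Psi')$, after an appropriate smoothness identification of the source, with $^\sharp E$ up to chain homotopy.

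The first step is to exhibit a quasi-isomorphism
\[
\Theta \colon C_*(A,\,C^*(A,A^\vee\otimes A)\otimes^\LL_A A)\ \xrightarrow{\simeq}\ C^*(A,A^\vee).
\]
Graphically, $\Theta$ is obtained by closing the outer $A$-bimodule loop of $C_*(A,-)$ using the coevaluation $\co$: the extra $A$-leg of an element of $C^*(A,A^\vee\otimes A)$ is contracted against one $A$-output of $\co$, and the remaining bar inputs are reorganized to produce a cochain valued in $A^\vee$. That this is a quasi-isomorphism is a standard smoothness argument, using $C^*(A,M)\simeq A^!\otimes^\LL_{A^e}M$ for perfect bimodules $M$ and cancellation of the auxiliary $\otimes^\LL_A A$.

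The second step is the graphical identification. I would now compose $\Theta^{-1}$ with $C_*(A,\Psi')$ and draw the resulting ribbon quiver. The diagram for $\Psi'$ already contains $\co$, $\eta$, and the evaluation vertex $\ev$; the Hochschild trace contributed by $C_*(A,-)$ and the coevaluation introduced by $\Theta^{-1}$ combine to form precisely the ``elbow'' configuration of \cref{def:ChernCharacter} defining $E$, with one additional $A$-leg through which the input cochain $\varphi\in C^*(A,A^\vee)\cong C_*(A,A)^\vee$ is paired via $\ev$. Reading off the output Hochschild chain then gives, term-by-term,
\[
\bigl(C_*(A,\Psi')\circ\Theta^{-1}\bigr)(\varphi)\ =\ \langle\varphi,E'\rangle\, E''\ =\ {}^{\sharp}\!E(\varphi),
\]
up to an explicit chain homotopy coming from the homotopy in \cref{prop:isoOfBimodules} between $\varphi\circ\Psi'$ and $\Psi\circ R$ and from the choice of quasi-inverses used to define $\Theta$.

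Finally, the comparison $\Cone(^\sharp E)\simeq C_*(A,\Cone(\Psi'))\simeq C_*(A,\Ahatinfty')\simeq C_*(A,\Ahatinfty)$ follows by the usual mapping-cone functoriality, yielding the stated quasi-isomorphism $C_*(A,\Ahatinfty)\simeq\Cone(^\sharp E)$. The main obstacle will be Step 2: verifying that the two graphical calculations agree \emph{on the nose} up to an explicit chain homotopy, rather than only up to homology. This amounts to careful bookkeeping of signs and orientations of the ribbon quivers and of the composite chain homotopies produced by $\Theta$ and by \cref{prop:isoOfBimodules}; the rest of the argument is essentially formal once these identifications are in place.
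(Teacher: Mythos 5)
Your plan follows essentially the same route as the paper's own proof: replace $\Ahatinfty$ by the model $\Ahatinfty'$ of \cref{prop:isoOfBimodules}, apply $C_*(A,-)$ to its defining triangle, identify the source with $C^*(A,A^\vee)$ via smoothness, and exhibit an explicit diagrammatic homotopy between the resulting map and ${}^{\sharp}E$ — this is exactly the comparison the paper performs, with the homotopy written out in \cref{app:relationToE}. The only substantive work you defer in your Step 2 is precisely that appendix computation, and the sign/orientation bookkeeping you flag as the main obstacle is in fact not needed here, since the statement is a quasi-isomorphism of cones and is therefore insensitive to an overall sign, as the paper notes in its appendix conventions.
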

\begin{proof}
    Let us now apply $C_*(A,-)$ to the distinguished triangle given by $\Psi'$, to get a distinguished triangle of complexes
    \[ C_*(A,C^*(A,A^\vee \otimes A))\otimes^\LL_A A = C^*(A, A^\vee \otimes^\LL_A A \otimes^\LL_A A) \xrightarrow{F'} C_*(A,A) \to C_*(A,\Ahatinfty') \]
    whose map $F'$ is given by the diagram
    \[\tikzfig{
        \node (beta) at (-1,2) {};
        \node [vertex] (ev) at (0,1) {$\ev$};
        \node [vertex] (eta) at (0,0) {$\eta$};
        \node [vertex] (coev) at (-1,0) {$\co$};
        \node [bullet] (left) at (-2,0) {};
        \node [bullet] (down) at (-1,-1.5) {};
        \node [circ] (out) at (-1,-2.5) {};
        \draw [->-,red,bend left=40] (beta) to (ev);
        \draw [->-,bend right=40] (beta) to (-2,1);
        \draw [->-] (-2,1) to (left);
        \draw [->-,bend right=45] (left) to (down);
        \draw [->-,bend left=45] (eta) to (down);
        \draw [->-] (down) to (out);
        \draw [->-] (1,2) to (1,1);
        \draw [->-,bend left=45] (1,1) to (eta);
        \draw [->-,cyan] (coev) to (eta);
        \draw [->-] (coev) to (left);
        \draw [->-] (eta) to (ev);
        \draw [->-] (beta) arc (180:0:1);
        \node [vertex,fill=white,inner sep=4pt] at (-1,2) {};
    }\]

    Now we look at the map given by mapping $C^*(A, A^\vee\otimes^\LL_A A \otimes^\LL_A A) \xrightarrow{\cong} C^*(A,A^\vee)$ and then pairing with the first term in $E$. 
    \[\tikzfig{
        \node [bullet] (mu1) at (0,0) {};
        \node [bullet] (mu2) at (0.8,0.8) {};
        \node [vertex] (gamma) at (0,1.6) {};
        \node [vertex] (co) at (0,3) {$\co$};
        \node [vertex] (ev) at (0,-1) {$\ev$};
        \node [vertex] (eta) at (0,-2) {$\eta$};
        \node [circ] (out) at (0,-3) {};
        \draw [->-,cyan] (co) arc (90:270:2.5);
        \draw [->-] (co) arc (90:-90:2.5);
        \draw [->-,red] (gamma) arc (90:270:0.8);
        \draw [->-] (mu2) arc (0:-90:0.8);
        \draw [->-] (gamma) arc (90:0:0.8);
        \draw [->-] (gamma) arc (180:270:0.8);
        \draw [->-] (eta) to (ev);
        \draw [->-] (eta) to (out);
        \draw [->-,red] (mu1) to (ev);
        \node [vertex,fill=white,inner sep=4pt] (gamma) at (0,1.6) {};
        \node [vertex,fill=white] (eta) at (0,-2) {$\eta$};
        \node [vertex,fill=white] (co) at (0,3) {$\co$};
    }\]
    To prove the statement we must give a homotopy between the two diagrams above; we do that in \cref{app:relationToE}.
\end{proof}

\section{Pre-CY structures and product on their cone}\label{sec:preCYandProduct}
We now address one of the main points of this paper, which is that the formalism of pre-CY structures developed in \cite{kontsevich2021precalabiyau} as we recalled in \cref{sec:graphicalCalculus} can be used to explicitly describe a product on $C_*(A,\Ahatinfty)$ when that structure is non-degenerate. The graphical calculus can then be used to show the desired properties of the induced product on $\HH(A,\Ahatinfty)$. 

More generally, we will see that any (3-truncated) pre-CY structure $m = \mu + \alpha + \tau$, even if $\alpha$ is degenerate, already gives a product with those properties, but on $C_*(A,M_\alpha)$ for another bimodule $M_\alpha$. As suggested by the notation, this bimodule depends on $\alpha$; however, we will show that when $\alpha$ is non-degenerate it is quasi-isomorphic to $\Ahatinfty$.

\subsection{The cone bimodule of a pre-CY category}\label{sec:coneoff}
Let $(A,m=\mu+\alpha + \tau +\sigma)$ be a 4-truncated pre-CY category of dimension $n$. That is, we have $\alpha \in C^n_{(2)}(A), \tau \in C^{2n-2}_{(3)}(A)$ and $\sigma \in C^{3n-4}_{(4)}(A)$ such that
\[ \mu \circ \mu = 0, \quad [\mu,\alpha] = 0, \quad \alpha \circ \alpha + [\mu,\tau] = 0. \]
\subsubsection{Definition}\label{def:morphismFalpha}
We use the data of $\alpha$ to give an $A_\infty$-morphism of bimodules
\begin{align}\label{align:falpha} f_\alpha: A^\vee[-n] \to A, \qquad f_\alpha = \tikzfig{
    \node [vertex] (left) at (-1,0) {$\ev$};
    \node [vertex] (right) at (0,0) {$\alpha$};
    \draw [->-,red] (-2,0) to (left);
    \draw [->-] (right) to (left);
    \draw [->-] (right) to (1,0);
}\end{align}
Note that the diagram above encapsulates the data of many maps of complexes, since $\alpha$ can take any number of $A[1]$-arrows on each side.

\begin{remark}
    We would like to point out a possible source of confusion, which is that the nondegeneracy of $\alpha$, in the sense that the map $A^! \to A$ it induces is a quasi-isomorphism, does not imply that $f_\alpha:A^\vee \to A$ will be a quasi-isomorphism. In fact, as we will see in \cref{sec:examples}, $\alpha$ can be non-degenerate at the same time that $f_\alpha$ is nullhomotopic.
\end{remark}

\begin{definition}\label{definition:calM}
    The bimodule $M_\alpha$ is the cone of the map $f_\alpha$ above, that is, it is given by the bimodule
    \[ M_\alpha = (B\overline{A}[1] \otimes A^\vee[-n] \otimes B\overline{A}[1])[1] \oplus A \]
    with the differential given by the usual differentials on the two summands plus $f_\alpha$.
\end{definition}

\subsubsection{Extension of pre-CY structure}\label{subsubsec:extension}
We now describe how the data of the pre-CY structure on $A$ canonically extends to certain operations on $M_{\alpha}$. Let $(A,m=\mu + \alpha + \tau + \sigma)$ be a 4-truncated pre-CY structure, and let us denote $M = M_{\alpha}$ for the cone bimodule.
\begin{proposition}\label{prop:extension}
   Let $(A,m=\mu + \alpha + \tau + \sigma)$ be a 4-truncated pre-CY of dimension $n$. Then there are canonically defined vertices
    \[\tikzfig{
        \node [vertex] (mu) at (0,0) {$\mu^2_M$};
        \node (topleft) at (-1,1.2) {$M$};
        \node (topright) at (1,1.2) {$M$};
        \node (bottom) at (0,-1.5) {$M$};
        \draw [->-] (topleft) to (mu);
        \draw [->-] (topright) to (mu);
        \draw [->-] (mu) to (bottom);
    } \qquad \tikzfig{
        \node [vertex] (mu) at (0,0) {$\mu^3_M$};
        \node (topleft) at (-1,1.2) {$M$};
        \node (topcenter) at (0,1.2) {$M$};
        \node (topright) at (1,1.2) {$M$};
        \node (bottom) at (0,-1.5) {$M$};
        \draw [->-] (topleft) to (mu);
        \draw [->-] (topcenter) to (mu);
        \draw [->-] (topright) to (mu);
        \draw [->-] (mu) to (bottom);
    }\]
    of degrees zero and one, where $\mu^2_M$ is closed, extends the product $\mu^2$ on $A$, and $\mu^3_M$ satisfies the equation
    \[ [d,\mu^3_M] = \mu^2_M(\mu^2_M(-,-),-) -\mu^2_M(-,\mu^2_M(-,-))\]
    that is, $\mu^2_M$ is associative up to the boundary of $\mu^3_M$, and also a vertex
    \[\tikzfig{
        \node [vertex] (center) at (0,0) {$\psi$};
        \node (top) at (0,1.5) {$M$};
        \node (left) at (-1,0.5) {$M$};
        \node (right) at (1,0.5) {$M$};
        \draw [->-] (top) to (center);
        \draw [->-] (center) to (-1.5,0);
        \draw [->-] (center) to (1.5,0);
    }\]
    satisfying the equation
    \[ d\psi = \tikzfig{
        \node [vertex] (center) at (-0.4,0) {$\alpha$};
        \node [vertex] (mu) at (1,0) {$\mu^2_M$};
        \node (top) at (0,1.5) {};
        \draw [->-] (0.5,1) to (mu);
        \draw [->-] (center) to (-1.2,0);
        \draw [->-] (center) to (mu);
        \draw [->-] (mu) to (2.2,0);
    } - \tikzfig{
        \node [vertex] (mu) at (0,0) {$\mu^2_M$};
        \node [vertex] (center) at (1.4,0) {$\alpha$};
        \node (top) at (0,1.5) {};
        \draw [->-] (0.5,1) to (mu);
        \draw [->-] (center) to (2.2,0);
        \draw [->-] (center) to (mu);
        \draw [->-] (mu) to (-1.2,0);
    }\]
\end{proposition}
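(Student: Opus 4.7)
The plan is to construct $\mu^2_M$, $\mu^3_M$ and $\psi$ explicitly as sums of oriented ribbon quivers whose internal vertices are populated by the components $\mu,\alpha,\tau,\sigma$ of the pre-CY structure together with the canonical evaluation vertex $\ev\colon A\otimes^{\mathbb{L}}_{A^e}A^\vee\to \kk$, and then verify the stated equations by reducing each to a component of the pre-CY identity $m\circ m=0$.

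Since $M_\alpha$ is the cone of $f_\alpha\colon A^\vee[-n]\to A$ defined in \eqref{align:falpha}, any bimodule morphism $X\to M_\alpha$ is the data of a map to $A$ together with a map to $A^\vee[-n]$ plus a homotopy witnessing compatibility with $f_\alpha$. Accordingly, I would define $\mu^2_M\colon M_\alpha\otimes_A M_\alpha\to M_\alpha$ componentwise by splitting the source into the four summands according to whether each factor lies in the $A$-part or the $A^\vee[-n]$-part of $M_\alpha$. On the $A\otimes A$ component I take the $A$-component of $\mu^2_M$ to be the product $\mu^2$ of $A$ and the $A^\vee[-n]$-component to vanish, so that $\mu^2_M$ extends $\mu^2$. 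On the mixed components I use the $A$-bimodule structure on $A^\vee$ expressed graphically via $\ev$ contracted against $\mu^k$-insertions. On the $A^\vee\otimes A^\vee$ component I use a ribbon quiver built from $\tau$ whose outputs are contracted by $\ev$ against the two $A^\vee$ inputs while the remaining outgoing $A$-leg supplies the output; alternatively, some of these contributions may be packaged using $\alpha$ and $f_\alpha$. The closedness equation $d\mu^2_M=0$ then reduces, component by component, to $\mu\circ\mu=0$, to the bimodule axioms for $A^\vee$, and in the nontrivial component precisely to the order-$2$ part of the pre-CY equation $[\mu,\alpha]=0$.

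I would construct $\mu^3_M$ and $\psi$ in the same spirit, decomposing by bimodule components of source and target and inserting the higher pre-CY elements $\tau$ and $\sigma$ at vertices of corresponding valence. The associator equation $[d,\mu^3_M]=\mu^2_M(\mu^2_M(-,-),-)-\mu^2_M(-,\mu^2_M(-,-))$ then reduces on the various components to the $A_\infty$-axioms for $\mu$, to associativity of the $A$-action on $A^\vee$, and in the nontrivial component to $\alpha\circ\alpha+[\mu,\tau]=0$, which is exactly the order-$3$ part of $m\circ m=0$. The vertex $\psi$ has one $M$-input and two $M$-outputs, and its defining equation is of Leibniz type, expressing that $\alpha$ (seen as a two-output operation on $M$) commutes with $\mu^2_M$ up to the boundary of $\psi$; to produce it one uses $\sigma$ at the leading vertex, with lower-order corrections built from $\tau$, $\alpha$ and $\mu$, and the equation reduces to the order-$4$ part of $m\circ m=0$ involving $\sigma$, together with terms already controlled by the lower-order relations.

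The main technical obstacle is the orientation-and-sign bookkeeping. For each of the finitely many components (indexed by the $2^k$ choices of $A$ versus $A^\vee$ on each incoming and outgoing leg), one must select the correct ribbon quiver, equip it with an orientation given by a total ordering of edges, and verify that the differential on the sum of ribbon quivers produces exactly the prescribed right-hand side. The verification strategy is to exhaust these components case by case and match each piece of the differential with the corresponding term of the truncated pre-CY identity, using that the $\bullet$-vertices in the graphical calculus compute $[\mu,-]$ and that the higher necklace products $\alpha\circ\alpha$, $[\mu,\tau]$, $[\alpha,\tau]$ arise geometrically from pairs of adjacent ribbon-graph vertices along a shared edge.
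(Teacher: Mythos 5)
Your overall strategy coincides with the paper's: decompose $M_\alpha=(B\overline{A}[1]\otimes A^\vee[1-n]\otimes B\overline{A}[1])\oplus A$ into its $A$- and $A^\vee$-summands, define each of the $2^k$ components of $\mu^2_M$, $\mu^3_M$ and $\psi$ by a ribbon quiver whose internal vertices carry $\mu,\alpha,\tau,\sigma$ and the evaluation vertex $\ev$, and verify the stated identities component by component against the truncated equation $m\circ m=0$. That is exactly how the proof of \cref{prop:extension} proceeds.

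However, your accounting of which pre-CY relation verifies which identity is off in three concrete places, and you should fix it before carrying out the case analysis. First, the mixed components of $\mu^2_M$ with output in $A$ (i.e.\ $\mu^2_{AZ,A}$ and $\mu^2_{ZA,A}$) are \emph{not} given by the bimodule structure of $A^\vee$ contracted against $\mu$-insertions; they are built from $\alpha$ with one output fed into $\ev$ against the $A^\vee$-input (essentially $f_\alpha$ with an extra $A$-input on $\alpha$). These terms are what make the product compatible with the cone differential, and omitting them leaves $[d,\mu^2_M]\neq 0$. Second, closedness of $\mu^2_M$ does not reduce only to $[\mu,\alpha]=0$: the component $\mu^2_{ZZ,A}$ is built from $\tau$, and the cone differential $f_\alpha$ applied to the $\alpha$-built components produces $\alpha\circ\alpha$-terms there, so the $Z\otimes Z\to A$ piece of $d\mu^2_M=0$ is precisely the order-three relation $\alpha\circ\alpha+[\mu,\tau]=0$. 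Third, $\sigma$ enters only $\mu^3_M$ (whose associator equation is where the order-four relation is consumed); the vertex $\psi$ is built solely from $\alpha$, $\mu$, $\tau$ and $\ev$, and its defining equation follows from the order-two and order-three relations together with the formula for $\mu^2_M$ — a $\sigma$-vertex at the lead of $\psi$ does not have the right arity for a one-input, two-output operation on $M$ in any component. With these corrections your case-by-case verification goes through as in the paper.
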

If one does not want to phrase this in the graphical language of vertices: $\mu^2_M$ is a map of $A_\infty$-bimodules
\[ \mu^2_M: M \otimes^\LL_A M \to M \]
while $\mu^3_M$ is a `nonclosed $A_\infty$-morphism' (a pre-morphism of bimodules in the language of \cite{ganatra2013symplectic})
\[ \mu^3_M: M \otimes^\LL_A M \otimes^\LL_A M \rightsquigarrow M [-1] \]
Likewise, $\psi$ is a `nonclosed' $A_\infty$-morphism'
\[ \psi: M \rightsquigarrow C^*(A,M\otimes M) \]

\begin{proof}
    To unclutter notation, let us denote $Z = A^\vee[1-n]$, $f = f_\alpha$, and $\tau = m_{(3)}$ (the part of the pre-CY structure on $A$ with 3 outputs). Since $M = (B\overline{A}[1]\otimes Z \otimes B\overline{A}[1]) \oplus A$, we have to describe 8 types of vertices, with inputs either in $A$ or in $(B\overline{A}[1]\otimes Z \otimes B\overline{A}[1])$, which we abbreviate by subscripts $A,Z$, respectively. We set:
    \begin{itemize}
        \item $\mu^2_{AA} = \mu^2$ (usual multiplication on $A$)
        \item $\mu^2_{AA,Z} = 0$
        \item $\mu^2_{AZ,Z}$ and $\mu^2_{ZA,Z}$ are the $A$-actions on the bimodule $Z$.
        \item $\mu^2_{AZ,A} = \tikzfig{
            \node [vertex] (ev) at (0.5,0.3) {$\ev$};
            \node [vertex] (alpha) at (0,-0.4) {$\alpha$};
            \draw [->-] (-1,1) to (alpha);
            \draw [->-,red] (1,1) to (ev);
            \draw [->-=0.8] (alpha) to (ev);
            \draw [->-] (alpha) to (0,-1.2);
        }, \qquad \mu^2_{ZA} = \tikzfig{
            \node [vertex] (ev) at (-0.5,0.3) {$\ev$};
            \node [vertex] (alpha) at (0,-0.4) {$\alpha$};
            \draw [->-,red] (-1,1) to (ev);
            \draw [->-] (1,1) to (alpha);
            \draw [->-=0.7] (alpha) to (ev);
            \draw [->-] (alpha) to (0,-1.2);
        }$
        \item $\mu^2_{ZZ,Z} = \tikzfig{
            \node [vertex] (evr) at (0.5,0.3) {$\ev$};
            \node [vertex] (evl) at (-0.5,0.3) {$\ev$};
            \node [vertex] (alpha) at (0,-0.5) {$\alpha$};
            \draw [->-,red] (-1,1) to (evl);
            \draw [->-,red] (1,1) to (evr);
            \draw [->-=0.7] (alpha) to (evr);
            \draw [->-=0.7] (alpha) to (evl);
            \draw [->-] (0,-1.2) to (alpha);
        }$, by which we mean that the output is an element of $Z$ which evaluates on an element of $A$ by plugging it into the bottom arrow.
        \item $\mu^2_{ZZ,A} = \tikzfig{
            \node [vertex] (evr) at (0.5,0.3) {$\ev$};
            \node [vertex] (evl) at (-0.5,0.3) {$\ev$};
            \node [vertex] (alpha) at (0,-0.5) {$\tau$};
            \draw [->-,red] (-1,1) to (evl);
            \draw [->-,red] (1,1) to (evr);
            \draw [->-=0.7] (alpha) to (evr);
            \draw [->-=0.7] (alpha) to (evl);
            \draw [->-] (alpha) to (0,-1.2);
        }$ 
    \end{itemize}
    One can check that summing all the terms above gives an operation
    \[ \mu^2_M: B\overline{A}[1] \otimes M \otimes B\overline{A}[1] \otimes M \otimes B\overline{A}[1] \to M \]
    which is closed, that is, a map of complexes giving a morphism in the category of bimodules of degree zero.

    In other words, the terms in $\mu^2_M$ are systematically obtained from diagrams with the vertices $\alpha = m_{(2)},\tau=m_{(3)}$, by dualizing the appropriate arrows. The formulas for $\mu^3_M$ are analogous, and involve $\alpha,\tau$ and $\sigma$. It remains to construct the element $\psi$, with one $M$ input and two $M$ outputs. Again, there are 8 terms to be specified.
    \begin{itemize}
        \item $\psi_{AA} = \tikzfig{
            \node [vertex] (center) at (0,0) {$\alpha$};
            \node (top) at (0,1.5) {};
            \draw [->-] (top) to (center);
            \draw [->-] (center) to (-1.5,0);
            \draw [->-] (center) to (1.5,0);
        }$
        \item $\psi_{AZ},\psi_{A,ZA},\psi_{A,ZZ},\psi_{Z,AZ}$ and $\psi_{Z,ZA}$ are all zero.
        \item $\psi_{Z,AA} = \tikzfig{
            \node [bullet] (center) at (0,0) {};
            \node [vertex] (top) at (0,1) {$\ev$};
            \draw [->-,red] (0,2) to (top);
            \draw [->-] (center) to (top);
            \draw [->-] (-1.5,0) to (center);
            \draw [->-] (1.5,0) to (center);
            \node at (0,-0.5) {};
        }$, interpreting, as we did for $\mu^2_{ZZ,Z}$ above, the bottom $A$ inputs as $Z \otimes Z$ outputs.
        \item $\psi_{Z,AA} = \tikzfig{
            \node [vertex] (center) at (0,0) {$\tau$};
            \node [vertex] (top) at (0,1) {$\ev$};
            \draw [->-,red] (0,2) to (top);
            \draw [->-] (center) to (top);
            \draw [->-] (center) to (-1.5,0);
            \draw [->-] (center) to (1.5,0);
        }$
    \end{itemize}
    It follows from the equations $[\mu,\alpha] = 0, [\mu,\tau] = \alpha \circ \alpha$, and from the previous definition of $\mu^2_M$, that these formulas give the desired equation for $d\psi$.
\end{proof}

\subsection{Extending products on chains and cochains}\label{sec:extendingproducts}
We continue to assume that $(A,m=\mu+\alpha+\tau+\sigma)$ is a 4-truncated pre-CY category of dimension $n$. We will now use the vertices defined in \cref{prop:extension} in order to construct a product structure on $C_*(A,M_{\alpha})$, and prove some of its properties. Here, $M_{\alpha}$ is given in \cref{definition:calM}. From now on, we make the convention that orange arrows carry the bimodule $M$, and all orange dot vertices represent the structure maps $\mu_M$.
\begin{definition}\label{def:productOnChains}
    Define a product $\pi_{M_\alpha}: C_*(A,M_\alpha) \otimes C_*(A,M_\alpha) \to C_*(A,M_\alpha)[n]$  by 
    \[ \pi_{M_\alpha}(x_1,x_2) = \Pi(\alpha,x_1,x_2), \]
    where $\Pi$ is the map $C^*_{(2)}(A) \otimes C_*(A,M_\alpha) \otimes C_*(A,M_\alpha) \to C_*(A,M_\alpha)$ given by the oriented ribbon quiver
    \[\tikzfig{
        \draw [->-] (0,1.5) arc (90:0:1.5);
        \draw [->-,orange] (1.5,0) arc (0:-90:1.5);
        \draw [-w-] (0,1.5) arc (90:180:1.5);
        \draw [->-,orange] (-1.5,0) arc (180:270:1.5);
        \node [inner sep=0pt,orange] (v1) at (-3,0) {$\times$};
        \node [bullet,orange] (v4) at (1.5,0) {};
        \node [vertex,fill=white,inner sep=4pt] (v3) at (0,1.5) {};
        \node [inner sep=0pt,orange] (v2) at (0,0) {$\times$};
        \node [bullet,orange] (v5) at (0,-1.5) {};
        \node [circ,orange] (v6) at (0,-3) {};
        \node [bullet,orange] (v7) at (-1.5,0) {};
        \draw [->-,shorten <=-3.5pt,orange] (v1) to (v7);
        \draw [->-,shorten <=-3.5pt,orange] (v2) to (v4);
        \draw [->-,orange] (v5) to (v6);
        \node at (0,-0.4) {II};
        \node at (-3,-0.4) {I};
        \node at (-1.3,1.3) {$1$};
        \node at (1.3,1.3) {$2$};;
        \node at (-1.3,-1.3) {$6$};
        \node at (1.3,-1.3) {$7$};
        \node at (-2.25,-0.4) {$3$};
        \node at (0.75,-0.4) {$4$};
        \node at (0.5,-2.25) {$8$};
    }\ ,\ (7\ 6\ 5\ 4\ 3\ 2\ 1).\]
\end{definition}

\begin{proposition}\label{prop:associativityProductCone}
    The chain-level product $\pi_{M_\alpha}$ is a map of complexes inducing an associative product of degree $-n$ on Hochschild homology $HH_*(A,M_\alpha)$. Furthermore, $\pi_{M_\alpha}$ extends the chain-level product $\pi$ on $C_*(A,A)$ described in \cref{align:productonchains}.
\end{proposition}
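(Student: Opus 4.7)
The statement splits into three claims: (i) $\pi_{M_\alpha}$ is a chain map, (ii) it extends the product $\pi$ from $C_*(A,A)$, and (iii) the induced operation on $HH_*(A, M_\alpha)$ is associative. I will address them in that order, since the first two are essentially verifications from the graphical calculus while (iii) requires using the higher data $\tau = m_{(3)}$ of the pre-CY structure.

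For (i), one applies the differential on the oriented-ribbon-quiver complex directly to the diagram defining $\pi_{M_\alpha}$. The contributions decompose into: (a) Hochschild differentials acting on the two orange $\times$ inputs and on the $\circ$ output, which are absorbed by the Hochschild differentials in the source and target; (b) the operadic differential $[\mu,-]$ applied to the top $\alpha$-vertex, which vanishes because $\alpha = m_{(2)}$ satisfies $[\mu,\alpha] = 0$ by the $2$-truncated pre-CY equation; and (c) the operadic differentials at the four $\mu^2_M$-vertices, all of which vanish by the closedness of $\mu^2_M$ established in \cref{prop:extension}. Hence the map is closed. For (ii), when both arguments lie in the summand $C_*(A,A) \subseteq C_*(A,M_\alpha)$ all orange edges carry factors in $A$, so every $\mu^2_M$-vertex reduces to its component $\mu^2_{AA,A} = \mu^2$ (see the case analysis in the proof of \cref{prop:extension}); the resulting oriented ribbon quiver is manifestly the one defining $\pi$ in \cref{align:productonchains}.

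The main content is the associativity in (iii), which I intend to prove by exhibiting an explicit chain homotopy
\[ h \colon C_*(A, M_\alpha)^{\otimes 3} \to C_*(A,M_\alpha)[n+1] \]
with $[d,h] = \pi_{M_\alpha} \circ (\pi_{M_\alpha} \otimes \id) - \pi_{M_\alpha} \circ (\id \otimes \pi_{M_\alpha})$. Both iterated products are represented by ribbon quivers with two $\alpha$-vertices, six $\mu^2_M$-vertices and three $\times$-inputs; they differ in the nesting of the $\mu^2_M$-vertices and in the relative placement of the two $\alpha$-vertices along the ``seam''. The homotopy $h$ will be the sum of two families of diagrams: one family replaces an adjacent pair of $\mu^2_M$-vertices by a single $\mu^3_M$-vertex, using the relation
\[ [d, \mu^3_M] = \mu^2_M(\mu^2_M(-,-),-) - \mu^2_M(-,\mu^2_M(-,-)) \]
of \cref{prop:extension} to reconcile the two parenthesisations of the $\mu^2_M$-multiplications; the second family replaces the two adjacent $\alpha$-vertices by a single $\tau$-vertex, using the $3$-truncated pre-CY identity $[\mu,\tau] + \alpha \circ \alpha = 0$ to produce exactly the needed boundary. (The vertex $\psi$ from \cref{prop:extension} enters when an $\alpha$ and a $\mu^2_M$ cross past each other during the rearrangement.) Once $h$ has been written down, the induced product on homology is associative.

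The hard part will be bookkeeping: one must specify orientations on all the ribbon quivers involved, check that the signs obtained from the permutations of edge orderings in the expansion of $[d,h]$ match those coming from the relations for $\mu^3_M$, $\psi$ and $\tau$, and verify that all unwanted terms cancel in pairs. This is conceptually straightforward — the $A_\infty$/pre-CY relations supply exactly the needed cancellations — but requires drawing out the complete set of diagrams for the two iterated pair-of-pants compositions and matching them against the differential of $h$. Note that this argument uses only the $3$-truncated part $\mu + \alpha + \tau$ of the pre-CY structure; the component $\sigma$ is not needed for associativity, but will appear in later compatibility statements.
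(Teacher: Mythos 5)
Your plan follows the paper's proof: the chain-map and extension claims are handled exactly as in the paper (closedness of $\alpha$ and $\mu^2_{M_\alpha}$, and $\mu^2_{AA,Z}=0$ with both $\alpha$-outputs landing in $A \hookrightarrow M_\alpha$), and associativity is likewise proved by exhibiting an explicit homotopy in the graphical calculus built from the vertices of \cref{prop:extension} — the paper's homotopy in \cref{app:associativityCone} is a single diagram with the $\psi$-vertex inserted at the one-$M$-in/two-$M$-out slot, together with the (left implicit) $\mu^3_{M_\alpha}$-type diagrams, which is essentially the combination of families you describe. The one slip is your closing remark that $\sigma$ is not needed: your homotopy relies on $\mu^3_{M_\alpha}$, and in the paper that vertex is constructed from $\alpha$, $\tau$ \emph{and} $\sigma$, which is exactly why the standing hypothesis in this section is a $4$-truncated pre-CY structure.
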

\begin{proof}
    The fact that $\pi_{M_\alpha}$ is a map of complexes follows from $\alpha$ and $\mu^2_{M_\alpha}$ being closed. To show that the induced map on Hochschild homology is associative we use the definition
    \begin{align*}
		\pi_{M_\alpha}(\pi_{M_\alpha}(x_1,x_2),x_3) &= \Pi(\alpha,\Pi(\alpha,x_1,x_2),x_3)), \\
    	(-1)^{n\deg(x_1)}\pi_{M_\alpha}(x_1,\pi_{M_\alpha}(x_2,x_3)) &= (-1)^{n\deg(x_1)}\Pi(\alpha,x_1,\Pi(\alpha,x_2,x_3))
	\end{align*}
    and draw the diagrams giving the chain-level expressions for these two maps $C^*_{(2)}(A)^{\otimes 2} \otimes (C_*(A,M_{\alpha}))^{\otimes 3} \to C_*(A,M_\alpha)$. We then find a homotopy between them, which we present in \cref{app:associativityCone}.

    The product $\pi_{M_\alpha}$ extends the product on $C_*(A,A)$ because $\mu^2_{AA,Z} = 0$ and the two lines coming out of $\alpha$ are in the image of $A \hookrightarrow M_\alpha$. 
    
\end{proof}

\subsubsection{The cup product}
We note that the vertex $\mu^2_{M_\alpha}$ also gives a product on Hochschild cohomology $C^*(A,M_\alpha)$, analogous to the usual cup product on $C^*(A,A)$.

\begin{definition}
    The chain-level cup product $\underset{M_\alpha}{\smile}: C^*(A,M_\alpha) \otimes C^*(A,M_\alpha) \to C^*(A,M_\alpha)$ is 
     defined by the following diagram
    \[\tikzfig{
	\draw (0,0.7) circle (1.2);
	\node [vertex] (a) at (-0.6,1) {I};
	\node [vertex] (b) at (0.6,1) {II};
	\node [bullet,orange] (c) at (0,0.3) {};
	\draw [->-,orange] (a) to (c);
	\draw [->-,orange] (b) to (c);
	\draw [->-,orange] (c) to (0,-0.5);
        \node at (-0.5,0.4) {$1$};
        \node at (0.5,0.4) {$2$};
        \node at (0.3,-0.1) {$3$};
    }\ ,\ (3\ 2\ 1) \]
    This is a map of complexes and we denote the induced map on cohomology equally by $\underset{M_\alpha}{\smile}$.
\end{definition}

It follows from the definition of $\mu^2_{M_\alpha}$ that the cup product above extends the cup product of ordinary Hochschild cochains $\smile:C_*(A,A)\otimes C_*(A,A) \to C_*(A,A)$.

\begin{proposition}
    The cohomology cup product $\underset{M_\alpha}{\smile}: HH^*(A,M_\alpha)\otimes HH^*(A,M_\alpha) \to HH^*(A,M_\alpha)$ is associative.
\end{proposition}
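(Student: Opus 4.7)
The plan is to mimic the standard argument that shows the ordinary Hochschild cup product is associative on cohomology despite not being strictly associative on cochains. The chain-level product $\underset{M_\alpha}{\smile}$ fails to be strictly associative precisely because $\mu^2_{M_\alpha}$ is itself only associative up to the homotopy $\mu^3_{M_\alpha}$ supplied by \cref{prop:extension}, and this same $\mu^3_{M_\alpha}$ will produce the required cochain-level homotopy witnessing associativity on cohomology.

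First, I would write out the two chain-level iterated cup products $(\varphi_1\underset{M_\alpha}{\smile}\varphi_2)\underset{M_\alpha}{\smile}\varphi_3$ and $\varphi_1\underset{M_\alpha}{\smile}(\varphi_2\underset{M_\alpha}{\smile}\varphi_3)$ as the two oriented ribbon diagrams obtained by composing two copies of the defining diagram of $\underset{M_\alpha}{\smile}$. Each is a disc with three $\varphi_i$-vertices on the boundary and two internal $\mu^2_{M_\alpha}$ vertices along the $M_\alpha$-colored arrows; the two diagrams differ only in the order in which the two $\mu^2_{M_\alpha}$ vertices are nested. Their difference at the cochain level is therefore the diagram obtained by replacing the two nested $\mu^2_{M_\alpha}$'s by the associator
\[ \mu^2_{M_\alpha}(\mu^2_{M_\alpha}(-,-),-) - \mu^2_{M_\alpha}(-,\mu^2_{M_\alpha}(-,-)). \]

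Next, I would define the candidate cochain homotopy $h \colon C^*(A,M_\alpha)^{\otimes 3} \to C^*(A,M_\alpha)[1]$ by the oriented ribbon diagram
\[\tikzfig{
	\draw (0,0.7) circle (1.2);
	\node [vertex] (a) at (-0.7,1) {I};
	\node [vertex] (b) at (0,1) {II};
	\node [vertex] (c) at (0.7,1) {III};
	\node [vertex] (m) at (0,0.3) {$\mu^3_{M_\alpha}$};
	\draw [->-,orange] (a) to (m);
	\draw [->-,orange] (b) to (m);
	\draw [->-,orange] (c) to (m);
	\draw [->-,orange] (m) to (0,-0.5);
    }\ ,\ (4\ 3\ 2\ 1)\]
that is, feeding the three Hochschild cochains into a single $\mu^3_{M_\alpha}$ vertex with appropriate orientation. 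Applying the graphical differential and using the defining relation
\[ [d,\mu^3_{M_\alpha}] = \mu^2_{M_\alpha}(\mu^2_{M_\alpha}(-,-),-) - \mu^2_{M_\alpha}(-,\mu^2_{M_\alpha}(-,-)) \]
from \cref{prop:extension}, together with the fact that the $\varphi_i$'s contribute their own Hochschild differentials in the standard way, one obtains exactly
\[ [d,h](\varphi_1,\varphi_2,\varphi_3) = (\varphi_1\underset{M_\alpha}{\smile}\varphi_2)\underset{M_\alpha}{\smile}\varphi_3 - \varphi_1\underset{M_\alpha}{\smile}(\varphi_2\underset{M_\alpha}{\smile}\varphi_3), \]
up to the standard Koszul signs coming from the chosen orientation.

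The only real bookkeeping step is to verify that the signs produced by the graphical differential on the $\mu^3_{M_\alpha}$-diagram match those produced by the iterated $\mu^2_{M_\alpha}$-diagrams; this is a routine orientation comparison of the kind carried out throughout \cref{sec:graphicalCalculus}, and is the only place I expect any delicacy. Once this is done, passing to cohomology kills $[d,h]$ and gives the associativity of $\underset{M_\alpha}{\smile}$ on $HH^*(A,M_\alpha)$.
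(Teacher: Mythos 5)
Your proposal is correct and is essentially the paper's own argument: the paper's proof simply invokes the existence of $\mu^3_{M_\alpha}$ and the relation $[d,\mu^3_{M_\alpha}] = \mu^2_{M_\alpha}(\mu^2_{M_\alpha}(-,-),-) - \mu^2_{M_\alpha}(-,\mu^2_{M_\alpha}(-,-))$, and your explicit homotopy diagram with the three cochains feeding into a single $\mu^3_{M_\alpha}$ vertex is exactly the witness that argument has in mind.
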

\begin{proof}
    The homotopy between the corresponding diagrams follows from the existence of $\mu^3_{M_\alpha}$ and the equation it satisfies together with $\mu^2_{M_\alpha}$.
\end{proof}

In general, this cup product is not commutative, but it does satisfy the following compatibility with the product $\pi_{M_\alpha}$ in  \cref{def:productOnChains} expressed in terms of the map of complexes \[g^{M_\alpha}_\alpha: C_*(A,M_\alpha) \to C^*(A,M_\alpha)[n]\] constructed in \cref{ex:galpha}. For simplicity, we write $g^{M_\alpha}_\alpha$ as $g_\alpha$ below. 
\begin{proposition}\label{prop:cupProductCompatibility}
    There is a chain homotopy
    \[ g_\alpha(\pi_{M_\alpha}(-,-)) \simeq g_\alpha(-) \underset{M_\alpha}{\smile} g_\alpha(-) \]
    of maps of complexes $C_*(A,M_\alpha) \otimes C_*(A,M_\alpha) \to C^*(A,M_\alpha)[-2n]$. The induced map
    \[ HH_*(A,M_\alpha) \otimes  HH_*(A,M_\alpha) \to HH^*(A,M_\alpha)[2n] \]
    is $(-1)^n$-commutative.
\end{proposition}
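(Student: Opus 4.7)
The plan is to construct a chain-level homotopy between the two maps by diagrammatic means, and then derive the commutativity statement from the cyclic symmetry properties of the pre-CY element $\alpha$.

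First I would write out both oriented ribbon quivers explicitly. The left-hand side $g_\alpha \circ \pi_{M_\alpha}(x_1, x_2)$ is the composition whose diagram contains: the pair-of-pants with $\alpha$ at its top (contributing the first $\alpha$-vertex), followed by the $g_\alpha$-closure (contributing a second $\alpha$-vertex), with the $M_\alpha$-arrows threaded through a sequence of $\mu^2_{M_\alpha}$ actions. The right-hand side $g_\alpha(x_1) \underset{M_\alpha}{\smile} g_\alpha(x_2)$ instead places the two $\alpha$-vertices around the separate $g_\alpha$-closures of $x_1$ and $x_2$ and joins their outputs by a single $\mu^2_{M_\alpha}$. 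In both diagrams the same algebraic data (two $\alpha$-vertices, a $\mu^2_{M_\alpha}$, the evaluation) appears; their only genuine difference is the relative position of one $\alpha$ with respect to $\mu^2_{M_\alpha}$.

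Next I would exhibit the homotopy. The essential building block is the vertex $\psi$ produced in \cref{prop:extension}: it is precisely a ``slider'' encoding the homotopy commutation
\[ \alpha(-, \mu^2_{M_\alpha}(-, -)) \;-\; \mu^2_{M_\alpha}(-, \alpha(-, -)) \;=\; d\psi. \]
Inserting $\psi$ into the appropriate ribbon quiver (in place of one $\alpha$ adjacent to a $\mu^2_{M_\alpha}$) produces a well-defined element in the mapping cochain complex, and a bookkeeping of its differential yields exactly the difference of the two sides, up to residual terms which are themselves exact via standard cap-product style homotopies of the type used in \cref{ex:capProducts}. The overall verification is carried out in the same spirit as the associativity argument of \cref{prop:associativityProductCone}, but with one fewer associator involved.

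For the commutativity statement, once the chain homotopy is in place we may equivalently check the symmetry of the symmetric-looking expression $[x_1] \otimes [x_2] \mapsto [g_\alpha(x_1) \underset{M_\alpha}{\smile} g_\alpha(x_2)]$ on cohomology. Swapping the two arguments corresponds, after reading the diagram, to a cyclic rotation of the two outputs of the $\alpha$-vertex sitting at the top of the pair-of-pants (or equivalently, to swapping the two $\alpha$-closures in the cup-product presentation). Since by construction $\alpha = m_{(2)} \in C^n_{(2,n)}(A)$, the cyclic-invariance condition defining $C^*_{(2,n)}(A)$ supplies an extra sign of $(-1)^{n-1}$ for this rotation, which combined with the Koszul swap sign produces precisely the claimed $(-1)^n$-commutativity on homology.

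The main technical hurdle I anticipate is the careful orientation and sign bookkeeping while writing down the homotopy: the vertex $\psi$ must be inserted at a specific location in a medium-sized diagram, and ensuring that the resulting differential gives exactly the desired difference with no leftover terms requires explicit tracking of edge orderings and of auxiliary cap-product homotopies. No conceptual obstruction is expected beyond that, but the diagrammatic accounting is non-trivial and will likely be relegated to an appendix, as done elsewhere in the paper.
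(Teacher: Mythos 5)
Your plan for the first homotopy is essentially the paper's approach (write both composites as ribbon quivers evaluated on $\alpha\otimes\alpha$ and exhibit an explicit homotopy); the paper in fact writes a single interpolating quiver directly and reserves $\psi$ for a different step, but a $\psi$-based sliding argument could plausibly be organized to work there. The genuine gap is in the commutativity half. You assert that swapping the two arguments ``corresponds to a cyclic rotation of the two outputs of the $\alpha$-vertex'' (equivalently, to ``swapping the two $\alpha$-closures in the cup-product presentation'') and then invoke the $\Z/2$-invariance of $\alpha\in C^*_{(2,n)}(A)$. But at chain level the diagram with $x_1,x_2$ exchanged is a genuinely different ribbon quiver from the original one -- they are different cells, not related by relabelling or rotating $\alpha$ -- and the cup product $\underset{M_\alpha}{\smile}$ is \emph{not} commutative, so ``swapping the two $\alpha$-closures'' is not implemented by any chain-level symmetry; it is exactly the statement that requires proof. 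The paper proves it by constructing an explicit homotopy that passes one $g_\alpha$-circle over the other, and this is precisely where the vertex $\psi$ of \cref{prop:extension} (hence the $m_{(3)}$-component of the pre-CY structure) is indispensable. Your proposal uses $\psi$ only for the first claim and supplies no homotopy realizing the swap, so the $(-1)^n$-commutativity is left unproven.

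Relatedly, your sign bookkeeping does not come out right. The twisted $\Z/2$-action on $C^*_{(2)}(A)$ contributes an extra $(-1)^{n-1}$ for $k=2$, and if the ``Koszul swap sign'' you combine it with is the sign $(-1)^{\deg(x_1)\deg(x_2)}$ already absorbed into the definition of graded commutativity, you would land on $(-1)^{n-1}$-commutativity rather than $(-1)^n$; if you mean some other Koszul contribution, it is not identified. In the paper the sign has a different and cleaner source: the swap homotopy exchanges the two copies of $\alpha$ in the ordered input $C^*_{(2)}(A)\otimes C^*_{(2)}(A)$, each of homological degree $-n$, and comparing orientations this Koszul exchange yields $(-1)^{n\cdot n}=(-1)^n$; the cyclic invariance of $\alpha$ is not the engine of the sign at all.
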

\begin{proof}
    To prove the first assertion, we must write both sides as maps
    \[ C^*_{(2)}(A) \otimes C^*_{(2)}(A) \otimes C_*(A,M_\alpha) \otimes C_*(A,M_\alpha) \to C^*(A,M_\alpha) \]
    given by ribbon quivers (where we input $\alpha \otimes \alpha$ into the first two factors) and find a homotopy between them. For the $(-1)^n$-commutativity, similarly we write the two ribbon quivers and find a homotopy that swaps the two inputs; comparing orientations at the end, we get the correct $(-1)^n$ sign. We present both of these homotopies in \cref{app:cupProductCompatibility}.
\end{proof}

Recall from Remark \ref{remark:nondegeneratequasiisomorphism} that if $\alpha$ is non-degenerate, for any bimodule $M$ the map \[g_\alpha^M: C_*(A,M) \to C^*(A,M)[n]\] is a quasi-isomorphism. Together with the previous proposition, this implies the following.
\begin{corollary}\label{cor:productCommutative}
    If $\alpha$ is non-degenerate, both the homology product $\pi_{M_\alpha}$ and the cohomology cup product $\underset{M_\alpha}{\smile}$ are $(-1)^n$-commutative.
\end{corollary}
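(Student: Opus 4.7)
The proof will be a short deduction from \cref{prop:cupProductCompatibility} combined with the non-degeneracy hypothesis. The plan is to exploit both parts of that proposition: first, the chain homotopy $g_\alpha(\pi_{M_\alpha}(-,-)) \simeq g_\alpha(-) \underset{M_\alpha}{\smile} g_\alpha(-)$ descends to an equality of maps on (co)homology; second, the composed map is $(-1)^n$-commutative. When $\alpha$ is non-degenerate, $g_\alpha = g_\alpha^{M_\alpha}$ is a quasi-isomorphism by \cref{remark:nondegeneratequasiisomorphism}, so it induces an isomorphism
\[ g_\alpha \colon HH_*(A,M_\alpha) \xrightarrow{\ \cong\ } HH^*(A,M_\alpha)[n]. \]
With this isomorphism in hand, we can transfer the $(-1)^n$-commutativity of the composed map to each of the two factors.

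More precisely, for the cup product: any classes $u,v \in HH^*(A,M_\alpha)$ can be written, by surjectivity of $g_\alpha$ on (co)homology, as $u = g_\alpha(x)$ and $v = g_\alpha(y)$ for some $x,y \in HH_*(A,M_\alpha)$. The $(-1)^n$-commutativity of the composition $\underset{M_\alpha}{\smile} \circ (g_\alpha \otimes g_\alpha)$ stated in the second part of \cref{prop:cupProductCompatibility} then directly yields
\[ u \underset{M_\alpha}{\smile} v = (-1)^{n + \deg(u)\deg(v)}\, v \underset{M_\alpha}{\smile} u, \]
proving commutativity of $\underset{M_\alpha}{\smile}$ on $HH^*(A,M_\alpha)$.

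For the chain product, we use injectivity of $g_\alpha$ on (co)homology instead. Given $x,y \in HH_*(A,M_\alpha)$, applying $g_\alpha$ to $\pi_{M_\alpha}(x,y)$ and using the first part of \cref{prop:cupProductCompatibility} followed by the commutativity just established for $\underset{M_\alpha}{\smile}$ gives
\[ g_\alpha(\pi_{M_\alpha}(x,y)) = g_\alpha(x) \underset{M_\alpha}{\smile} g_\alpha(y) = (-1)^{n + \deg(x)\deg(y)}\, g_\alpha(y) \underset{M_\alpha}{\smile} g_\alpha(x) = (-1)^{n + \deg(x)\deg(y)}\, g_\alpha(\pi_{M_\alpha}(y,x)). \]
Since $g_\alpha$ is injective on (co)homology, we conclude $\pi_{M_\alpha}(x,y) = (-1)^{n + \deg(x)\deg(y)}\, \pi_{M_\alpha}(y,x)$, as required. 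There is no genuine obstacle here: the entire argument is a direct consequence of \cref{prop:cupProductCompatibility} once one knows $g_\alpha$ is a quasi-isomorphism; the only thing to be careful about is bookkeeping the degree shifts (both $\pi_{M_\alpha}$ and $\underset{M_\alpha}{\smile}$ fit together consistently with the degree-$n$ shift introduced by $g_\alpha$, so the exponent $n + \deg(x)\deg(y)$ matches on both sides).
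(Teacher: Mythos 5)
Your proof is correct and follows the same route as the paper, which deduces the corollary immediately from \cref{prop:cupProductCompatibility} together with the fact (\cref{remark:nondegeneratequasiisomorphism}) that non-degeneracy of $\alpha$ makes $g_\alpha^{M_\alpha}$ a quasi-isomorphism; your use of surjectivity and injectivity of $g_\alpha$ on (co)homology just spells out the transfer argument the paper leaves implicit.
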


\subsubsection{Relation to the categorical formal punctured neighborhood of infinity}
We will now study the relation between the `cone bimodule' $M_{\alpha}$ of \cref{definition:calM}, defined using $\alpha$, and the bimodule $\Ahatinfty$, whose definition is independent of $\alpha$.

Consider the map of $A$-bimodules given by the composition
\[ A \otimes^\LL_A A^\vee \xrightarrow{\sim} C_*(A,A^\vee \otimes A) \xrightarrow{g_\alpha^{A^\vee \otimes A}} C^*(A,A^\vee \otimes A)[n] \]
We also have another map  of $A$-bimodules
\[ A \otimes^\LL_A A^\vee[-n] \xrightarrow{\sim} A^\vee[-n] \xrightarrow{f_\alpha} A \]
Together with the map defining $\Ahatinfty$ and the canonical map $A \to C^*(A,\Hom_\kk(A,A))$ (which is always a quasi-isomorphism), these maps form a square
\[\xymatrix{
    A \otimes^\LL_A A^\vee[-n] \ar[d] \ar[r] & A \ar[d] \\
    C^*(A,A^\vee \otimes A) \ar[r] & C^*(A,\Hom_\kk(A,A))
}\]
\begin{proposition}\label{prop:homotopyAlpha}
    There is a homotopy making the square above commute, and defining a map of bimodules $M_{\alpha} \to \Ahatinfty$. When $\alpha$ is non-degenerate, this map is a quasi-isomorphism of bimodules.
\end{proposition}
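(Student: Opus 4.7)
The plan is to first construct the homotopy at the chain level using the graphical calculus, then promote this to a map of cones, and finally handle the quasi-isomorphism statement by a five-lemma argument.

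First, I would write out explicit ribbon-quiver diagrams for both compositions. The top-right path sends an element of $A \otimes^{\LL}_A A^\vee[-n]$ to an element of $C^*(A,\Hom_\kk(A,A))$ by applying $f_\alpha$ (which uses one $\alpha$-vertex and one $\ev$-vertex, as in \cref{align:falpha}), followed by the canonical quasi-isomorphism $A \to C^*(A,\Hom_\kk(A,A))$ used in \cref{prop:isoOfBimodules}. The left-bottom path instead first uses the natural equivalence $A \otimes^{\LL}_A A^\vee \xrightarrow{\sim} C_*(A,A^\vee \otimes A)$, then $g_\alpha^{A^\vee \otimes A}$ (an $\alpha$-vertex wrapped in a coevaluation-style loop with the $A^\vee$ output surviving, as in \cref{ex:galpha}), then the map $\Psi$ from \cref{Definition:Psi}. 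Both diagrams have a single $\alpha$-vertex and a single $\ev$-vertex, but the way the bar-complex strings are routed and the position at which $\ev$ is applied differ; the homotopy between them will be a linear combination of ribbon quivers containing $\alpha$ together with a ``sliding'' term that moves the evaluation along a bar string. This is analogous in spirit to the homotopy produced inside the proof of \cref{thm:EulerCharacter} in \cref{app:relationToE}, and I would construct it by the same sort of manipulation: express the difference as the boundary of a one-parameter family of diagrams interpolating where the $\ev$ vertex is inserted along the $A^\vee$-line.

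Once the square commutes up to an explicit homotopy $h \colon A \otimes^{\LL}_A A^\vee[-n] \to C^*(A,\Hom_\kk(A,A))[-1]$, the standard cone construction in the dg category of $A$-bimodules yields a morphism
\[ M_\alpha = \Cone(f_\alpha) \longrightarrow \Cone(\Psi) = \Ahatinfty \]
whose components on the two summands are the right vertical and left vertical maps of the square, together with the homotopy $h$ implementing the connecting term. The fact that this assembly is a closed morphism of bimodules is exactly the equation $dh$ equals the difference of the two compositions; this is what we arranged in the first step.

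For the nondegenerate case, observe that the right vertical map $A \to C^*(A,\Hom_\kk(A,A))$ is always a quasi-isomorphism (this is the standard model used in \cref{prop:isoOfBimodules}). The left vertical map factors as the natural quasi-isomorphism $A\otimes^{\LL}_A A^\vee \xrightarrow{\sim} C_*(A,A^\vee \otimes A)$ followed by $g_\alpha^{A^\vee \otimes A}$; by \cref{remark:nondegeneratequasiisomorphism}, the latter is a quasi-isomorphism whenever $\alpha$ is non-degenerate. Having both vertical maps be quasi-isomorphisms, the induced map on mapping cones is a quasi-isomorphism by the usual long-exact-sequence/five-lemma argument applied to the distinguished triangles defining $M_\alpha$ and $\Ahatinfty$.

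The main obstacle is the first step: producing the explicit homotopy. The difficulty is bookkeeping, since both diagrams involve the bimodule $A^\vee\otimes A$ (which has four $A$-actions) and the graphical calculus has to correctly track the orientation and signs of each intermediate ribbon quiver. I expect this to parallel closely the homotopy built in \cref{app:relationToE}, so I would model the construction on that one, rather than start from scratch; the rest of the argument is then formal.
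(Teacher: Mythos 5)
Your proposal follows essentially the same route as the paper: write the two compositions as ribbon quivers, exhibit an explicit homotopy in the graphical calculus (the paper gives a single interpolating diagram in its appendix, where signs can even be ignored since only a quasi-isomorphism of cones is needed), assemble the map of cones $M_\alpha \to \Ahatinfty$, and conclude the quasi-isomorphism from the fact that the right vertical map is always a quasi-isomorphism while the left one is $g_\alpha^{A^\vee\otimes A}$ composed with a natural equivalence, hence a quasi-isomorphism when $\alpha$ is non-degenerate. The argument is correct and matches the paper's proof.
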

\begin{proof}
    Again, for the first claim we write the ribbon quivers giving the two maps and a homotopy between them, in \cref{app:homotopyAlpha}. The second claim follows from the fact (see \cref{remark:nondegeneratequasiisomorphism}) that if $\alpha$ is non-degenerate then $g_\alpha^{A^\vee \otimes A}$ is a quasi-isomorphism (and thus the vertical maps are both quasi-isomorphisms). 
\end{proof}

A quasi-isomorphism $M_{\alpha} \simeq \Ahatinfty$ of bimodules induces a quasi-isomorphism $C_*(A,M_{\alpha}) \simeq C_*(A, \Ahatinfty)$. Combining this with the previous results of this section we have:
\begin{theorem}
    Given a 4-truncated non-degenerate $n$-pre-CY category $(A,m)$, there is a product $\pi_\infty$ on $HH_*(A,\Ahatinfty)[-n]$ and a cup product $\underset{\infty}{\smile}$ on $HH^*(A,\Ahatinfty)$, both associative and $(-1)^n$-commutative, related by $g^{\Ahatinfty}_\alpha(\pi_\infty(-,-)) = g^{\Ahatinfty}_\alpha(-) \underset{\infty}{\smile} g^{\Ahatinfty}_\alpha(-)$.
\end{theorem}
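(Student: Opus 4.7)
The plan is to derive this theorem as a direct consequence of the results already established for the bimodule $M_\alpha$, by transferring structures through the quasi-isomorphism $M_\alpha \simeq \Ahatinfty$ provided by \cref{prop:homotopyAlpha}.

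First, I would observe that the non-degeneracy hypothesis on $\alpha$ gives, via \cref{prop:homotopyAlpha}, a quasi-isomorphism of $A$-bimodules $\phi \colon M_\alpha \xrightarrow{\simeq} \Ahatinfty$. Applying the functors $C_*(A,-)$ and $C^*(A,-)$ respectively, this induces quasi-isomorphisms of complexes
\[ \phi_* \colon C_*(A,M_\alpha) \xrightarrow{\simeq} C_*(A,\Ahatinfty), \qquad \phi^* \colon C^*(A,M_\alpha) \xrightarrow{\simeq} C^*(A,\Ahatinfty), \]
and correspondingly isomorphisms on (co)homology. I would then define $\pi_\infty$ and $\underset{\infty}{\smile}$ on the (co)homology of $\Ahatinfty$ as the transfer of $\pi_{M_\alpha}$ and $\underset{M_\alpha}{\smile}$ under these isomorphisms.

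Associativity of both products, as well as the $(-1)^n$-commutativity under the non-degeneracy assumption, are then immediate from \cref{prop:associativityProductCone} and \cref{cor:productCommutative} respectively, since $\phi_*$ and $\phi^*$ are (co)algebra isomorphisms on (co)homology by construction. For the compatibility $g^{\Ahatinfty}_\alpha(\pi_\infty(-,-)) = g^{\Ahatinfty}_\alpha(-) \underset{\infty}{\smile} g^{\Ahatinfty}_\alpha(-)$, I would combine \cref{prop:cupProductCompatibility} with the naturality of the map $g^M_\alpha$ in the bimodule $M$: that is, for any bimodule morphism $M \to N$, the square formed with $g^M_\alpha$ and $g^N_\alpha$ commutes up to homotopy, so applying this to $\phi \colon M_\alpha \to \Ahatinfty$ identifies $g^{\Ahatinfty}_\alpha \circ \phi_*$ with $\phi^* \circ g^{M_\alpha}_\alpha$ on homology.

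The main (and only nontrivial) obstacle is checking naturality of $g^M_\alpha$ with respect to bimodule morphisms at the level of homology; this is a direct verification in the graphical calculus: the defining ribbon quiver for $g^M_\alpha$ in \cref{ex:galpha} has a single $M$-input and a single $M$-output, and post-composing the output arrow (or pre-composing the input) with $\phi$ yields two homotopic ribbon quivers, where the homotopy is produced from the $A_\infty$-morphism data of $\phi$ by the standard splitting-of-edges construction. Once this naturality is in place, the compatibility relation for $(\pi_\infty, \underset{\infty}{\smile})$ follows by applying $\phi^*$ to both sides of the identity of \cref{prop:cupProductCompatibility} and using that $\phi^*$ is a ring map on $HH^*$.
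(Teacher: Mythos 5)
Your proposal is correct and takes essentially the same route as the paper: the theorem is obtained by transferring $\pi_{M_\alpha}$, $\underset{M_\alpha}{\smile}$ and their established properties (\cref{prop:associativityProductCone}, \cref{cor:productCommutative}, \cref{prop:cupProductCompatibility}) through the quasi-isomorphism $M_\alpha \simeq \Ahatinfty$ of \cref{prop:homotopyAlpha} applied to Hochschild chains and cochains. The only difference is that you make explicit the homotopy-naturality of $g^M_\alpha$ in the bimodule $M$, which the paper leaves implicit; your graphical-calculus justification of that point is the standard one and is fine.
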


\begin{remark}
   One could obtain a chain-level description of these operations by using the explicit quasi-isomorphism $M_{\alpha} \simeq \Ahatinfty$ and picking an inverse, but the resulting calculations would be rather complicated. We take the point of view that it is simpler to just use $M_{\alpha}$ instead.
\end{remark}

\subsection{The dual product}\label{subsec:dualProduct}
Let us return to the chain-level product $\pi_{M_{\alpha}}$ on the complex $C_*(A,M_{\alpha})$, which we defined in \cref{def:productOnChains} purely from the (truncated) pre-CY structure of $A$. By definition, this complex is a cone of maps of Hochschild chain complexes
\[ C_*(A,M_{\alpha}) = C_*(A,A^\vee)[1-n] \oplus C_*(A,A) \]
with differential combining the Hochschild differentials with the map $F_\alpha$ induced by the map of bimodules $f_\alpha$ in \eqref{align:falpha}. The product $\pi_{M_{\alpha}}$ restricts to a product $\pi$ on the subcomplex $C_*(A,A)$ by construction, but it does not induce a chain operation on the complex $C_*(A,A^\vee)[1-n]$. Note that we always have a pairing
\[ \langle g^{A^\vee}(-), - \rangle \colon C_*(A,A^\vee)[-n] \otimes C_*(A,A) \to \kk \]
whose induced pairing on homology is non-degenerate when $\alpha$ is non-degenerate. Thus a degree $n$ product on $C_*(A,A^\vee)[1-n]$ is exactly the type of operation that could be dual to a coproduct of degree $1-n$ on $C_*(A,A)$. However, recall that the coproduct $\lambda_H$ we defined in \cref{sec:coproduct} only gives a map of complexes after a quotient.

\subsubsection{Homological algebra setup}\label{sec:dualKernel}
For the sake of clarity, let us describe a certain algebraic setup in general. Suppose that we have complexes of $\kk$-modules $X,Y$, together with a pairing, that is, a map of complexes
\[ (-,-) \colon X \otimes Y \to \kk \]
such that the induced map $H_*X \to H_*(Y)^\vee$ is \emph{surjective}. Suppose that we also have a subcomplex $W \subseteq Y$ such that the induced map $H_*(W) \to H_*(Y)$ is \emph{injective}; as a consequence, the long exact sequence in homology splits into short exact sequences
\[ 0 \to H_*(W) \to H_*(Y) \to H_*(Y/W) \to 0 \]
which upon dualization give exact sequences
\[ 0 \to H_*(Y/W)^\vee \to H_*(Y)^\vee \to H_*(W)^\vee \]
Let us denote $Z_*(-)$ and $B_*(-)$ for the cycles and boundaries subcomplexes. We now define a subcomplex of $X$
\[ K \coloneqq \{x \in Z_*(X)\ |\ \forall w \in Z_*(W), (x,w) = 0 \} \subseteq X. \]
and pick any subcomplex $Q$ of $X$. We note that $B_*(X) \subseteq K$.
\begin{lemma}\label{lem:surjectionToDual}
    There is a \emph{surjective} map of graded $\kk$-modules $f \colon K/B_*(Q) \to H_*(Y/W)^\vee$ making the diagram
    \[\xymatrix{
        K/B_*(Q) \ar[r] \ar@{->>}[d]_{f} & H_*(X) \ar@{->>}[d] \\
        H_*(Y/W)^\vee \ar@{^{(}->}[r]^-{p^\vee} & H_*(Y)^\vee 
    }\]
    commute.
\end{lemma}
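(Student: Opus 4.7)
The plan is to construct $f$ directly from the pairing $(-,-)$ and then verify the claimed properties by formal bookkeeping. First I would use the hypothesis $H_*(W) \hookrightarrow H_*(Y)$ to split the long exact sequence of the pair $(Y,W)$ into short exact sequences $0 \to H_*(W) \to H_*(Y) \to H_*(Y/W) \to 0$; dualizing (in each degree) gives the left-exact sequence $0 \to H_*(Y/W)^\vee \xrightarrow{p^\vee} H_*(Y)^\vee \to H_*(W)^\vee$, so $H_*(Y/W)^\vee$ is identified with the subspace of functionals in $H_*(Y)^\vee$ that vanish on the image of $H_*(W)$. Given any $x \in K$, the fact that $x$ is a cycle ensures the formula $\psi_x([y]) \coloneqq (x,y)$ defines a linear functional $\psi_x \in H_*(Y)^\vee$, and the defining property of $K$ --- that $(x,w) = 0$ for every $w \in Z_*(W)$ --- says exactly that $\psi_x$ vanishes on $H_*(W)$. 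Hence $\psi_x$ lies in the image of $p^\vee$, and I would set $f(x) \coloneqq (p^\vee)^{-1}(\psi_x)$.

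Next I would verify that $f$ descends to $K/B_*(Q)$ and fits into the stated square. Since $B_*(Q) \subseteq B_*(X) \subseteq K$, for any $q \in Q$ one has $\psi_{dq}([y]) = (dq,y) = \pm (q, dy) = 0$ on cycles $y$, so $f$ annihilates $B_*(Q)$. Commutativity of the square is immediate: the composite $K/B_*(Q) \to H_*(X) \to H_*(Y)^\vee$ sends $[x]$ to $\psi_x$, which is $p^\vee(f(x))$ by construction.

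The only point requiring genuine input is surjectivity. Given $\varphi \in H_*(Y/W)^\vee$, the functional $p^\vee(\varphi) \in H_*(Y)^\vee$ vanishes on $H_*(W)$ by construction. Invoking the hypothesis that $H_*(X) \twoheadrightarrow H_*(Y)^\vee$ induced by the pairing is \emph{surjective}, we lift $p^\vee(\varphi)$ to a class in $H_*(X)$ and choose a cycle representative $x \in Z_*(X)$. Then $(x,w) = p^\vee(\varphi)([w]) = 0$ for every $w \in Z_*(W)$, so $x \in K$, and $f(x) = \varphi$ by injectivity of $p^\vee$.

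I expect no serious obstacle here; the whole argument is a piece of standard homological bookkeeping. The one subtlety worth flagging is that $K$ is defined at the chain level (pairings against cycles of $W$) rather than at the homology level, which is why the well-definedness passage hinges on the observation $B_*(Q) \subseteq B_*(X)$ and why the surjectivity step must produce a genuine cycle representative --- not merely a homology class --- of the lift of $p^\vee(\varphi)$.
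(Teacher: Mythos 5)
Your proof is correct and follows essentially the same route as the paper's: identify $H_*(Y/W)^\vee$ with the functionals on $H_*(Y)$ vanishing on the image of $H_*(W)$, observe that elements of $K$ give such functionals (and boundaries of $Q$ give zero ones, since $B_*(Q)\subseteq B_*(X)$), and prove surjectivity by lifting $p^\vee(\varphi)$ through the surjection $H_*(X)\twoheadrightarrow H_*(Y)^\vee$ to a cycle representative, which lies in $K$ by the vanishing on $Z_*(W)$. The only cosmetic difference is that the paper first reduces to the case $Q=X$ via the surjection $K/B_*(Q)\twoheadrightarrow K/B_*(X)$, whereas you check directly that $f$ annihilates $B_*(Q)$; the content is identical.
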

\begin{proof}
    We first note that $K/B_*(Q) \to K/B_*(X)$ is surjective, so it is enough to prove the statement with $Q=X$. Every class in the image of $K/B_*(X) \hookrightarrow H_*(X)$ pairs with zero with the image of $H_*(W) \to H_*(Y)$, so the map $f$ exists by the universal property of the kernel. It also follows that for any \emph{map of sets} $s \colon H_*(Y/W) \to H_*(Y)$ giving a section (note that without extra splitness assumptions we do not necessarily have a morphism of graded $\kk$-modules), the pairing
    \[ F \colon K/B_*(X) \otimes H_*(Y/W) \to \kk \]
    corresponding to $f$ satisfies $F(a,b) = (i(a),s(b))$. Therefore, given any $\varphi \in H_*(Y/W)^\vee$, we take $p^\vee(\varphi)$, which is a function on $H_*(Y/W)$; any function of that form is given by a linear function on $Z_*(Y)$ satisfying $p^\vee(\varphi)(y) = p^\vee(\varphi)(y + dz + w)$ for all $z\in Y$ and $w\in Z_*(W)$. But then by assumption of surjectivity of $H_*(X) \to H_*(Y)^\vee$, there exists $x \in Z_*(X)$ such that $(x,y) = p^\vee(\varphi)(y)$ for every $y \in Z_*(Y)$ and by linearity we conclude that $(x,w) = 0$ for every $w \in Z_*(W)$. Thus we must have $x \in K$, and its class in $K/B_*(X)$ maps to $\varphi$ as a function on $H_*(Y/W)$, proving surjectivity.
\end{proof}

\subsubsection{Lifting the product}
To lift the product $\pi_{M_{\alpha}}$ to the complex $C_*(A,A^\vee)[1-n]$ or to a subcomplex thereof, we use a pair of nullhomotopies of the map of complexes $F_\alpha$. In the case where the class of $[F_\alpha]$ is non-trivial, we can still get a partial lift from a `partial nullhomotopy'. Again, let $W$ be some subcomplex of $C_*(A,A)$ such that $H_*(W)\to HH_*(A,A)$ is injective.
\begin{definition}
    A \emph{homotopy} of $F_\alpha$ onto $W$ is a pair $(E_0,h)$, where
    \[ E_0 \in W \otimes C_*(A,A) \cap C_*(A,A)\otimes W,\]
    is closed and $h \colon C_*(A,A^\vee)[-n] \to C_*(A,A)[-1]$ is a homotopy of maps $C_*(A,A^\vee)[-n] \to C_*(A,A)$ between $F_\alpha$ and $^\sharp E_0 \circ g^{A^\vee}_\alpha$.
\end{definition}
In other words, since $E_0 \in C_*(A,A) \otimes W$, a partial homotopy as above gives a homotopy between the map $F_\alpha$ and a map landing in the smaller complex $W$. Given any homotopy as above, we then get a map of graded $\kk$-modules $\iota_h \colon C_*(A,A^\vee)[-n+1] \to C_*(A,M_\alpha)$ given by $\iota_h(x) \coloneqq (x, -h(x))$, where we use the isomorphism of graded $\kk$-modules $C_*(A,M_\alpha) \cong C_*(A,A^\vee)[-n+1]\oplus C_*(A,A)$; it follows from the definition above that this map restricts to a \emph{map of complexes}
\[ \iota_h \colon \ker(^\sharp E_0 \circ g^{A^\vee}_\alpha) \to C_*(A,M_\alpha). \]
\begin{proposition}\label{prop:productCone}
    Let $(E_{01},h_1),(E_{02},h_2)$ be two homotopies of $F_\alpha$ onto $W$. Then the map
    \[ \pi_{h_1, h_2}(x_1,x_2) = p \pi_{M_{\alpha}}(\iota_{h_1}(x_1), \iota_{h_2}(x_2)) \]
    where $p$ is the projection $C_*(A,M_\alpha)[n] \to C_*(A,A^\vee)[-n]$ gives a map of complexes 
    \[ \left(\ker(^\sharp E_{01} \circ g^{A^\vee}_\alpha) \otimes C_*(A,A^\vee)[-n+1] \right) \cap \left(C_*(A,A^\vee)[-n+1] \otimes \ker(^\sharp E_{02} \circ g^{A^\vee}_\alpha) \right) \to C_*(A,A^\vee)[1], \]
    whose induced map on homology only depends on the $h_i$ up to $[d,-]$-exact terms.
\end{proposition}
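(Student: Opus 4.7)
The plan is to verify the two claims separately: first that $\pi_{h_1,h_2}$ intertwines the differentials, and then that the class it induces on homology is insensitive to variations of the $h_i$ by $[d,-]$-exact terms.

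For the chain map claim, I would begin by unpacking the cone structure: $C_*(A,M_\alpha)\cong C_*(A,A^\vee)[1-n]\oplus C_*(A,A)$ as graded $\kk$-modules, and the Hochschild differential takes the upper-triangular form whose only off-diagonal component, induced by $f_\alpha$, maps the $A^\vee$-summand into the $A$-summand. Consequently the $A$-summand is a subcomplex and $p$ realizes the projection to the quotient complex, so $p$ is a chain map. Since $\iota_{h_i}$ restricts to a chain map on $\ker(q_i)$ (with $q_i = {}^\sharp E_{0i}\circ g^{A^\vee}_\alpha$) as recalled just before the statement, and $\pi_{M_\alpha}$ is a chain map by \cref{prop:associativityProductCone}, the composition $\pi_{h_1,h_2} = p\circ \pi_{M_\alpha}\circ(\iota_{h_1}\otimes\iota_{h_2})$ is a chain map on the intersection of the two stated subcomplexes.

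For the homology invariance, suppose $h_1' - h_1 = [d,k_1]$ for some graded map $k_1$ (the variation in $h_2$ being handled symmetrically). I would lift $k_1$ to $\tilde k_1\colon C_*(A,A^\vee)[1-n]\to C_*(A,M_\alpha)$ by $\tilde k_1(x) = (0,-k_1(x))$. Since the differential on the $A$-summand is the purely internal Hochschild differential (no back-arrow from $A$ into $A^\vee$), a direct sign check gives $[d,\tilde k_1] = \iota_{h_1'} - \iota_{h_1}$ as maps of graded modules. Setting
\[ R(x_1\otimes x_2) \coloneqq p\,\pi_{M_\alpha}(\tilde k_1(x_1)\otimes\iota_{h_2}(x_2)), \]
the Leibniz rule for the chain map $\pi_{M_\alpha}$, combined with $\iota_{h_2}$ being a chain map on $\ker(q_2)$ and $p$ being a chain map, yields $[d,R] = \pi_{h_1',h_2} - \pi_{h_1,h_2}$, exhibiting the desired nullhomotopy at the chain level.

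The argument is essentially a mechanical consequence of the structural results already assembled in the section, so no new construction is required beyond the lift $\tilde k_1$. The only real source of potential trouble is sign bookkeeping arising from the Koszul rule in the tensor product, the shifts $[1-n]$ and $[n]$, and the mapping-cone differential on $C_*(A,M_\alpha)$; these determine the overall sign of $R$ but do not affect the structure of the proof.
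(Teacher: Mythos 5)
Your argument is correct, and it is essentially the verification the paper leaves implicit: \cref{prop:productCone} is stated without a written proof, being treated as an immediate consequence of exactly the facts you invoke (writing $q_i = {}^\sharp E_{0i}\circ g^{A^\vee}_\alpha$: each $\iota_{h_i}$ is a chain map on $\ker(q_i)$, $\pi_{M_\alpha}$ is a chain map by \cref{prop:associativityProductCone}, and $p$ is a chain map because the cone differential has no component from the $C_*(A,A)$-summand back into the $C_*(A,A^\vee)$-summand). Two small points are worth tightening. First, the stated domain is $\bigl(\ker(q_1)\otimes C_*(A,A^\vee)[-n+1]\bigr)\cap\bigl(C_*(A,A^\vee)[-n+1]\otimes\ker(q_2)\bigr)$, which is not literally $\ker(q_1)\otimes\ker(q_2)$, so "tensor of chain maps" does not apply verbatim; the clean way to get the chain property on the whole intersection is to use the defect formula $[d,\iota_{h_1}\otimes\iota_{h_2}]=[d,\iota_{h_1}]\otimes\iota_{h_2}\pm\iota_{h_1}\otimes[d,\iota_{h_2}]$, whose first summand vanishes on $\ker(q_1)\otimes C_*(A,A^\vee)[-n+1]$ and whose second vanishes on $C_*(A,A^\vee)[-n+1]\otimes\ker(q_2)$, hence both vanish on the intersection; the same remark is what kills the term $\tilde k_1\otimes[d,\iota_{h_2}]$ in your homotopy computation. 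Second, for the invariance claim to take place on a fixed domain you should note that replacing $h_1$ by $h_1+[d,k_1]$ leaves $E_{01}$, hence $q_1$ and the domain, unchanged (since $[d,[d,k_1]]=0$); with that observed, your nullhomotopy $R$ does exactly what is needed, up to the sign bookkeeping you already flag.
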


Let us from now on assume that $A$ is connective. Let us denote
\[ Q \coloneqq \ker(^\sharp E_{01} \circ g^{A^\vee}_\alpha) \cap \ker(^\sharp E_{02} \circ g^{A^\vee}_\alpha)\]
and apply the construction of \cref{sec:dualKernel} with
\[ X = C_*(A,A^\vee)[-n], \quad Y = C_*(A,A), \quad (-,-) = \langle g^{A^\vee}_\alpha(-),- \rangle, \]
defining a subcomplex
\[ K \coloneqq \{x \in C_*(A,A^\vee)[-n]\ |\ dx=0\ \text{and}\ \forall w \in W\ \text{such that}\ dw=0, \langle g^{A^\vee}_\alpha(x), w \rangle = 0 \}. \]
Since  $E_{01},E_{02}$ are closed elements of degree zero in $W \otimes C_*(A,A)$ and this complex is supported in nonnegative degrees, we have $E_{01},E_{02} \in Z_0(W) \otimes Z_0(C_*(A,A))$, which implies that $K \subseteq Q$. We can then restrict the operation $\pi_{h_1,h_2}$ to $K \otimes K$.
\begin{definition}\label{def:chainDualProduct}
    With the same assumptions of \cref{prop:productCone}, and assuming $A$ connective, we define the \emph{chain-level dual product} to be the map of complexes given by the restriction
    \[ \pi_{h_1,h_2} \colon K\otimes K \to C_*(A,A^\vee)[1], \]
    and the \emph{homology dual product} to be the corresponding induced map
    \[  \pi_{h_1,h_2} \colon \frac{K}{B_*(Q)} \otimes \frac{K}{B_*(Q)} \to HH_*(A,A^\vee)[1],\]
\end{definition}

\begin{proposition}\label{prop:dualProdCommutativity}
    The homology dual product $\pi_{h_1,h_2}$ only depends on the values of $h_i$ on $K$ up to exact terms, and is $(-1)^n$-commutative when $h_1-h_2$ takes exact values on $K$.
\end{proposition}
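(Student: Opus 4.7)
The proof divides into two parts corresponding to the two assertions. For the first, suppose $h_1 - h_1'$ takes values in boundaries of $C_*(A,A)$ when restricted to $K$; since $K$ has zero differential (being a subcomplex of cycles), we may choose a $\kk$-linear map $\phi \colon K \to C_*(A,A)$ of the appropriate degree with $(h_1 - h_1')(x) = d\phi(x)$ for every $x \in K$. Under the identification $C_*(A, M_\alpha) \cong C_*(A, A^\vee)[-n+1] \oplus C_*(A, A)$, the map
\[ H_\phi \colon K \to C_*(A, M_\alpha), \quad H_\phi(x) = (0, -\phi(x)) \]
witnesses a chain homotopy between $\iota_{h_1}|_K$ and $\iota_{h_1'}|_K$: because the cone differential on an element of the form $(0,c)$ receives no contribution from $f_\alpha$ (which requires a nonzero $A^\vee$-entry), one computes $d_{M_\alpha} H_\phi(x) = (0, -d\phi(x)) = \iota_{h_1}(x) - \iota_{h_1'}(x)$ for $x \in K$. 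Composing with $\pi_{M_\alpha}$ in the first slot, with $\iota_{h_2}$ in the second, and then applying the projection $p$, I conclude that the chain-level maps $\pi_{h_1, h_2}$ and $\pi_{h_1', h_2}$ differ by a $d$-boundary and therefore agree as maps $K/B_*(Q) \otimes K/B_*(Q) \to HH_*(A, A^\vee)[1]$; a symmetric argument in the second slot finishes the first assertion.

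For the second assertion, the first reduces the problem to showing that $\pi_{h,h}$ is $(-1)^n$-commutative on homology, where $h \coloneqq h_1$. The key ingredient will be a chain-level identity
\[ \pi_{M_\alpha}(y_1, y_2) - (-1)^n (-1)^{\deg(y_1)\deg(y_2)} \pi_{M_\alpha}(y_2, y_1) = dR(y_1, y_2) - R(dy_1, y_2) - (-1)^{\deg(y_1)} R(y_1, dy_2) \]
for some bilinear operation $R$ on $C_*(A, M_\alpha)$ built from the higher component $\tau = m_{(3)}$ of the pre-CY structure. To construct $R$, I will exhibit a $1$-chain in the cellular model for the moduli space $\mathscr{M}_{0,2,1}$ realized as an explicit sum of oriented ribbon quivers, whose boundary is the difference of the pair-of-pants ribbon quiver of \cref{def:productOnChains} and its input-swapped counterpart. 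The overall sign $(-1)^n$ arises from combining three contributions: the $(-1)^{n-1}$ cyclic sign on $\alpha \in C^n_{(2,n)}(A)$ at the central $2$-output vertex, the permutation sign from reordering edges in the orientation, and the Koszul sign from swapping the inputs. Specializing the identity to $y_i = \iota_h(x_i)$ with $x_i \in K$ cycles (so $dy_i = 0$), and then applying $p$, yields the desired $(-1)^n$-commutativity of $\pi_{h,h}$ on $K/B_*(Q) \otimes K/B_*(Q)$.

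The principal technical obstacle lies in the explicit construction of the homotopy $R$ and the verification of signs. This amounts to producing a coherent sum of oriented ribbon quivers with $\tau$ inserted at an internal vertex, chosen so that the differential in the ribbon quiver complex matches precisely the asymmetry of the pair-of-pants operation under swapping the two inputs. The $4$-truncation assumption on the pre-CY structure guarantees that $\tau$ (and $\sigma$, if needed to correct higher-order errors) is available and satisfies the pertinent necklace bracket equations, which in turn ensure that all necessary sign cancellations take place.
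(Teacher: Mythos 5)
Your treatment of the first assertion is correct and is essentially the paper's own argument, just spelled out: since $K$ consists of cycles, a map $\phi$ with $(h_1-h_1')|_K = d\phi$ gives the homotopy $x \mapsto (0,-\phi(x))$ between $\iota_{h_1}|_K$ and $\iota_{h_1'}|_K$ (the cone differential has no $f_\alpha$-contribution on the $C_*(A,A)$ summand), and composing with the chain maps $\pi_{M_\alpha}$ and $p$ shows the two dual products differ by exact terms.

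The second assertion is where your proposal diverges from the paper, and where it has a genuine gap. Everything rests on the claimed bilinear operation $R$ on $C_*(A,M_\alpha)$ realizing $(-1)^n$-commutativity of $\pi_{M_\alpha}$ up to chain homotopy, and you do not construct it; this is not a routine sign verification. The cellular chains on $\mathscr{M}_{0,\vec 2,\vec 1}$ act on Hochschild chains with values in the \emph{diagonal} bimodule, so "a $1$-chain in the cellular model whose boundary is the swap of the pair of pants" does not by itself produce an operation on $M_\alpha$-valued chains: to slide one $M_\alpha$-input past the other and past the $\alpha$-vertex you need extended vertices compatible with the cone differential beyond the $\mu^2_{M_\alpha}$, $\mu^3_{M_\alpha}$, $\psi$ of \cref{prop:extension} (note also that $\tau$ is already consumed inside $\mu^2_{M_\alpha}$, so the $A^\vee\otimes A^\vee$ components of a swap homotopy would require new $\sigma$-type vertices and identities that are nowhere established). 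Moreover, if such an $R$ existed for an arbitrary $4$-truncated pre-CY structure, it would give $(-1)^n$-commutativity of the product on $HH_*(A,M_\alpha)$ with no nondegeneracy hypothesis, which is strictly stronger than what the paper proves: the paper obtains commutativity of $\pi_{M_\alpha}$ on homology only by comparing it with the cup product via $g_\alpha$ (\cref{prop:cupProductCompatibility}, where the swap is implemented by passing one circle over the other using $\psi$) and then using that $g_\alpha^{M_\alpha}$ is a quasi-isomorphism for nondegenerate $\alpha$ (\cref{cor:productCommutative}); that homology-level fact is exactly what the proposition's proof invokes, and your argument never uses nondegeneracy at all. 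As written, the central step is asserted rather than proved; either exhibit $R$ explicitly in the $M_\alpha$-calculus (and then you would in fact have strengthened \cref{cor:productCommutative}), or route the commutativity through $g_\alpha$ and nondegeneracy as the paper does.
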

\begin{proof}
    Follows from the fact that $\pi_{M_\alpha}$ is a map of complexes which is $(-1)^n$-commutative on homology, and from the fact that the maps
    $\iota_{h_i}$ are maps of complexes $K[1] \to C_*(A,M_\alpha)$, whose induced map on homology only depends on the values $h_i(x)$ for $x \in K$ up to exact terms.
\end{proof}

\begin{definition}\label{def:dualBalanced}
    The quadruple $(W,h_1,h_2,\alpha)$ is \emph{balanced} if the image of the chain-level dual product $\pi_{h_1,h_2}$ lies in $K$.
\end{definition}
When $\kk$ is a field and the quadruple $(W,h_1,h_2,\alpha)$ is balanced, taking homology we get a product
\[ \overline{\pi}_{h_1,h_2} \colon \frac{K}{B_*(Q)}[-n] \otimes \frac{K}{B_*(Q)}[-n] \to \frac{K}{B_*(Q)}[1], \]
which as a consequence of \cref{prop:dualProdCommutativity} is graded commutative when $h_1|_K$ and $h_2|_K$ differ from each other by $[d,-]$-exact terms.

\subsubsection{Associativity of the homology dual product}
Asssume that $\kk$ is a field and that we made a balanced choice of quadruple $(W,h_1,h_2,\alpha)$, so that it makes sense to discuss the associativity properties of $\pi_{h_1,h_2}$.
\begin{proposition}\label{prop:dualProdAssociativity}
    If $A$ is smooth over a field $\kk$, connective with a nondegenerate (4-truncated) pre-CY structure of dimension $n \ge 3$, the quadruple $(W,h_1,h_2,\alpha)$ is balanced and $h_1 - h_2$ takes exact values on $K$, then the homology dual product $\overline\pi_{h_1,h_2}$ on $K/B_*(Q)[-n]$ is associative.
\end{proposition}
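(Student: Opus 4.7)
The chain-level product $\pi_{M_\alpha}$ is associative up to an explicit homotopy coming from $\mu^3_{M_\alpha}$ (\cref{prop:associativityProductCone}), and since $\pi_{M_\alpha}$ is commutative on homology when $\alpha$ is nondegenerate (\cref{cor:productCommutative}), both forms of the associativity identity are accessible at the chain level. Moreover, since we work over a field $\kk$ and $h_1 - h_2$ takes exact values on $K$, we can pick a $\kk$-linear primitive to promote this pointwise exactness to an honest chain homotopy $\tilde P \colon K \to C_*(A, M_\alpha)[1]$ between $\iota_{h_1}|_K$ and $\iota_{h_2}|_K$. In particular, the two sections induce the same map $[\iota] \colon K/B_*(Q) \to HH_*(A, M_\alpha)$, and the balanced assumption guarantees that the iterated chain-level dual products $\pi_{h_1, h_2}(\pi_{h_1, h_2}(x_1, x_2), x_3)$ and $\pi_{h_1, h_2}(x_1, \pi_{h_1, h_2}(x_2, x_3))$ are defined and land in $K$.

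For $x_1, x_2, x_3 \in K$, set $q_{12} = \pi_{M_\alpha}(\iota_{h_1}(x_1), \iota_{h_2}(x_2))$ and $q'_{23} = \pi_{M_\alpha}(\iota_{h_1}(x_2), \iota_{h_2}(x_3))$. Balancedness says $p q_{12}, p q'_{23} \in K$, so $\iota_{h_1}(p q_{12})$ and $\iota_{h_2}(p q'_{23})$ are closed lifts of these elements to $C_*(A, M_\alpha)$; each differs from $q_{12}$, resp.\ $q'_{23}$, by a closed element of the subcomplex $Y = C_*(A, A)$. Applying the chain-level associativity homotopy for $\pi_{M_\alpha}$ to rearrange parentheses, and using $\tilde P$ to interpolate between $\iota_{h_2}(x_2)$ and $\iota_{h_1}(x_2)$ in the central slot, the difference
\[ \overline\pi_{h_1, h_2}(\overline\pi_{h_1, h_2}(x_1, x_2), x_3) - \overline\pi_{h_1, h_2}(x_1, \overline\pi_{h_1, h_2}(x_2, x_3)) \]
reduces, modulo boundaries in $C_*(A, A^\vee)$, to the $p$-projection of products $\pi_{M_\alpha}(u, \iota_{h_2}(x_3))$ and $\pi_{M_\alpha}(\iota_{h_1}(x_1), u')$, where $u, u'$ are closed elements of $Y \subseteq C_*(A, M_\alpha)$ coming from the two cone-theoretic defect terms.

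The main obstacle is to show that this residual correction is a boundary not merely in $C_*(A, A^\vee)$ but in the smaller subcomplex $Q$. I would approach this by a direct diagrammatic analysis of the ribbon quiver for $\pi_{M_\alpha}$ when one input is supported on the subbimodule $A \subseteq M_\alpha$: using the explicit component formulas for $\mu^2_{M_\alpha}$ given in \cref{prop:extension}, the $A^\vee$-component of such a product admits a natural chain-level primitive built from the $\mu^2_{AZ,Z}$-type summands. The balanced hypothesis (constraining the pairing with $Z_*(W)$), the connectivity of $A$, and the dimension constraint $n \ge 3$ together ensure that this primitive lies in $Q$, so the correction is a boundary in $B_*(Q)$ as required. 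Nondegeneracy of $\alpha$ then enters to identify the resulting equality in $K/B_*(Q)[-n]$ with the associativity statement via the quasi-isomorphism $g^{A^\vee}_\alpha$.
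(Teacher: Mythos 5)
Your setup coincides with the paper's: writing $\rho_h = \id - \iota_h \circ p$, both iterated products decompose into an honest iterated $\pi_{M_\alpha}$-product, which is handled exactly as you do (associativity of $\pi_{M_\alpha}$ on homology from \cref{prop:associativityProductCone}, plus the fact that $\iota_{h_1}$ and $\iota_{h_2}$ agree on $K$ up to boundaries, which is your $\tilde P$), and two defect terms $p\,\pi_{M_\alpha}(\rho_{h_1}(\cdots),\iota_{h_2}x_3)$ and $p\,\pi_{M_\alpha}(\iota_{h_1}x_1,\rho_{h_2}(\cdots))$, which are your $u,u'$. Up to that point you are on the paper's track; the appeal to commutativity via \cref{cor:productCommutative} plays no role.

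The gap is the final step, which is the actual content of the proposition: you never show that the defect terms do not contribute. You call this ``the main obstacle'' and then only outline a plan --- building a chain-level primitive for the $A^\vee$-component out of the $\mu^2_{AZ,Z}$-type summands of \cref{prop:extension} and asserting that balancedness, connectivity and $n\ge 3$ ``together ensure'' this primitive lies in $Q$. No such primitive is exhibited, no argument is given for why it would land in $Q$, and the hypotheses of nondegeneracy and $n\ge 3$ are never used in any concrete way, even though they are precisely what makes the statement true. The paper disposes of the defect terms by a short degree argument that your sketch misses entirely: since $A$ is connective, the complex $C_*(A,A)$ --- the target of $\rho_{h_1},\rho_{h_2}$ --- vanishes in negative degrees; since $\alpha$ is nondegenerate, $HH_*(A,A^\vee)[-n]\cong (HH_*(A,A))^\vee$ is supported in nonpositive degrees, so the homology classes of the inner products $\pi_{M_\alpha}(\iota_{h_1}x_i,\iota_{h_2}x_j)$ are concentrated in degrees at most $2-n\le -1$; hence the defect terms, being composites of maps of complexes, vanish on homology. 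Without this argument (or a completed version of your primitive construction) the associativity claim is not established.
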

\begin{proof}
    Given any homotopy $h \colon C_*(A,A^\vee)[-n] \to C_*(A,A)[-1]$ of $F_\alpha$, we have a map \[ \rho_h \colon C_*(A,M_\alpha) \to C_*(A,A)\]
    given by $\rho_h(x,y) = y + h(x)$. By definition, this map satisfies $\rho_h \circ \iota_h = 0$, and also the identity of endomorphisms of $C_*(A,M_\alpha)$
    \[ \iota_h \circ p + \rho_h = \id. \]
    We now take any three elements $x_1,x_2,x_3$ of $K$ and use this identity to write
    \begin{align*}
        \pi_{h_1,h_2}(\pi_{h_1,h_2}(x_1,x_2),x_3) &= p \pi_{M_\alpha}(\iota_{h_1} p \pi_{M_\alpha}(\iota_{h_1} x_1,\iota_{h_2} x_2),\iota_{h_2} x_3) \\
        &= p \pi_{M_\alpha}(\pi_{M_\alpha}(\iota_{h_1}x_1,\iota_{h_2}x_2),\iota_{h_2}x_3) - p \pi_{M_\alpha}(\rho_{h_1} \pi_{M_\alpha}(\iota_{h_1}x_1,\iota_{h_2}x_2),\iota_{h_2}x_3) \\
        \pi_{h_1,h_2}(x_1,\pi_{h_1,h_2}(x_2,x_3)) &= p \pi_{M_\alpha}(\iota_{h_1} x_1, \iota_{h_2} p \pi_{M_\alpha}(\iota_{h_1} x_2,\iota_{h_2} x_3)) \\
        &= p \pi_{M_\alpha}(\iota_{h_1}x_1, \pi_{M_\alpha}(\iota_{h_2}x_2,\iota_{h_2}x_3)) - p \pi_{M_\alpha}(\iota_{h_1} x_1, \rho_{h_2} \pi_{M_\alpha}(\iota_{h_1} x_2,\iota_{h_2} x_3))
    \end{align*}
    We note that every term in the second and fourth line of the equation above only has maps of complexes, so it is enough to prove the desired associativity for the corresponding maps on homology. On homology, the two former terms of those lines are equal since $\iota_{h_1} = \iota_{h_2}$ and $\pi_{M_\alpha}$ is associative. The two latter terms, involving $\rho_{h_1},\rho_{h_2}$, should be seen as a defect of associativity, and vanish for degree reasons given our assumptions: since $HH_*(A,A)$ is supported in nonnegative degrees, so $\rho_{h_1},\rho_{h_2}$ vanish on negative degrees, and $\alpha$ is nondegenerate, $HH_*(A,A^\vee)[-n] \cong (\HH_*(A,A))^\vee$ is supported in nonpositive degrees so the image of $\pi_{M_\alpha}$ is concentrated in degrees $(-\infty,2-n]$.
\end{proof}

\section{Relations between products on the dual and coproducts}\label{sec:relations}
In this section we establish the relation between the dual products $\pi_{h_1,h_2}$, which we got by choosing homotopies of $f_\alpha$, and the coproduct $\lambda_H$ we get by choosing a trivialization $H$ of $E$. In the process, we prove a symmetry statement for $[E]$ under the non-degeneracy assumption on $\alpha$.

\subsection{Symmetry of Chern character}
Recall that in \cref{def:ChernCharacter} we defined a chain-level Chern character $E \in C_*(A,A)\otimes C_*(A,A)$ given a smooth $A_{\infty}$-category $A$ and chain-level representatives for the vertices $\co$ and $\eta$. Given any $2$-truncated $n$-dimensional pre-CY structure $m = \mu + \alpha$, we can compose both sides of $E$ with $g_\alpha: C_*(A,A) \to C^*(A,A)[n]$ to get an element of degree $-2n$
\[ (\id \otimes g_\alpha)(g_\alpha \otimes \id)E \in C^*(A,A)\otimes C^*(A,A). \]
We specify the order of application of the $g_\alpha$ maps due to the possibly nontrivial sign difference $(g_\alpha \otimes \id)(\id \otimes g_\alpha) =g_\alpha \otimes g_\alpha =  (-1)^n (\id \otimes g_\alpha)(g_\alpha \otimes \id)$.

To make the visualization easier, we cut along an arc going from one boundary component of the elbow to the other, and draw the ribbon graphs in the square as follows, namely we identify the bottom edge with the top edge of the square.  We insert $\alpha \otimes \alpha$ into the circles labeled I and II:
\[ E =  \tikzfig{
    \draw (-2,-2) rectangle (2,2);
    \node [vertex] (co) at (0,1) {$\co$};
    \node [vertex] (eta) at (0,-1) {$\eta$};
    \draw [->-,cyan] (co) to (0,2);
    \draw [->-,cyan] (0,-2) to (eta);
    \draw [->-] (co) to (eta);
    \draw [->-] (eta) to (2,-1);
    \draw [->-] (eta) to (-2,-1);
    \node at (-0.3,0) {$1$};
    \node at (-0.3,1.5) {$2$};
    \node at (-1,-0.7) {$3$};
    \node at (1,-0.7) {$4$};
    } \qquad (\id \otimes g_\alpha)(g_\alpha \otimes \id)E = \tikzfig{
    \draw (-2,-2) rectangle (2,2);
    \node [vertex] (co) at (0,1) {$\co$};
    \node [vertex] (eta) at (0,-1) {$\eta$};
    \node [vertex] (alphaL) at (-1,1) {I};
    \node [vertex] (alphaR) at (1,0) {II};
    \node [bullet] (midL) at (-1,0) {};
    \node [bullet] (botL) at (-1,-1) {};
    \node [bullet] (botR) at (1,-1) {};
    \node [bullet] (topR) at (1,1) {};
    \draw [->-,cyan] (co) to (0,2);
    \draw [->-,cyan] (0,-2) to (eta);
    \draw [->-] (co) to (eta);
    \draw [->-] (eta) to (botL);
    \draw [->-] (eta) to (botR);
    \draw [-w-] (alphaL) to (midL);
    \draw [->-] (botL) to (midL);
    \draw [->-] (alphaL) to (-1,2);
    \draw [->-] (-1,-2) to (botL);
    \draw [->-] (midL) to (-2,0);
    \draw [-w-] (alphaR) to (topR);
    \draw [->-] (alphaR) to (botR);
    \draw [->-] (botR) to (1,-2);
    \draw [->-] (1,2) to (topR);
    \draw [->-] (topR) to (2,1);
    \node at (-0.3,0) {$1$};
    \node at (-0.3,1.5) {$2$};
    \node at (-0.5,-0.7) {$3$};
    \node at (0.5,-0.7) {$4$};
    \node at (-1.3,0.5) {$5$};
    \node at (-1.3,1.5) {$6$};
    \node at (-1.3,-1.5) {$7$};
    \node at (-1.7,-0.3) {$8$};
    \node at (1.3,0.5) {$9$};
    \node at (1.3,-0.5) {$10$};
    \node at (1.3,-1.5) {$11$};
    \node at (1.7,0.7) {$12$};
    }
\]

We can draw another diagram which, when also evaluated on $\alpha \otimes \alpha$, gives another closed element $D_\alpha$ of degree $-2n$ in $C^*(A,A)\otimes C^*(A,A)$:
\[ D_\alpha = \tikzfig{
    \draw (-2,-2) rectangle (2,2);
    \node [vertex] (alphaL) at (0,0.5) {I};
    \node [vertex] (alphaR) at (1,-1) {II};
    \node [bullet] (botR) at (0,-1) {};
    \node [bullet] (botL) at (-1,-1) {};
    \draw [-w-] (alphaL) to (botR);
    \draw [->-] (botR) to (botL);
    \draw [->-] (alphaL) to (0,2);
    \draw [->-,rounded corners=5] (-0,-2) |- ++(-0.3,0.5) -| (botL);
    \draw [->-] (botL) to (-2,-1);
    \draw [->-] (alphaR) to (botR);
    \draw [-w-] (alphaR) to (2,-1);
    \node at (0.3,-0.3) {$1$};
    \node at (0.3,1.2) {$2$};
    \node at (1.5,-1.3) {$3$};
    \node at (0.6,-1.3) {$4$};
    \node at (-0.5,-0.7) {$5$};
    \node at (-1.5,-0.7) {$6$};
}\]
Note that pairing with the right-hand output of $D_\alpha$ gives a map
\[ (D_\alpha)^\sharp: C_*(A,A^\vee) \to C^*(A,A) [2n] \]
which is \emph{equal at chain-level} to $g_\alpha \circ F_\alpha$. Besides the chain-level expressions for $\co,\eta$, recall from \cref{sec:anotherDescr} that we fixed a (non-closed) bimodule morphism $\beta:A \to A$ of degree $1$, witnessing the relation between $\co,\eta$
\begin{lemma}\label{lem:Jhomotopy}
    The elements $(\id\otimes g_\alpha)(g_\alpha \otimes \id)E$ and $D_\alpha$ are homologous, i.e. there is an explicit element $J_\alpha \in C^*(A,A)\otimes C^*(A,A)$, depending on the chain-level expressions for $\alpha,\co,\eta$ and $\beta$, satisfying
    \[ dJ_\alpha = (\id\otimes g_\alpha)(g_\alpha \otimes \id)E - D_\alpha \]
\end{lemma}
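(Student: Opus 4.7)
The plan is to construct $J_\alpha$ explicitly using the graphical calculus, leveraging the non-closed bimodule morphism $\beta \colon A \to A$ of degree $1$ introduced in \cref{sec:anotherDescr}. Recall that $\beta$ satisfies $[d,\beta] = \eta\circ\co\text{-pattern} - \id_A$, meaning that $d\beta$ equals a fixed ribbon quiver built from $\co$, $\eta$, $\mu$ and the bimodule action maps, minus the identity morphism on $A$. The key point is that $\beta$ witnesses that the composition of $\eta$ and $\co$ is the identity on $A$ up to a boundary, and this is precisely the simplification that distinguishes $(\id\otimes g_\alpha)(g_\alpha\otimes\id)E$, which contains a $\co$-$\eta$ pair, from $D_\alpha$, which does not.

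First, I would redraw both $(\id\otimes g_\alpha)(g_\alpha\otimes\id)E$ and $D_\alpha$ in the cylindrical/square form used in the statement, and identify the single $A$-arrow connecting $\co$ and $\eta$ in the first diagram as the natural site for a $\beta$-insertion. I would then define $J_\alpha$ to be the oriented ribbon quiver obtained from the diagram for $(\id\otimes g_\alpha)(g_\alpha\otimes\id)E$ by replacing the $\co$-$\eta$ edge with an edge carrying $\beta$, with the total ordering of edges adjusted so that $\beta$ sits at the head of the ordering (this controls the sign of the Leibniz term involving $d\beta$).

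Computing $dJ_\alpha$ via the ribbon-quiver Leibniz rule, the only non-trivial contribution comes from $[d,\beta]$, since $\alpha$, $\co$, $\eta$, and the $A_\infty$-structure maps $\mu$ are all closed. Substituting the $\eta\circ\co$ half of $[d,\beta]$ back into the ribbon quiver reproduces the original $\co$-$\eta$ configuration and therefore recovers $(\id\otimes g_\alpha)(g_\alpha\otimes\id)E$ on the nose. Substituting the $\id_A$ half of $[d,\beta]$ collapses the internal edge, leaving a diagram in which the two $\alpha$ vertices are now connected directly through the bimodule actions. Inspecting the resulting ribbon quiver and comparing it with the defining diagram of $D_\alpha$, one sees that the two coincide after a permutation of edges whose sign cancels against the orientation adjustment chosen for $J_\alpha$.

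The main obstacle I anticipate is the bookkeeping: verifying that the signs induced by the chosen total ordering of edges produce the required $(+,-)$ pair of terms on the right-hand side, and ensuring that the $\id_A$-collapsed diagram matches $D_\alpha$ on the nose rather than only up to a further homotopy. In case additional diagrammatic corrections are needed, I would augment $J_\alpha$ by auxiliary ribbon quivers — each involving one further insertion of $\beta$ or of a small homotopy analogous to the cap-product homotopy $(\vec\Gamma_3,o_3)$ of \cref{ex:capProducts} — so that the boundary terms cancel. The verification is ultimately a diagrammatic calculation of the same flavor as those deferred to the appendix in the proofs of \cref{prop:isoOfBimodules} and \cref{thm:EulerCharacter}, and I would present the explicit $J_\alpha$ there.
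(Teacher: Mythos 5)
Your key idea is the right one: the only non-closed ingredient available is $\beta$, whose differential $[d,\beta]$ is (the ``wheel'' built from $\eta$, $\co$ and two multiplications) minus $\id_A$, and it is indeed the $\beta$-insertion that ultimately trades the $\co$--$\eta$ pair for the direct connection appearing in $D_\alpha$. This matches the final term of the $J_\alpha$ the paper writes down in \cref{app:Jhomotopy}.

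However, there is a genuine gap in the claim that a single $\beta$-insertion diagram already has differential $(\id\otimes g_\alpha)(g_\alpha\otimes\id)E - D_\alpha$ ``on the nose.'' The $\co$--$\eta$ configuration produced by the wheel half of $[d,\beta]$ is not the $\co$--$\eta$ configuration occurring in $(\id\otimes g_\alpha)(g_\alpha\otimes\id)E$: in the latter, the two $A$-outputs of $\eta$ feed the two \emph{separate} tensor factors (each with its own $g_\alpha$ gadget), whereas in the wheel of $[d,\beta]$ those two outputs are recombined with the $A$-output of $\co$ by two $\mu$-vertices into a \emph{single} $A$-line sitting on one edge of the $D_\alpha$ pattern. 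These are different oriented ribbon quivers (they do not even have the same underlying graph), so the single diagram you propose has differential $(\text{wheel spliced into } D_\alpha) - D_\alpha$, and the first term is only \emph{homologous} to $(\id\otimes g_\alpha)(g_\alpha\otimes\id)E$, not equal to it. Bridging that remaining discrepancy requires an explicit chain of homotopies that slide $\co$ and the outputs of $\eta$ around the surface past the two $\alpha$-vertices and the output marks; this is precisely the content of the six additional diagrams constituting the bulk of the paper's $J_\alpha$, and your contingent remark about ``auxiliary ribbon quivers'' neither identifies them nor explains why they are forced. (Relatedly, your second and third paragraphs are inconsistent about whether $\co$ and $\eta$ are retained in $J_\alpha$ or replaced by $\beta$; only the latter reading is compatible with your claim about which half of $[d,\beta]$ produces which term.) As written, the proposal identifies the mechanism but does not constitute a proof.
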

\begin{proof}
    We write in \cref{app:Jhomotopy} an explicit combination of oriented ribbon quivers giving the element $J_\alpha$.
\end{proof}

Let us denote by $(-)^T$ the action of the generator of $\Z/2\Z$ permuting the two factors.
\begin{lemma}\label{lem:Lhomotopy}
    The elements $D_\alpha$ and $D_\alpha^T$ are homologous, i.e.\ there is an explicit element $L_\alpha \in C^*(A,A)\otimes C^*(A,A)$ satisfying $dL_\alpha = D_\alpha - D_\alpha^T$
\end{lemma}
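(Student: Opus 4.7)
The plan is to produce $L_\alpha$ as an explicit sum of oriented ribbon quivers, paralleling the construction of $J_\alpha$ in \cref{lem:Jhomotopy}. The conceptual input is the $\Z/2\Z$-invariance of $\alpha$ as an element of $C^n_{(2,n)}(A) = C^n_{(2)}(A)^{\Z/2\Z}[n-2]$: swapping the two output legs of an $\alpha$-vertex returns the same element up to the twisted Koszul sign $(-1)^{n-1}$, and it is precisely this invariance that makes a chain-level homotopy between $D_\alpha$ and its transposition $D_\alpha^T$ possible.

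First I would redraw $D_\alpha^T$ on the same annulus on which $D_\alpha$ lives, so that the difference $D_\alpha - D_\alpha^T$ appears as two combinatorial configurations of the same underlying ribbon graph (two $\alpha$-vertices, two $\mu$-bullets, and the connecting edges). Geometrically, the operator $T$ is realized as the $180^\circ$ rotation of the annulus that interchanges its two boundary components, which are the two loci from which the two $C^*(A,A)$ outputs of $D_\alpha$ are read.

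Second, I would interpolate between these two configurations by a short sequence of elementary ribbon-quiver moves, each contributing one summand to $L_\alpha$ whose differential accounts for a piece of $D_\alpha - D_\alpha^T$. The two kinds of moves involved are: (a) cyclic rotation of the outputs of an $\alpha$-vertex, powered by the $\Z/2\Z$-invariance of $\alpha$; and (b) sliding a $\mu$-bullet across an $\alpha$-vertex, powered by the pre-CY equation $[\mu,\alpha]=0$ (i.e.\ the closedness of $\alpha$ in $C^*_{(2)}(A)$ under the Gerstenhaber-type differential). Assembling these intermediate quivers with the appropriate choice of orientations produces the desired $L_\alpha$.

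The main obstacle is not conceptual but bookkeeping: one must carefully compare the orientations on the various oriented ribbon quivers, track the sign arising from the $(n-2)$-shift in the definition of $C^n_{(2,n)}(A)$, and verify that the Koszul sign $(-1)^{|x||y|}$ from the tensor transposition is correctly absorbed. Following the conventions in the appendix where $J_\alpha$ is constructed, I expect $L_\alpha$ to be expressible as an explicit sum of only a few oriented ribbon quivers on the annulus, with the verification $dL_\alpha = D_\alpha - D_\alpha^T$ reducing to a direct, if tedious, sign computation once the orientations are pinned down.
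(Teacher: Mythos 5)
Your plan is essentially the paper's proof: the paper simply exhibits $L_\alpha$ as the evaluation on $\alpha\otimes\alpha$ of a sum of two oriented ribbon quivers on the cut-open annulus, and the identity $dL_\alpha = D_\alpha - D_\alpha^T$ follows from exactly the two mechanisms you name, namely the (sign-twisted) $\Z/2\Z$-invariance of $\alpha\in C^n_{(2,n)}(A)$ and the closedness $[\mu,\alpha]=0$. The only thing missing from your write-up is the actual pair of diagrams, but your interpolation strategy produces them directly.
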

\begin{proof}
    The element $L_\alpha$ given by evaluating the following combination of diagrams on $\alpha \otimes \alpha$ satisfies the desired property:
    \[\tikzfig{
	\draw (-2,-2) rectangle (2,2);
	\node [vertex] (alphaI) at (0,0.5) {I};
	\node [vertex] (alphaII) at (0,-1) {II};
	\node [bullet] (botL) at (-1,-1) {};
	\draw [-w-] (alphaI) to (alphaII);
	\draw [->-] (alphaI) to (0,2);
	\draw [->-,rounded corners=5] (-0,-2) |- ++(-0.3,0.5) -| (botL);
	\draw [->-] (botL) to (-2,-1);
	\draw [->-] (alphaII) to (botL);
	\draw [-w-] (alphaII) to (2,-1);
	\node at (0.3,-0.3) {$1$};
	\node at (0.3,1.2) {$2$};
	\node at (1,-1.3) {$3$};
	\node at (-0.5,-0.7) {$4$};
	\node at (-1.7,-0.7) {$5$};
} \quad + \quad \tikzfig{
\draw (-2,-2) rectangle (2,2);
\node [vertex] (alphaI) at (0,0.5) {I};
\node [vertex] (alphaII) at (-1,-1) {II};
\node [bullet] (bot) at (0,-1) {};
\draw [-w-] (alphaI) to (bot);
\draw [->-] (alphaI) to (0,2);
\draw [->-,rounded corners=5] (-0,-2) |- ++(-0.3,0.5) -| (alphaII);
\draw [->-] (bot) to (2,-1);
\draw [-w-] (alphaII) to (bot);
\draw [->-] (alphaII) to (-2,-1);
\node at (0.3,-0.3) {$1$};
\node at (0.3,1.2) {$2$};
\node at (1,-1.3) {$3$};
\node at (-0.5,-0.7) {$4$};
\node at (-1.7,-0.7) {$5$};
}\]
\end{proof}

From the above lemmas, we conclude that the elements $(\id \otimes g_\alpha)(g_\alpha \otimes \id) E$ and $$(g_\alpha \otimes \id)(\id \otimes  g_\alpha) E^T = (-1)^n(\id \otimes  g_\alpha)(g_\alpha \otimes \id) E^T$$ are homologous. Therefore we can conclude a symmetry property of the class $[E]$, in the case that $A$ admits a non-degenerate $\alpha$.
\begin{theorem}\label{thm:symmetry}
    If the smooth $A_\infty$-category $A$ admits a nondegenerate (2-truncated) pre-Calabi-Yau structure of dimension $n$, then its class $[E]$ is $(-1)^n$-symmetric. In particular, over a field $\kk$ of characteristic zero, if $A$ admits a weak smooth Calabi-Yau structure of dimension $n$, that is, if there is a quasi-isomorphism of bimodules $A \simeq A^![-n]$, then its class $[E]$ is $(-1)^n$-symmetric.
\end{theorem}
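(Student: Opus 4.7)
The plan is to combine Lemmas \ref{lem:Jhomotopy} and \ref{lem:Lhomotopy} to transport the chain-level Chern character to the auxiliary element $D_\alpha$, use the swap symmetry $D_\alpha \sim D_\alpha^T$ supplied by $L_\alpha$, and then invert the identifications via the nondegeneracy of $\alpha$, which ensures that $g_\alpha$ is a quasi-isomorphism (Remark \ref{remark:nondegeneratequasiisomorphism}).

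First I would chain the two explicit homotopies $J_\alpha$ and $L_\alpha$ to obtain, at the level of homology classes in $H_*(C^*(A,A) \otimes C^*(A,A))$,
\[ (\id \otimes g_\alpha)(g_\alpha \otimes \id)[E] \;=\; [D_\alpha] \;=\; [D_\alpha^T]. \]
Next I would apply the swap $T$ to the chain-level identity $dJ_\alpha = (\id\otimes g_\alpha)(g_\alpha \otimes \id)E - D_\alpha$ to produce a chain homotopy $T(J_\alpha)$ between $T\bigl((\id \otimes g_\alpha)(g_\alpha \otimes \id) E\bigr)$ and $D_\alpha^T$. Rewriting $T\bigl((\id \otimes g_\alpha)(g_\alpha \otimes \id) E\bigr)$ in the normalized form $(\id \otimes g_\alpha)(g_\alpha \otimes \id) E^T$ produces a global sign $(-1)^n$ coming from transporting two degree-$(-n)$ maps $g_\alpha$ past one another through the tensor-factor swap (the total degree zero of $E$ eliminates all further sign contributions). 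Combining the three identifications yields
\[ (g_\alpha \otimes g_\alpha)[E] \;=\; (-1)^n (g_\alpha \otimes g_\alpha)[E^T]. \]
Since $\alpha$ is nondegenerate, $g_\alpha$ is a quasi-isomorphism, so $g_\alpha \otimes g_\alpha$ induces an isomorphism on homology; cancelling yields $[E] = (-1)^n [E^T]$, as desired.

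The main obstacle I anticipate lies in the careful sign analysis of the step comparing $T\bigl((\id\otimes g_\alpha)(g_\alpha\otimes\id)E\bigr)$ with $(\id\otimes g_\alpha)(g_\alpha\otimes\id)E^T$: one must unwind the Koszul rule for two compositions of graded tensor-product maps of degree $-n$, together with the convention that $T$ acts with a sign on the summands of $E = \sum E' \otimes E''$. The fact that $|E|=0$ makes the bookkeeping tractable but nontrivial, and it is exactly this sign analysis that yields the precise factor of $(-1)^n$ in the final statement. For the second assertion, I would use that a weak smooth Calabi-Yau structure $A \simeq A^![-n]$, transported through the identification \eqref{quasi-iso-pCY}, already supplies a nondegenerate closed cocycle $\alpha \in C^n_{(2)}(A)$; in characteristic zero one could alternatively invoke the promotion theorem of \cite{KTV23}, but only the $2$-truncated datum is needed to apply the first part.
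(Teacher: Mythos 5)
Your proposal is correct and follows essentially the same route as the paper: it combines the homotopies $J_\alpha$ and $L_\alpha$, transposes the $J_\alpha$ relation and uses the sign identity $(g_\alpha\otimes\id)(\id\otimes g_\alpha)=(-1)^n(\id\otimes g_\alpha)(g_\alpha\otimes\id)$ to compare with $(\id\otimes g_\alpha)(g_\alpha\otimes\id)E^T$, and then cancels $g_\alpha\otimes g_\alpha$ on homology via nondegeneracy. The treatment of the second assertion (extracting a closed nondegenerate $\alpha\in C^n_{(2)}(A)$ from the weak smooth CY structure through the identification \cref{quasi-iso-pCY}) also matches the paper's intent.
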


\subsection{Compatibility relation}\label{sec:compatibility}
We will now study the compatibility between the coproducts $\lambda_H$ from \cref{def:chainAlgebraicLoopCoproduct} and the dual products $\pi_{h_1,h_2}$ from \cref{def:chainDualProduct}. This relation will allow us to complete the proof of \cref{thm:thm1intro}. This is the most involved proof in this paper: it involves calculating some rather complicated homotopies, so we will break the proof into parts.

\subsubsection{The square-filling lemma}
Let us start with a lemma which holds for any smooth $A$ with pre-CY structure, not necessarily nondegenerate. We keep this separate from the main proof since we believe it may be of future interest, e.g., when studying operations coming from a possibly degenerate pre-CY structure.

Recall that $G(\varphi)$ gives a homotopy between capping with $\varphi$ onto the left factor of $E$ and onto the right factor of $E$. Passing to Hochschild cochains using $(\id \otimes g_\alpha)(g_\alpha \otimes \id)$, we have a similar description for the cup product. We define
\[ \Gamma(\varphi) =  (\id \otimes g_\alpha)(g_\alpha \otimes \id)G(\varphi) + \tikzfig{
	\draw (-2,-2) rectangle (2,2);
	\node [vertex] (co) at (0,-0.5) {$\co$};
	\node [rectangle,draw,thick] (eta) at (0,1) {$\eta$};
	\node [vertex] (alphaL) at (-1,0) {I};
	\node [vertex] (alphaR) at (1,-1) {II};
	\node [bullet] (topL) at (-1,1) {};
	\node [bullet] (botL) at (-1,-0.7) {};
	\node [vertex] (phi) at (-1,-1.5) {$\varphi$};
	\node [bullet] (midR) at (1,0) {};
	\node [bullet] (topR) at (1,1) {};
	\draw [->-,cyan] (co) to (eta);
	\draw [->-] (co) to (0,-2);
	\draw [->-] (0,2) to (eta);
	\draw [->-] (eta) to (topL);
	\draw [->-] (eta) to (topR);
	\draw [->-] (topR) to (midR);
	\draw [->-] (alphaL) to (topL);
	\draw [-w-] (alphaL) to (botL);
	\draw [->-] (topL) to (-1,2);
	\draw [->-=.9] (-1,-2) to (phi);
	\draw [->-] (phi) to (botL);
	\draw [->-] (botL) to (-2,-0.7);
	\draw [-w-] (alphaR) to (midR);
	\draw [->-] (alphaR) to (1,-2);
	\draw [->-] (1,2) to (topR);
	\draw [->-] (midR) to (2,0);
	\node at (-0.3,-1.2) {$ 1$};
	\node at (-0.3,0) {$ 2$};
	\node at (-0.5,1.3) {$ 3$};
	\node at (0.5,1.3) {$ 4$};
	\node at (-0.7,-0.5) {$ 5$};
	\node at (-1.3,0.5) {$ 6$};
	\node at (-1.3,1.5) {$ 7$};
	\node at (-0.7,-1) {$ 8$};
	\node at (-1.5,-1) {$ 9$};
	\node at (0.7,-0.5) {$10$};
	\node at (0.7,-1.5) {$11$};
	\node at (0.7,0.5) {$12$};
	\node at (1.5,-0.3) {$13$};
} \quad + \quad \tikzfig{
\draw (-2,-2) rectangle (2,2);
\node [vertex] (co) at (0,-0.5) {$\co$};
\node [rectangle,draw,thick] (eta) at (0,1) {$\eta$};
\node [vertex] (alphaL) at (-1,-1) {I};
\node [vertex] (alphaR) at (1,-1.3) {II};
\node [bullet] (topL) at (-1,1) {};
\node [bullet] (midL) at (-1,0) {};
\node [vertex] (phi) at (1,-0.4) {$\varphi$};
\node [bullet] (midR) at (1,0.3) {};
\node [bullet] (topR) at (1,1) {};
\draw [->-,cyan] (co) to (eta);
\draw [->-] (co) to (0,-2);
\draw [->-] (eta) to (topL);
\draw [->-] (eta) to (topR);
\draw [->-] (topL) to (midL);
\draw [->-] (alphaL) to (midL);
\draw [->-] (-1,2) to (topL);
\draw [-w-] (alphaL) to (-1,-2);
\draw [->-] (midL) to (-2,0);
\draw [-w-] (alphaR) to (phi);
\draw [->-] (phi) to (midR);
\draw [->-] (topR) to (midR);
\draw [->-] (alphaR) to (1,-2);
\draw [->-] (1,2) to (topR);
\draw [->-] (midR) to (2,0.3);
\draw [->-] (0,2) to (eta);
\node at (-0.3,-1.4) {$1$};
\node at (-0.3,0) {$2$};
\node at (-0.6,1.3) {$3$};
\node at (0.6,1.3) {$4$};
\node at (-0.7,-1.7) {$5$};
\node at (-0.7,-0.7) {$6$};
\node at (-1.3,0.4) {$7$};
\node at (0.7,0) {$11$};
\node at (-1.5,-0.3) {$8$};
\node at (0.7,-0.8) {$9$};
\node at (0.7,-1.8) {$10$};
\node at (0.7,0.7) {$12$};
\node at (1.5,0.5) {$13$};
} \]
where the two diagrams are taken with orientation $(13\ 12\ \dots\ 1)$ and evaluated with $\alpha \otimes \alpha$ input into I and II. The map $\Gamma: C^*(A,A) \to C^*(A,A)\otimes C^*(A,A)[2n-1]$ then satisfies the equation
\[ d(\Gamma(\varphi)) + \Gamma(d\varphi) =  ((\smile \varphi \otimes \id) - (\id \otimes \varphi \smile))(\id \otimes g_\alpha)(g_\alpha \otimes \id) E \]

The element $J_\alpha$ of \cref{lem:Jhomotopy}, by cupping on the left or on the right, gives a homotopy between each of the two terms on the right-hand side of the equation above and another term involving cupping with the element $D_\alpha$:
\begin{align*} 
	d((\smile \varphi \otimes \id)J_\alpha) + (\smile d\varphi \otimes \id)J_\alpha &= -(\smile \varphi \otimes \id)(\id \otimes g_\alpha)(g_\alpha \otimes \id)(E) + (\smile \varphi \otimes \id)D_\alpha \\
	d((\id \otimes \varphi \smile)J_\alpha) + (\id \otimes d\varphi \smile)J_\alpha &= -(\id \otimes \varphi \smile)(\id \otimes g_\alpha)(g_\alpha \otimes \id)(E) + (\id \otimes \varphi \smile)D_\alpha
\end{align*}
We can easily picture another combination of diagrams which gives a homotopy between the two last terms of the equations above, by `passing' the $\varphi$-vertex through the lines of the diagram. Taking $I_\alpha$ to be the evaluation of the following map on $\alpha \otimes \alpha$:
\begin{align*}
	&+ \quad \tikzfig{
		\draw (-2,-2) rectangle (2,2);
		\node [vertex] (alphaL) at (0.4,0.5) {I};
		\node [vertex] (alphaR) at (1.2,-1) {II};
		\node [bullet] (botR) at (0.4,-1) {};
		\node [bullet] (botL) at (-0.4,-1) {};
		\node [vertex] (phi) at (-1.2,-1) {$\varphi$};
		\draw [-w-] (alphaL) to (botR);
		\draw [->-] (botR) to (botL);
		\draw [->-] (alphaL) to (0.4,2);
		\draw [->-,rounded corners=5] (0.4,-2) |- ++(-0.3,0.5) -| (botL);
		\draw [->-] (botL) to (phi);
		\draw [->-] (phi) to (-2,-1);
		\draw [->-] (alphaR) to (botR);
		\draw [-w-] (alphaR) to (2,-1);
		\node at (0.7,-0.1) {$1$};
		\node at (0.7,1.2) {$2$};
		\node at (0.8,-0.7) {$ 3$};
		\node at (1.6,-0.7) {$ 4$};
		\node at (0,-0.7) {$ 5$};
		\node at (-0.8,-0.7) {$ 6$};
		\node at (-1.6,-0.7) {$ 7$};
	} \quad + \quad \tikzfig{
		\draw (-2,-2) rectangle (2,2);
		\node [vertex] (alphaL) at (0,1) {I};
		\node [vertex] (alphaR) at (1,-1) {II};
		\node [bullet] (botR) at (0,-1) {};
		\node [bullet] (botL) at (-1,-1) {};
		\node [vertex] (phi) at (0,0) {$\varphi$};
		\draw [-w-] (alphaL) to (phi);
		\draw [->-] (phi) to (botR);
		\draw [->-] (botR) to (botL);
		\draw [->-] (alphaL) to (0,2);
		\draw [->-,rounded corners=5] (-0,-2) |- ++(-0.3,0.5) -| (botL);
		\draw [->-] (botL) to (-2,-1);
		\draw [->-] (alphaR) to (botR);
		\draw [-w-] (alphaR) to (2,-1);
		\node at (0.3,0.5) {$1$};
		\node at (0.3,1.6) {$2$};
		\node at (0.5,-1.3) {$ 3$};
		\node at (1.5,-0.7) {$ 4$};
		\node at (0.3,-0.6) {$ 5$};
		\node at (-0.5,-0.7) {$ 6$};
		\node at (-1.5,-0.7) {$ 7$};
	} \\
	&- \quad \tikzfig{
		\draw (-2,-2) rectangle (2,2);
		\node [vertex] (alphaL) at (0,0.5) {I};
		\node [vertex] (alphaR) at (1,-1) {II};
		\node [bullet] (botR) at (0,-1) {};
		\node [bullet] (botL) at (-1,-1) {};
		\node [vertex] (phi) at (1,0) {$\varphi$};
		\draw [-w-] (alphaL) to (botR);
		\draw [->-] (phi) to (alphaR);
		\draw [->-] (botR) to (botL);
		\draw [->-] (alphaL) to (0,2);
		\draw [->-,rounded corners=5] (-0,-2) |- ++(-0.3,0.5) -| (botL);
		\draw [->-] (botL) to (-2,-1);
		\draw [->-] (alphaR) to (botR);
		\draw [-w-] (alphaR) to (2,-1);
		\node at (0.3,-0.3) {$1$};
		\node at (0.3,1.2) {$2$};
		\node at (1.3,-0.5) {$ 3$};
		\node at (1.5,-1.3) {$5$};
		\node at (0.6,-1.3) {$4$};
		\node at (-0.5,-0.7) {$ 6$};
		\node at (-1.7,-0.7) {$ 7$};
	} \quad + \quad \tikzfig{
		\draw (-2,-2) rectangle (2,2);
		\node [vertex] (alphaL) at (-0.4,0.5) {I};
		\node [vertex] (alphaR) at (0.4,-1) {II};
		\node [bullet] (botR) at (-0.4,-1) {};
		\node [bullet] (botL) at (-1.2,-1) {};
		\node [vertex] (phi) at (1.3,-1) {$\varphi$};
		\draw [-w-] (alphaL) to (botR);
		\draw [-w-=.8] (alphaR) to (phi);
		\draw [->-] (phi) to (2,-1);
		\draw [->-] (botR) to (botL);
		\draw [->-] (alphaL) to (-0.4,2);
		\draw [->-,rounded corners=5] (-0.4,-2) |- ++(-0.3,0.5) -| (botL);
		\draw [->-] (botL) to (-2,-1);
		\draw [->-] (alphaR) to (botR);
		\node at (-0.1,-0.2) {$1$};
		\node at (-0.1,1.4) {$2$};
		\node at (0,-1.3) {$ 3$};
		\node at (1.8,-1.3) {$7$};
		\node at (0.9,-1.3) {$4$};
		\node at (-0.8,-0.7) {$5$};
		\node at (-1.7,-1.3) {$6$};
	}
\end{align*}
we have the equation 
\[ d I_\alpha(\varphi) + I_\alpha(d\varphi) = (\smile \varphi \otimes \id)D_\alpha - (\id \otimes \varphi \smile) D_\alpha \]

Heuristically, the homotopies above are the \emph{four sides of a square}, which sits inside of some topological space given by the realization of a cell complex, which should be seen as the space of operations on Hochschild (co)chains that we can realize with our calculus, the cells of which are labelled by our oriented diagrams. One can then naturally ask if there is a combination of cells that `fills in' the squares, and the answer is yes:
\begin{lemma}\label{lem:squareLemma}
    There is a map
    \[ N_\alpha :C^*(A,A) \to C^*(A,A) \otimes C^*(A,A)[2n-2] \]
    depending on $\alpha$ and $\beta$, such that we have an equality in $C^*(A,A) \otimes C^*(A,A)$
    \[ dN_\alpha(\varphi)-N_\alpha(d\varphi) = \Gamma(\varphi) + (\smile \varphi \otimes \id)J_\alpha - (\id \otimes \varphi \smile)J_\alpha - I_\alpha(\varphi) \]
    for every $\varphi \in C^*(A,A)$.
\end{lemma}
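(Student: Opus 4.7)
The plan is to produce $N_\alpha(\varphi)$ as an explicit sum of oriented ribbon quivers with both $\varphi$ and $\beta$ vertices inserted, and then verify the claimed equation by differentiating term-by-term in the graphical calculus.

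First, I would recognize the four boundary terms as the four edges of a square in the space of operations on Hochschild co/chains: the horizontal edges $\Gamma(\varphi)$ and $I_\alpha(\varphi)$ express that cupping with $\varphi$ on the left or the right of $(\id \otimes g_\alpha)(g_\alpha \otimes \id)E$ (respectively, of $D_\alpha$) give chain-homotopic operations, while the vertical edges $(\smile\varphi \otimes \id)J_\alpha$ and $(\id \otimes \varphi\smile)J_\alpha$ are obtained by cupping the homotopy $J_\alpha$ of \cref{lem:Jhomotopy} on one side or the other. The total sum of the four edges is $d$-closed in $\Hom(C^*(A,A),C^*(A,A)\otimes C^*(A,A))[2n-1]$, so the content of the lemma is to produce an explicit $d$-primitive of it.

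Second, since $J_\alpha$ was built using the pre-morphism $\beta$ satisfying $[d,\beta]=\eta\circ\co-\id_A$, the natural interior of the square should contain both $\beta$ and $\varphi$. I would construct $N_\alpha(\varphi)$ as a sum of oriented ribbon quivers on the same underlying topological type of surface used for $\Gamma(\varphi)$, in which $\varphi$ is placed at each of the intermediate edges/positions between the $G$-type placements appearing in $\Gamma(\varphi)$ and the $I_\alpha$-type placements in $I_\alpha(\varphi)$, and simultaneously $\beta$ is inserted along the appropriate $A$-arrow (exactly where it appears in $J_\alpha$). Each such placement yields one diagram, equipped with an orientation that I would fix so as to match the sign conventions already used for $\Gamma$, $J_\alpha$, and $I_\alpha$.

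Third, I would compute $dN_\alpha(\varphi)$ by expanding the differential of each diagram. The differential acts by three mechanisms: (a) splitting an $A_\infty$-structure $\mu$ off a $\bullet$-vertex, (b) applying $d$ to $\varphi$, and (c) replacing the internal $\beta$-vertex by its boundary $\eta\circ\co-\id_A$. Terms of type (a) come in pairs that witness $\varphi$ being moved past an adjacent vertex and cancel against terms produced by adjacent diagrams in $N_\alpha(\varphi)$; these are the interior 1-cells of the square. Terms of type (b) give precisely $N_\alpha(d\varphi)$. Terms of type (c), when $\beta$ is replaced by $\eta\circ\co$, reproduce the diagrams of $(\id\otimes g_\alpha)(g_\alpha\otimes\id)E$ with $\varphi$ in one of the four positions showing up in $\Gamma(\varphi)$; when $\beta$ is replaced by $\id_A$ the diagrams collapse to $D_\alpha$-type diagrams with $\varphi$ in one of the four positions of $I_\alpha(\varphi)$. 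The terms in which $\varphi$ is still placed on the central $A$-arrow of $J_\alpha$ reproduce the vertical edges $(\smile\varphi\otimes\id)J_\alpha$ and $(\id\otimes\varphi\smile)J_\alpha$ after the $\mu$-splittings are resolved.

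The hard part will be the bookkeeping of signs and orientations: the master square has several internal 1-cells and many 2-cells, and one must ensure that the orientations chosen for each diagram making up $N_\alpha(\varphi)$ conspire so that the interior contributions cancel while the four boundary contributions appear with exactly the signs prescribed by the statement. This verification is algorithmic but tedious, and I would defer the explicit enumeration of diagrams and the sign check to an appendix, following the same format used earlier for $G$, $\Gamma$, and $I_\alpha$.
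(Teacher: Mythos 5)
Your proposal follows essentially the same route as the paper: the paper's proof also consists of exhibiting an explicit combination of oriented ribbon quivers containing both $\varphi$ and $\beta$, obtained (as the appendix puts it) by ``passing the $\varphi$ vertex from one side to the other'' on each diagram of $J_\alpha$ and then correcting the leftover terms by hand, and your description of how the differential acts --- $\mu$-splittings cancelling internally, the $d\varphi$ terms assembling into $N_\alpha(d\varphi)$, and $[d,\beta]=\eta\circ\co-\id_A$ producing the $\Gamma$- and $I_\alpha$-type boundary edges --- is exactly the mechanism at work. Be aware, though, that essentially all of the substance of this lemma lies in the step you defer: the paper's smallest explicit solution has 53 terms, so until that enumeration and the orientation/sign check are actually carried out, what you have is a correct plan rather than a completed proof.
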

\begin{proof}
    We give an expression for the element $N_\alpha$ in \cref{app:squareLemma}.
\end{proof}

\subsubsection{The compatibility relation}
We return to the setting of \cref{sec:dualKernel}, and consider the map $g_\alpha \circ F_\alpha \colon C_*(A,A^\vee)[-n] \to C^*(A,A)[n]$. Suppose that we are given two elements
\[ E_{01}, E_{02} \in W \otimes C_*(A,A) \cap C_*(A,A) \otimes W \]
and two maps $\widetilde h_1, \widetilde h_2 \colon C_*(A,A^\vee)[-n] \to C^*(A,A)[n-1]$ satisfying
\[ [d,\widetilde h_1] = g_\alpha \circ F_\alpha - g_\alpha \circ ^\sharp E_{01} \circ g^{A^\vee}_\alpha, \quad [d,\widetilde h_2] = g_\alpha \circ F_\alpha - g_\alpha \circ ^\sharp E_{02} \circ g^{A^\vee}_\alpha, \]
in other words, $\widetilde h_1,\widetilde h_2$ are homotopies between $g_\alpha \circ F_\alpha$ and maps landing in the subcomplex $g_\alpha(W)$. We can rewrite the pairing between the dual product and a Hochschild cochain in terms of the $\widetilde{h}_i$, by using the following lemma:
\begin{lemma}\label{lem:chainToCochainHomotopy}
    For any pair $(\widetilde{h}_1,\widetilde{h}_2)$ as above, if $\alpha$ is non-degenerate, there exists a pair of homotopies $(E_{01},h_1),(E_{02},h_2)$ of $F_\alpha$ such that for any closed elements $x_1,x_2 \in K$ and $\varphi \in C^*(A,A)$, there is an equality
    \begin{align*}
        \langle \pi_{h_1,h_2}(x_1,x_2),\varphi \rangle &= \Lambda_\alpha(x_1,x_2,\varphi) - (-1)^{\deg(x_1)\deg(x_2)}\langle x_1, \varphi \smile \widetilde{h}_2(x_2)\rangle \\
        &+ (-1)^{\deg(x_2)\deg(\varphi)}\langle x_2, \varphi \smile \widetilde{h}_1(x_1) \rangle 
    \end{align*} 
    where $\Lambda_\alpha$ is a map defined by evaluating a certain combination of oriented ribbon quivers (\cref{app:compatibility}) on $\alpha \otimes \alpha$. Moreover, if $\widetilde{h}_1$ and $\widetilde{h}_2$ restricted to $K$ differ by a $[d,-]$-exact term on $K$, we can also choose $h_1,h_2$ whose restriction to $K$ differ by a $[d,-]$-exact term.
\end{lemma}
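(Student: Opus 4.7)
The plan is to carry out the proof in four steps: (i) lift the cochain-level homotopies $\widetilde{h}_i$ to chain-level homotopies $h_i$, (ii) expand both sides in the graphical calculus, (iii) construct an explicit ribbon-quiver homotopy matching them modulo exact terms (deferred to the appendix), and (iv) deduce the last assertion from the flexibility of the lift.

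For step (i), we exploit the nondegeneracy hypothesis. By \cref{remark:nondegeneratequasiisomorphism}, the map $g_\alpha \colon C_*(A,A) \to C^*(A,A)[n]$ is a quasi-isomorphism, and hence it induces a quasi-isomorphism of mapping complexes $\Hom(C_*(A,A^\vee)[-n], C_*(A,A)) \xrightarrow{\simeq} \Hom(C_*(A,A^\vee)[-n], C^*(A,A)[n])$. The given $\widetilde{h}_i$ witnesses that the class of $F_\alpha - {}^\sharp E_{0i} \circ g^{A^\vee}_\alpha$ becomes zero after composing with $g_\alpha$; by the quasi-isomorphism above, we can therefore choose a chain-level nullhomotopy $h_i$ and a secondary homotopy $\tau_i \colon C_*(A,A^\vee)[-n] \to C^*(A,A)[n-2]$ such that $\widetilde{h}_i - g_\alpha \circ h_i = [d,\tau_i]$. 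The $\tau_i$'s will enter the argument only through $[d,-]$-exact corrections.

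For steps (ii) and (iii), we unwind the definition $\pi_{h_1,h_2}(x_1,x_2) = p\circ \pi_{M_\alpha}(\iota_{h_1}(x_1), \iota_{h_2}(x_2))$. Writing $\iota_{h_i}(x_i) = (x_i, -h_i(x_i))$ and expanding $\pi_{M_\alpha}$ via the structural vertices of $M_\alpha$ from \cref{prop:extension}, the projection $p$ retains only three kinds of contributions: a pure $ZZ \to Z$ piece (built from $\alpha$ acting on both $x_i$'s) and two mixed $AZ \to Z$ and $ZA \to Z$ pieces (coming from the bimodule actions on $Z = A^\vee[1-n]$ applied to the $-h_i(x_i)$ components). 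Pairing with $\varphi$ via $\langle-,-\rangle$ together with a $g_\alpha$ on the appropriate factor introduces a second $\alpha$, so that the whole expression becomes a combination of oriented ribbon quivers evaluated on $\alpha \otimes \alpha \otimes \varphi \otimes x_1 \otimes x_2$. The pure part will be absorbed into $\Lambda_\alpha(x_1,x_2,\varphi)$, while the mixed parts, after replacing each $g_\alpha \circ h_i$ by $\widetilde{h}_i$ via $\tau_i$, produce terms of the form $\langle x_j, \varphi \smile \widetilde{h}_i(x_i)\rangle$ up to exact corrections. We then exhibit an explicit chain-level homotopy (deferred to \cref{app:compatibility}) whose differential realizes the stated equality on closed $x_1,x_2 \in K$, using that closedness kills boundary terms in which the differential would land on an $x_i$-input or on the closed cycles $E_{0i}$.

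The main obstacle is combinatorial: constructing the explicit ribbon-quiver homotopy in \cref{app:compatibility} requires careful bookkeeping of Koszul signs and cyclic orientations, and extracting the cup-product form of the mixed terms requires sliding the $\widetilde{h}_i$-vertex past several bimodule actions using subsidiary homotopies. For the last assertion, suppose $\widetilde{h}_1|_K - \widetilde{h}_2|_K = [d,s]$ on $K$. Having constructed $h_1$ as in step (i), we seek $h_2 = h_1 + c$ with $c$ closed such that $g_\alpha \circ c$ is cohomologous to $[d,s]$ on $K$; nondegeneracy of $\alpha$ guarantees such $c$ exists with $c|_K$ itself $[d,-]$-exact, whence $h_1|_K - h_2|_K$ is $[d,-]$-exact as required.
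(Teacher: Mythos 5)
Your overall strategy is the same as the paper's: the paper also exploits non-degeneracy of $\alpha$ (i.e.\ $g_\alpha$ being a quasi-isomorphism) to pass between chain-level homotopies $h_i$ and cochain-level homotopies $\widetilde h_i$ — it just runs the correspondence in the converse direction ("given $(h_1,h_2)$, produce $(\widetilde h_1,\widetilde h_2)$, which suffices since $g_\alpha$ is a quasi-isomorphism") — and then expands $\langle \pi_{h_1,h_2}(x_1,x_2),\varphi\rangle$ diagrammatically and converts the two $h_i$-containing terms into cup-product terms by explicit ribbon-quiver homotopies. However, your step (iv) contains a genuine error. You set $h_2 = h_1 + c$ with $c$ closed; but then $[d,h_2]=[d,h_1]=F_\alpha - {}^\sharp E_{01}\circ g^{A^\vee}_\alpha$, so the pair $(E_{02},h_2)$ is \emph{not} a homotopy of $F_\alpha$ onto $W$ unless $E_{01}=E_{02}$, which fails in the intended application (\cref{align:widetildeh}, \cref{prop:compatibility}), where $E_{01}=(-1)^nE_0^T$ and $E_{02}=E_0$ are in general distinct. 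The repair is to construct $h_2$ independently, relative to $E_{02}$, exactly as in your step (i), and then compare restrictions to $K$ directly: by connectivity $E_{01},E_{02}\in Z_0(W)\otimes Z_0(C_*(A,A))$, so both maps ${}^\sharp E_{0i}\circ g^{A^\vee}_\alpha$ vanish on $K$; hence $(h_1-h_2)|_K$ is closed, $g_\alpha\circ (h_1-h_2)|_K$ is exact by hypothesis (up to the $[d,\tau_i]$-corrections), and non-degeneracy of $\alpha$ then forces $(h_1-h_2)|_K$ to be exact. No modification of $h_1$ is needed or allowed.

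Two further caveats. First, the substantive content of the lemma — the explicit combination of oriented ribbon quivers defining $\Lambda_\alpha$ and the explicit homotopies replacing the $h_1(x_1)$- and $h_2(x_2)$-terms — is precisely what you defer; note that in the paper $\Lambda_\alpha$ is \emph{not} just the naive "pure $ZZ$" contribution you describe: it also absorbs terms (involving $\tau=m_{(3)}$, via the vertices $\psi$, $\mu^3_{M_\alpha}$ and the correcting homotopies) produced when sliding the $\varphi$- and $h_i$-vertices through the diagrams, so the identification of $\Lambda_\alpha$ cannot be read off before those homotopies are actually constructed. Second, your disposal of the $[d,\tau_i]$-corrections uses that $\varphi$ is closed (otherwise a $d\varphi\smile\tau_i(x_i)$ term survives), and your quasi-isomorphism of mapping complexes in step (i) needs $\kk$ a field or an h-projectivity hypothesis on the source; the paper's own reduction tacitly relies on the same facts, so these are shared rather than new defects, but they should be made explicit.
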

\begin{proof}
    Given in \cref{app:compatibility}.
\end{proof}

Suppose now that we are given a trivialization $H$ of $E$ onto $W$. We define:
\begin{align}\label{align:widetildeh}
\begin{aligned}
    E_{01} = (-1)^n E^T_0, \quad &\widetilde{h}_1 = \left( (\id \otimes g_\alpha)(g_\alpha\otimes \id) H  + J_\alpha + L_\alpha \right)^\sharp \\
    E_{02} = E_0, \quad &\widetilde{h}_2 =\ ^\sharp\left((\id \otimes g_\alpha)(g_\alpha\otimes \id) H  + J_\alpha \right)
    \end{aligned}
\end{align}
that is, maps given by pairing either on the right or the left-hand side of the specified elements of $C^*(A,A)\otimes C^*(A,A)$. Recall we have subcomplexes
\[ K \coloneqq \{x \in C_*(A,A^\vee)[-n]\ |\ dx=0\ \text{and}\ \forall w \in W\ \text{such that}\ dw=0, \langle g^{A^\vee}_\alpha(x), w \rangle = 0 \} \]
and $Q \coloneqq \ker(E_0^\sharp \circ g^{A^\vee}_\alpha) \cap \ker(^\sharp E_0 \circ g^{A^\vee}_\alpha)$.
\begin{lemma}\label{lem:compatibilityDeltaTilde}
    For any closed elements $x_1,x_2 \in K,\  \varphi \in C^*(A,A)$, the following equation holds:
    \begin{align*}
        &(-1)^{\deg(x_1)\deg(x_2)}\langle x_2\otimes x_1, (\id \otimes g_\alpha)(g_\alpha \otimes \id) \widetilde{\lambda}_H(\varphi) \rangle \\
        &=\Lambda_\alpha(x_1,x_2,\varphi) -(-1)^{\deg(x_1)\deg(x_2)}\langle x_1, \varphi \smile \widetilde{h}_2(x_2)\rangle
        + (-1)^{\deg(x_2)\deg(\varphi)}\langle x_2, \varphi \smile \widetilde{h}_1(x_1) \rangle 
    \end{align*}
\end{lemma}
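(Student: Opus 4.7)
The plan is to unfold the left-hand side via the explicit definition of $\widetilde\lambda_H$ and reorganize it using the homotopies established earlier in Section~\ref{sec:relations}, with $\Lambda_\alpha(x_1,x_2,\varphi)$ being defined in Appendix~\ref{app:compatibility} as precisely the collection of residual diagrams that remain after the cup-product simplifications described below. The proof is therefore an equality of operations built from explicit oriented ribbon quivers, carried out by diagrammatic manipulations with careful sign tracking.

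The first move is to substitute $\widetilde\lambda_H(\varphi) = G(\varphi) - (-1)^{\deg\varphi}\bigl((\varphi \frown \otimes \id)H - (\id \otimes \varphi \frown)H\bigr)$ and distribute $(\id \otimes g_\alpha)(g_\alpha \otimes \id)$. By the definition of $\Gamma$ recorded before Lemma~\ref{lem:squareLemma}, the $G$-summand equals $\Gamma(\varphi)$ minus two explicit correction diagrams built from $\co,\eta,\alpha,\varphi$; by that lemma, $\Gamma(\varphi)$ itself expands as $dN_\alpha(\varphi) - N_\alpha(d\varphi) - (\smile\varphi \otimes \id)J_\alpha + (\id \otimes \varphi \smile)J_\alpha + I_\alpha(\varphi)$. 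Pairing the result with the closed elements $x_2 \otimes x_1$ annihilates the $dN_\alpha(\varphi)$ term by adjointness.

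Next I would match the two resulting $J_\alpha$-pieces against the two $H$-pieces from the original expansion. The sum of $(\id\otimes\varphi\smile)J_\alpha$ and $(\id\otimes\varphi\frown)H$ combines into an expression of the form $-(-1)^{\deg(x_1)\deg(x_2)}\langle x_1, \varphi\smile \widetilde h_2(x_2)\rangle$: after the $g_\alpha$'s have been distributed, its right-kept/left-paired structure is precisely the formula $\widetilde h_2 = {^\sharp}\bigl((\id \otimes g_\alpha)(g_\alpha\otimes \id)H + J_\alpha\bigr)$ from \eqref{align:widetildeh}. For the mirror combination $(\smile \varphi\otimes \id)J_\alpha$ and $(\varphi\frown \otimes \id)H$, the analogous naive formula would produce $\langle x_2, \varphi\smile \bigl((\id\otimes g_\alpha)(g_\alpha\otimes \id)H + J_\alpha\bigr)^\sharp(x_1)\rangle$; this however would correspond to $E_{01} = E_0$, not to $(-1)^n E_0^T$ as required by \eqref{align:widetildeh}. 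To compensate for the transpose and the sign $(-1)^n$, I invoke Lemma~\ref{lem:Lhomotopy} and add the homotopy $L_\alpha$ to the right factor, thereby exactly producing the definition $\widetilde{h}_1 = \bigl((\id\otimes g_\alpha)(g_\alpha\otimes \id)H + J_\alpha + L_\alpha\bigr)^\sharp$.

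All remaining diagrams --- the two correction pieces generated by the $\Gamma$-rewriting, the $I_\alpha(\varphi)$ contribution paired with $x_2\otimes x_1$, and the boundary term $-\langle x_2\otimes x_1, N_\alpha(d\varphi)\rangle$ --- are gathered and \emph{defined} to be $\Lambda_\alpha(x_1,x_2,\varphi)$, whose explicit quiver presentation is the one recorded in Appendix~\ref{app:compatibility}. The main obstacle is the combinatorial and sign bookkeeping: tracking Koszul signs through the shifts by $n$ across the chain of substitutions, and verifying that the $L_\alpha$-swap produces exactly the prefactor $(-1)^{\deg(x_2)\deg(\varphi)}$ in the last term of the right-hand side. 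The asymmetry between $\widetilde h_1$ and $\widetilde h_2$ in \eqref{align:widetildeh} --- with $L_\alpha$ appearing in one but not the other --- is forced by this step and reflects the choice $E_{01}=(-1)^n E_0^T$, $E_{02}=E_0$.
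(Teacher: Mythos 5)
Your overall architecture matches the paper's: expand $\widetilde\lambda_H$ from its definition, rewrite $(\id\otimes g_\alpha)(g_\alpha\otimes\id)G(\varphi)$ through $\Gamma$ and \cref{lem:squareLemma}, kill $dN_\alpha(\varphi)$ against closed inputs, and absorb the $J_\alpha$- and $H$-terms into the formulas \eqref{align:widetildeh} for $\widetilde h_1,\widetilde h_2$, with $L_\alpha$ (via \cref{lem:Lhomotopy}) accounting for the transpose in $E_{01}=(-1)^nE_0^T$. All of that is correct and is exactly how the argument is organized in the paper.

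The gap is in your final step. You cannot \emph{define} $\Lambda_\alpha(x_1,x_2,\varphi)$ to be whatever residue your computation leaves over: $\Lambda_\alpha$ is a fixed operation, given by an explicit combination of ribbon quivers at the start of \cref{app:compatibility}, and it must be the \emph{same} operation that appears in \cref{lem:chainToCochainHomotopy} --- the entire point of the two lemmas is that $\Lambda_\alpha$ cancels when they are subtracted to yield \cref{prop:compatibility}. If you redefine it on one side, that cancellation is no longer established and the proposition does not follow. Concretely, the residue of your manipulation still contains the two correction diagrams from the definition of $\Gamma$, which are built from the vertices $\co$ and $\eta$, whereas the diagrams defining $\Lambda_\alpha$ involve only $\alpha$, the evaluation vertices and $\varphi$; identifying the former with the latter (up to exact terms, paired against closed $x_1,x_2$) is a nontrivial diagrammatic statement. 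This is precisely the content of the additional homotopy given at the end of \cref{app:compatibility} --- which, notably, also brings in the component $\tau=m_{(3)}$ of the pre-CY structure --- and it is the ingredient missing from your write-up.
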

\begin{proof}
    Follows from the definition of $\widetilde\lambda_H, \widetilde{h}_1,\widetilde{h}_2$, together with \cref{lem:squareLemma} and a homotopy that we give at the end of \cref{app:compatibility}.
\end{proof}

The following proposition follows from \cref{lem:compatibilityDeltaTilde,lem:chainToCochainHomotopy}:
\begin{proposition}\label{prop:compatibility}
    Given any trivialization $(E_0,H)$ of $H$ onto $W$, if $\alpha$ is nondegenerate there are two homotopies $((-1)^n E^T_0 h_1), (E_0,h_2)$ of $F_\alpha$ such that for any closed elements $x_1,x_2 \in K$ and $y \in C_*(A,A)$ we have the equation
    \[ \langle \pi_{h_1,h_2}(x_1,x_2),g_\alpha(y)\rangle = (-1)^{\deg(x_1)\deg(x_2)}\langle x_2\otimes x_1, (\id \otimes g_\alpha)(g_\alpha \otimes \id)\lambda_H(y) \rangle \]
\end{proposition}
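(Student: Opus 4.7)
The plan is to combine \cref{lem:chainToCochainHomotopy} and \cref{lem:compatibilityDeltaTilde}, both of which involve the same auxiliary term $\Lambda_\alpha(x_1,x_2,\varphi)$ and the same cochain-level pairings with two homotopies $\widetilde{h}_1, \widetilde{h}_2$. The idea is to take $\widetilde{h}_1, \widetilde{h}_2$ exactly as in equation \eqref{align:widetildeh}, apply \cref{lem:chainToCochainHomotopy} to produce corresponding chain-level homotopies $(E_{01},h_1), (E_{02},h_2)$ of $F_\alpha$, and then equate the two right-hand sides given by the two lemmas. After canceling the $\Lambda_\alpha$ and $\widetilde{h}_i$ terms, one is left with
\[ \langle \pi_{h_1,h_2}(x_1,x_2),\varphi\rangle = (-1)^{\deg(x_1)\deg(x_2)} \langle x_2 \otimes x_1, (\id \otimes g_\alpha)(g_\alpha \otimes \id)\widetilde{\lambda}_H(\varphi)\rangle \]
for closed $\varphi \in C^*(A,A)$. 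Substituting $\varphi = g_\alpha(y)$ and using $\lambda_H = \widetilde{\lambda}_H \circ g_\alpha$ then yields the proposition.

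To invoke \cref{lem:chainToCochainHomotopy} one must verify that the $\widetilde{h}_1, \widetilde{h}_2$ from \eqref{align:widetildeh} are indeed homotopies with boundaries $g_\alpha \circ F_\alpha - g_\alpha \circ {}^\sharp E_{0i} \circ g^{A^\vee}_\alpha$, where $E_{01} = (-1)^n E_0^T$ and $E_{02} = E_0$. This follows by combining three ingredients: the defining relation $dH = E - E_0$ of the trivialization, the homotopy $dJ_\alpha = (\id \otimes g_\alpha)(g_\alpha \otimes \id)E - D_\alpha$ from \cref{lem:Jhomotopy}, and the symmetrizing homotopy $dL_\alpha = D_\alpha - D_\alpha^T$ from \cref{lem:Lhomotopy}. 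Applying $^\sharp$ to each side of the resulting chain and recalling the chain-level identity $(D_\alpha)^\sharp = g_\alpha \circ F_\alpha$, one recovers the boundary of $\widetilde{h}_2$ at once; the extra $L_\alpha$ term in $\widetilde{h}_1$ is precisely what converts the pairing against $E_0$ on the left into the pairing against $(-1)^n E_0^T$ on the right. The nondegeneracy assumption on $\alpha$ is what permits \cref{lem:chainToCochainHomotopy} to be applied in the first place, since its statement requires nondegeneracy in order to construct $h_1,h_2$ from $\widetilde{h}_1, \widetilde{h}_2$.

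The main obstacle is the consistent tracking of signs in the boundary verification for $\widetilde{h}_1$, where the asymmetry with $\widetilde{h}_2$ — the extra $L_\alpha$ summand and the factor $(-1)^n$ in front of $E_0^T$ — arises both from pairing on opposite sides of the two-tensor and from the Koszul commutation $(g_\alpha\otimes\id)(\id\otimes g_\alpha) = (-1)^n (\id\otimes g_\alpha)(g_\alpha\otimes\id)$. Once these signs are matched against the conventions built into \cref{lem:chainToCochainHomotopy} (whose own proof is deferred to the appendix), everything else is a direct algebraic comparison of the two identities supplied by \cref{lem:compatibilityDeltaTilde} and \cref{lem:chainToCochainHomotopy}, with no further homotopies required.
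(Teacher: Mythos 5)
Your proposal is correct and follows essentially the same route as the paper: the paper's proof of this proposition is precisely the combination of \cref{lem:chainToCochainHomotopy} and \cref{lem:compatibilityDeltaTilde} applied to the $\widetilde h_1,\widetilde h_2$ of \eqref{align:widetildeh}, with the $\Lambda_\alpha$ and $\widetilde h_i$ terms canceling and $\varphi=g_\alpha(y)$ substituted at the end. Your extra verification that the $\widetilde h_i$ satisfy the required boundary equations (via $dH=E-E_0$, \cref{lem:Jhomotopy}, \cref{lem:Lhomotopy}, and $(D_\alpha)^\sharp=g_\alpha\circ F_\alpha$) is exactly what the paper leaves implicit in defining \eqref{align:widetildeh}.
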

We note that in the proposition above, we do not make any assumption about the `balanced' condition, or impose any symmetry condition on $H$ or $E_0$, and that the result holds with any ring $\kk$ over which the data $\alpha,H$ etc. is defined.
\begin{corollary}
    If $\kk$ is a field, for any choice of homotopies $h_1,h_2$ produced by \cref{prop:compatibility}, the homology loop coproduct
    \[ \lambda_H \colon HH_*(A,A)[n-1] \to \overline{HH}_*(A,A)[n-1] \otimes \overline{HH}_*(A,A)[n-1] \]
    is entirely determined by the homology dual product
    \[ \pi_{h_1,h_2} \colon \frac{K}{B_*(Q)}[n] \otimes \frac{K}{B_*(Q)}[-n] \to HH_*(A,A^\vee)[1]. \] 
\end{corollary}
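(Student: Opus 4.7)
The plan is to combine the compatibility identity of Proposition~\ref{prop:compatibility} with the surjectivity statement of Lemma~\ref{lem:surjectionToDual}, using non-degeneracy of $\alpha$ and the fact that we work over a field. Fix a closed element $y \in C_*(A,A)$; for any closed $x_1, x_2 \in K$, Proposition~\ref{prop:compatibility} gives
\[ \langle \pi_{h_1,h_2}(x_1,x_2),g_\alpha(y)\rangle = (-1)^{\deg(x_1)\deg(x_2)}\langle x_2\otimes x_1, (\id \otimes g_\alpha)(g_\alpha \otimes \id)\lambda_H(y) \rangle. \]
The left-hand side passes to homology, depends only on the class of $\pi_{h_1,h_2}(x_1,x_2)$ in $HH_*(A,A^\vee)[1]$, and is entirely computed from the homology dual product together with the pairing $HH_*(A,A^\vee) \otimes HH^*(A,A) \to \kk$. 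Since $\alpha$ is non-degenerate, $g_\alpha^A \colon HH_*(A,A) \to HH^*(A,A)[n]$ is an isomorphism (Remark~\ref{remark:nondegeneratequasiisomorphism}), so as $y$ varies over closed classes in $C_*(A,A)$, $g_\alpha(y)$ varies over all of $HH^*(A,A)[n]$.

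Still by non-degeneracy of $\alpha$, and using that $\kk$ is a field (so that dualization is exact and commutes with taking homology), the pairing $\langle g_\alpha^{A^\vee}(-), -\rangle$ induces an isomorphism $HH_*(A,A^\vee)[-n] \xrightarrow{\sim} HH_*(A,A)^\vee$. In particular the induced map $H_*(X) \to H_*(Y)^\vee$ of Lemma~\ref{lem:surjectionToDual} (applied with $X = C_*(A,A^\vee)[-n]$, $Y=C_*(A,A)$, and pairing $\langle g_\alpha^{A^\vee}(-),-\rangle$) is surjective, and the lemma yields a surjection
\[ f \colon K/B_*(Q) \twoheadrightarrow \overline{HH}_*(A,A)^\vee. \]
Dualizing produces an injection $\overline{HH}_*(A,A) \hookrightarrow (K/B_*(Q))^\vee$, and tensoring two copies---valid over a field, where the K\"unneth map is an isomorphism and tensor products preserve injections---gives an injection $\overline{HH}_*(A,A)^{\otimes 2} \hookrightarrow ((K/B_*(Q))^{\otimes 2})^\vee$.

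Now $(g_\alpha \otimes g_\alpha)$ is an isomorphism on the relevant homology, so the element $\lambda_H(y) \in \overline{HH}_*(A,A)^{\otimes 2}[n-1]$ is determined by $(\id \otimes g_\alpha)(g_\alpha \otimes \id)\lambda_H(y)$, which in turn is uniquely determined by its pairings $\langle x_2 \otimes x_1, \cdot\rangle$ as $x_1, x_2$ range over classes in $K/B_*(Q)$ thanks to the injection above. By the compatibility identity these pairings coincide, up to sign, with the values $\langle \pi_{h_1,h_2}(x_1,x_2), g_\alpha(y)\rangle$, which are computed purely from the homology dual product $\pi_{h_1,h_2}$ and the fixed isomorphism $g_\alpha$. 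Combining these steps recovers $\lambda_H(y)$ from $\pi_{h_1,h_2}$ for every closed $y$, proving the corollary. The main subtlety is ensuring that the surjectivity of Lemma~\ref{lem:surjectionToDual} genuinely propagates to the tensor square; this is precisely where the field hypothesis is used.
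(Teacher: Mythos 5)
Your proof is correct and follows essentially the same route as the paper's: the paper's (much terser) argument likewise combines the isomorphism $HH_*(A,A^\vee)[-n]\cong (HH_*(A,A))^\vee$ induced by the pairing, the surjection $K/B_*(Q)\twoheadrightarrow H_*(Y/W)^\vee$ from \cref{lem:surjectionToDual}, and the chain-level identity of \cref{prop:compatibility} valid for all closed elements. You have merely filled in the dualization/K\"unneth details (injectivity of $\overline{HH}_*(A,A)^{\otimes 2}\hookrightarrow ((K/B_*(Q))^{\otimes 2})^\vee$ over a field) that the paper leaves implicit.
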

\begin{proof}
    The pairing we used induces an isomorphism $HH_*(A,A^\vee)[-n] \cong (HH_*(A,A))^\vee$, and by \cref{lem:surjectionToDual} also gives a surjective map $\frac{K}{B_*(Q)} \to H_*(Y/W)^\vee$; the result above follows from these two facts together with the equation of \cref{prop:compatibility}, which holds for all closed elements.
\end{proof}

Finally, we relate the two conditions that we called `balanced'.
\begin{proposition}
    If $\kk$ is a field, when the triple $(W,H,\alpha)$ is balanced, in the sense of \cref{def:balancedText}, then any quadruple $(W,h_1,h_2,\alpha)$ from \cref{prop:compatibility} is balanced, in the sense of \cref{def:dualBalanced}.
\end{proposition}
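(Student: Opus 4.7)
The plan is to combine Proposition 5.7 with a Künneth-type decomposition to directly verify that the image of $\pi_{h_1,h_2}$ lies in $K$. Take any closed $x_1, x_2 \in K$; since $\pi_{h_1,h_2}$ is a chain map, $\pi_{h_1,h_2}(x_1,x_2)$ is automatically closed, so by the definition of $K$ it remains to show that $\langle g_\alpha^{A^\vee}(\pi_{h_1,h_2}(x_1,x_2)),\, w\rangle = 0$ for every closed $w \in W$. The first move is to rewrite this pairing: the two natural pairings $C_*(A,A^\vee) \otimes C^*(A,A) \to \kk$ and $C^*(A,A^\vee) \otimes C_*(A,A) \to \kk$ both arise from $\mathrm{ev}\colon A \otimes A^\vee \to \kk$, and the cyclic symmetry of $\alpha \in C^n_{(2,n)}(A)$ translates, via a direct oriented-ribbon-quiver calculation, into the identity $\langle g_\alpha^{A^\vee}(y),\, c\rangle = \pm \langle y,\, g_\alpha(c)\rangle$ for $y \in C_*(A,A^\vee)$ and $c \in C_*(A,A)$. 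Thus it suffices to show that $\langle \pi_{h_1,h_2}(x_1,x_2),\, g_\alpha(w)\rangle$ vanishes; by Proposition 5.7 this equals, up to a sign, $\langle x_2 \otimes x_1,\, (\id \otimes g_\alpha)(g_\alpha \otimes \id)\lambda_H(w)\rangle$.

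Now the balancedness hypothesis for $(W,H,\alpha)$ is used. Since $\kk$ is a field, any graded splitting $C_*(A,A) = W \oplus W'$ gives the identification
\[ W \otimes C_*(A,A) \,\cap\, C_*(A,A) \otimes W = W \otimes W, \]
so $\lambda_H(w) \in W \otimes W$; this element is also closed, as $w$ is closed and $\lambda_H$ is a chain map. The Künneth theorem over a field then yields a decomposition $\lambda_H(w) = \sum_i w_i \otimes w_i' + d z$, with $w_i, w_i'$ closed elements of $W$ and $z \in W \otimes W$. Since $x_1, x_2$ are closed and $g_\alpha$ is a chain map, the $dz$ term contributes trivially to the pairing; the remaining terms factor as products $\pm \langle g_\alpha^{A^\vee}(x_2),\, w_i\rangle \cdot \langle g_\alpha^{A^\vee}(x_1),\, w_i'\rangle$, each of which vanishes because $w_i, w_i'$ are closed elements of $W$ and $x_1, x_2 \in K$.

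The only non-routine ingredient is the pairing-compatibility identity $\langle g_\alpha^{A^\vee}(y), c\rangle = \pm \langle y, g_\alpha(c)\rangle$, which amounts to reading the same oriented ribbon quiver (with $\alpha$ inserted, and the $A$/$A^\vee$ legs connected via $\mathrm{ev}$) from either of its two evaluation boundaries; this is where one has to be careful about the sign, which depends on the $\Z/2$-action twist by $n-1$ on $C^*_{(2,n)}(A)$. Everything else is linear-algebra bookkeeping, so no serious obstacle is expected beyond getting that compatibility right.
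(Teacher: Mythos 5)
Your argument is correct and follows the same route as the paper's own proof: apply the compatibility equation of \cref{prop:compatibility}, use the field hypothesis to identify $W \otimes C_*(A,A) \cap C_*(A,A) \otimes W$ with $W \otimes W$ and represent $\lambda_H(w)$ by a combination of closed elements of $W$ via K\"unneth, and then conclude from the defining property of $K$. The only difference is expository: you make explicit the adjointness $\langle g^{A^\vee}_\alpha(y), c\rangle = \pm \langle y, g_\alpha(c)\rangle$ relating the two pairings, and the disposal of the exact K\"unneth term against the closed inputs $x_1, x_2$, both of which the paper leaves implicit.
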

\begin{proof}
    Recall that $(W,H,\alpha)$ is balanced if the image of $W$ under $\lambda_H$ is contained in $W \otimes C_*(A,A) \cap C_*(A,A) \otimes W$. Over a field we have
    \[ H_*(W \otimes C_*(A,A) \cap C_*(A,A) \otimes W) \cong H_*(W \otimes W) \cong H_*(W) \otimes H_*(W) \]
    so for $[y] \in HH_*(A,A)$, we can represent $\lambda(y)$ by a linear combination of pairs of closed elements of $W$. Thus, applying the equation in \cref{prop:compatibility}, we conclude that for $x_1,x_2 \in K$ we have $\pi_{h_1,h_2}(x_1,x_2) \in K$.
\end{proof}

\begin{remark} \label{rem:moreInfo}
    If $\lambda_H$ happens to be defined over some ring $\kk$, the equation of \cref{prop:compatibility} still holds at chain-level, but it may be that the homology coproduct $\lambda_H$ contain \emph{more information} than the dual coproduct. This is expected, since we lose all the information about torsion classes by taking linear duals. We present an example of this fact in \cref{sec:twoSphereIntegers}.
\end{remark}

\subsubsection{Symmetry of $H$}
Note that the statement of \cref{prop:compatibility} does not depend on any symmetry conditions on $H$. Recall that we have $dH = E-E_0$, and $E_0$ is $(-1)^n$-symmetric by assumption.

We know from \cref{lem:Jhomotopy,lem:Lhomotopy} that upon applying $g_\alpha$ on both sides, we have an explicit relation between $E$ and its transpose:
\[ d(J_\alpha + L_\alpha - J_\alpha^T )= (\id \otimes g_\alpha)(g_\alpha \otimes \id)(E-(-1)^n E^T) \]

Therefore it is natural to consider the following symmetry condition on the complex where $H$ lives.
\begin{definition}\label{def:appropriatelySymmetric}
    A (partial) trivialization $H$ of $E$ is \emph{$\alpha$-symmetric} modulo closed terms in $W$ if the expression
    \[ (\id \otimes g_\alpha)(g_\alpha \otimes \id)(H - (-1)^n H^T) + J_\alpha + L_\alpha -J_\alpha^T \]
    is exact in $C^*(A,A)\otimes C^*(A,A)$ modulo terms in $g_\alpha(Z_*(W)) \otimes g_\alpha(Z_*(W))$.
\end{definition}

\begin{proposition}\label{prop:appropriatelysymmetrich}
    If $H$ is $\alpha$-symmetric modulo closed terms in $W$, then $h_1$ and $h_2$ in \cref{prop:compatibility} can be chosen so that their values on $K$ differ by exact terms. 
\end{proposition}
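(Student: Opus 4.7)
The plan is to reduce the statement to an explicit computation involving the elements $\widetilde{h}_1,\widetilde{h}_2$ defined in \eqref{align:widetildeh}, and then exploit the $\alpha$-symmetry hypothesis on $H$ directly. By the last sentence of \cref{lem:chainToCochainHomotopy}, it is enough to show that the difference $\widetilde{h}_1 - \widetilde{h}_2$ takes values on $K$ that differ by $[d,-]$-exact terms. So I reformulate the proposition as a statement about pairings on one vs.\ the other side of an element of $C^*(A,A)\otimes C^*(A,A)$.

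First I would recall that for any element $D = D'\otimes D'' \in C^*(A,A)\otimes C^*(A,A)$, pairing with the first factor equals (up to a Koszul sign depending on degrees) pairing with the second factor of $D^T$. Using this observation, I would rewrite
\[ \widetilde{h}_1(x) - \widetilde{h}_2(x) = \bigl( (\id\otimes g_\alpha)(g_\alpha\otimes\id)(H - (-1)^n H^T) + J_\alpha + L_\alpha - J_\alpha^T \bigr)^\sharp (x) \]
for $x \in K$, where the explicit $(-1)^n$ factor and the $-J_\alpha^T$ term arise precisely from converting $^\sharp(-)$ applied to $G_H + J_\alpha$ into a right-pairing via transposition. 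The element appearing on the right is exactly the one featured in \cref{def:appropriatelySymmetric}.

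Next I invoke the hypothesis: this element is $d$-exact modulo a term $T \in g_\alpha(Z_*(W))\otimes g_\alpha(Z_*(W))$. Write the element as $dU + T$. Since $x \in K$ is closed, pairing the exact part $dU$ with $x$ on either side gives $d(\langle x, U\rangle \text{ or similar})$, i.e.\ a $[d,-]$-exact Hochschild cochain, because the Hochschild differential intertwines the pairing. As for $T$, this is where the definition of $K$ does its work: any closed element $w \in W$ satisfies $\langle g^{A^\vee}_\alpha(x), w\rangle = 0$ for $x \in K$, and consequently pairing $x$ against $g_\alpha(w)$ on any side gives zero. Thus $T^\sharp(x) = 0$ identically, and the entire difference $(\widetilde{h}_1 - \widetilde{h}_2)(x)$ is $[d,-]$-exact on $K$.

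The main obstacle is being careful with the bookkeeping of Koszul signs when swapping which factor of a tensor pairs with $x$, and identifying which diagrammatic transpose is being used; none of it is conceptually hard, but matching the sign $(-1)^n$ in \cref{def:appropriatelySymmetric} with the one produced by transposition requires patience. Once the sign identification is done, applying \cref{lem:chainToCochainHomotopy} produces homotopies $h_1,h_2$ of $F_\alpha$ whose restrictions to $K$ differ by a $[d,-]$-exact term, completing the proof.
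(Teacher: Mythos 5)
Your proposal is correct and follows essentially the same route as the paper: unpack the $\alpha$-symmetry hypothesis as an exact-plus-$g_\alpha(Z_*(W))\otimes g_\alpha(Z_*(W))$ decomposition, observe that the transposed pairing identifies $\widetilde h_1-\widetilde h_2$ with the sharp of exactly that expression (so it is exact on $K$, the $W$-term vanishing by the defining property of $K$), and then invoke the second assertion of \cref{lem:chainToCochainHomotopy}. You simply spell out the sign/transposition bookkeeping and the vanishing of the $W$-term that the paper's short proof leaves implicit.
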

\begin{proof}
   The element $H$ being $\alpha$-symmetric modulo closed terms in $W$ implies that there exists $\Phi \in C^*(A,A)\otimes C^*(A,A)$ such that
   \[ d\Phi = (\id \otimes g_\alpha)(g_\alpha \otimes \id)(H - (-1)^n H^T) + J_\alpha + L_\alpha -J_\alpha^T + X \]
   where $X \in g_\alpha(Z_*(W)) \otimes g_\alpha(Z_*(W))$. Thus $\widetilde{h}_1$ and $\widetilde{h}_2$ in \cref{align:widetildeh} will differ on $K$ by exact values. Then the result follows from the second assertion of  \cref{lem:chainToCochainHomotopy}.
\end{proof}

\begin{theorem} \label{thm:commCoproduct}
    Let $\kk$ be a field. If $H$ is $\alpha$-symmetric modulo closed terms in $W$, then the coproduct $\overline\lambda_H$ is $(-1)^n$-cocommutative at the homology level. If moreover $A$ is connective and $n \ge 3$, then it is also coassociative at the homology level.
\end{theorem}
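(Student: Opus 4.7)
The plan is to deduce both properties of $\overline\lambda_H$ at the homology level from the corresponding properties of the dual homology product $\overline\pi_{h_1,h_2}$ on $K/B_*(Q)$, using the pairing identity of \cref{prop:compatibility} as a dictionary. As a preliminary, I would fix homotopies $h_1,h_2$ of $F_\alpha$ as produced by \cref{prop:compatibility}; by \cref{prop:appropriatelysymmetrich}, the $\alpha$-symmetry hypothesis on $H$ (modulo closed terms in $W$) allows these to be chosen so that $h_1|_K$ and $h_2|_K$ differ by $[d,-]$-exact terms. The balanced condition on $(W,H,\alpha)$ from the ambient setup of this theorem transfers to balancedness of the quadruple $(W,h_1,h_2,\alpha)$, which places the chain-level dual product into $K$.

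For the cocommutativity claim, I would invoke \cref{prop:dualProdCommutativity} to conclude that $\overline\pi_{h_1,h_2}$ is $(-1)^n$-commutative on $K/B_*(Q)$, and then dualize. Concretely, for closed $x_1,x_2\in K$ and $y\in C_*(A,A)$, comparing the identity of \cref{prop:compatibility} applied to $(x_1,x_2)$ with its analogue applied to $(x_2,x_1)$, and invoking commutativity on the dual side, yields
\[\langle x_2\otimes x_1,(\id\otimes g_\alpha)(g_\alpha\otimes\id)\lambda_H(y)\rangle \;=\; (-1)^{n}\,\langle x_1\otimes x_2,(\id\otimes g_\alpha)(g_\alpha\otimes\id)\lambda_H(y)\rangle.\]
Nondegeneracy of $\alpha$ makes $g_\alpha$ a quasi-isomorphism, and \cref{lem:surjectionToDual} (applied tensor-factor-wise) ensures the pairing against $K/B_*(Q)^{\otimes 2}$ separates classes in $\overline{HH}_*(A,A)^{\otimes 2}$, from which $(-1)^n$-cocommutativity of $\overline\lambda_H$ follows.

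For coassociativity, the additional hypotheses of connectivity and $n\ge 3$ let me apply \cref{prop:dualProdAssociativity}, yielding associativity of $\overline\pi_{h_1,h_2}$ on $K/B_*(Q)$. The plan is to iterate \cref{prop:compatibility} twice. Because $(W,h_1,h_2,\alpha)$ is balanced, $\pi_{h_1,h_2}(x_1,x_2)$ lies in $K$ and can be reinserted as an argument. Pairing $\pi_{h_1,h_2}(\pi_{h_1,h_2}(x_1,x_2),x_3)$ with $g_\alpha(y)$ and applying \cref{prop:compatibility} twice produces an iterated pairing of $x_3\otimes x_2\otimes x_1$ against $(\lambda_H\otimes\id)\lambda_H(y)$, up to Koszul signs. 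The analogous computation starting from $\pi_{h_1,h_2}(x_1,\pi_{h_1,h_2}(x_2,x_3))$ produces the iterated pairing against $(\id\otimes\lambda_H)\lambda_H(y)$. Associativity of $\overline\pi_{h_1,h_2}$ thus equates the two iterated pairings for every triple of closed elements in $K$, and the separating-property argument used for cocommutativity then promotes this to coassociativity of $\overline\lambda_H$ on reduced Hochschild homology.

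The main obstacle will be the double application of \cref{prop:compatibility} in the coassociativity step. Legitimately feeding the intermediate output $\pi_{h_1,h_2}(x_1,x_2)\in K$ back into the compatibility relation requires the balanced condition (to land in $K$) and forces one to verify that the compatibility identity passes to the homology class rather than merely at the chain level, since $\pi_{h_1,h_2}$ is itself only a chain-level operation refining an associative operation on homology. A secondary but finicky point is carefully tracking Koszul signs through the two applications of \cref{prop:compatibility}, together with the three-fold shift/transposition arising from the pairing, so that the final identification with the two iterates of $\lambda_H$ comes out with matching signs; this is a pure bookkeeping issue given the explicit form of the pairing identity.
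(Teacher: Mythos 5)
Your proposal is correct and follows essentially the same route as the paper: fix $h_1,h_2$ via \cref{prop:compatibility}, use \cref{prop:appropriatelysymmetrich} to make them agree on $K$ up to exact terms, import $(-1)^n$-commutativity and (under connectivity and $n\ge 3$) associativity of the dual product from \cref{prop:dualProdCommutativity,prop:dualProdAssociativity}, and dualize using the surjection $K/B_*(Q)\twoheadrightarrow \overline{HH}_*(A,A)^\vee$ of \cref{lem:surjectionToDual}. The paper's proof is just a terser version of this same argument, leaving the iterated application of the compatibility identity implicit.
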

\begin{proof}
    Combining \cref{prop:appropriatelysymmetrich,prop:dualProdCommutativity,prop:dualProdAssociativity}, we obtain that the homology dual product $\overline\pi_H \coloneqq \pi_{h_1, h_2}$ is $(-1)^n$-commutative, and associative if $A$ is connective and $n\ge 3$. The result then follows from the fact that the space $K/B_*(Q)$ on which this dual product is defined surjects onto $\overline{HH}_*(A,A)^\vee$, so we can deduce cocommutativity and coassociativity of $\overline\lambda_H$ from the dual properties on $\overline\pi_H$.
\end{proof}
The results above are what we stated in the Introduction as \cref{thm:thm1intro}. We conclude this section by making a remark. The condition of being $\alpha$-symmetric modulo closed terms in $W$ stated in \cref{def:appropriatelySymmetric} requires the term in question to be exact modulo something in $g_\alpha(Z_*(W))\otimes g_\alpha(Z_*(W))$, instead of just modulo something in the larger subcomplex
\[ g_\alpha(W) \otimes C^*(A,A) \cap C^*(A,A) \otimes g_\alpha(W). \]
This may seem excessive, as the value of the coproduct $\lambda_H$ is invariant under changes in $H$ by terms in the complex $W\otimes C_*(A,A) \cap C_*(A,A)\otimes W$, but it is necessary to prove our theorem since elements in $K$ are only required to vanish on closed elements of $W$. However, it turns out that this distinction is irrelevant when $\kk$ is a field of characteristic $\neq 2$, in which case one can state the following variant.
\begin{definition}\label{def:appropriatelySymmetric2}
    Over a field $\kk$ of characteristic $\neq 2$, we say that a (partial) trivialization $H$ of $E$ is \emph{$\alpha$-symmetric} modulo $W$ if
    \[ (\id \otimes g_\alpha)(g_\alpha \otimes \id)(H - (-1)^n H^T) + J_\alpha + L_\alpha -J_\alpha^T \]
    is exact in $C^*(A,A)\otimes C^*(A,A)/g_\alpha(W) \otimes g_\alpha(W)$.
\end{definition}

The following proposition says that the definition above is as good as \cref{def:appropriatelySymmetric} for our purposes.
\begin{proposition}
    Over a field $\kk$ of characteristic $\neq 2$, if $H$ is $\alpha$-symmetric modulo $W$ (in the sense of \cref{def:appropriatelySymmetric2}), then there exists some $\widehat H$ which is $\alpha$-symmetric modulo closed terms in $W$ (in the sense of \cref{def:appropriatelySymmetric}) and such that $\widehat{H} -H \in W \otimes W$. Therefore \cref{thm:commCoproduct} holds also for $H$.
\end{proposition}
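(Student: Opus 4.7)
The plan is to exploit the freedom to modify $H$ by elements of $W \otimes W$ — which does not change the homology coproduct $\lambda_H$, as noted before the proposition — combined with the characteristic $\neq 2$ hypothesis, to upgrade the exactness condition on the symmetry defect. The construction will produce $\widehat H = H + \delta H$ for an explicit $\delta H \in W \otimes W$ obtained by lifting the defect of $H$ along $g_\alpha \otimes g_\alpha$.

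Write $\Sigma(H) := (\id \otimes g_\alpha)(g_\alpha \otimes \id)(H - (-1)^n H^T) + J_\alpha + L_\alpha - J_\alpha^T$, set $\epsilon := (-1)^n$, and let $(-)^T$ denote the Koszul-signed swap. I first establish the key symmetry $\Sigma(H)^T \equiv -\Sigma(H)$ modulo exact terms. A direct sign computation using the relation $(\id \otimes g_\alpha)(g_\alpha \otimes \id)\circ(-)^T = \epsilon\,(-)^T\circ(\id \otimes g_\alpha)(g_\alpha \otimes \id)$ will handle the first summand of $\Sigma(H)$; for the combination $J_\alpha + L_\alpha - J_\alpha^T$, the antisymmetry reduces to the identity $L_\alpha^T \equiv -L_\alpha$ modulo exact terms, which can be checked from the two ribbon quivers defining $L_\alpha$ in the proof of \cref{lem:Lhomotopy}.

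Assuming this antisymmetry, I then write $\Sigma(H) = d\Phi + Y$ with $Y \in g_\alpha(W)\otimes g_\alpha(W)$ by hypothesis. Taking transposes, $Y + Y^T$ is exact, and since $\mathrm{char}(\kk) \neq 2$ one may replace $(\Phi, Y)$ with $(\tfrac{1}{2}(\Phi - \Phi^T),\, \tfrac{1}{2}(Y - Y^T))$ to assume $Y$ is strictly antisymmetric. Next, fix a graded $\kk$-vector space complement $V'$ to $g_\alpha(Z_*(W))$ inside $g_\alpha(W)$, and decompose
\[
    Y = Y^{ZZ} + Y_c, \qquad Y^{ZZ} \in g_\alpha(Z_*(W))\otimes g_\alpha(Z_*(W)),
\]
with $Y_c$ living in the sum of the remaining three blocks. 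The antisymmetry of $Y$ restricts to antisymmetries of $Y^{ZZ}$ and $Y_c$ separately. Using surjectivity of $g_\alpha \otimes g_\alpha : W\otimes W \twoheadrightarrow g_\alpha(W) \otimes g_\alpha(W)$, choose a lift $y \in W \otimes W$ of $Y_c$, and set $\delta H := -\tfrac{1}{2} y$.

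The last step is a short computation: using antisymmetry of $Y_c$, one checks
\[
    (\id \otimes g_\alpha)(g_\alpha \otimes \id)(\delta H - \epsilon (\delta H)^T) \;=\; -\tfrac{1}{2}(Y_c - Y_c^T) \;=\; -Y_c,
\]
so $\Sigma(\widehat H) = d\Phi + Y^{ZZ}$, making $\widehat H$ $\alpha$-symmetric modulo closed terms in $W$. Since $\delta H \in W \otimes W \subseteq W \otimes C_*(A,A) \cap C_*(A,A) \otimes W$, the homology coproducts agree, $\lambda_{\widehat H} = \lambda_H$, and applying \cref{thm:commCoproduct} to $\widehat H$ delivers the conclusion for $H$. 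The main obstacle is verifying $L_\alpha^T \equiv -L_\alpha$ modulo exact terms: this is the only genuinely case-specific computation and requires careful Koszul sign bookkeeping for the two ribbon quivers defining $L_\alpha$.
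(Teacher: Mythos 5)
There is a genuine gap, and it sits exactly where you flagged it: your argument stands or falls on the claim that $\Sigma(H)^T \equiv -\Sigma(H)$ modulo exact terms, which (since the $H$-summand of $\Sigma(H)$ is antisymmetric on the nose and $(J_\alpha - J_\alpha^T) + (J_\alpha - J_\alpha^T)^T = 0$) reduces precisely to $L_\alpha + L_\alpha^T$ being exact. What comes for free from \cref{lem:Lhomotopy} is only that $L_\alpha + L_\alpha^T$ is \emph{closed} (its differential is $(D_\alpha - D_\alpha^T) + (D_\alpha^T - D_\alpha) = 0$); exactness is a nontrivial assertion about a specific degree class in $H_*(C^*(A,A)\otimes C^*(A,A))$, which has no reason to vanish for a general connective $A$. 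You defer this verification as "Koszul sign bookkeeping", but it is not a sign check — it would require producing a new explicit primitive — and every subsequent step (antisymmetrizing $Y$, and the final cancellation $-\tfrac12(Y_c - Y_c^T) = -Y_c$) depends on it. As written the proof is therefore incomplete.

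The paper's proof shows the claim is also unnecessary. Take any witness $X \in W\otimes W$ with $\Sigma(H) = \text{exact} + (\id\otimes g_\alpha)(g_\alpha\otimes\id)X$ and set $\widehat H = H - X/2$, with no antisymmetrization and no choice of complement to $g_\alpha(Z_*(W))$. The residual defect is then $(\id\otimes g_\alpha)(g_\alpha\otimes\id)\tfrac{X + (-1)^nX^T}{2}$, i.e.\ the \emph{symmetric part} of the witness rather than zero. Applying $d$ to the hypothesis, and using $dH = E - E_0$ together with the relation $d(J_\alpha + L_\alpha - J_\alpha^T) = (\id\otimes g_\alpha)(g_\alpha\otimes\id)(E - (-1)^nE^T)$ from \cref{lem:Jhomotopy,lem:Lhomotopy}, shows this residual term is closed; since $\kk$ is a field, K\"unneth then says a closed element of $g_\alpha(W)\otimes g_\alpha(W)$ is homologous to one in $g_\alpha(Z_*(W))\otimes g_\alpha(Z_*(W))$, which is exactly what \cref{def:appropriatelySymmetric} asks for. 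So you can repair your argument by deleting the antisymmetry lemma and the block decomposition entirely: subtract half of the full witness, and replace "the non-$ZZ$ part cancels" by "the symmetric part is closed, hence K\"unneth-homologous to a cycle-by-cycle term".
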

\begin{proof}
    Given our assumptions, there exists $X \in W \otimes W$ such that 
    \[ (\id \otimes g_\alpha)(g_\alpha \otimes \id)(H - (-1)^n H^T) + J_\alpha + L_\alpha -J_\alpha^T = \text{exact}\ + (\id \otimes g_\alpha)(g_\alpha \otimes \id) X, \]
    so applying the differential we have
    \[ d(\id \otimes g_\alpha)(g_\alpha \otimes \id)X = -(\id \otimes g_\alpha)(g_\alpha \otimes \id)(E_0 -(-1)^n E_0^T). \]
    Taking $\widehat H = H - X/2$ (which makes sense since we can divide by two) we get
    \[ (\id \otimes g_\alpha)(g_\alpha \otimes \id)(\widehat H - (-1)^n \widehat H^T) + J_\alpha + L_\alpha -J_\alpha^T = \text{exact}\ + (\id \otimes g_\alpha)(g_\alpha \otimes \id) \frac{X + (-1)^n X^T}{2}, \]
    whose last term is closed and therefore by K\"unneth it is homologous to something in $g_\alpha(Z_*(W)) \otimes g_\alpha(Z_*(W))$.
\end{proof}

\section{Examples}\label{sec:examples}
The formalism of pre-CY structures and the associated calculus developed in \cite{kontsevich2021precalabiyau} is suitable for constructing homotopies and higher structures described in terms of diagrams. Evaluating such diagrams involves taking sums over many expressions, which is often impractical. However, in particular examples, one can perform explicit calculations by hand. We would like to present some of these calculations using examples of interest for string topology.

In this section we will study the algebraic loop product and coproduct for algebras given by the homology of the loop spaces of spheres. Spheres are formal and coformal \cite{berglund2014koszul,berglund2017free}, so the graded algebra $A = H_*(\Omega S^N,\kk)$ is quasi-isomorphic to the dg algebra of chains on $\Omega S^N$. We reiterate that, for some general manifold $M$, this homology will not be quasi-isomorphic to the algebra of chains, so looking at the homology algebra $H_*(\Omega M)$ would not give the correct answers. 

One should be careful when comparing the calculations we perform below to the geometric loop coproduct, since the algebraic operation $\lambda_H$ we defined \emph{is not invariant} with respect to quasi-isomorphisms of $A$. In fact, for some choices of $H$, $\lambda_H$ might not have a geometric interpretation and the space of ``geometrically meaningful" trivializations seems to be more restricted. 

Still, we believe that the examples we calculate below could be of interest for string topology. It may seem like this chain-level formalism is overly complicated for calculating such simple examples; nevertheless, we argue that it allows for a systematic treatment of signs which, in some cases, as for the even-dimensional spheres \cref{sec:evenSpheres}, allows us to work over $\Z$ and understand the behavior of torsion classes.

The cases of the circle and the 2-sphere are of particular interest; in both of those cases, there are multiple choices of $\alpha$-symmetric (\cref{def:appropriatelySymmetric}) trivialization $H$, some of which give non-coassociative coproducts $\lambda_H$. These examples show that the condition $n\ge 3$ in \cref{thm:commCoproduct} is necessary.

\subsection{Spheres of odd dimension greater or equal to 3}\label{sec:oddSpheres}
Let us take $A = \kk[t], \deg(t) = 2N \ge 2$, seen as a homologically graded dg algebra over $\kk$ with trivial differential. As mentioned above, this is the homology algebra of $\Omega S^{2N+1}$, and since the sphere is coformal, the Hochschild homology of $A$ calculates the homology of $L S^{2N+1}$. More explicitly, it is given by
\[ HH_*(A,A) = \kk\ \bigoplus_{k \ge 1}\ (\kk[2Nk] \oplus \kk[2Nk+1]) \]
This may be computed using the Koszul resolution of $A$ in \cref{align:resolutionkt} below, and we may pick some simple representatives in the Hochschild chain complex $C_*(A, A)$ (see \cref{subsubsection:chainsandcochains}) for each of those classes: the chain of length one $t^k$ for the generator in degree $2Nk$, for each $k \ge 0$, and the chain of length two $t^{k-1}[t]$ for the generator in degree $2Nk+1$, for each $k \ge 1$. Here, $[t] \in \A[1]$.

Let us also calculate and pick explicit representatives for the non trivial classes in the Hochschild cohomology of $A$. Following the noncommutative geometry analogy between Hochschild cochains and vector fields, we think of $A$ as functions on some space with coordinates $s_k, k \ge 0$, where $s_k$ is dual to $t^k$, and denote by $s_{k_1}s_{k_2}\dots s_{k_p}\del^k$ the Hochschild cochain of length $p$ that sends the basis element $[t^{k_1}]\otimes [t^{k_2}]\otimes \dots \otimes [t^{k_p}]$ to $ t^{k}$ and all other basis elements $[t^{k_1'}] \otimes \dots \otimes [t^{k_q'}]$ to zero. We note that for any $k \ge 0$, the cochain $\del^k$ of length zero and degree $2Nk$ is closed, and so is the cochain $\sum_i i s_i \del^{i+k}$, of length one and degree $2Nk+1$. It turns out that these are all the representatives for the nonzero Hochschild cohomology classes. We summarize our chosen representatives below:
\[\hspace{-0.5cm}\begin{array}{|c||c|c|c|c|c|c|c|c|}
    \hline
    \text{chains}           & \dots & t^2[t] & t^3 & t[t] & t^2 & 1[t] & t & 1\\
    \hline
    \text{deg in\ } HH_*(A,A) & \dots & 6N+1 & 6N & 4N+1 & 4N & 2N+1 & 2N & 0\\
    \hline \hline
    \text{cochains}           & \dots & \del^2 & \sum_i is_i\del^{i+2} & \del^1 & \sum_i is_i\del^{i+1} & \del^0 & \sum_i is_i\del^{i} & \sum_i is_i\del^{i-1} \\
    \hline
    \text{deg in\ } HH^*(A,A) & \dots & 4N & 4N-1 & 2N & 2N-1 & 0 & -1 & -2N-1 \\
    \hline
\end{array}
\]

\subsubsection{Chain-level Chern character}
Recall that even before we pick any notion of orientation on $A$ (smooth CY or pre-CY structure), once we pick chain-level representatives for the canonical vertices $\co$ and $\eta$ we can define the chain-level Chern character of the diagonal bimodule $E$ and the operation $G:C^*(A,A) \to C_*(A,A) \otimes C_*(A,A)[-1]$. 

First, we must choose an explicit representative for the bimodule dual $A^!$. For that, we use the following resolution for the diagonal bimodule $A$:
\begin{align}\label{align:resolutionkt} \widetilde A = (A^e[2N+1] \oplus A^e,d_{\widetilde A}) 
\end{align}
with differential that sends elements of the first summand to the second summand by
\[ d_{\widetilde A}(t^k \otimes t^\ell) = t^{k+1} \otimes t^\ell - t^k \otimes t^{\ell+1} \]
that is, sending the generator $1\otimes 1 \in A^e[2N+1] $ to $ t\otimes 1 -1 \otimes t\in A^e$. We then dualize this bimodule to get a representative of $A^!$ given by
\[ A^! = (A^e \oplus A^e[-2N-1], d_{A^!}). \]
Let us denote the elements of the first and second summand by $R^{k,\ell},S^{k,\ell}$, respectively (denoting $t^k\otimes t^\ell$ in each factor), with differential given by
\[ d_{A^!}(R^{k,\ell}) = S^{k+1,\ell} - S^{k,\ell+1} \]
The bimodule structure is given by the structure maps
\begin{align*}
    \mu(t^i,R^{k,\ell}) &= R^{k+i,\ell} \\
    \mu(R^{k,\ell},t^i) &=  R^{k,\ell+i} \\
    \mu(t^i,S^{k,\ell}) &= S^{k+1,\ell} \\
    \mu(S^{k,\ell},t^i) &= -S^{k,\ell+i}  
\end{align*}

We must now find chain-level representatives for the $\co$ and $\eta$ vertices. These are not unique, but we can easily find a choice that satisfies the necessary conditions. For the coevaluation $\co \in A\otimes^\LL_{A^e} A^!$, we have
\begin{align} \label{align:coexample} \co = 1 \otimes [\varnothing] \otimes R^{0,0} \otimes [\varnothing] - 1 \otimes [t] \otimes S^{0,0} \otimes [\varnothing] - 1 \otimes [\varnothing] \otimes S^{0,0} \otimes [t].
\end{align}
Here, $\co$ lies in   $\bigoplus_{i, j \geq 0} A \otimes \A^{\otimes i} \otimes A^! \otimes \A^{\otimes j}$ and  is illustrated as follows, compare \cref{align:coevfigure}
\begin{align*}
  \co=  \tikzfig{
        \node [vertex] (center) at (0,0) {$\co$};
        \node (left) at (-1.2,0) {$1$};
        \node (right) at (1.4,0) {$R^{0,0}$};
        \draw [->-] (center) to (-1,0);
        \draw [->- ,cyan] (center) to (1,0);
    } -   \tikzfig{
        \node [vertex] (center) at (0,0) {$\co$};
        \node (top) at (0, 1.2) {$[t]$};
        \node (left) at (-1.4,0) {$1$};
        \node (right) at (1.4,0) {$S^{0,0}$};
        \draw [->-] (center) to (top);
        \draw [->-] (center) to (-1,0);
        \draw [->-,cyan] (center) to (1,0);
    } -  \tikzfig{
        \node [vertex] (center) at (0,0) {$\co$};
        \node (bottom) at (0, -1.2) {$[t]$};
        \node (left) at (-1.2,0) {$1$};
        \node (right) at (1.4,0) {$S^{0,0}$};
        \draw [->-] (center) to (bottom);
        \draw [->-] (center) to (-1,0);
        \draw [->-,cyan] (center) to (1,0);
    }
\end{align*}
We check that this element is closed and satisfies the universal property of coevaluation.

For the vertex $\eta$ we can then pick the form given by
\begin{align*}
        \tikzfig{
        \node [rectangle,draw] (center) at (0,0) {$\eta$};
        \node (left) at (-1.3,0) {};
        \node (right) at (1.3,0) {};
        \node (top) at (0,1.3) {$t^n$};
        \node (bottom) at (0,-1.3) {$R^{k,\ell}$};
        \draw [->-] (center) to (-1,0);
        \draw [->-] (center) to (1,0);
        \draw [->-] (0,1) to (center);
        \draw [->-,cyan] (0,-1) to (center);
    } &= \eta(t^n \otimes R^{k,\ell})  = t^\ell \otimes t^{n+k} \\ \\
    \tikzfig{
        \node [rectangle,draw] (center) at (0,0) {$\eta$};
        \node (left) at (-1.3,0) {};
        \node (right) at (1.3,0) {};
        \node (topleft) at (-1,1) {$[t^m]$};
        \node (top) at (0,1.3) {$t^n$};
        \node (bottom) at (0,-1.3) {$S^{k,\ell}$};
        \draw [->-] (center) to (-1,0);
        \draw [->-] (center) to (1,0);
        \draw [->-] (0,1) to (center);
        \draw [->-,cyan] (0,-1) to (center);
        \draw [->-] (topleft) to (center);
    } &= \eta([t^m] \otimes t^n \otimes S^{k,\ell}) = -\sum_{1\leq i \leq m} t^{\ell+i-1} \otimes t^{k+n+m-i} 
\end{align*}
with all other terms of $\eta$ being zero. The first term, involving $R^{k,\ell}$, guarantees that $\eta$ and $\co$ satisfy the required compatibility \cref{align:pairingvertex}, and the second term, involving $S^{k,\ell}$, makes $\eta$ closed under taking the necklace bracket with $\mu$.

Using the vertices above we compute the chain-level Euler character to be $E = 0$, since it is a difference between two equal terms $1\otimes 1$, one obtained from pairing the first term of $\co$ with the first term of $\eta$ and the other from pairing the third term of $\co$ with the second term of $\eta$; all other terms in the evaluation of the diagram are zero. Having $E=0$ is obviously expected since the Euler characteristic of $S^{2N+1}$ is zero, and the differential on Hochschild chains vanishes in degree one since $A$ is commutative.

\subsubsection{The map $G$}
Since $E=0$ at chain-level as explained above, the map $G$ is a map of complexes (see \cref{align:homotopyG}), and induces a map in cohomology
\[ G \colon HH^*(A,A) \to HH_*(A,A)\otimes HH_*(A,A)[-1].\]
Let us calculate the chain-level map on $\varphi$ ranging over the representatives we picked for the nonzero cohomology classes. We note that the only nonzero diagram to evaluate in the expression for $G(\varphi)$ is the term
\[\tikzfig{
        \draw (-2,0) [partial ellipse =0:-180:1.5 and 0.3];
        \draw [dashed] (-2,0) [partial ellipse =0:180:1.5 and 0.3];
        \draw (2,0) [partial ellipse =0:-180:1.5 and 0.3];
        \draw [dashed] (2,0) [partial ellipse =0:180:1.5 and 0.3];
        \draw (0.5,0) arc (0:180:0.5);
        \draw (3.5,0) arc (0:180:3.5);
        \node [vertex] (coev) at (-0.5,2.5) {$\co$};
        \node [vertex] (phi) at (-1.1,2) {$\varphi$};
        \node (eta) at (-0.5,1) {};
        \draw [->-] (coev) to (eta);
        \draw [->-] (phi) to (eta);
        \draw (0,2) [->-,partial ellipse=90:-90:0.3 and 1.5,cyan,dashed];
        \draw [->-,bend left=20,cyan] (coev) to (0,3.5);
        \draw [bend left=20,cyan] (0,0.5) to (eta);
        \draw [->-] (eta) arc (115:2.5:1.5); 
        \draw [->-] (eta) arc (100:184.5:1.2); 
        \node [rectangle,draw,fill=white] at (-0.5,1) {$\eta$};
        \node at (-0.2,1.7) {$e_1$};
        \node at (0.6,2.5) {$e_2$};
        \node at (-1.5,1.5) {$e_3$};
        \node at (-1.6,0.7) {$e_4$};
        \node at (1.6,1) {$e_5$};
    }\]
with orientation $(e_5\ e_4\ e_3\ e_2\ e_1)$. We plug in the expressions we picked for $\co$ and $\eta$, and calculate:
\begin{align*}
    G(\del_k) &= \sum_{1 \le i \le k} (t^{i-1} \otimes t^{k-i}[t] - t^{i-1}[t] \otimes t^{k-i} ) \\
    G(\sum_i i s_i \del^{i+k}) &= -\sum_{1 \le j \le k+1} (t^{j-1} \otimes t^{k+1-j} )
\end{align*}
To calculate the second line above, note that $\co$ only contains terms with power of $t$ equal to one (see \cref{align:coexample}) so only the term $G(s_1 \partial^{1+k})$ is nonzero.

The map induced by $G$ on cohomology does not depend on our choice of $\co$ and $\eta$; by the discussion in \cref{sec:covariance}, we could compensate a change in the chain-level expressions for those vertices by a change in $H$, but the space of choices for $H$ (i.e.\ the $HH_0(A,A)\otimes HH_1(A,A)\oplus HH_1(A,A)\otimes HH_0(A,A)$-torsor) is trivial in homology.

\subsubsection{The CY and pre-CY structures}
Let us now pick the orientation on $A$, which for us will be a non-degenerate pre-CY structure. We will prove nondegeneracy by first giving a compatible smooth CY structure:
\begin{proposition}\label{prop:inverseOddSphere}
    The Hochschild chain $\omega = 1[t] \in C_{2N+1}(A,A)$ is non-degenerate.
\end{proposition}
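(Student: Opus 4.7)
The plan is to verify that the chain $\omega = 1[t]$, under the identification $C_*(A,A) \simeq \R\!\Hom_{A^e}(A^!,A)$, corresponds to an $A$-bimodule morphism $\Phi_\omega\colon A^! \to A$ of degree $2N+1$ which is a quasi-isomorphism.

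First I would use the two-term free resolution $\widetilde A = (A^e[2N+1] \oplus A^e, d_{\widetilde A})$ to rewrite $C_*(A,A) \simeq A \otimes_{A^e} \widetilde A = A[2N+1]\oplus A$. Because $A$ is graded commutative, the induced differential vanishes; this identifies the length-one chain $1[t]$ (via the standard Koszul contraction from the bar resolution to $\widetilde A$) with the generator $\hat 1$ of the $A[2N+1]$ summand in homological degree $2N+1$. This also reproves the expected description of $HH_*(A,A)$ that was given just before the proposition.

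Next I would compute $\Phi_\omega$ explicitly using $A^! = \Hom_{A^e}(\widetilde A, A^e)$. Tensor-hom adjunction identifies $\hat 1 \in A \otimes_{A^e} A^e[2N+1]$ with the bimodule map sending $S^{k,\ell} \mapsto \pm t^{k+\ell}$ and $R^{k,\ell} \mapsto 0$ (the latter forced by degree parity, since the putative image would sit in an odd degree of $A$). To finish I would compute the induced map on homology: direct inspection of $d_{A^!}R^{k,\ell} = S^{k+1,\ell} - S^{k,\ell+1}$ yields
\[ H_*(A^!) = \bigoplus_{n \geq 0} \kk \cdot s_n, \qquad s_n = [S^{k,\ell}] \text{ for any } k+\ell = n, \]
concentrated in degrees $-2N-1+2Nn$. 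The map $\Phi_\omega$ sends $s_n$ to $\pm t^n$; since $A$ is one-dimensional in each degree $2Nn$ and zero elsewhere, $\Phi_\omega$ is a degreewise isomorphism on homology, hence a quasi-isomorphism.

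The main subtlety I expect is the sign $\mu(S^{k,\ell}, t^i) = -S^{k,\ell+i}$ in the bimodule structure of $A^!$. This sign forbids $\Phi_\omega$ from being a strict bimodule morphism with a nonzero value on $S^{0,0}$, because combining the bimodule action with the cocycle equation $\Phi_\omega(S^{k+1,\ell}) = \Phi_\omega(S^{k,\ell+1})$ would force $2\,\Phi_\omega(S^{0,0})\cdot t = 0$. The resolution is that $\omega$ has length one as a Hochschild chain, so the corresponding element of $\R\!\Hom_{A^e}(A^!,A)$ is a genuine $A_\infty$-bimodule morphism with a nontrivial length-one component, which compensates for the sign twist. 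Carrying out this cocycle check with the correct signs, and confirming that $s_n \mapsto \pm t^n$ on homology, is the technically delicate step of the proof.
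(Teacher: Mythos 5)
Your proposal takes a genuinely different route from the paper. You attack the definition of non-degeneracy head-on: identify $\omega=1[t]$ with the generator of the shifted summand of the Koszul model $A\otimes_{A^e}\widetilde A\cong A[2N+1]\oplus A$, compute the corresponding morphism $A^!\to A$ of degree $2N+1$, and check it is an isomorphism on homology using $H_*(A^!)=\bigoplus_{n\ge 0}\kk\cdot s_n$. The paper instead never touches $\R\!\Hom_{A^e}(A^!,A)$ directly: it writes down an explicit closed $\alpha\in C^{2N+1}_{(2)}(A)$ and verifies $g_\alpha(\omega)=1$, which forces $\omega$ to be a quasi-isomorphism (a split surjection between graded modules that are free of rank one in the same degrees). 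The paper's route has the practical advantage of producing the $\alpha$ that the rest of \cref{sec:oddSpheres} is built on ($g_\alpha$, the coproduct, the cone bimodule); your route, even completed, only yields non-degeneracy abstractly. Your homology computation of $A^!$ is correct, and the identification of $1[t]$ with $\hat 1$ via the standard bar-to-Koszul comparison is fine.

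However, the step you yourself flag as delicate contains a real gap, and the mechanism you propose to close it does not work. You observe that the sign $\mu(S^{k,\ell},t^i)=-S^{k,\ell+i}$ together with the cocycle condition $\Phi(S^{k+1,\ell})=\Phi(S^{k,\ell+1})$ would force $2t\cdot\Phi(S^{0,0})=0$ for a strict bimodule map, and you claim the higher ($A_\infty$) components of the morphism associated to the length-one chain $1[t]$ absorb this. They cannot: in the closedness equation for an $A_\infty$-bimodule morphism $f\colon A^!\to A$, the components $f^{1,0},f^{0,1}$ enter the $(1|1|0)$- and $(0|1|1)$-level equations only through terms of the form $f^{0,1}(d_{A^!}m,a)$, $f^{0,1}(m,\mu^1 a)$ and $d_A f^{0,1}(m,a)$. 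Since $d_A=0$, $\mu^1=0$ and $d_{A^!}S^{k,\ell}=0$, all of these vanish on the generators $S^{k,\ell}$, so the leading component $f^{0,0}$ is forced to strictly intertwine the left and right $t$-actions on the $S$'s regardless of what the higher components are. Consequently, if the sign clash you describe were real, no quasi-isomorphism $A^!\to A[2N+1]$ of any kind would exist, contradicting the proposition; the actual resolution has to be that the signs in the duality identification conspire so that the strict component already satisfies the cocycle condition with $f^{0,0}(S^{k,\ell})=\pm t^{k+\ell}$, the sign depending only on $k+\ell$. Until you carry out that sign bookkeeping and confirm the leading coefficient is a unit (not, say, $2$ or $0$ — this matters because the paper wants this example over an arbitrary ring $\kk$), the proof is not complete at precisely the point where it could fail.
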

\begin{proof}
    We will prove this by explicitly giving its inverse pre-CY structure. Let $\alpha \in C^{2N+1}_{(2)}(A)$ be given by the formulas
    \begin{align*}
        \tikzfig{
        \node [vertex] (center) at (0,0) {$\alpha$};
        \node (top) at (0,1) {$t^k$};
        \node (left) at (-1.8,0) {$\alpha(\varnothing,t^k)'$};
        \node (right) at (1.8,0) {$\alpha(\varnothing,t^k)''$};
        \draw [->-] (top) to (center);
        \draw [->-] (center) to (-1,0);
        \draw [-w-] (center) to (1,0);
    } &\qquad \alpha(\varnothing, t^k)' \otimes \alpha(\varnothing, t^k)'' = \sum_{0\le i \le k-1} t^i \otimes t^{k-1-i} \\
    \tikzfig{
        \node [vertex] (center) at (0,0) {$\alpha$};
        \node (bot) at (0,-1) {$t^k$};
        \node (left) at (-1.8,0) {$\alpha(t^k,\varnothing)'$};
        \node (right) at (1.8,0) {$\alpha(t^k,\varnothing)''$};
        \draw [->-] (bot) to (center);
        \draw [->-] (center) to (-1,0);
        \draw [-w-] (center) to (1,0);
    } &\qquad \alpha(t^k,\varnothing)' \otimes \alpha(t^k,\varnothing)'' = - \sum_{0\le i \le k-1} t^i \otimes t^{k-1-i}
    \end{align*}
    with all other components of $\alpha$ being zero. Let us explain this better; $\alpha$ only evaluates non-trivially on exactly one input, sending $t^k$ to the tensor $\pm\sum t^i \otimes t^{k-1-i}$, with sign depending on which side this input is on; recall that since the CY dimension is odd, $\alpha$ needs to be antisymmetric under the rotation. We can check that this is a closed element of $C^*_{(2)}(A)$, under the differential given by taking necklace bracket with the Hochschild cochain giving the $A_\infty$ structure $\mu$ (in this case just the multiplication). In this case this means explicitly verifying the following equation
    \[ \tikzfig{
        \node [vertex] (center) at (0,0) {$\alpha$};
        \node [bullet] (mu) at (1,0) {};
        \node (topleft) at (0,1.5) {$t^k$};
        \node (topright) at (1,1.5) {$t^\ell$};
        \draw [->-] (topleft) to (center);
        \draw [->-] (topright) to (mu);
        \draw [->-] (center) to (-1.2,0);
        \draw [-w-] (center) to (mu);
        \draw [->-] (mu) to (2.2,0);
    } + \tikzfig{
        \node [bullet] (mu) at (0,0) {};
        \node [vertex] (center) at (1,0) {$\alpha$};
        \node (topleft) at (0,1.5) {$t^k$};
        \node (topright) at (1,1.5) {$t^\ell$};
        \draw [->-] (topleft) to (mu);
        \draw [->-] (topright) to (center);
        \draw [-w-] (center) to (2.2,0);
        \draw [->-] (center) to (mu);
        \draw [->-] (mu) to (-1.2,0);
    } = \tikzfig{
        \node [vertex] (center) at (0,0) {$\alpha$};
        \node (topleft) at (-0.5,1.5) {$t^k$};
        \node (topright) at (0.5,1.5) {$t^\ell$};
        \node [bullet] (mu) at (0,0.7) {};
        \draw [->-] (topleft) to (mu);
        \draw [->-] (topright) to (mu);
        \draw [->-] (mu) to (center);
        \draw [->-] (center) to (-1,0);
        \draw [-w-] (center) to (1,0);
    }\]
    for all $k, \ell \ge 0$, which holds since both sides are equal to $-\sum_{0\le i \le k+\ell-1} t^i \otimes t^{k+\ell-1-i}$.

    It remains to check that $\alpha$ is indeed an inverse to $\omega$, that is, that the Hochschild cochain given by
    \[\tikzfig{
        \draw (0,0) circle (1.5);
	\node [vertex] (v3) at (0,0.8) {$\alpha$};
	\node [vertex] (v2) at (0,0) {$\omega$};
	\node [bullet] (v4) at (0.8,0) {};
	\node [bullet] (v5) at (0,-0.8) {};
	\draw [->-] (v2) to (v4);
	\draw [-w-,shorten <=6pt] (0,0.8) arc (90:0:0.8);
	\draw [->-,shorten <=6pt] (0,0.8) arc (90:270:0.8);
	\draw [->-] (0.8,0) arc (0:-90:0.8);
	\draw [->-] (v5) to (0,-1.5);
    }\]
    is cohomologous to the unit cochain; in this case one can calculate that it is exactly equal to the unit cochain.
\end{proof}

In order to apply our results, we need the element $\alpha$ we found above to be part of a pre-CY structure. In this case, there is no more trivial terms.
\begin{proposition}
    Taking $m = \mu + \alpha, m_{(\ge 3)}=0$, gives a pre-CY structure of dimension $2N+1$ on $A$.
\end{proposition}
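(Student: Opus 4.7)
The plan is to verify the defining equation $m \circ m = 0$ of a pre-CY structure on the truncations $C^*_{[2N+1]}(A)/F^{\ell+1}$; since $m = \mu + \alpha$ has no components with $\ge 3$ outputs, the necklace composition $m \circ m$ splits as a sum of three pieces, one in $C^*_{(1,2N+1)}(A)$, one in $C^*_{(2,2N+1)}(A)$, and one in $C^*_{(3,2N+1)}(A)$ (higher piece vanishes automatically). The first is $\mu \circ \mu = 0$, which is just the associativity of the graded-commutative algebra $\kk[t]$, and the second is $[\mu,\alpha]= 0$, already checked while proving \cref{prop:inverseOddSphere} for all inputs. So the entire content of the claim is the third equation $\alpha \circ \alpha = 0$.

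To compute $\alpha \circ \alpha$, I will exploit that $\alpha$ evaluates non-trivially only on exactly one $A$-input, sending $t^k$ to $\pm\sum_{i+j=k-1} t^i\otimes t^j$. Therefore every term in $\alpha \circ \alpha$ is obtained by a two-step procedure: a single input $t^k$ is consumed by one copy of $\alpha$, producing a pair of outputs $t^i,t^j$ with $i+j=k-1$; then the second copy of $\alpha$ consumes one of these outputs $t^i$ (or $t^j$), producing two further outputs summing to $i-1$ (resp.\ $j-1$) in exponent. Each contributing composition thus produces a term of the form $\sum_{a+b+c=k-2}t^a\otimes t^b\otimes t^c$ up to sign, lying in $A^{\otimes 3}$ of total $t$-degree $k-2$. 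The necklace composition instructs us to sum over \textit{all} such contractions, cyclically symmetrized.

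The plan is then to enumerate the resulting oriented diagrams: for each of the three angles of the trivalent cyclic diagram where the input $t^k$ can be placed, and for each of the two choices of which output of the inner $\alpha$ is fed into the input of the outer $\alpha$. Each configuration produces the same sum $\sum_{a+b+c=k-2}t^a \otimes t^b \otimes t^c$ up to an overall sign governed by (i) the Koszul and necklace sign rules from \cite[Section 2, 6]{kontsevich2021precalabiyau}, and (ii) the discrepancy $\alpha(t^k,\varnothing) = -\alpha(\varnothing,t^k)$, which encodes exactly how $\alpha$ transforms under the $\Z/2\Z$-rotation (with the $(n-1)(k-1)$ sign twist for $n=2N+1$, $k=2$). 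A careful accounting shows that these contributions pair up in cancelling pairs, giving $\alpha \circ \alpha = 0$.

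The main obstacle is the sign bookkeeping in the necklace product: with $n=2N+1$ odd and $k=3$ outputs, the cyclic twist $(n-1)(k-1) = 4N$ is even, so cyclic symmetrization is by pure rotation, which together with the antisymmetry of $\alpha$ under swapping its input side produces the needed cancellation. To streamline the argument and avoid lengthy case-by-case sign chasing, I would reinterpret $\alpha$ conceptually as (up to signs and shifts) the coproduct dual to the multiplication on $\kk[t]$; coassociativity of this coproduct is the algebraic shadow of $\alpha\circ\alpha=0$, and one then only needs to check that the signs in the necklace composition coincide with the signs produced by the linear dualization of associativity, which is a single uniform calculation.
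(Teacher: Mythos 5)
Your proposal is correct and follows essentially the same route as the paper: reduce to checking $\alpha\circ\alpha=0$, use that $\alpha$ is nonzero only on a single input so the only contributions come from feeding one output of an inner $\alpha$ into an outer $\alpha$, and observe that the two resulting terms cancel because of the sign discrepancy $\alpha(t^k,\varnothing)=-\alpha(\varnothing,t^k)$ (the paper places the input in one angle and invokes cyclic symmetry for the rest). Your closing remark reinterpreting the cancellation as coassociativity of the dual coproduct is a pleasant conceptual gloss but is not needed, and the explicit two-term cancellation you describe is exactly the paper's computation.
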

\begin{proof}
    We already verified that $[\mu,\alpha] = 0$, so the only equation left to verify is that $[\alpha,\alpha] = 0$ in $C^*_{(3)}(A)$. Since $\alpha$ is only nonzero evaluated on exactly one input, the only possibly nontrivial term in $[\alpha,\alpha]$ can be on exactly one input. By symmetry it is enough to place it in the first of the three angles; we then have that the only nontrivial terms are
    \begin{align*} [\alpha,\alpha](t^k,\varnothing,\varnothing) &= \tikzfig{
        \node [vertex] (alpha2) at (0,0) {$\alpha$};
        \node [vertex] (alpha1) at (1,0) {$\alpha$};
        \node (in) at (1,-1.2) {$t^k$};
        \draw [->-] (alpha1) to (2.2,0);
        \draw [-w-] (alpha1) to (alpha2);
        \draw [->-] (alpha2) to (-1,1.2);
        \draw [-w-] (alpha2) to (-1,-1.2);
        \draw [->-,bend left=30] (in) to (alpha1);
    } + \tikzfig{
        \node [vertex] (alpha2) at (0,0) {$\alpha$};
        \node [vertex] (alpha1) at (-0.75,-0.9) {$\alpha$};
        \node (in) at (1,-1.2) {$t^k$};
        \draw [->-] (alpha2) to (1.2,0);
        \draw [-w-] (alpha1) to (alpha2);
        \draw [-w-] (alpha2) to (-1,1.2);
        \draw [->-] (alpha1) to (-1.25,-1.5);
        \draw [->-,bend right=30] (in) to (alpha1);
    } \\
    &= -\sum_{i_1+i_2+i_3 = k-1}  t^{i_1} \otimes t^{i_2} \otimes t^{i_3} + \sum_{i_1+i_2+i_3 = k-1}  t^{i_1} \otimes t^{i_2} \otimes t^{i_3} = 0 
    \end{align*}
\end{proof}

\subsubsection{The algebraic loop coproduct}
As we mentioned above, the space of choices for $H$ has trivial homology; combined with the fact that $E=0$ at chain-level, we conclude that any choice of $H$ (that is, with $dH=0$) will only contribute zero to the homology-level coproduct.

We use the element $\alpha$ we found above to calculate the chain-level map $g_\alpha: C_*(A,A) \to C^*(A,A)[2N+1]$ on the chosen representatives of the nonzero classes. We calculate that for every $k \ge 0$ we have
\[ g_\alpha(t^k) = -\sum_i is_i\del^{i+k-1}, \quad g_\alpha(t^k[t]) = \del_k, \]
so composing with the map $G$ we get:
\begin{align*}
    \lambda(t^k) &= \sum_{1 \le i \le k} t^{i-1} \otimes t^{k-i} \\
    \lambda(t^k[t]) &= \sum_{1 \le i \le k} (t^{i-1} \otimes t^{k-i}[t] - t^i[t] \otimes t^{k-1-i})
\end{align*}
As expected from \cref{thm:symmetry}, since the CY dimension $\ge 3$ and $H=0$ is symmetric, the coproduct above is cocommutative and coassociative. It also agrees with the coproduct on the free loop space homology defined topologically. Comparing to the expression in \cite{CieHinOan2}, for example, the identification between our basis and theirs is given by $t^k \leftrightarrow AU^k$ and $t^k[t] \leftrightarrow U^k$.

\subsubsection{The cone bimodule}
For the sake of illustration, let us now describe the dual picture, that is, the product on $HH^*(A,A^\vee)$. For that, we use the bimodule $M = \Cone(f_\alpha)$ which is quasi-isomorphic to the categorical formal punctured neighborhood of infinity since $\alpha$ is non-degenerate. To compute the morphism of bimodules $f_\alpha: B\A[1] \otimes A^\vee \otimes B\A[1] \to A$, let us fix some notation: we denote by $s_k$ the element of degree $-2Nk$ in $A^\vee$ which is dual to $t^k$, that is, which maps $t^\ell$ to $\delta_{\ell-k}$. The set $\{s_k\}_{k\in \N}$ is not a complete basis of the complex $A^\vee$ (which is, after all, just the algebraic dual so `bigger' than $A$), but it will suffice to describe the map $f_\alpha$ on it. We have that the only nontrivial values are
\[ f_\alpha(s_k \otimes t^{\ell}) = -\chi_{k \le \ell-1}\ t^{\ell-1-k}, \quad f_\alpha(t^{\ell} \otimes s_k) = \chi_{k \le \ell-1}\ t^{\ell-1-k} \]
where $\chi_{\dots}$ is the `indicator function', that is, one if $\dots$ is satisfied and zero if it is not. The induced map on Hochschild chains is then $F_\alpha: C_*(A,A^\vee[-2N-1]) \to C_*(A,A)$ is then given by
\[ F_\alpha(s_k[t^{k_1}|\dots|t^{k_p}]) = -\chi_{k \le k_1-1}\ t^{k_1-1-k}[t^{k_2}|\dots|t^{k_p}] + (-1)^{p-1} \chi_{k \le k_p-1}\ t^{k_p-1-k}[t^{k_1}|\dots|t^{k_{p-1}}] \]

We can easily show that this map is zero on cohomology. Since $A$ is smooth $(2N+1)$-CY, we have $HH_*(A,A^\vee[-2N-1]) = HH^*(A,A^\vee) = (HH_*(A,A))^\vee$ which is supported in degrees $0,-2N,-2N+1,-4N,-4N-1,\dots$. Meanwhile, the target of $F_\alpha$ has cohomology supported in degrees $0,2N,2N+1,4N,4N+1,\dots$. Therefore the only possible nontrivial map would be represented in degree zero, but $F_\alpha$ vanishes there.

\subsubsection{The product on the cone}
We have that $C_*(A,M) \simeq \Cone(F_\alpha) = C^*(A,A^\vee[-2N]) \oplus C_*(A,A)$, with the extra differential given by $F_\alpha$ above. We calculate the extension of the pre-CY structure from $A$ to $M$, which depends on $\alpha$ and $\tau$ (zero in this case)\footnote{Again, $M$ as a complex is actually the sum of the bar resolution of $A^\vee[-2N-1]$ and $A$; in this case only the length zero elements have nontrivial products.}
\begin{align*}
    \mu_M(t^k,t^\ell) &= 0+t^{k+\ell},\\
    \mu_M(s_k,t^\ell) &= s_{k-\ell} - \chi_{k\le \ell-1} t^{\ell-1-k},\\
    \mu_M(t^\ell,s_k) &= s_{k-\ell} + \chi_{k\le \ell-1} t^{\ell-1-k},\\
    \mu_M(s_k,s_\ell) &= s_{k+\ell+1} + 0.
\end{align*}

This allows us to calculate the product $\pi_M$ for any two cochains in $C_*(A,M)$. It suffices to compute the product on the chosen representatives; besides the representatives for the classes in $HH_*(A,A)$, we can choose representatives for the nontrivial classes in $HH^*(A,A^\vee)$:
\[ s_k, \ \deg = 2N(k+1) \quad  \text{\ and\ } \quad  s_k[t],\  \deg = 2N(k-1)-1,\  \text{for $k \ge 0$} \]
We compute:
\begin{align*}
    \pi(t^k[t],t^\ell) &= -t^{k+\ell}\\
    \pi(t^k[t],t^\ell[t]) &= t^{k+\ell}[t]\\
    \pi(s_k[t],s_\ell) &= -s_{k+\ell+1},\\
    \pi(s_k[t],s_\ell[t]) &= s_{k+\ell+1}[t],\\
    \pi(s_k[t],t^\ell) &= -s_{k-\ell} + \chi_{k\le \ell-1} t^{\ell-1-k},\\
    \pi(s_k,t^\ell[t]) &= s_{k-\ell} - \chi_{k\le \ell-1} t^{\ell-1-k},\\
    \pi(s_k[t],t^\ell[t]) &= s_{k-\ell}[t] - \chi_{k\le \ell-1} t^{\ell-1-k}[t]
\end{align*}
with the remaining operations either determined from the above by graded skew-commutativity or zero otherwise. To lift the product to $C^*(A,A^\vee)$, one would need to pick nullhomotopies $h_1,h_2$. But even though any nullhomotopy $h$ of $F_\alpha$ must be nonzero, we only need to evaluate it on the cycles $s_k,s_k[t]$ above; the respective images $h(s_k)$ and $h(s_k[t])$ live in degrees $-2N(k+1)$ and $-2Nk+1$, where the complex vanishes. So for any pair of homotopies $h_1,h_2$, the corresponding product on $C_*(A,A^\vee[-2N])$ is just given, up to exact terms, by: 
\begin{align*}
    \pi_{h_1,h_2}(s_k[t],s_\ell) &= -s_{k+\ell+1},\\
    \pi_{h_1,h_2}(s_k[t],s_\ell[t]) &= s_{k+\ell+1}[t].\\
\end{align*}
As expected from \cref{prop:compatibility}, this product is dual to the coproduct $\lambda$, using the pairing
\[ (s_k,t^\ell[t]) \mapsto -\delta_{k-\ell}, \quad (s_k[t],t^\ell) \mapsto \delta_{k-\ell}. \]
Note also that since there are no torsion classes in $HH_*(A,A)$, the product $\pi_{h_1,h_2}$ on $HH_*(A,A^\vee)$ contains the same amount of information as the coproduct $\lambda$ on $HH_*(A,A)$.

\subsection{Spheres of even dimension greater or equal to 4}\label{sec:evenSpheres}
Let us consider the dg algebra $A = \kk[t]$ with $\deg(t)= 2N-1, N \ge 2$, with trivial differential.

In order not to get confused about signs, which will be very important in this example, let us be more precise: as an $A_\infty$-algebra, we have $\mu^{\neq 2}=0$, and $\mu^2$ is given by $\mu^2(t^k,t^\ell) = (-1)^k t^{k+\ell}$, where we view  $\mu^i \in C^2(A, A).$ This affects the signs in the Hochschild chain differential $\partial = L_\mu$, see \cref{ex:capProducts}. 

The Hochschild homology $HH_*(A,A)$ will depend on our choice of ring $\kk$; the only difference between this case and the case of \cref{sec:oddSpheres} is that the differential $C_1(A,A)\to C_0(A,A)$ does not vanish, and instead we have $d(t^{2i-1}[t]) = -2 t^{2i}$ for every $i \ge 1$. Therefore, over $\Z$ we do not have the classes represented by $t^\mathrm{odd}[t]$, and the classes represented by $t^\mathrm{even}$ are $2$-torsion:
\[ HH_*(A,\Z) = \Z \oplus \bigoplus_{1 \le k\ \mathrm{odd}} \left(\Z[(2N-1)k] \oplus \Z[(2N-1)k+1]\right) \oplus \bigoplus_{2 \le \ell\ \mathrm{even}} \frac{\Z}{2\Z}[(2N-1)\ell] \]

Let us also calculate representatives for the nonzero classes in $HH^*(A,A)$. As in the example of \cref{sec:oddSpheres}, they can all be realized by cochains of length zero and one, except that the representatives will change. For length zero we calculate that:
\[ 
    d(\del^k)(t^i) = \begin{cases}
        0 & k\ \text{even} \\
        (-1+(-1)^i)t^{k+i} & k\ \text{odd}
    \end{cases}
\]
so only the cochains $\del^k$ with $k$ even represent nonzero classes, each of degree $k(2N-1)$. As for cochains of length one, denoting we have the following closed elements
\[ \sum_{i \ge 1} i s_i \del^{i+k}, k\ \text{even}, \qquad \sum_{i \ge 1}\frac{1-(-1)^i}{2}s_i \del^{i+k}, k\ \text{odd}. \]
of degree $k(2N-1)-1$. To simplify notation, let us set $\sigma_i = \frac{1-(-1)^i}{2}$, or equivalently $\sigma_i= i\mod 2$; Note that for odd $k\ge 1$, $\sum_{i \ge 1}\sigma_i s_i \del^{i+k}$ has order $2$, since twice this class is equal to $-d(\del^k)$ by the calculation above.  Denoting the $2$-torsion classes in red, we summarize our chosen representative chains in $HH_*(A,\Z)$ and cochains in $HH^*(A,\Z)$:
\[\hspace{-0.3cm}\begin{array}{|c||c|c|c|c|c|c|c|c|}
    \hline
    \text{chains}               & \dots & \textcolor{red}{t^4} & t^2[t] & t^3 & \textcolor{red}{t^2} & 1[t] & t & 1\\
    \hline
    \text{deg }  & \dots & 8N-4 & 6N-2 & 6N-3  & 4N-2 & 2N & 2N-1 & 0\\
    \hline  \hline
    \text{cochains}             & \dots & \textcolor{red}{\sum_{i \ge 1}\sigma_i s_i \del^{i+3}} & \del^2 & \sum_i i s_i \del^{i+2} & \textcolor{red}{\sum_i \sigma_i s_i \del^{i+1}} & \del^0 & \sum_i i \sigma_i \del^i & \sum_i \sigma_i s_i \del^{i-1} \\
    \hline
    \text{deg}  & \dots & 6N-4 & 4N-2 & 4N-3 & 2N-2 & 0 & -1 & -2N \\
    \hline
\end{array}
\]
In other words, over any field $\kk_{\neq 2}$ of characteristic $\neq 2$, the red classes are zero in (co)homology. Over a field $\kk_2$ of characteristic two, the chains $t^\mathrm{odd}[t]$ and cochains $\del^\mathrm{even}$ are closed, and represent nonzero classes in $HH_*(A,\kk_2)$ and $HH^*(A,\kk_2)$, respectively.

\subsubsection{Chain-level Chern character and map $G$}
The description of $A^!$ and the vertices $\co$ and $\ev$ is just like in the case of odd-dimensional spheres, with the only difference being in signs. More precisely, the inverse dualizing bimodule is still given by
\[ A^! = (A^e \oplus A^e[-2N], d_{A^!}) \]
with basis of first and second summand given by elements labeled $R^{k,\ell},S^{k,\ell}$, with differential still given by $d_{A^!}(R^{k,\ell}) = S^{k+1,\ell} - S^{k,\ell+1}$. On the other hand, the bimodule structure maps also have some signs we need to take into account, for example:
\begin{align*}
    \mu(t^i,R^{k,\ell}) &= (-1)^i R^{k+i,\ell} \\
    \mu(R^{k,\ell},t^i) &= (-1)^{k+\ell} R^{k,\ell+i} \\
    \mu(t^i,S^{k,\ell}) &= (-1)^i S^{k+1,\ell} \\
    \mu(S^{k,\ell},t^i) &= (-1)^{k+\ell} S^{k,\ell+i}  
\end{align*}

The elements $\co$ and $\eta$ also differ from the case of \cref{sec:oddSpheres} by signs; we have 
\begin{align*} 
    \co &= 1 \otimes R^{0,0} + 1 \otimes [t] \otimes S^{0,0} + 1 \otimes S^{0,0} \otimes [t]\\
    \eta(t^n; R^{k,\ell}) &= (-1)^{n(k+\ell+1)+(k+1)\ell}t^\ell \otimes t^{k+n} \\
    \eta([t^m],t^n; R^{k,\ell}) &= (-1)^{(m+n)(k+\ell+1) + k\ell + m}\sum_{i=1}^m (-1)^{(i-1)(k+\ell+1)}t^{\ell+i-1}\otimes t^{k+n+m-i}
\end{align*}
The complicated signs above arise from the requirement that $\eta$ be closed under the correct differential $d$ given by taking the necklace bracket with the structure maps. We recall that when exchanging terms past one another we use the reduced degree.

Using the vertices above we compute $E=2\times(1\otimes 1)$, as expected for an even-dimensional sphere. Let us now calculate $G$ on our chosen representatives:
\begin{align*}
    G(\del_k) &= \sum_{i=1}^k (-1)^{i+1} (t^{i-1} \otimes t^{k-i}[t] + t^{i-1}[t] \otimes t^{k-i} ) \\
    G(\sum_i i s_i \del^{i+k}) = G(\sum_i \sigma_i s_i \del^{i+k})&= \sum_{i=1}^{k+1} (-1)^{i+1}(t^{i-1}\otimes t^{k-i})
\end{align*}

\subsubsection{Pre-CY structure and chain-level coproduct}
The CY and pre-CY structure here is also very similar to the one discussed in the case of \cref{sec:oddSpheres}, with difference in signs; here $\alpha$ will symmetric with respect to the $\Z/2\Z$-action changing the marking of the first output, since the dimension is even. That is,
\[ \alpha(\varnothing, t^k) = \alpha(t^k,\varnothing) = \sum_{0 \le i \le k-1} t^i \otimes t^{k-i-1} \]
is the only non-zero term for $m_{(2)} = \alpha$ and there are no higher terms in the pre-CY structure, since we calculate $\alpha \circ \alpha = 0$.

We can then calculate the chain-level map $g_\alpha: C_*(A,A) \to C^*(A,A)[2N]$ on the chosen representatives of the nonzero classes. We calculate that for every $k \ge 0$ we have
\[ g_\alpha(t^k) = \begin{cases}
    \sum_i is_i\del^{i+k-1} & k \text{\ odd},\\
    \sum_i \sigma_i s_i\del^{i+k-1} & k \text{\ even}
\end{cases},  \quad g_\alpha(t^k[t]) = \del^k, \]

To get a chain-level coproduct, we must pick a partial trivialization of $E$. We pick $E_0 = E = 2(1\otimes 1)$ and $W = 2\kk \subseteq C_0(A,A)$, that is, the $\kk$-submodule spanned by a chain of length one given by $2\in A$. Any partial trivialization $H$ satisfies $dH=0$. Since $HH_1(A,A)=C_1(A,A)=0$, the torsor in which $H$ lives has trivial homology, and any choice of $H$ will give the same coproduct; this would hold even if we picked some other quasi-isomorphism algebra $A'\simeq A$. Therefore the chain-level coproduct is given by 
\begin{align*}
    \lambda(t^k) &= \sum_{i=1}^{k} (-1)^{i+1}(t^{i-1} \otimes t^{k-i}) \\
    \lambda(t^k[t]) &= \sum_{i=1}^{k} (-1)^{i+1}(t^{i-1}\otimes t^{k-i}[t] + t^{i-1}[t]\otimes t^{k-i}),
\end{align*}
and we can conclude that $(W,H,\alpha)$ is balanced for any choice of $H$.

\subsubsection{The product on the cone and lift to product on the dual}
The expression for the morphism $f_\alpha$ is also similar to the case of \cref{sec:oddSpheres}, just differing in signs. The induced map on Hochschild chains is
\[ F_\alpha(s_k[t^{k_1}|\dots|t^{k_p}]) = \chi_{k \le k_1-1}\ (-1)^{\sharp_1} t^{k_1-1-k}[t^{k_2}|\dots|t^{k_p}] + \chi_{k \le k_p-1}\ (-1)^{\sharp_2}t^{k_p-1-k}[t^{k_1}|\dots|t^{k_{p-1}}] \]
where $\sharp_1=kk_1, \sharp_2 =(k_p+1)(k+k_1+\dots+k_{p-1}+p+k)+k+k_p$. We recall also that one must change the signs of the pairing between $A^\vee$ and $A$, setting $\ev(s_k,t^\ell) = (-1)^k \delta_{k-\ell}$

We evaluate some of the products on the cone $C_*(A,A^\vee[-2n+1])\otimes C_*(A,A)$:
\begin{align*}
    \pi(t^k[t],t^\ell) &= (-1)^{(k+1)\ell}t^{k+\ell}\\
    \pi(t^k[t],t^\ell[t]) &= (-1)^{(k+1)\ell}t^{k+\ell}[t]\\
    p\pi(t^k[t],s_\ell) &= (-1)^{(k+1)\ell}s_{\ell-k}\\
    p\pi(s_k[t],t^\ell) &= (-1)^{(k+1)\ell}s_{k-\ell}\\
    p\pi(s_k[t],s_\ell) &= (-1)^{k(\ell+1)}s_{k+\ell+1},\\
    p\pi(s_k[t],s_\ell[t]) &= (-1)^{k(\ell+1)}s_{k+\ell+1}[t],
\end{align*}
again with other products determined by commutativity or zero otherwise. We note that the resulting product on $C_*(A,A^\vee)$ does not respect the differential on all of that complex; if we take $k=0, \ell = 2i+1$, the product of closed chains $p\pi(s_0[t],s_{2i+1}) = s_{2i+2}[t]$ is not closed. 

The cohomology of $C_*(A,A^\vee[-2N])$ is concentrated in degrees $\ge 0$ and the cohomology of $C_*(A,A)$ in degrees $\le 0$; therefore the only nontrivial map in homology is in degree zero, where the map is multiplication by $2$. Note that $s_0[t]$ is the only `problematic class'; this is expected, since we know that upon picking a partial homotopy of $F_\alpha$, we will only be able to lift this product to a subcomplex of $C_*(A,A^\vee)$.

Again, we do not have much of a choice for this partial homotopy; we pick $E_0 = E = 2(1\otimes 1)$, which gives a map $q:C_*(A,A^\vee[-2N]) \to 2\kk \subset C_0(A,A)$ given by sending $s_0[t] \to 2$ in degree zero and zero in other degrees. Making any choice of pairs $h_1,h_2$ of partial homotopies, that is, with $dh+hd = F_\alpha -iq$ for $h=h_1,h_2$, we can lift the product to $\ker(q)$, but in fact, though the chain-level expression for any choice of $h$ will be nontrivial, also in this case any two such choices differ by a $[d,-]$-exact term; moreover, when evaluated on chains of lengths one and two we already have $\ ^\sharp E_0=F_\alpha$.

So on our representatives we can ignore the $H$-terms, and for any choice of $H$ we get the following product, up to exact terms:
\begin{align}
    \pi_H(s_k,s_\ell) &= 0,\\
    \pi_H(s_k[t],s_\ell) &= (-1)^{k(\ell+1)}s_{k+\ell+1},\\
    \pi_H(s_k[t],s_\ell[t]) &= (-1)^{k(\ell+1)}s_{k+\ell+1}[t],
\end{align}
As expected, the resulting product on $C_*(A,A^\vee)$ does not respect the differential on all of that complex; if we take $k=0, \ell = 2i+1$, the product of closed chains $p\pi(s_0[t],s_{2i+1}) = -s_{2i+2}[t]$ is not closed. Note that $s_0[t]$ is the only `problematic class'.

\subsubsection{Over a field of characteristic $\neq 2$}\label{sec:evenSphereFieldCharNot2}
Over $\kk=\kk_{\neq 2}$, we have $W \cong \kk_{\neq 2}$, that is, the rank one vector space in degree zero. Since $\lambda_H(1)=0$ we have a map of complexes of vector spaces
\[ \lambda_H: C_*(A,A)/W\to C_*(A,A)/W \otimes C_*(A,A)/W \]

Let us analyze the induced coproduct in homology.
the closed classes in $HH_*(A,\kk_{\neq 2})/W$ represented by $t^k$, which is exact if $k$ is even, and $t^\ell[t]$, only with $\ell$ even. We note that each summand in the expression for $\lambda_H(t^k)$ is closed, and since $[t^\mathrm{even}]=0$, this coproduct in homology is just given by
\[ \lambda(t^k) = -\sum_{i=1}^{(k-1)/2} t^{2i-1} \otimes t^{k-2i} \qquad k \text{\ odd} \]

As for $\lambda(t^\ell[t])$, we note that even though some of the terms in its expression are not individually closed (since $\partial(t^{2i-1}[t]) = -2t^{2i}$), the whole sum is closed under the differential $\partial_{C_*\otimes C_*}$ on the tensor product $C_*(A,A)/\kk \otimes C_*(A,A)/\kk$. Moreover, we have 
\[ b(t^{2i-1}[t] \otimes t^{2j-1}[t]) = -2t^{2i} \otimes t^{2j-1}[t] + 2t^{2i-1}[t] \otimes t^{2j} \]
so every difference $(t^{2i} \otimes t^{2j-1}[t] -t^{2i-1}[t] \otimes t^{2j})$ is zero in homology. Thus the coproduct in homology is given by
\[\lambda(t^\ell[t]) = -\sum_{i=1}^{\ell/2}t^{2i-1} \otimes t^{\ell-2i}[t] \ + \sum_{i=0}^{\ell/2-1} t^{2i}[t] \otimes t^{\ell-1-2i} \qquad \ell \text{\ even}\]

Comparing with the product on the dual, we first calculate
\[ HH_*(A,A^\vee,\kk_{\neq 2}) = \kk_{\neq 2}[2N] \oplus \bigoplus_{i \le 0} (\kk_{\neq 2}[-(2N-1)(2i)+1] \oplus \kk_{\neq 2}[-(2N-1)(2i)+1])\]
with representatives for the nonzero classes given by
\[\begin{array}{|c||c|c|c|c|c|c|c|c|c|}
	\hline
    \text{chains} & s_0[t] & s_1[t] & s_0 & s_3[t] & s_2 & \dots & s_{2i+1}[t] & s_{2i} & \dots\\
    \hline
    \text{deg \ } & 2N & 1 & 0 & -4N+3 & -4N+2 & \dots & -(2N-1)(2i)+1 & -(2N-1)(2i) &  \dots\\
	\hline
\end{array}
\]

The product induced on $H^*(\ker(q)) = \bigoplus_{i>-2n} HH_i(A,A^\vee;\kk_{\neq 2})$ is then given by
\begin{align} 
    \pi_H(s_k,s_\ell) &= 0, & k,\ell \text{\ even} \\
    \pi_H(s_k[t],s_\ell) &= -s_{k+\ell+1}, & k \text{\ odd},\ell \text{\ even}\\
    \pi_H(s_k[t],s_\ell[t]) &= s_{k+\ell+1}[t], & k,\ell \text{\ odd}
\end{align}
which as expected from \cref{thm:thm1intro}, is commutative and associative.

Comparing the coproduct $\lambda_H$ with this product, we check that these operations satisfy the compatibility relation predicted by \cref{prop:compatibility}, and as a consequence $\lambda_H$ is cocommutative and coassociative, which can also be checked directly. As we are over a field, the homology operations $\lambda_H$ and $\pi_H$ contain exactly the same information. Comparing to the product $\pi$ on $HH_*(A,\kk_{\neq 2})$, we also see that Sullivan's relation is satisfied, as expected from \cref{thm:sullivan}.

\subsubsection{Over a field of characteristic $2$}
Over the field $\kk=\kk_2$, basically the same description as above holds, except that for every $i\ge 0$ we have extra nonzero classes $t^{2i+1}[t] \in HH_*(A,\kk_2)$, which get mapped to nonzero classes $\del^{2i}$. On the dual, we have two extra families of nonzero classes, represented by $s_{2i+1}$ and $s_{2i+2}[t]$, for $i\ge 0$. 

Another difference is that $E=0$, so we do not need to worry about quotienting out by some $W$; we get a coproduct
\begin{align*}
    \lambda_H(t^k) &= \sum_{i=1}^{k} t^{i-1} \otimes t^{k-i} \\
    \lambda_H(t^k[t]) &= \sum_{i=1}^{k} t^{i-1}\otimes t^{k-i}[t] + t^{i-1}[t]\otimes t^{k-i}.
\end{align*}
This coproduct is dual to the product on $HH_*(A,A^\vee;\kk_2)$, for any $k,\ell \ge 0$
\begin{align} 
    \pi_H(s_k,s_\ell) &= 0  \\
    \pi_H(s_k[t],s_\ell) &= s_{k+\ell+1}\\
    \pi_H(s_k[t],s_\ell[t]) &= s_{k+\ell+1}[t]
\end{align}
and as a consequence, $\lambda_H$ is again cocommutative and coassociative.

\subsubsection{Interaction with torsion classes for integer coefficients}\label{sec:twoSphereIntegers}
Let us now return to coefficients $\kk=\Z$. The chain-level coproduct $\lambda_H$ is a map of complexes and gives a map on homology:
\[ \lambda_H: HH_*(A,A)/2\Z \to H_*(C_*(A,A)/2\Z \otimes C_*(A,A)/2\Z) [-2N+1] \]
Unlike what we had for field coefficients, the target of this map is not isomorphic to the tensor product of reduced homologies $HH_*(A,\Z)/2\Z\ \otimes\ HH_*(A,\Z)/2\Z$; recall the K\"unneth short exact sequence:
\[ \hspace{-1cm} \bigoplus_{a+b=k} \frac{HH_a(A,A)}{2\Z} \otimes \frac{HH_b(A,A)}{2\Z} \to H_k\left(\frac{C_*(A,A)}{2\Z} \otimes \frac{C_*(A,A)}{2\Z}\right) \to \bigoplus_{a+b=k-1} \Tor^\Z_1\left(\frac{HH_a(A,A)}{2\Z}, \frac{HH_b(A,A)}{2\Z}\right) \]
The third term is nonzero when the degrees are both even multiples of $(2N-1)$, say $a=2i(2N-1),b=2j(2N-1)$, in which case the Tor group contributes a $\Z/2$ summand; let us denote the corresponding 2-torsion class by $\gamma_{i,j}$, with degree $(2i+2j)(2N-1)+1$. We can choose the following representatives for these classes
\begin{align*}
    \gamma_{i,j} &:= t^{2i} \otimes t^{2j-1}[t] -t^{2i-1}[t] \otimes t^{2j}, \quad \text{for\ } i \ge 0, j \ge 1
\end{align*}
With this notation, the coproduct
\[\lambda: HH_*(A,A) \to H_{*-2N+1}\left(\frac{C_*(A,A)}{2\Z} \otimes \frac{C_*(A,A)}{2\Z}\right)\]
is given by
\begin{align*}
    \lambda(t^k) &= \sum_{i=1}^{k} (-1)^{i-1}\ t^{i-1} \otimes t^{k-i} & \text{any\ } k  \\
    \lambda(t^\ell[t]) &= -\sum_{i=1}^{\ell/2} t^{2i-1} \otimes t^{\ell-2i}[t]\ +\ \sum_{i=0}^{\ell/2-1} t^{2i}[t] \otimes t^{\ell-1-2i} + \sum_{i=0}^{\ell/2-1} \gamma_{i,\ell/2-i} & \ell \text{\ even}
\end{align*}
Therefore, from these calculations it becomes clear that the coproduct $\lambda_H$ `reaches' the $2$-torsion classes. The calculation of the product $\pi_H$ is identical to the one in \cref{sec:evenSphereFieldCharNot2}; therefore, as we pointed out in \cref{rem:moreInfo}, over $\Z$ the coproduct $\lambda_H$ has strictly more information than its dual product $\pi_H$.

\subsection{The 2-sphere}
The case of the 2-sphere is of particular interest; it is basically identical to the case of spheres of even dimension $2N \ge 4$, with the sole difference that here we have a nontrivial space of choices for the partial trivialization $H$, so the coproduct is not uniquely defined. The space of choices for $H$, that is, $C_0(A,A)\otimes C_1(A,A) \oplus C_1(A,A) \otimes C_0(A,A)$ is rank two; we write any $H$ as
\[ H = c_+ \cdot 1\otimes t + c_- \cdot t \otimes 1 \]
for scalars $c_+,c_- \in \kk$. The corresponding correction to the chain-level coproduct is only nontrivial on $\lambda(t^k[t])$, for even $k$:
\begin{align*}
    \lambda_H(t^k[t]) &= \sum_{i=1}^{k} (-1)^{i+1}(t^{i-1}\otimes t^{k-i}[t] + t^{i-1}[t]\otimes t^{k-i}) \\
    &+ c_+(t^k\otimes t + 1 \otimes t^{k+1}) + c_-(t^{k+1}\otimes 1 + t^k \otimes t)
\end{align*}
We see that $\lambda_H$ is cocommutative when $H$ is symmetric, as expected from \cref{thm:thm1intro}. 

Let us analyze the corresponding coproduct on $HH_*(A,\kk)/2\kk$ for different coefficients $\kk$. Over a field $\kk_{\neq 2}$ of characteristic $\neq 2$, the correction vanishes, since the classes represented by $t^\mathrm{even}$ are zero, so the coproduct $\lambda_H$ does not depend on $H$ and is always coassociative. Over $\Z$, the correction $c_+(t^k\otimes t + 1 \otimes t^{k+1}) + c_-(t^{k+1}\otimes 1 + t^k \otimes t)$ represents a 2-torsion class in $HH_*(A,\Z)/2\Z$, and is nontrivial for every $\ge 1$.

\subsubsection{Failure of coassocitivity over a field of characteristic 2} \label{subsection:failure}
The most interesting case is over a field $\kk_2$ of characteristic two: assuming $H$ symmetric, so we get a cocommutative coproduct, there are only two choices for $H$, namely $H_0=0$ and $H_1 = 1\otimes t + 1 \otimes t$. The coproducts only differ when evaluated on $t^k[t]$ for any $k \ge 1$:
\begin{align*}
    \lambda_{H_0}(t^k[t]) &= \sum_{i=1}^{k}( t^{i-1}\otimes t^{k-i}[t] + t^{i-1}[t]\otimes t^{k-i}) \\
    \lambda_{H_1}(t^k[t]) &= \sum_{i=1}^{k}( t^{i-1}\otimes t^{k-i}[t] + t^{i-1}[t]\otimes t^{k-i}) + t^k\otimes t + 1 \otimes t^{k+1} + t^{k+1}\otimes 1 + t^k \otimes t
\end{align*}
Both of these coproducts, together with the product $\pi$, satisfy Sullivan's relation, as expected from \cref{thm:sullivan}. We see that the coproduct $\lambda_{H_0}$ is coassociative, but the coproduct $\lambda_{H_1}$ is not. For example, evaluating on $t[t]$ we get
\begin{align*}
    \lambda_{H_1}(\lambda_{H_1}(t[t])') \otimes \lambda_{H_1}(t[t])' &= 1 \otimes t \otimes 1 + t \otimes 1 \otimes 1,\\
    \lambda_{H_1}(t[t])' \otimes \lambda_{H_1}(\lambda_{H_1}(t[t])'') &= 1\otimes 1\otimes t + 1\otimes t \otimes 1
\end{align*}
which represent different classes in $HH_*(A,\kk_2)$. As a corollary, we conclude that there is no automorphism of the coalgebra $HH_*(A,\kk_2)$ intertwining these coproducts, and they are genuinely different. Note that the failure of coassociativity does not contradict \cref{thm:thm1intro}, since we are not in the dimension range covered by that result.

\begin{remark}
    We believe that this discrepancy between the existence of two coproducts is somehow related to the two distinct BV algebra structures on $H_*(LS^2,\F_2)$ found by Menichi \cite{Men09}; see also \cite{poirier2023note} for a very recent perspective and explanation. In our language of CY structures, we think that the difference between the two BV algebras comes from the choice of different lifts to negative cyclic chains. Though we have not explored that circle action in this paper, we believe that there is some relation between the space of choices of trivialization $H$ and the space of negative cyclic lifts; the two different BV algebra structures would then be related to the two nonequivalent coproducts above. We leave these question for future investigations.
\end{remark}

\subsection{The circle}
Let us now describe the example of the circle, which resembles the example of higher-dimensional odd spheres, but is more complicated due to a large space of choices of trivializations. For now, let $\kk$ be any ring, and we take $A = \kk[t^{\pm 1}]$, with $\deg{t} = 0$, which is the homology algebra $H_*(\Omega S^1)$. Its Hochschild homology is the homology of free loop space
\[ HH_*(A,A) = \kk[\Z] \oplus \kk[\Z][1], \]
We pick representatives for the nonzero classes to be $t^k$, in degree zero, and $t^k[t]$ in degree $1$. As for Hochschild cohomology, we have classes $\del^k$ and $\sum_i i s_i \del^{i+k}$, in degrees zero and $-1$, respectively. All these classes are nonzero in (co)homology for any $k\in \Z$.

\subsubsection{The maps $G$ and $\widetilde\lambda_H$}
We can pick representatives for the bimodule $A^!$ exactly as in the case of \cref{sec:oddSpheres} (taking $n=0$) and the vertex $\co$ to be given by the same formula. On the other hand, $\eta$ has to be modified to account for the negative powers of $t$:
\[
    \eta([t^m],t^n;R^{k,\ell}) = -\chi_{1 \le m} \sum_{i=1}^{m} t^{\ell +i-1} \otimes t^{k+n+m-i} + \chi_{m\le -1} \sum_{i=m+1}^{0} t^{\ell+i-1} \otimes t^{k+n+m-i}
\]
We calculate that $E=0$, as expected. Calculating the map $G$ we now have
\begin{align*}
    G(\del_k) = \chi_{1 \le k} \sum_{i=1}^k &(t^{i-1} \otimes t^{k-i}[t] - t^{i-1}[t] \otimes t^{k-i} )- \chi_{k \le -1} \sum_{i=k+1}^0 (t^{i-1} \otimes t^{k-i}[t] - t^{i+1}[t] \otimes t^{k-i} )\\
    G(\sum_i i s_i \del^{i+k}) &= -\chi_{0 \le k} \sum_{i=1}^{k+1} t^{i-1} \otimes t^{k+1-i} + \chi_{k \le -2} \sum_{i=k+2}^0 t^{i-1} \otimes t^{k+1-i}
\end{align*}

Though $E =0$ already at the chain-level, we are free to pick a trivialization $H$ with $dH = 0$, that is, a closed element of degree $-1$ in $C_*(A,A)\otimes C_*(A,A)$. We write $H$ as
\[ H = \sum_{p,q \in \Z \oplus \Z} a_{p,q}t^p \otimes t^q[t] + b_{p,q} t^p[t] \otimes t^q \]
where only finitely many of the $a_{p,q},b_{p,q}$ are nonzero. 

\subsubsection{The CY structure and the chain-level coproduct}
There are multiple nonequivalent CY structures on the algebra $A$. Here we will use one that is of special importance for derived symplectic geometry; see \cite{bozec2023calabi,bozec2023calabimultiplicative} for a discussion of its relation to quasi-hamiltonian quotients. 

We pick the Hochschild chain $\omega=t^{-1}[t] \in C_1(A,A)$ which is non-degenerate; this induces a map $HH^*(A,A)\to HH_*(A,A)[-1]$ given on our representatives by
\[ \sum_i is_i\del^{i+k} \mapsto t^k, \quad \del^{k} \mapsto t^{k-1}[t] \]
for every $k \in \Z$. It turns out that the chain-level inverse $g_\alpha$, with $\alpha \in C^1_{(2)}(A)$ skewsymmetric (as required for a pre-CY structure) only exists if $2$ is a unit in $\kk$. For now, just using the homology-level inverse of the map above (which exists for any $\kk$) we can calculate the coproduct on homology, for each choice of $H$ as above:
\begin{align*}
    \lambda_H(t^k[t]) &= \chi_{0 \le k} \sum_{i=1}^{k+1} (t^{i-1} \otimes t^{k+1-i}[t] - t^{i-1}[t] \otimes t^{k+1-i} ) \\
	&- \chi_{k \le -2} \sum_{i=k+2}^0 (t^{i-1} \otimes t^{k+1-i}[t] - t^{i+1}[t] \otimes t^{k+1-i} )\\
    &+\sum_{p,q \in \Z \oplus \Z} a_{p,q}(t^p[t] \otimes t^{q+k+1} - t^p \otimes t^{q+k+1}[t]) - b_{p,q}(t^{p+k+1}[t]\otimes t^q - t^{p+k+1}\otimes t^q[t])  \\
    \lambda_H(t^k) &= \chi_{0 \le k} \sum_{i=1}^{k+1} t^{i-1} \otimes t^{k+1-i} - \chi_{k \le -2} \sum_{i=k+2}^0 t^{i-1} \otimes t^{k+1-i}\\
    &+\sum_{p,q \in \Z \oplus \Z} a_{p,q}t^p \otimes t^{q+k+1} - b_{p,q} t^{p+k+1}\otimes t^q\\
\end{align*}

Every monomial in the expressions above represents a class in $HH_*(A,A)$, so we can directly read the coproduct induced in homology. We reiterate that for any choice of $H$, the coproduct above, together with the product $\pi$, satisfies Sullivan's relation. As expected, if $H$ is skew-symmetric, meaning $a_{p,q} = -b_{q,p}$, then the coproduct is skew-commutative. Even if we restrict attention to skew-symmetric choices for $H$, there is still a very large family of nonequivalent coproducts, generically not coassociative, as expected since $n=1 <3$. We single out two choices of $a,b$ that lead to particularly nice coproducts.

If $H=0$, then the product obtained is moreover coassociative, and the pair $(\pi,\lambda_0)$ endows $HH_*(A,A)[-1]$ with the structure of an infinitesimal bialgebra. Comparing with the notation used in \cite[Subsection 8.3]{CieHinOan2}, we can identify $t^{k-1} \leftrightarrow AU^k, t^{k-1}[t] \leftrightarrow U^k$ (note the shift with respect to the case of higher-dimensional odd spheres) which gives the coproduct
\begin{align*}
    \lambda_0(U^k) &= \chi_{1 \le k} \sum_{i=1}^{k} (AU^i \otimes U^{k-i+1} \\
	&- U^{i} \otimes AU^{k-i+1})- \chi_{k \le -1} \sum_{i=k+1}^0 (AU^i \otimes U^{k-i+1} - U^i \otimes AU^{k-i+1} )\\
    \lambda_0(AU^k) &= \chi_{1 \le k} \sum_{i=1}^{k} AU^i \otimes AU^{k-i+1}- \chi_{k \le -1} \sum_{i=k+1}^0 AU^i \otimes AU^{k-i+1}\\
\end{align*}
We note the partial similarity of this coproduct with the coproducts on $H_*(LS^1)$ found using Floer-theoretic approaches in \cite{CieHinOan2}. More specifically, the coproduct $\lambda_0$ above is a version of the coproduct denoted $\lambda_{v_-,v_-}$ of Remark 8.8 in \cite{CieHinOan2} with a shift in $k$, that satisfies cocommutativity, coassociativity and Sullivan's relation.

Another particularly nice choice for $H$ is $a_{-1,0} = -b_{0,-1}=1$ with all other $a_{p,q},b_{p,q}$ zero; that is, $H = t^{-1}\otimes 1[t] - 1[t]\otimes t^{-1}$. Again using the notation $U^k,AU^k$, we get the coproduct 
\begin{align*}
    \lambda_H(U^k) &= \chi_{1 \le k} \sum_{i=0}^{k+1} (AU^i \otimes U^{k-i+1} - U^{i} \otimes AU^{k-i+1}) \\
	&- \chi_{k \le -1} \sum_{i=k+2}^{-1} (AU^i \otimes U^{k-i+1} - U^i \otimes AU^{k-i+1} )\\
    \lambda_H(AU^k) &= \chi_{1 \le k} \sum_{i=0}^{k+1} AU^i \otimes AU^{k-i+1}- \chi_{k \le -1} \sum_{i=k+2}^{-1} AU^i \otimes AU^{k-i+1}\\
\end{align*}
which is cocommutative, satisfies Sullivan's relation with respect to $\pi$, but is not coassociative. This coproduct again is a shifted version of the coproduct denoted $\lambda_{v_+,v_+}$ of \emph{op.cit.} 

\begin{remark}
    We do not really understand the geometric significance of the coproducts we calculated above; that would require an investigation about how the choice of $H$ is related to the choice of vector field that leads to all these different coproducts.
\end{remark}

\subsubsection{The pre-CY structure and the cone bimodule}
Let us work over a field $\kk_{\neq 2}$ of characteristic $\neq 2$. We give an explicit inverse $\alpha$ to $\omega$, defined by the formulas
\begin{align*}
    \alpha(t^k, \varnothing) &= \chi_{1\le k}\ \left(\frac{1}{2}(1\otimes t^k + t^k \otimes 1) -\sum_{1 \le i \le k-1}t^i \otimes t^{k-i}\right) \\
    &+ \chi_{k\le -1}\ \left(\frac{1}{2}(1\otimes t^k + t^k \otimes 1) + \sum_{k+1 \le i \le -1}t^i \otimes t^{k-i}\right)
\end{align*}
and $\alpha(\varnothing,t^k) = -\alpha(t^k, \varnothing)$, using the same notation from the proof of \cref{prop:inverseOddSphere}. One can check that this gives a closed element of $C^*_{(2)}(A)$, skewsymmetric and inverse to $\omega$. In order to extend this to a full pre-CY structure, we also need an element with three outputs and zero inputs; by an explicit calculation we verify:
\begin{proposition}
    Taking $m=\mu + \alpha + \tau$, where $\tau = \frac{1}{4}(1\otimes 1 \otimes 1)$, gives a pre-CY structure of dimension 1 on $A$.
\end{proposition}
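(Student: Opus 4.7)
The pre-CY equation $m \circ m = 0$ for $m = \mu + \alpha + \tau$, read modulo the $F^4$ truncation filtration, decomposes by number of outputs into three pieces:
(i) the $1$-output part $\mu \circ \mu = 0$, which is the associativity of $A = \kk[t^{\pm 1}]$;
(ii) the $2$-output part $[\mu,\alpha] = 0$, namely the closedness of $\alpha$ under the Gerstenhaber differential, which was verified in the preceding discussion (the parallel check in \cref{prop:inverseOddSphere} for odd spheres goes through with the appropriate signs since $\deg(t) = 0$);
(iii) the $3$-output part $\alpha \circ \alpha + [\mu,\tau] = 0$ in $C^*_{(3,1)}(A)$, which is the only genuinely new equation.

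The plan is to compute both terms in (iii) directly on each input configuration and check they cancel. Since $\alpha$ is nontrivial only when exactly one input is supplied (placed in one of its two arcs), and the result is itself a sum of terms in $A \otimes A$ of length one, $\alpha \circ \alpha$ is nontrivial only on configurations carrying at most two inputs distributed among the three arcs. By the $\Z/3$-equivariance of elements of $C^*_{(3,1)}(A)$, it suffices to check the equation on configurations with inputs placed in a fixed arc.

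I split $\alpha = \alpha^{\mathrm{bdry}} + \alpha^{\mathrm{bulk}}$ where $\alpha^{\mathrm{bdry}}(t^k,\varnothing) = \tfrac{1}{2}(1 \otimes t^k + t^k \otimes 1)$ and $\alpha^{\mathrm{bulk}}(t^k,\varnothing) = \mp \sum t^i \otimes t^{k-i}$ (with the sign depending on $\mathrm{sgn}(k)$), and similarly for the $(\varnothing,t^k)$-slot using $\alpha(\varnothing,-) = -\alpha(-,\varnothing)$. The necklace product $\alpha^{\mathrm{bulk}} \circ \alpha^{\mathrm{bulk}}$ vanishes by exactly the same telescoping cancellation as in the odd-sphere calculation following \cref{prop:inverseOddSphere}, so the residue of $\alpha \circ \alpha$ comes from the cross-terms $\alpha^{\mathrm{bulk}} \circ \alpha^{\mathrm{bdry}}$, $\alpha^{\mathrm{bdry}} \circ \alpha^{\mathrm{bulk}}$, and $\alpha^{\mathrm{bdry}} \circ \alpha^{\mathrm{bdry}}$. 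A direct expansion, using that $\alpha^{\mathrm{bdry}}$ inserts a unit on one side of its output tensor, shows that the bulk-boundary cross-terms telescope to leave only unit-normalized residues, and $\alpha^{\mathrm{bdry}} \circ \alpha^{\mathrm{bdry}}$ contributes residues with an overall factor of $\tfrac{1}{4}$.

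On the other side, $[\mu,\tau]$ with $\tau = \tfrac{1}{4}(1 \otimes 1 \otimes 1)$ a constant is computed by attaching $\mu^2$ at the corners adjacent to each of the three output legs of $\tau$. Since $\tau$ outputs $1$ in each slot and $A$ is strictly unital with $\mu^2(1,x) = \mu^2(x,1) = x$, only configurations with a single input on a given arc survive (all other configurations vanish by strict unitality in the normalized complex, and $\mu^{\geq 3} = 0$). Each of the three arcs contributes (up to sign determined by the $\Z/3$-twisted symmetrization and the reduced degree of the input) precisely a term of the form $\tfrac{1}{4}(1 \otimes t^k \otimes 1)$ and its cyclic rotations.

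The final step is checking, configuration by configuration, that the explicit residue of $\alpha \circ \alpha$ and the explicit value of $[\mu,\tau]$ cancel. The main obstacle here is bookkeeping: the $\Z/3$-equivariance combined with the skew-symmetry of $\alpha$ under its own $\Z/2$ action produces several a priori distinct terms that must be identified via the symmetrization, and the signs arising from shifting $t^k$ across the necklace must be tracked carefully. Because $\deg(t) = 0$ all Koszul signs are trivial and the verification reduces to a finite combinatorial identity among units: each of the six residue terms of $\alpha \circ \alpha$ (two bulk-boundary combinations on each of the three output slots, plus a single $\alpha^{\mathrm{bdry}} \circ \alpha^{\mathrm{bdry}}$ residue per slot) is matched by the corresponding attachment of $\mu^2$ in $[\mu,\tau]$, with the factor $\tfrac{1}{4}$ in $\tau$ chosen precisely to absorb the product $\tfrac{1}{2} \cdot \tfrac{1}{2}$ of the two boundary coefficients. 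This establishes (iii) and completes the proof.
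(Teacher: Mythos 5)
Your overall strategy is the natural one, and the paper itself offers nothing beyond ``by an explicit calculation we verify,'' so a correct direct verification of the three-output equation $\alpha\circ\alpha+[\mu,\tau]=0$, organized by splitting $\alpha$ into its boundary part $\tfrac12(1\otimes t^k+t^k\otimes 1)$ and its bulk part, would be perfectly acceptable. However, the central bookkeeping in your step (iii) is wrong. Every monomial of $[\mu,\tau]=\mu\circ\tau$ (note $\tau\circ\mu=0$ since $\tau$ has no inputs) has exactly one non-unit output $t^k$ and two unit outputs, with coefficient $\pm\tfrac14$: the only way $\mu^2$ can fire is with one unit output of $\tau$ in one slot and the single external input in the other. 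By contrast, a bulk--boundary cross term of $\alpha\circ\alpha$ has the shape $\pm\tfrac12\, t^i\otimes 1\otimes t^{k-i}$ with $1\le i\le k-1$: two non-unit outputs and coefficient $\tfrac12$. Such terms can never be ``matched by the corresponding attachment of $\mu^2$ in $[\mu,\tau]$,'' and there is no telescoping mechanism inside a single sum $\sum_i t^i\otimes 1\otimes t^{k-i}$ either. What actually happens is that the bulk--boundary cross terms cancel among themselves between the two gluings of $\alpha$ into $\alpha$ (just as the bulk--bulk terms do), and the only surviving residue of $\alpha\circ\alpha$ on an input $t^k$ is the boundary--boundary part, of the form $\pm\tfrac14(t^k\otimes1\otimes1)\mp\tfrac14(1\otimes1\otimes t^k)$ up to cyclic placement---exactly two terms per input placement, which is what $-[\mu,\tau]$ supplies. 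Your tally of ``six residue terms\dots two bulk-boundary combinations on each of the three output slots'' cannot be made to work, so the matching step as described would fail and must be redone with the correct cancellation pattern. (Relatedly, $\alpha\circ\alpha$ is supported on configurations with exactly one external input, not ``at most two'': if the outer $\alpha$ received an external input in addition to the glued one it would vanish, and the inner $\alpha$ vanishes on zero inputs.)

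There is also a smaller mismatch with the statement: the proposition asserts a full pre-CY structure, while you only verify $m\circ m=0$ modulo $F^4$. This is fixable in one line---the remaining components are $[\alpha,\tau]$ (four outputs) and $\tau\circ\tau$ (five outputs), and both vanish because $\tau$ has no inputs while its outputs are units, which are annihilated by the normalized cochain $\alpha$---but as written your argument does not prove the proposition as stated, so you should add this remark.
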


\subsubsection{Product on the cone}
We again denote $s_k$ for the element of $A^\vee$ dual to $t^k$, for $k \in \Z$; all these elements are of degree zero. We calculate the map $f_\alpha$:
\begin{align*}
    f_\alpha(s_k \otimes t^\ell) &= - \chi_{1\le \ell}\left(\frac{1}{2} \delta_k + \frac{1}{2} \delta_{k-\ell} + \chi_{1 \le k \le \ell-1}\right) t^{\ell-k} + \chi_{\ell \le -1}\left(\frac{1}{2} \delta_k t^\ell + \frac{1}{2} \delta_{k-\ell} + \chi_{\ell+1 \le k \le -1}  \right)t^{\ell-k} \\
    f_\alpha(t^\ell \otimes s_k) &= + \chi_{1\le \ell}\left(\frac{1}{2} \delta_k + \frac{1}{2} \delta_{k-\ell} + \chi_{1 \le k \le \ell-1}  \right) t^{\ell-k}- \chi_{\ell \le -1}\left(\frac{1}{2} \delta_k t^\ell+ \frac{1}{2} \delta_{k-\ell} +\chi_{\ell+1 \le k \le -1}  \right)t^{\ell-k}
\end{align*}
and the map induced on Hochschild chains is
\begin{align*}
    F_\alpha(s_k[t^{k_1}|\dots|t^{k_p}]) = &-\chi_{1 \le k_1} \left(\frac{1}{2} \delta_k + \frac{1}{2}\delta_{k-k_1} + \chi_{1\le k \le k_1-1} \right)t^{k_1-k}[t^{k_2}|\dots|t^{k_p}]\\
    &+\chi_{k_1 \le -1} \left(\frac{1}{2} \delta_k + \frac{1}{2}\delta_{k-k_1} + \chi_{k_1+1\le k \le -1} \right) t^{k_1-k}[t^{k_2}|\dots|t^{k_p}]\\
    &+(-1)^{p-1}\chi_{1 \le k_p} \left(\frac{1}{2} \delta_k + \frac{1}{2}\delta_{k-k_p} + \chi_{1\le k \le k_p-1} \right)t^{k_p-k}[t^{k_1}|\dots|t^{k_{p-1}}]\\
    &-(-1)^{p-1}\chi_{k_p \le -1} \left(\frac{1}{2} \delta_k + \frac{1}{2}\delta_{k-k_p} + \chi_{k_p+1\le k \le -1} \right) t^{k_p-k}[t^{k_1}|\dots|t^{k_{p-1}}]
\end{align*}

We know that the cohomology of the complex $C_*(A,A^\vee[1])$ is supported in degrees 0 and 1, with a partial basis \footnote{note that as a module each one of $HH_{0,1}(A,A^\vee[1])$ is the algebraic dual module $k[\Z]^\vee$} given by the classes $s_k$ and $s_k[t]$, and the cohomology of $C_*(A,A)$ is concentrated in homological degrees $1$ and 0; therefore the only nontrivial component of $F_\alpha$ on cohomology can be in degree zero, which we calculate to be zero. Therefore, again we know that there must be a homotopy between $F_\alpha$ and the zero map.

We follow the same steps as in the case of higher-dimensional odd spheres, and obtain an expression for the product $\pi$ on the cone $C_*(A,A^\vee) \oplus C_*(A,A)$. The expressions get rather complicated, so let us just write the terms we will need. Let us denote $p: C_*(A,A^\vee) \oplus C_*(A,A^\vee) \to C_*(A,A^\vee)$ for the connecting map from the cone.
\begin{align*}
    \pi(t^k[t],t^\ell) &= -t^{k+\ell+1}\\
    \pi(t^k[t],t^\ell[t]) &= t^{k+\ell+1}[t]\\
    p\pi(s_k[t],t^\ell) &= -s_{k-\ell-1}\\
    p\pi(s_k,t^\ell[t]) &= s_{k-\ell-1}\\
    p\pi(s_k[t],t^\ell[t]) &= s_{k-\ell-1}[t]
\end{align*}
The calculation with two inputs in $C_*(A,A^\vee)$ is rather involved, and many diagrams contribute. We present the result in table form:
\begin{align*}
    p\pi(s_k[t],s_\ell) &= \begin{array}{c||c|c|c|}
    1 \le \ell & 0 & -1/2 & -1 \\
    \hline
    \ell =0 & 1/2 & 0 & -1/2 \\
    \hline
    \ell \le -1 & 1 & 1/2 & 0 \\
    \hline \hline
     & k\le-1 & k=0 & 1 \le k 
    \end{array} \times s_{k+\ell-1} \\ \\
    p\pi(s_k[t],s_\ell[t]) &= \begin{array}{c||c|c|c|}
    1 \le \ell & 0 & 1/2 & 1 \\
    \hline
    \ell =0 & -1/2 & 0 & 1/2 \\
    \hline
    \ell \le -1 & -1 & -1/2 & 0 \\
    \hline \hline
     & k\le-1 & k=0 & 1 \le k 
    \end{array} \times s_{k+\ell-1}
\end{align*}

\subsubsection{Difference between homotopies and trivializations}\label{sec:differenceHomotopiesTriv}
We would like to use this example to illustrate the following fact: when $HH_*(A,A)$ is infinite-rank in some degree (as it is the case here), there are more homotopies $h$ of the morphism $F_\alpha$ than trivializations $H$ of the canonical element $E$, even if we know that the maps $\ ^\sharp E \circ g^{A^\vee}_\alpha$ and $F_\alpha$ are homotopic. In other words, not every homotopy $h$ of $F_\alpha$ is equivalent to one that comes as $ ^\sharp H$, that is, from pairing with some trivialization $H$ of $E$.

For example, the simplest homotopy that we can write down for the map $F_\alpha$ above is the map $h: C_*(A,A^\vee) \to C_*(A,A)$ given by
\[ h(s_k[t^{k_1}|\dots|t^{k_p}]) = \left(\frac{1}{2}\delta_k + \chi_{1\le k}\right)t^{-k}[t^{k_1}|\dots|t^{k_p}] \]
If we now compute the resulting product $\pi_h$ on $C_*(A,A^\vee)$, for instance on $(s_k[t],s_\ell)$, the extra terms $p\pi(h(s_k[t]),s_\ell)$ and $p\pi(s_k[t],h(s_\ell))$ add $1/2$ and $1$ to some rows and columns of the table above, and we get that
\[ \pi_h(s_k[t],s_\ell) = s_{k+\ell-1} \]
for all values of $k,\ell$. We see that this product $\pi_h$ on $HH_*(A,A^\vee)$ is not the dual to any coproduct on $HH_*(A,A)$: for any integer $j$, there are infinitely many $k,\ell$ for which $\pi_h(s_k[t],s_\ell) = s_j$, namely, all the pairs with $k+\ell = j+1$. This will be the generic behavior for a homotopy of the map $f_\alpha$.

\vfill
\pagebreak

\appendix
\addtocontents{toc}{\protect\setcounter{tocdepth}{1}}       %
\section{Explicit diagrams}
We left some of the more complicated combinations of diagrams out of the main body of the text for clarity.

\subsection{Conventions}
In order to simplify the notation, we will make some conventions. For some of the proofs, it is unnecessary to specify signs. For example, to prove a quasi-isomorphism between complexes given by the cones of two maps, it is enough to find a homotopy between those maps up to an overall sign. For the proofs where the signs are important, following the discussion in \cref{sec:orientedRibbon}, we can give a sign by
\begin{enumerate}
    \item giving a total ordering of the edges,
    \item giving a total ordering of the vertices where we are going to input the copies of $\alpha \in C^*_{(2)}(A)$.
    \item for each vertex with two or more outgoing arrows, specifying one of them to be the first arrow.
\end{enumerate}
We specify $1.$ by labeling the edges from $1$ to $N$ where $N$ is the number of edges, and always using the ordering $(e_N\ \dots\ e_1)$. We specify $2.$ by numbering the circles where each copy of $\alpha$ will be placed, and $3$ by a white arrowhead.

The statements in this paper are phrased for $A_\infty$-structures, but in all examples of our interest in this paper, we simply have dg structures (that is, an $A_\infty$-structure with $\mu^{\ge 3}=0$). For the sake of simplicity, when giving a combination of diagrams, we will omit terms that are zero for dg-structures. For example, we will omit homotopies such as
\[ \tikzfig{
\node (mid) [bullet] at (0,0) {};
\draw [->-] (-1,1) to node [auto,swap] {$1$} (mid);
\draw [->-] (0,1) to node [auto] {$2$} (mid);
\draw [->-] (1,1) to node [auto] {$3$} (mid);
\draw [->-] (mid) to node [auto] {$4$} (0,-1);
} \quad \overset{[d,-]}{\mapsto} \quad \tikzfig{
\node (left) [bullet] at (-0.5,0.2) {};
\node (right) [bullet] at (0.5,-0.2) {};
\draw [->-] (-1,1) to node [auto,swap] {$1$} (left);
\draw [->-] (0,1) to node [auto] {$2$} (left);
\draw [->-] (left) to node [auto,swap] {$5$} (right); 
\draw [->-] (1,1) to node [auto] {$3$} (right);
\draw [->-] (right) to node [auto] {$4$} (0.5,-1);
} + \tikzfig{
\node (left) [bullet] at (-0.5,-0.2) {};
\node (right) [bullet] at (0.5,+0.2) {};
\draw [->-] (-1,1) to node [auto,swap] {$1$} (left);
\draw [->-] (0,1) to node [auto,swap] {$2$} (right);
\draw [->-] (right) to node [auto] {$5$} (left); 
\draw [->-] (1,1) to node [auto] {$3$} (right);
\draw [->-] (left) to node [auto,swap] {$4$} (0.5,-1);
}\]
and instead just identify two diagrams that differ by something like the two terms on the right hand side with a minus sign. Similarly, we will omit diagrams where the evaluation vertex $\ev$ receives more than one factor of $A$.

\subsection{Diagrams for infinitesimal bialgebra relation}\label{app:ibl}
We start by giving an explicit expression for the homotopy realizing the compatibility between $G$ and the cup product, which appeared in the proof of \cref{lem:compatibilityWithCup}. Each of the following diagrams lives on the elbow (as in \cref{def:ChernCharacter}) but for ease of visualization we cut along the bottom of the elbow from one boundary component to the other. 
\begin{align*}
    + \quad &\tikzfig{
        \draw (-2,-2) rectangle (2,2);
        \node [vertex] (co) at (0,1) {$\co$};
        \node [rectangle,draw] (eta) at (0,-1) {$\eta$};
        \node [vertex] (phi) at (-1.4,-0.2) {$\varphi$};
        \node [vertex] (psi) at (-0.8,0.4) {$\psi$};
        \draw [->-,cyan] (co) to (0,2);
        \draw [->-,cyan] (0,-2) to (eta);
        \draw [->-] (co) to (eta);
        \draw [->-] (eta) to (2,-1);
        \draw [->-] (eta) to (-2,-1);
        \draw [->-] (phi) to (eta.north west);
        \draw [->-] (psi) to (eta.north west);
        \node at (0.3,0) {$1$};
        \node at (0.3,1.5) {$2$};
        \node at (-1,-0.7) {$3$};
        \node at (-0.8,-0.1) {$4$};
        \node at (-1,-1.3) {$5$};
        \node at (1,-1.3) {$6$};
    } \quad + \quad \tikzfig{
        \draw (-2,-2) rectangle (2,2);
        \node [vertex] (co) at (0,1) {$\co$};
        \node [rectangle,draw] (eta) at (0,-1) {$\eta$};
        \node [vertex] (phi) at (-1.2,0.2) {$\varphi$};
        \node [vertex] (psi) at (0,0) {$\psi$};
        \draw [->-,cyan] (co) to (0,2);
        \draw [->-,cyan] (0,-2) to (eta);
        \draw [->-] (co) to (psi);
        \draw [->-] (eta) to (2,-1);
        \draw [->-] (eta) to (-2,-1);
        \draw [->-] (phi) to (eta);
        \draw [->-] (psi) to (eta);
        \node at (0.3,0.5) {$1$};
        \node at (0.3,1.5) {$2$};
        \node at (-0.8,-0.6) {$3$};
        \node at (0.3,-0.5) {$4$};
        \node at (-1,-1.3) {$5$};
        \node at (1,-1.3) {$6$};
    } \quad + \quad \tikzfig{
        \draw (-2,-2) rectangle (2,2);
        \node [vertex] (co) at (0,1) {$\co$};
        \node [rectangle,draw] (eta) at (0,-1) {$\eta$};
        \node [vertex] (phi) at (-1.2,0.2) {$\varphi$};
        \node [vertex] (psi) at (1.2,0.2) {$\psi$};
        \draw [->-,cyan] (co) to (0,2);
        \draw [->-,cyan] (0,-2) to (eta);
        \draw [->-] (co) to (eta);
        \draw [->-] (eta) to (2,-1);
        \draw [->-] (eta) to (-2,-1);
        \draw [->-] (phi) to (eta);
        \draw [->-] (psi) to (eta);
        \node at (0.3,0) {$1$};
        \node at (0.3,1.5) {$2$};
        \node at (-0.8,-0.6) {$3$};
        \node at (0.8,-0.6) {$4$};
        \node at (-1,-1.3) {$5$};
        \node at (1,-1.3) {$6$};
    }
\end{align*}
\begin{align*}
    + \quad &\tikzfig{
        \draw (-2,-2) rectangle (2,2);
        \node [vertex] (co) at (0,1) {$\co$};
        \node [rectangle,draw] (eta) at (0,-1) {$\eta$};
        \node [vertex] (psi) at (1.2,0.2) {$\psi$};
        \node [vertex] (phi) at (0,0) {$\varphi$};
        \draw [->-,cyan] (co) to (0,2);
        \draw [->-,cyan] (0,-2) to (eta);
        \draw [->-] (co) to (phi);
        \draw [->-] (eta) to (2,-1);
        \draw [->-] (eta) to (-2,-1);
        \draw [->-] (phi) to (eta);
        \draw [->-] (psi) to (eta);
        \node at (-0.3,0.5) {$1$};
        \node at (-0.3,1.5) {$2$};
        \node at (0.8,-0.6) {$4$};
        \node at (-0.3,-0.5) {$3$};
        \node at (-1,-1.3) {$5$};
        \node at (1,-1.3) {$6$};
    } \quad + \quad \tikzfig{
        \draw (-2,-2) rectangle (2,2);
        \node [vertex] (co) at (0,1) {$\co$};
        \node [rectangle,draw] (eta) at (0,-1) {$\eta$};
        \node [vertex] (psi) at (1.4,-0.2) {$\psi$};
        \node [vertex] (phi) at (0.8,0.4) {$\varphi$};
        \draw [->-,cyan] (co) to (0,2);
        \draw [->-,cyan] (0,-2) to (eta);
        \draw [->-] (co) to (eta);
        \draw [->-] (eta) to (2,-1);
        \draw [->-] (eta) to (-2,-1);
        \draw [->-] (phi) to (eta.north east);
        \draw [->-] (psi) to (eta.north east);
        \node at (-0.3,0) {$1$};
        \node at (-0.3,1.5) {$2$};
        \node at (1,-0.7) {$4$};
        \node at (0.8,-0.1) {$3$};
        \node at (-1,-1.3) {$5$};
        \node at (1,-1.3) {$6$};
    } \quad + \quad \tikzfig{
        \draw (-2,-2) rectangle (2,2);
        \node [vertex] (co) at (0,1) {$\co$};
        \node [rectangle,draw] (eta) at (0,-1) {$\eta$};
        \node [vertex] (phi) at (-1.2,0.2) {$\varphi$};
        \node [vertex] (psi) at (1,-1) {$\psi$};
        \draw [->-,cyan] (co) to (0,2);
        \draw [->-,cyan] (0,-2) to (eta);
        \draw [->-] (co) to (eta);
        \draw [->-] (psi) to (2,-1);
        \draw [->-] (eta) to (-2,-1);
        \draw [->-] (phi) to (eta);
        \draw [->-] (eta) to (psi);
        \node at (0.3,0) {$1$};
        \node at (0.3,1.5) {$2$};
        \node at (-0.8,-0.6) {$3$};
        \node at (0.5,-1.3) {$4$};
        \node at (-1,-1.3) {$5$};
        \node at (1.5,-1.3) {$6$};
    } \\
    + \quad &\tikzfig{
        \draw (-2,-2) rectangle (2,2);
        \node [vertex] (co) at (0,1) {$\co$};
        \node [rectangle,draw] (eta) at (0,-1) {$\eta$};
        \node [vertex] (phi) at (0,0) {$\varphi$};
        \node [vertex] (psi) at (1,-1) {$\psi$};
        \draw [->-,cyan] (co) to (0,2);
        \draw [->-,cyan] (0,-2) to (eta);
        \draw [->-] (co) to (phi);
        \draw [->-] (psi) to (2,-1);
        \draw [->-] (eta) to (-2,-1);
        \draw [->-] (phi) to (eta);
        \draw [->-] (eta) to (psi);
        \node at (-0.3,0.5) {$1$};
        \node at (-0.3,1.5) {$2$};
        \node at (-0.3,-0.5) {$3$};
        \node at (0.5,-1.3) {$4$};
        \node at (-1,-1.3) {$5$};
        \node at (1.5,-1.3) {$6$};
    } \quad + \quad \tikzfig{
        \draw (-2,-2) rectangle (2,2);
        \node [vertex] (co) at (0,1) {$\co$};
        \node [rectangle,draw] (eta) at (0,-1) {$\eta$};
        \node [vertex] (phi) at (1.2,0.2) {$\varphi$};
        \node [vertex] (psi) at (1,-1) {$\psi$};
        \draw [->-,cyan] (co) to (0,2);
        \draw [->-,cyan] (0,-2) to (eta);
        \draw [->-] (co) to (eta);
        \draw [->-] (psi) to (2,-1);
        \draw [->-] (eta) to (-2,-1);
        \draw [->-] (phi) to (eta);
        \draw [->-] (eta) to (psi);
        \node at (-0.3,0) {$1$};
        \node at (-0.3,1.5) {$2$};
        \node at (0.5,-0.1) {$3$};
        \node at (0.5,-1.3) {$4$};
        \node at (-1,-1.3) {$5$};
        \node at (1.5,-1.3) {$6$};
    } \quad + \quad \tikzfig{
        \draw (-2,-2) rectangle (2,2);
        \node [vertex] (co) at (-0.7,1) {$\co$};
        \node [rectangle,draw] (eta) at (-0.7,-1) {$\eta$};
        \node [vertex] (phi) at (0.4,-1) {$\varphi$};
        \node [vertex] (psi) at (1.2,-1) {$\psi$};
        \draw [->-,cyan] (co) to (-0.7,2);
        \draw [->-,cyan] (-0.7,-2) to (eta);
        \draw [->-] (co) to (eta);
        \draw [->-] (psi) to (2,-1);
        \draw [->-] (eta) to (-2,-1);
        \draw [->-] (eta) to (phi);
        \draw [->-] (phi) to (psi);
        \node at (-1,0) {$1$};
        \node at (-1,1.5) {$2$};
        \node at (0,-1.3) {$3$};
        \node at (0.8,-1.3) {$4$};
        \node at (-1.6,-1.3) {$5$};
        \node at (1.7,-1.3) {$6$};
    } \\
    + \quad &\tikzfig{
        \draw (-2,-2) rectangle (2,2);
        \node [vertex] (co) at (-0.7,1.1) {$\co$};
        \node [rectangle,draw] (eta) at (-0.7,0) {$\eta$};
        \node [vertex] (phi) at (0.6,-1) {$\varphi$};
        \node [vertex] (psi) at (0.6,0) {$\psi$};
        \draw [->-,cyan] (co) to (-0.7,2);
        \draw [->-,cyan] (-0.7,-2) to (eta);
        \draw [->-] (co) to (eta);
        \draw [->-] (psi) to (2,0);
        \draw [->-] (eta) to (-2,0);
        \draw [->-] (eta) to (psi);
        \draw [->-] (phi) to (psi);
        \node at (-1,0.5) {$1$};
        \node at (-1,1.5) {$2$};
        \node at (0,0.3) {$3$};
        \node at (0.8,-0.5) {$4$};
        \node at (-1.3,-0.3) {$5$};
        \node at (1.3,0.3) {$6$};
    }
\end{align*}

\subsection{Diagrams related to $\Ahatinfty$}
Here we give two homotopies that were omitted from \cref{sec:catFormal}. For both of these cases, we can work without specifying signs, because the relevant statements, quasi-isomorphisms of cones, is independent of an overall sign, and also because the differential of each diagram will have exactly two terms, so one can always arrange the signs so that the intermediate terms cancel.

\subsubsection{Homotopy relating two models for $\Ahatinfty$}\label{app:AhatinftyBimodules}
The following homotopy gives the proof of \cref{prop:isoOfBimodules}:
\begin{equation*}
    \pm \quad \tikzfig{
        \node [vertex,inner sep=4pt] (top) at (-1,2) {};
        \node [vertex] (ev) at (0,1) {$\ev$};
        \node [rectangle,draw,thick] (eta) at (0,0) {$\eta$};
        \node [vertex] (coev) at (-1,0) {$\co$};
        \node [bullet] (left) at (-2,0) {};
        \node [bullet] (down) at (-1,-1.5) {};
        \node (down2) at (-1,-2.5) {};
        \draw [->-,red,bend left=40] (top) to (ev);
        \draw [->-,bend right=40] (top) to (-2,1);
        \draw [->-] (-2,1) to (left);
        \draw [->-,bend right=45] (left) to (down);
        \draw [->-,bend left=45] (eta) to (down);
        \draw [->-] (down) to (down2);
        \draw (1,2) to (1,1);
        \draw [->-,bend left=45] (1,1) to (eta);
        \draw [->-,cyan] (coev) to (eta);
        \draw [->-] (coev) to (left);
        \draw [->-] (eta) to (ev);
        \draw [->-] (1,-2.5) to (eta.south east);
} \quad \pm \quad \tikzfig{
        \node [vertex,inner sep=4pt] (top) at (-1,2) {};
        \node [vertex] (ev) at (0,1) {$\ev$};
        \node [rectangle,draw] (eta) at (0,0) {$\eta$};
        \node [vertex] (coev) at (-1,0) {$\co$};
        \node [bullet] (left) at (-2,0) {};
        \node [bullet] (down) at (-1,-1.5) {};
        \node (down2) at (-1,-2.5) {};
        \draw [->-,red,bend left=40] (top) to (ev);
        \draw [->-,bend right=40] (top) to (-2,1);
        \draw [->-] (-2,1) to (left);
        \draw [->-,bend right=45] (left) to (down);
        \draw [->-,bend left=45] (eta) to (down);
        \draw [->-] (down) to (down2);
        \draw [->-] (1,2) to (eta.north east);
        \draw [->-,cyan] (coev) to (eta);
        \draw [->-] (coev) to (left);
        \draw [->-] (eta) to (ev);
        \draw (1,-2.5) to (1,-1);
        \draw [->-,bend right=45] (1,-1) to (eta);
} \quad \pm \quad \tikzfig{
        \node [vertex,inner sep=4pt] (top) at (-1,2) {};
        \node [vertex] (ev) at (0,2) {$\ev$};
        \node [rectangle,draw,thick] (eta) at (0,0) {$\eta$};
        \node [vertex] (coev) at (-1,0) {$\co$};
        \node [bullet] (left) at (-2,0) {};
        \node [bullet] (down) at (-1,-1.5) {};
        \node (down2) at (-1,-2.5) {};
        \draw [->-,red] (top) to (ev);
        \draw [->-,bend right=40] (top) to (-2,1);
        \draw [->-] (-2,1) to (left);
        \draw [->-,bend right=45] (left) to (down);
        \draw [->-,bend left=45] (eta) to (down);
        \draw [->-] (down) to (down2);
        \draw [->-] (1,2) to (ev);
        \draw [->-,cyan] (coev) to (eta);
        \draw [->-] (coev) to (left);
        \draw [->-,rounded corners=5] (eta) |- ++(-0.5,1) -| (top);
        \draw (1,-2.5) to (1,-1);
        \draw [->-,bend right=45] (1,-1) to (eta);
} \quad \pm  \quad \tikzfig{
        \node [vertex,inner sep=4pt] (top) at (-1,2) {};
        \node [vertex] (ev) at (0,2) {$\ev$};
        \node [vertex] (beta) at (0,0) {$\beta$};
        \node [bullet] (mid) at (-1,0) {};
        \node (down2) at (-1,-2.5) {};
        \draw [->-,red] (top) to (ev);
        \draw [->-] (1,2) to (ev);
        \draw [->-] (beta) to (mid);
        \draw [->-] (top) to (mid);
        \draw [->-] (mid) to (down2);
        \draw (1,-2.5) to (1,-1);
        \draw [->-,bend right=45] (1,-1) to (beta);
}
\end{equation*}

\subsubsection{Homotopy relating $E$ to map whose cone is $C_*(A,\Ahatinfty)$}\label{app:relationToE}
The following homotopy gives the proof of \cref{thm:EulerCharacter}, relating our chain-level Chern character and the map $C^*(A,A^\vee) \to C_*(A,A)$ coming from applying Hochschild chains to the map of bimodules defining $\Ahatinfty$:
\begin{equation}
\pm \quad \tikzfig{
\node [vertex,inner sep=4pt] (top) at (-1,2) {};
\node [vertex] (ev) at (0,1) {$\ev$};
\node [rectangle,draw,thick] (eta) at (0,0) {$\eta$};
\node [vertex] (coev) at (-1,0) {$\co$};
\node [bullet] (left) at (-2,0) {};
\node [circ] (down) at (-1,-1.5) {};
\draw [->-,red,bend left=40] (top) to (ev);
\draw [->-,bend right=40] (top) to (-2,1);
\draw [->-] (-2,1) to (left);
\draw [->-,bend right=45] (left) to (down);
\draw [->-,bend left=45] (eta) to (down);
\draw [->-] (1,2) to (1,1);
\draw [bend left=45] (1,1) to (eta);
\draw [->-,cyan] (coev) to (eta);
\draw [->-] (coev) to (left);
\draw [->-] (eta) to (ev);
\draw (top) arc (180:0:1);
\node [vertex,inner sep=4pt,fill=white] at (-1,2) {};
} \quad 
\pm \quad \tikzfig{
\node [vertex,inner sep=4pt](in) at (0,2) {};
\node [vertex] (ev) at (0,1) {$\ev$};
\node [rectangle,draw] (eta) at (0,0) {$\eta$};
\node [vertex] (coev) at (-1,2) {$\co$};
\node [bullet] (top) at (0,3) {};
\node [circ] (down) at (0,-1) {};
\draw [->-,red] (in) to (ev);
\draw [->-] (in) to (top);
\draw (top) -- (0.5,3);
\draw [bend left=45] (0.5,3) to (1.5,2);
\draw [->-] (1.5,2) to (1.5,0.3);
\draw [bend left=45] (1.5,0.3) to (eta.south east);
\draw [->-] (eta) to (down);
\draw [->-,bend left=45] (1,1) to (eta);
\draw [cyan] (coev) -- (-1,1);
\draw [->-,cyan,bend right=40] (-1,1) to (eta);
\draw [->-] (eta) to (ev);
\draw [->-,bend left=45] (coev) to (top);
\draw [bend left=40] (in) to (1,1);
} \quad \pm \quad \tikzfig{
\node [vertex,inner sep=4pt](in) at (0,2) {};
\node [vertex] (ev) at (0,1) {$\ev$};
\node [rectangle,draw,thick] (eta) at (0,0) {$\eta$};
\node [vertex] (coev) at (-1,2) {$\co$};
\node [bullet] (top) at (0,3) {};
\node [circ] (down) at (0,-1) {};
\draw [->-,red] (in) to (ev);
\draw [->-] (in) to (top);
\draw (top) -- (0.5,3);
\draw [bend left=45] (0.5,3) to (1.5,2);
\draw [->-] (1.5,2) to (1.5,1.5);
\draw [bend left=45] (1.5,1.5) to (eta.east);
\draw [->-] (eta) to (down);
\draw [cyan] (coev) -- (-1,1);
\draw [->-,cyan,bend right=40] (-1,1) to (eta);
\draw [->-] (eta) to (ev);
\draw [->-,bend left=45] (coev) to (top);
\draw [->-,bend left=50] (in) to (eta.north east);
} \quad \pm \quad \tikzfig{
\node [bullet] (mu1) at (0,0) {};
\node [vertex] (gamma) at (0,1.6) {};
\node [vertex,fill=white] (co) at (0,2.5) {$\co$};
\node [vertex] (ev) at (0,-1) {$\ev$};
\node [rectangle,draw,thick] (eta) at (0,-2) {$\eta$};
\node [circ] (out) at (0,-3) {};
\draw [cyan, bend right=40] (co) to (-1.5,1.5);
\draw [->-,cyan] (-1.5,1.5) to (-1.5,-1);
\draw [cyan,bend right=45] (-1.5,-1) to (eta);
\draw [bend left=40] (co) to (1.5,1.5);
\draw [->-] (1.5,1.5) to (1.5,-1);
\draw [bend left=45] (1.5,-1) to (eta);
\draw [->-,red] (gamma) arc (90:270:0.8);
\draw [->-] (gamma) to (mu1);
\draw [->-,bend left=45] (gamma) to (eta.north east);
\draw [->-] (eta) to (ev);
\draw [->-] (eta) to (out);
\draw [->-,red] (mu1) to (ev);
\node [vertex,fill=white,inner sep=4pt] (gamma) at (0,1.6) {};
}
\end{equation}

\subsection{Diagrams for the cone bimodule $M$}
Here we give some homotopies involving the cone bimodule $M$, which as in the text we denote with orange arrows. For some of these calculations we will need to be careful with signs, so we will include the orientation data.

\subsubsection{Homotopy for associativity of product on cone}\label{app:associativityCone}
Let us make explicit the homotopy proving \cref{prop:associativityProductCone}, that is, realizing the associativity in homology of the product $\pi_M$ on $C_*(A,M)$. For this calculation we will need to be careful with signs. The expressions $\pi_M(x_1,\pi_M(x_2,x_3))$ and $\pi_M(x_1,\pi_M(x_2,x_3))$ are given respectively by evaluating the diagrams
\[\tikzfig{
	\node (v1) at (-1.5,0) {$x_1$};
	\node [bullet,orange] (v4) at (0.75,0) {};
	\node (v2) at (0,0) {$x_2$};
	\node [bullet,orange] (v5) at (0,-0.75) {};
	\node [bullet,orange] (v6) at (0,-1.5) {};
	\node [bullet,orange] (v7) at (-0.75,0) {};
	\node [bullet,orange] (v9) at (0,-3) {};
	\node [bullet,orange] (v10) at (-0.75,-2.25) {};
	\node [circ,orange,fill=white] (v11) at (-1.5,-2.25) {};
	\node (v12) at (0,-2.25) {$x_3$};
	\draw [->-,shorten <=-3.5pt,orange] (v1) to (v7);
	\draw [->-,shorten <=-3.5pt,orange] (v2) to (v4);
	\draw [->-] (0,0.75) arc (90:0:0.75);
	\draw [->-,orange] (0.75,0) arc (0:-90:0.75);
	\draw [-w-] (0,0.75) arc (90:180:0.75);
	\draw [->-,orange] (-0.75,0) arc (180:270:0.75);
	\draw [->-,orange] (v5) to (v6);
	\draw [->-,shorten <=-3.5pt,orange] (v12) to (v9);
	\draw [-w-] (0.75,-2.25) arc (0:90:0.75);
	\draw [->-,orange] (0,-1.5) arc (90:180:0.75);
	\draw [->-] (0.75,-2.25) arc (0:-90:0.75);
	\draw [->-,orange] (0,-3) arc (-90:-180:0.75);
	\draw [->-,orange] (v10) to (v11);
	\node [vertex,fill=white] (v3) at (0,0.75) {$1$};
	\node [vertex,fill=white] (v8) at (0.75,-2.25) {$2$};
	\node at (-0.8,0.8) {$1$};
	\node at (0.8,0.8) {$2$};
	\node at (-1,-0.3) {$3$};
	\node at (0.5,-0.3) {$4$};
	\node at (-0.8,-0.8) {$5$};
	\node at (0.8,-0.8) {$6$};
	\node at (-0.3,-1.25) {$7$};
	\node at (0.8,-1.45) {$8$};
	\node at (0.8,-3.05) {$9$};
	\node at (0.3,-2.65) {$10$};
	\node at (-0.8,-1.45) {$11$};
	\node at (-0.8,-3.05) {$12$};
	\node at (-1,-2.55) {$13$};
}\qquad \text{and} \qquad - \tikzfig{
	\draw [->-] (0,1.5) arc (90:0:1.5);
	\draw [->-,orange] (1.5,0) arc (0:-90:1.5);
	\draw [-w-] (0,1.5) arc (90:180:1.5);
	\draw [->-,orange] (-1.5,0) arc (180:270:1.5);
	\draw [->-] (-0.7,0.3) arc (180:90:0.7);
	\draw [-w-] (-0.7,0.3) arc (180:270:0.7);
	\draw [->-,orange] (0,1) arc (90:0:0.7);
	\draw [->-,orange] (0,-0.4) arc (-90:0:0.7);
	\node (v1) at (-2.4,0) {$x_1$};
	\node [bullet,orange] (v2) at (0,1) {};
	\node [vertex,fill=white] (v3) at (-0.7,0.3) {$2$};
	\node (v4) at (0,0.2) {$x_3$};
	\node [bullet,orange] (v5) at (0.7,0.3) {};
	\node [bullet,orange] (v6) at (0,-0.4) {};
	\node (v7) at (0,-1.1) {$x_2$};
	\node [bullet,orange] (v8) at (1.5,0) {};
	\node [vertex,fill=white] (v9) at (0,1.5) {$1$};
	\node [bullet,orange] (v10) at (0,-1.5) {};
	\node [circ,orange,fill=white] (v11) at (0,-2.2) {};
	\node [bullet,orange] (v12) at (-1.5,0) {};
	\draw [->-,orange] (v1) to (v12);
	\draw [->-,orange] (v5) to (v8);
	\draw [->-,orange] (v4) to (v2);
	\draw [->-,orange] (v7) to (v6);
	\draw [->-,orange] (v10) to (v11);
	\node at (-0.8,0.8) {$1$};
	\node at (0.8,0.8) {$6$};
	\node at (-0.2,-0.6) {$3$};
	\node at (0.2,0.6) {$4$};
	\node at (-0.7,-0.4) {$2$};
	\node at (0.7,-0.4) {$5$};
	\node at (1,-0.1) {$7$};
	\node at (-1.4,1.4) {$8$};
	\node at (1.4,1.4) {$9$};
	\node at (-1.8,-0.3) {$10$};
	\node at (-1.4,-1.4) {$11$};
	\node at (1.4,-1.4) {$12$};
	\node at (-0.3,-2) {$13$};
}\]
on $\alpha \otimes \alpha$; note the minus sign on the latter diagram coming from the composition rules. The homotopy between these maps $C^*_{(2)}(A)^{\otimes 2} \to C_*(A,A)$ is simply given by
\[\tikzfig{
	\node (v1) at (-1.5,0) {$x_1$};
	\node [bullet,orange] (v4) at (0.75,0) {};
	\node (v2) at (0,0) {$x_2$};
	\node [bullet,orange] (v6) at (0,-1.5) {};
	\node [bullet,orange] (v7) at (-0.75,0) {};
	\node [bullet,orange] (v9) at (0,-3) {};
	\node [bullet,orange] (v10) at (-0.75,-2.25) {};
	\node [circ,orange,fill=white] (v11) at (-1.5,-2.25) {};
	\node (v12) at (0,-2.25) {$x_3$};
	\draw [->-,shorten <=-3.5pt,orange] (v1) to (v7);
	\draw [->-,shorten <=-3.5pt,orange] (v2) to (v4);
	\draw [->-] (0,0.75) arc (90:0:0.75);
	\draw [->-] (0,0.75) arc (90:180:0.75);
	\draw [->-,orange] (v7) to (v6);
	\draw [->-,shorten <=-3.5pt,orange] (v12) to (v9);
	\draw [-w-,orange] (0.75,-2.25) arc (0:90:0.75);
	\draw [->-,orange] (0,-1.5) arc (90:180:0.75);
	\draw [->-,orange] (0.75,-2.25) arc (0:-90:0.75);
	\draw [->-,orange] (0,-3) arc (-90:-180:0.75);
	\draw [->-,orange] (v10) to (v11);
	\node [vertex,fill=white] (v3) at (0,0.75) {I};
	\node [vertex,fill=white] (v8) at (0.75,-2.25) {II};
	\draw [->-, orange,bend left=45] (v4) to (v8);
	\node at (-0.8,0.8) {$1$};
	\node at (0.8,0.8) {$2$};
	\node at (-1,-0.3) {$3$};
	\node at (0.5,-0.3) {$4$};
	\node at (-0.8,-0.8) {$5$};
	\node at (0.8,-0.8) {$6$};
	\node at (0.8,-1.45) {$7$};
	\node at (0.8,-3.05) {$9$};
	\node at (0.3,-2.65) {$10$};
	\node at (-0.8,-1.45) {$11$};
	\node at (-0.8,-3.05) {$12$};
	\node at (-1,-2.55) {$13$};
}\]
together with the diagrams involving only the $\mu_M$ vertices, that following our conventions we leave implicit. Note that we input the $\psi$ map described in \cref{prop:extension} into the vertex with one incoming factor and two outgoing factors of $M$. Comparing signs we get the desired associativity result.

\subsubsection{Compatibility to cup product on $C^*(A,\Ahatinfty)$}\label{app:cupProductCompatibility}
Now we give the two homotopies proving the claims of \cref{prop:cupProductCompatibility}. For the homotopy $ g^M_\alpha(\pi_M(-,-)) \simeq g^M_\alpha(-) \underset{M}{\smile} g^M_\alpha(-)$, we write each side respectively as evaluations of the diagrams
\[  \tikzfig{
	\draw [->-] (0,1.5) arc (90:0:1.5);
	\draw [->-,orange] (1.5,0) arc (0:-90:1.5);
	\draw [-w-] (0,1.5) arc (90:270:1.5);
	\draw [->-] (-0.7,0.3) arc (180:90:0.7);
	\draw [-w-] (-0.7,0.3) arc (180:270:0.7);
	\draw [->-,orange] (0,1) arc (90:0:0.7);
	\draw [->-,orange] (0,-0.4) arc (-90:0:0.7);
	\node [bullet,orange] (v2) at (0,1) {};
	\node [vertex,fill=white] (v3) at (-0.7,0.3) {II};
	\node (v4) at (0,0.2) {$x_2$};
	\node [bullet,orange] (v5) at (0.7,0.3) {};
	\node [bullet,orange] (v6) at (0,-0.4) {};
	\node (v7) at (0,-1.1) {$x_1$};
	\node [bullet,orange] (v8) at (1.5,0) {};
	\node [vertex,fill=white] (v9) at (0,1.5) {I};
	\node [bullet,orange] (v10) at (0,-1.5) {};
	\node [circ,orange,fill=white] (v11) at (0,-2.2) {};
	\draw [->-,orange] (v5) to (v8);
	\draw [->-,orange] (v4) to (v2);
	\draw [->-,orange] (v7) to (v6);
	\draw [->-,orange] (v10) to (v11);
	\node at (-0.8,0.8) {$1$};
	\node at (0.8,0.8) {$6$};
	\node at (-0.2,-0.6) {$3$};
	\node at (0.2,0.6) {$4$};
	\node at (-0.7,-0.4) {$2$};
	\node at (0.7,-0.4) {$5$};
	\node at (1,-0.1) {$7$};
	\node at (-2,0) {$8$};
	\node at (1.4,1.4) {$9$};
	\node at (1.4,-1.4) {$10$};
	\node at (-0.3,-2) {$11$};
} \qquad \text{and} \qquad \tikzfig{
\node [bullet,orange] (v1) at (0.75,0) {};
\node (v2) at (0,0) {$x_2$};
\node [bullet,orange] (v3) at (0,-0.75) {};
\draw [->-,shorten <=-3.5pt,orange] (v2) to (v1);
\draw [->-] (0,0.75) arc (90:0:0.75);
\draw [->-,orange] (0.75,0) arc (0:-90:0.75);
\draw [-w-] (0,0.75) arc (90:270:0.75);
\node [vertex,fill=white] (v4) at (0,0.75) {II};
\node at (-1,0) {$1$};
\node at (0.8,0.8) {$2$};
\node at (0.5,0.3) {$3$};
\node at (0.8,-0.6) {$4$};
\node [bullet,orange] (v5) at (-2,0) {};
\node (v6) at (-2.75,0) {$x_2$};
\node [bullet,orange] (v7) at (-2.75,-0.75) {};
\draw [->-,shorten <=-3.5pt,orange] (v6) to (v5);
\draw [->-] (-2.75,0.75) arc (90:0:0.75);
\draw [->-,orange] (-2,0) arc (0:-90:0.75);
\draw [-w-] (-2.75,0.75) arc (90:270:0.75);
\node [vertex,fill=white] (v8) at (-2.75,0.75) {I};
\node at (-4,0) {$6$};
\node at (-1.95,0.8) {$7$};
\node at (-2.4,0.3) {$8$};
\node [bullet,orange] (mid) at (-1.375,-1.5) {};
\draw [->-,orange,bend left=30] (v3) to (mid);
\draw [->-,orange,bend right=30] (v7) to (mid);
\node [circ,orange,fill=white] (out) at (-1.375,-2.2) {};
\draw [->-,orange] (mid) to (out);
\node at (-0.5,-1) {$5$};
\node at (-2,-0.6) {$9$};
\node at (-2.8,-1.3) {$10$};
\node at (-1.7,-1.8) {$11$};
} \]
Again using our convention and omitting some of the terms, the homotopy between these maps is given by the diagram
\[\tikzfig{
	\draw [->-] (-0.7,0.3) arc (180:90:0.7);
	\draw [-w-] (-0.7,0.3) arc (180:270:0.7);
	\draw [->-,orange] (0,1) arc (90:0:0.7);
	\draw [->-,orange] (0,-0.4) arc (-90:0:0.7);
	\node [bullet,orange] (v2) at (0,1) {};
	\node [vertex,fill=white] (v3) at (-0.7,0.3) {II};
	\node (v4) at (0,0.2) {$x_2$};
	\node [bullet,orange] (v5) at (0.7,0.3) {};
	\node [bullet,orange] (v6) at (0,-0.4) {};
	\node (v7) at (0,-1.1) {$x_1$};
	\node [vertex,fill=white] (v9) at (-1.5,-0.75) {I};
	\node [bullet,orange] (v10) at (0,-1.5) {};
	\node [circ,orange,fill=white] (v11) at (0,-2.2) {};
	\draw [->-,orange] (v5) arc (60:-108:0.98);
	\draw [->-,bend left=30] (v9) to (v3);
	\draw [->-,bend right=30] (v9) to (v10);
	\draw [->-,orange] (v4) to (v2);
	\draw [->-,orange] (v7) to (v6);
	\draw [->-,orange] (v10) to (v11);
	\node at (-0.8,0.8) {$1$};
	\node at (0.8,0.8) {$6$};
	\node at (-0.2,-0.6) {$3$};
	\node at (0.2,0.6) {$4$};
	\node at (-0.7,-0.4) {$2$};
	\node at (0.7,-0.4) {$5$};
	\node at (1.4,-1) {$7$};
	\node at (-1.2,-1.4) {$8$};
	\node at (-1.6,0) {$9$};
	\node at (-0.3,-2) {$10$};
}\]
Now, for the homotopy proving the second claim, it suffices to take the diagram for $g^M_\alpha(x_1) \smile_M g^M_\alpha(x_2)$ and pass one circle over the other, using $\psi$:
\[\tikzfig{
	\node [bullet,orange] (v1) at (0.75,0) {};
	\node (v2) at (0,0) {$x_2$};
	\node [bullet,orange] (v3) at (0,-0.75) {};
	\draw [->-,shorten <=-3.5pt,orange] (v2) to (v1);
	\draw [->-] (0,0.75) arc (90:0:0.75);
	\draw [->-,orange] (0.75,0) arc (0:-90:0.75);
	\draw [-w-] (0,0.75) arc (90:270:0.75);
	\node [vertex,fill=white] (v4) at (0,0.75) {II};
	\node at (-1,0) {$1$};
	\node at (0.8,0.8) {$2$};
	\node at (0.4,0.3) {$3$};
	\node at (0.8,-0.6) {$4$};
	\node [bullet,orange] (v5) at (-2,0) {};
	\node (v6) at (-2.75,0) {$x_2$};
	\node [bullet,orange] (v7) at (-2.75,-0.75) {};
	\draw [->-,shorten <=-3.5pt,orange] (v6) to (v5);
	\draw [->-] (-2.75,0.75) arc (90:0:0.75);
	\draw [->-,orange] (-2,0) arc (0:-90:0.75);
	\draw [-w-] (-2.75,0.75) arc (90:270:0.75);
	\node [vertex,fill=white] (v8) at (-2.75,0.75) {I};
	\node at (-4,0) {$6$};
	\node at (-1.95,0.8) {$7$};
	\node at (-2.4,0.3) {$8$};
	\draw [->-,orange,rounded corners=5] (v3) |- ++(-0.3,-0.3) -| ++(-1,0.3) |- ++(-0.6,2) -| (v8);
	\node [circ,orange,fill=white] (out) at (-2.75,-1.5) {};
	\draw [->-,orange] (v7) to (out);
	\node at (-0.7,-1.3) {$5$};
	\node at (-2,-0.6) {$9$};
	\node at (-3.2,-1.3) {$10$};
} \]
The $(-1)^n$ factor comes from comparing orientations and exchanging the two $\alpha$ entries, since each one of those inputs has degree $-n$ in $C^*_{(2)}(A)$.

\subsection{Quasi-isomorphism between $M$ and $\Ahatinfty$}\label{app:homotopyAlpha}
For this homotopy proving \cref{prop:homotopyAlpha} we will not need signs, since we are just proving a quasi-isomorphism of cones. The map giving the composition of the left and bottom arrows, that is, $A \otimes^\LL_A A^\vee[-n] \to C^*(A,A^\vee \otimes A) \to C^*(A,\Hom_\kk(A,A))$, and the diagram giving the composition of the top and right arrows, that is, $A \otimes^\LL_A A^\vee \to A \to C^*(A,\Hom_\kk(A,A))$, are given respectively by the diagrams
\[ \tikzfig{
	\node [vertex] (mid) at (0,0) {$\alpha$};
	\node [bullet] (left) at (-1,0) {};
	\node [bullet,red] (right) at (1,0) {};
	\node [vertex] (ev) at (1,-1) {$\ev$};
	\draw [->-] (-1,1) to (left);
	\draw [->-] (left) to (-1,-2);
	\draw [->-,red] (1,1) to (right);
	\draw [->-,red] (right) to (ev);
	\draw [->-] (1,-2) to (ev);
	\draw [->-] (mid) to (right);
	\draw [->-] (mid) to (left);
	\draw [->-] (-0.3,1) to (-0.3,0.5);
	\draw [->-] (0,1) to (0,0.5);
	\draw [->-] (0.3,1) to (0.3,0.5);
	\draw [->-] (-0.3,-2) to (-0.3,-1.5);
	\draw [->-] (0,-2) to (0,-1.5);
	\draw [->-] (0.3,-2) to (0.3,-1.5);
} \qquad \text{and} \qquad \tikzfig{
\node [vertex] (ev) at (0,0.8) {$\ev$};
\node [vertex] (alpha) at (0,0) {$\alpha$};
\node [bullet] (mid) at (-1,-0.5) {};
\node [bullet] (down) at (-1,-1.2) {};
\draw [->-,red] (0,1.5) to (ev);
\draw (-2,1.5) to (-2,0.5);
\draw [->-,bend right=30] (-2,0.5) to (mid);
\draw [->-,bend left=30] (alpha) to (mid);
\draw [->-] (alpha) to (ev);
\draw [->-] (mid) to (down);
\draw [->-,bend right=40] (0,-2) to (down);
\draw [->-,bend right=40] (down) to (-2,-2);
\draw [->-] (-1.3,1.5) to (-1.3,1);
\draw [->-] (-1,1.5) to (-1,1);
\draw [->-] (-0.7,1.5) to (-0.7,1);
\draw [->-] (-1.3,-2) to (-1.3,-1.5);
\draw [->-] (-1,-2) to (-1,-1.5);
\draw [->-] (-0.7,-2) to (-0.7,-1.5);
}\]
The homotopy between these diagrams is simply given by the diagram
\[ \pm \quad \tikzfig{
	\node [bullet] (left) at (-1,-1) {};
	\node [vertex] (right) at (1,-1) {$\alpha$};
	\node [vertex] (ev) at (1,0) {$\ev$};
	\draw [->-] (-1,1) to (left);
	\draw [->-] (left) to (-1,-2);
	\draw [->-,red] (1,1) to (ev);
	\draw [->-] (right) to (ev);
	\draw [->-] (1,-2) to (right);
	\draw [->-] (right) to (ev);
	\draw [->-] (right) to (left);
	\draw [->-] (-0.3,1) to (-0.3,0.5);
	\draw [->-] (0,1) to (0,0.5);
	\draw [->-] (0.3,1) to (0.3,0.5);
	\draw [->-] (-0.3,-2) to (-0.3,-1.5);
	\draw [->-] (0,-2) to (0,-1.5);
	\draw [->-] (0.3,-2) to (0.3,-1.5);
}\]

\subsection{Diagrams for compatibility between products on the dual and coproducts}
Here we will give the many homotopies that were omitted from \cref{sec:relations}. For all these homotopies we will need orientations.

\subsubsection{Relation between $E$ and $D_\alpha$}\label{app:Jhomotopy}
Below we give the expression for a map $C^*_{(2)}(A)^{\otimes 2} \to C^*(A,A)[-1]$ that, when evaluated on $\alpha \otimes \alpha$, gives the element $J_\alpha$ of \cref{lem:Jhomotopy}.
\begin{align*}
	&+ \quad \tikzfig{
		\draw (-2,-2) rectangle (2,2);
		\node [vertex] (co) at (0,-1) {$\co$};
		\node [rectangle,draw,thick] (eta) at (0,1) {$\eta$};
		\node [vertex] (alphaL) at (-1,0) {I};
		\node [vertex] (alphaR) at (1,-1) {II};
		\node [bullet] (topL) at (-1,1) {};
		\node [bullet] (botL) at (-1,-1) {};
		\node [bullet] (midR) at (1,0) {};
		\draw [->-,cyan] (co) to (eta);
		\draw [->-] (co) to (0,-2);
		\draw [->-] (0,2) to (eta);
		\draw [->-] (eta) to (topL);
		\draw [->-,bend left=45] (eta) to (midR);
		\draw [->-] (alphaL) to (topL);
		\draw [-w-] (alphaL) to (botL);
		\draw [->-] (topL) to (-1,2);
		\draw [->-] (-1,-2) to (botL);
		\draw [->-] (botL) to (-2,-1);
		\draw [-w-] (alphaR) to (midR);
		\draw [->-] (alphaR) to (1,-2);
		\draw [->-] (1,2) to (eta);
		\draw [->-] (midR) to (2,0);
		\node at (-0.3,-1.5) {$ 1$};
		\node at (-0.3,0) {$ 2$};
		\node at (-0.5,1.3) {$ 3$};
		\node at (1,1) {$ 4$};
		\node at (-1.3,-0.5) {$ 5$};
		\node at (-1.3,0.5) {$ 6$};
		\node at (-1.3,1.5) {$ 7$};
		\node at (-1.5,-1.3) {$ 8$};
		\node at (0.7,-0.5) {$ 9$};
		\node at (0.7,-1.5) {$ {10}$};
		\node at (1.5,-0.3) {$ {11}$};
	} \quad - \quad \tikzfig{
		\draw (-2,-2) rectangle (2,2);
		\node [vertex] (co) at (0,-1) {$\co$};
		\node [rectangle,draw,thick] (eta) at (0,1) {$\eta$};
		\node [vertex] (alphaL) at (-1,0) {I};
		\node [vertex] (alphaR) at (1,-1) {II};
		\node [bullet] (topL) at (-1,1) {};
		\node [bullet] (botL) at (-1,-1) {};
		\node [bullet] (midR) at (1,0) {};
		\draw [->-,cyan] (co) to (eta);
		\draw [->-] (co) to (0,-2);
		\draw [->-,bend right=45] (0,2) to (eta);
		\draw [->-] (eta) to (topL);
		\draw [->-,bend left=45] (eta) to (midR);
		\draw [->-] (alphaL) to (topL);
		\draw [-w-] (alphaL) to (botL);
		\draw [->-] (topL) to (-1,2);
		\draw [->-] (-1,-2) to (botL);
		\draw [->-] (botL) to (-2,-1);
		\draw [-w-] (alphaR) to (midR);
		\draw [->-] (alphaR) to (1,-2);
		\draw [->-,rounded corners=5] (1,2) -- ++(0,-0.3) -| (eta);
		\draw [->-] (midR) to (2,0);
		\node at (-0.3,-1.5) {$ 1$};
		\node at (-0.3,0) {$ 2$};
		\node at (-0.5,1.2) {$ 3$};
		\node at (1,1) {$ 4$};
		\node at (-1.3,-0.5) {$ 5$};
		\node at (-1.3,0.5) {$ 6$};
		\node at (-1.3,1.5) {$ 7$};
		\node at (-1.5,-1.3) {$ 8$};
		\node at (0.7,-0.5) {$ 9$};
		\node at (0.7,-1.5) {$ {10}$};
		\node at (1.5,-0.3) {$ {11}$};
	} \quad - \quad \tikzfig{
		\draw (-2,-2) rectangle (2,2);
		\node [vertex] (co) at (0,0) {$\co$};
		\node [rectangle,draw,thick] (eta) at (0,1) {$\eta$};
		\node [vertex] (alphaL) at (-1,0) {I};
		\node [vertex] (alphaR) at (1,-1) {II};
		\node [bullet] (topL) at (-1,1) {};
		\node [bullet] (botL) at (-1,-1) {};
		\node [bullet] (midR) at (1,0) {};
		\draw [->-,cyan] (co) to (eta);
		\draw [->-] (co) to (alphaL);
		\draw [->-] (eta) to (topL);
		\draw [->-,bend left=45] (eta) to (midR);
		\draw [->-] (alphaL) to (topL);
		\draw [-w-] (alphaL) to (botL);
		\draw [->-] (topL) to (-1,2);
		\draw [->-] (-1,-2) to (botL);
		\draw [->-] (botL) to (-2,-1);
		\draw [-w-] (alphaR) to (midR);
		\draw [->-] (alphaR) to (1,-2);
		\draw [->-,rounded corners=5] (1,2) -- ++(0,-0.3) -| (eta);
		\draw [->-] (midR) to (2,0);
		\node at (-0.5,-0.3) {$ 1$};
		\node at (-0.3,0.5) {$ 2$};
		\node at (-0.5,1.2) {$ 3$};
		\node at (1,1) {$ 4$};
		\node at (-1.3,-0.5) {$ 5$};
		\node at (-1.3,0.5) {$ 6$};
		\node at (-1.3,1.5) {$ 7$};
		\node at (-1.5,-1.3) {$ 8$};
		\node at (0.7,-0.5) {$ 9$};
		\node at (0.7,-1.5) {$ {10}$};
		\node at (1.5,-0.3) {$ {11}$};
	} \\
	&- \quad \tikzfig{
		\draw (-2,-2) rectangle (2,2);
		\node [vertex] (co) at (0,0) {$\co$};
		\node [rectangle,draw,thick] (eta) at (0,1) {$\eta$};
		\node [vertex] (alphaL) at (-1,-1.1) {I};
		\node [vertex] (alphaR) at (1,-1) {II};
		\node [bullet] (midL) at (-1,0.5) {};
		\node [bullet] (botL) at (-1,-0.4) {};
		\node [bullet] (midR) at (1,0) {};
		\draw [->-,cyan] (co) to (eta);
		\draw [->-] (co) to (midL);
		\draw [->-,bend right=30] (eta) to (midL);
		\draw [->-,bend left=45] (eta) to (midR);
		\draw [->-] (alphaL) to (-1,-2);
		\draw [-w-] (alphaL) to (botL);
		\draw [->-] (midL) to (botL);
		\draw [->-] (-1,2) to (eta);
		\draw [->-] (botL) to (-2,-0.4);
		\draw [-w-] (alphaR) to (midR);
		\draw [->-] (alphaR) to (1,-2);
		\draw [->-,rounded corners=5] (1,2) -- ++(0,-0.3) -| (eta);
		\draw [->-] (midR) to (2,0);
		\node at (-0.5,0) {$ 1$};
		\node at (0.3,0.5) {$ 2$};
		\node at (-0.65,1.2) {$ 3$};
		\node at (1,1) {$ 4$};
		\node at (-1.3,-1.6) {$ 5$};
		\node at (-0.7,-0.6) {$ 6$};
		\node at (-1.3,0) {$ 7$};
		\node at (-1.5,-0.8) {$ 8$};
		\node at (0.7,-0.5) {$ 9$};
		\node at (0.7,-1.5) {$ {10}$};
		\node at (1.5,-0.3) {$ {11}$};
	} \quad - \quad  \tikzfig{
		\draw (-2,-2) rectangle (2,2);
		\node [vertex] (co) at (0,0) {$\co$};
		\node [rectangle,draw,thick] (eta) at (0,1) {$\eta$};
		\node [vertex] (alphaL) at (0,-1) {I};
		\node [vertex] (alphaR) at (1,-1) {II};
		\node [bullet] (midL) at (-1,0.5) {};
		\node [bullet] (botL) at (-1,-0.4) {};
		\node [bullet] (midR) at (1,0) {};
		\draw [->-,cyan] (co) to (eta);
		\draw [->-] (co) to (midL);
		\draw [->-,bend right=30] (eta) to (midL);
		\draw [->-,bend left=45] (eta) to (midR);
		\draw [-w-] (alphaL) to (0,-2);
		\draw [->-,bend left=40] (alphaL) to (botL);
		\draw [->-] (midL) to (botL);
		\draw [->-] (botL) to (-2,-0.4);
		\draw [-w-] (alphaR) to (midR);
		\draw [->-] (alphaR) to (alphaL);
		\draw [->-] (0,2) to (eta);
		\draw [->-] (midR) to (2,0);
		\node at (-0.5,0) {$ 1$};
		\node at (0.3,0.5) {$ 2$};
		\node at (-0.65,1.2) {$ 3$};
		\node at (1,1) {$ 4$};
		\node at (-0.3,-1.5) {$ 5$};
		\node at (-0.5,-0.6) {$ 6$};
		\node at (-1.3,0) {$ 7$};
		\node at (-1.5,-0.8) {$ 8$};
		\node at (0.7,-0.5) {$ 9$};
		\node at (0.5,-1.3) {$ {10}$};
		\node at (1.5,-0.3) {$ {11}$};
	} \quad + \quad \tikzfig{
		\draw (-2,-2) rectangle (2,2);
		\node [vertex] (co) at (0,0) {$\co$};
		\node [rectangle,draw,thick] (eta) at (0,1) {$\eta$};
		\node [vertex] (alphaL) at (0,-1.4) {I};
		\node [vertex] (alphaR) at (1,0) {II};
		\node [bullet] (botL) at (-1,-0.4) {};
		\node [bullet] (midL) at (-1,0.5) {};
		\node [bullet] (botC) at (0,-0.8) {};
		\draw [->-,cyan] (co) to (eta);
		\draw [->-] (co) to (midL);
		\draw [->-,bend right=30] (eta) to (midL);
		\draw [->-,bend left=45] (eta) to (alphaR);
		\draw [-w-=.8] (alphaL) to (0,-2);
		\draw [->-] (botC) to (botL);
		\draw [->-] (midL) to (botL);
		\draw [->-] (botL) to (-2,-0.4);
		\draw [->-] (alphaR) to (botC);
		\draw [->-] (alphaL) to (botC);
		\draw [->-] (0,2) to (eta);
		\draw [-w-] (alphaR) to (2,0);
		\node at (-0.5,0) {$ 1$};
		\node at (0.3,0.5) {$ 2$};
		\node at (-0.65,1.2) {$ 3$};
		\node at (1,1) {$ 4$};
		\node at (-0.3,1.5) {$ 5$};
		\node at (-0.3,-1.1) {$ 6$};
		\node at (-1.3,0) {$ 7$};
		\node at (-1.5,-0.8) {$ 8$};
		\node at (-0.5,-0.4) {$ 9$};
		\node at (0.6,-0.8) {$ {10}$};
		\node at (1.5,-0.3) {$ {11}$};
	} \\
	&- \quad \tikzfig{
		\draw (-2,-2) rectangle (2,2);
		\node [vertex] (alphaL) at (-0.5,1.2) {I};
		\node [vertex] (alphaR) at (0.5,-1) {II};
		\node [vertex] (midL) at (-0.5,0) {$\beta$};
		\node [bullet] (botL) at (-0.5,-1) {};
		\node [bullet] (botLL) at (-1.3,-1) {};
		\draw [-w-] (alphaL) to (midL);
		\draw [->-] (midL) to (botL);
		\draw [->-] (botL) to (botLL);
		\draw [->-] (alphaL) to (-0.5,2);
		\draw [->-=0.5,rounded corners=5] (-0.5,-2) |- ++(-0.3,0.3) -| (botLL);
		\draw [->-] (botLL) to (-2,-1);
		\draw [->-] (alphaR) to (botL);
		\draw [-w-] (alphaR) to (2,-1);
		\node at (-0.2,0.6) {$2$};
		\node at (-0.2,1.5) {$1$};
		\node at (0,-0.7) {$ 3$};
		\node at (1.5,-0.7) {$ 4$};
		\node at (-0.8,-0.6) {$ 5$};
		\node at (-0.8,-1.3) {$ 6$};
		\node at (-1.7,-0.7) {$ 7$};
	}
\end{align*}

\subsubsection{Homotopy for the square-filling Lemma}\label{app:squareLemma}
Here we give the combination of diagrams giving the expression for $N_\alpha$ in \cref{lem:squareLemma}. It is rather complicated: the smallest solution we could find has no less than 53 terms. One can arrive to this solution by `trying to pass the $\varphi$ vertex from one side to the other' on each diagram involved in $J_\alpha$, and then correcting the remaining terms in the differential by hand, adding extra diagrams. Our expression for the element $N_\alpha$ is given by evaluating the following combination of diagrams:%
\begin{align*}				
&+ \quad \tikzfig{					%
\draw (-2,-2) rectangle (2,2);
\node [vertex] (co) at (0,-0.5) {$\co$};
\node [rectangle,draw,thick] (eta) at (0,1) {$\eta$};
\node [vertex] (alphaL) at (-1,0) {I};
\node [vertex] (alphaR) at (1,-1) {II};
\node [bullet] (topL) at (-1,1) {};
\node [bullet] (botL) at (-1,-0.7) {};
\node [vertex] (phi) at (-1,-1.5) {$\varphi$};
\node [bullet] (midR) at (1,0) {};
\draw [->-,cyan] (co) to (eta);
\draw [->-] (co) to (0,-2);
\draw [->-] (0,2) to (eta);
\draw [->-] (eta) to (topL);
\draw [->-,bend left=45] (eta) to (midR);
\draw [->-] (alphaL) to (topL);
\draw [-w-] (alphaL) to (botL);
\draw [->-] (topL) to (-1,2);
\draw [->-=.9] (-1,-2) to (phi);
\draw [->-] (phi) to (botL);
\draw [->-] (botL) to (-2,-0.7);
\draw [-w-] (alphaR) to (midR);
\draw [->-] (alphaR) to (1,-2);
\draw [->-] (1,2) to (eta);
\draw [->-] (midR) to (2,0);
\node at (-0.3,-1.2) {$ 1$};
\node at (-0.3,0) {$ 2$};
\node at (-0.5,1.3) {$ 3$};
\node at (1,1) {$ 4$};
\node at (-0.7,-0.5) {$ 5$};
\node at (-1.3,0.5) {$ 6$};
\node at (-1.3,1.5) {$ 7$};
\node at (-0.7,-1) {$ 8$};
\node at (-1.5,-1) {$ 9$};
\node at (0.7,-0.5) {$10$};
\node at (0.7,-1.5) {$11$};
\node at (1.5,-0.3) {$12$};
} \quad 							
- \quad \tikzfig{					%
\draw (-2,-2) rectangle (2,2);
\node [vertex] (co) at (0,-0.5) {$\co$};
\node [rectangle,draw,thick] (eta) at (0,1) {$\eta$};
\node [vertex] (alphaL) at (-1.2,0) {I};
\node [vertex] (alphaR) at (1,-1) {II};
\node [bullet] (topL) at (-1.2,1) {};
\node [bullet] (botL) at (-1.2,-1) {};
\node [bullet] (midR) at (1,0) {};
\node [vertex] (phi) at (-0.7,1.6) {$\varphi$};
\draw [->-,cyan] (co) to (eta);
\draw [->-] (co) to (0,-2);
\draw [->-] (0,2) to (eta);
\draw [->-] (eta) to (topL);
\draw [->-,bend left=45] (eta) to (midR);
\draw [->-] (alphaL) to (topL);
\draw [-w-] (alphaL) to (botL);
\draw [->-] (topL) to (-1.2,2);
\draw [->-] (-1.2,-2) to (botL);
\draw [->-] (botL) to (-2,-1);
\draw [-w-] (alphaR) to (midR);
\draw [->-] (alphaR) to (1,-2);
\draw [->-] (1,2) to (eta);
\draw [->-] (midR) to (2,0);
\draw [->-] (phi) to (eta);
\node at (-0.3,-1.2) {$ 1$};
\node at (-0.3,0) {$ 2$};
\node at (-0.5,0.7) {$ 3$};
\node at (1,1) {$ 4$};
\node at (-1.5,-0.5) {$ 5$};
\node at (-1.5,0.5) {$ 6$};
\node at (-1.5,1.5) {$ 7$};
\node at (-0.3,1.5) {$ 8$};
\node at (-1.6,-1.3) {$ 9$};
\node at (0.7,-0.5) {$10$};
\node at (0.7,-1.5) {$11$};
\node at (1.5,-0.3) {$12$};
} \quad 							
- \quad \tikzfig{					%
\draw (-2,-2) rectangle (2,2);
\node [vertex] (co) at (0,-0.5) {$\co$};
\node [rectangle,draw,thick] (eta) at (0,1) {$\eta$};
\node [vertex] (alphaL) at (-1,0) {I};
\node [vertex] (alphaR) at (1,-1) {II};
\node [bullet] (topL) at (-1,1) {};
\node [bullet] (botL) at (-1,-0.7) {};
\node [vertex] (phi) at (-1,-1.5) {$\varphi$};
\node [bullet] (midR) at (1,0) {};
\draw [->-,cyan] (co) to (eta);
\draw [->-] (co) to (0,-2);
\draw [->-,bend right=45] (0,2) to (eta);
\draw [->-,rounded corners=5] (1,2) -- ++(0,-0.3) -| (eta);
\draw [->-] (eta) to (topL);
\draw [->-,bend left=45] (eta) to (midR);
\draw [->-] (alphaL) to (topL);
\draw [-w-] (alphaL) to (botL);
\draw [->-] (topL) to (-1,2);
\draw [->-=.9] (-1,-2) to (phi);
\draw [->-] (phi) to (botL);
\draw [->-] (botL) to (-2,-0.7);
\draw [-w-] (alphaR) to (midR);
\draw [->-] (alphaR) to (1,-2);
\draw [->-] (midR) to (2,0);
\node at (-0.3,-1.2) {$ 1$};
\node at (-0.3,0) {$ 2$};
\node at (-0.5,1.3) {$ 3$};
\node at (1,1) {$ 4$};
\node at (-0.7,-0.5) {$ 5$};
\node at (-1.3,0.5) {$ 6$};
\node at (-1.3,1.5) {$ 7$};
\node at (-0.7,-1) {$ 8$};
\node at (-1.5,-1) {$ 9$};
\node at (0.7,-0.5) {$10$};
\node at (0.7,-1.5) {$11$};
\node at (1.5,-0.3) {$12$};
}\\									
&+ \quad \tikzfig{					%
	\draw (-2,-2) rectangle (2,2);
	\node [vertex] (co) at (0,-0.5) {$\co$};
	\node [rectangle,draw,thick] (eta) at (0,1) {$\eta$};
	\node [vertex] (alphaL) at (-1.2,0) {I};
	\node [vertex] (alphaR) at (1,-1) {II};
	\node [bullet] (topL) at (-1.2,1) {};
	\node [bullet] (botL) at (-1.2,-1) {};
	\node [bullet] (midR) at (1,0) {};
	\node [vertex] (phi) at (-0.8,1.7) {$\varphi$};
	\draw [->-,cyan] (co) to (eta);
	\draw [->-] (co) to (0,-2);
	\draw [->-,bend right=45] (0,2) to (eta);
	\draw [->-] (eta) to (topL);
	\draw [->-,bend left=45] (eta) to (midR);
	\draw [->-] (alphaL) to (topL);
	\draw [-w-] (alphaL) to (botL);
	\draw [->-] (topL) to (-1.2,2);
	\draw [->-] (-1.2,-2) to (botL);
	\draw [->-] (botL) to (-2,-1);
	\draw [-w-] (alphaR) to (midR);
	\draw [->-] (alphaR) to (1,-2);
	\draw [->-,rounded corners=5] (1,2) -- ++(0,-0.3) -| (eta);
	\draw [->-] (midR) to (2,0);
	\draw [->-] (phi) to (eta);
	\node at (-0.3,-1.2) {$ 1$};
	\node at (-0.3,0) {$ 2$};
	\node at (-0.5,0.7) {$ 3$};
	\node at (1,1) {$ 4$};
	\node at (-1.5,-0.5) {$ 5$};
	\node at (-1.5,0.5) {$ 6$};
	\node at (-1.5,1.5) {$ 7$};
	\node at (-0.5,1.2) {$ 8$};
	\node at (-1.6,-1.3) {$ 9$};
	\node at (0.7,-0.5) {$10$};
	\node at (0.7,-1.5) {$11$};
	\node at (1.5,-0.3) {$12$};
} \quad 
- \quad \tikzfig{					%
		\draw (-2,-2) rectangle (2,2);
		\node [vertex] (co) at (0,-0.5) {$\co$};
		\node [rectangle,draw,thick] (eta) at (0,1) {$\eta$};
		\node [vertex] (alphaL) at (-1,0) {I};
		\node [vertex] (alphaR) at (1,-1) {II};
		\node [bullet] (topL) at (-1,1) {};
		\node [bullet] (botL) at (-1,-0.7) {};
		\node [vertex] (phi) at (-1,-1.5) {$\varphi$};
		\node [bullet] (midR) at (1,0) {};
		\draw [->-,cyan] (co) to (eta);
		\draw [->-,bend left=50] (co) to (phi);
		\draw [->-] (eta) to (topL);
		\draw [->-,bend left=45] (eta) to (midR);
		\draw [->-] (alphaL) to (topL);
		\draw [-w-] (alphaL) to (botL);
		\draw [->-] (topL) to (-1,2);
		\draw [->-=.9] (-1,-2) to (phi);
		\draw [->-] (phi) to (botL);
		\draw [->-] (botL) to (-2,-0.7);
		\draw [-w-] (alphaR) to (midR);
		\draw [->-] (alphaR) to (1,-2);
		\draw [->-,rounded corners=5] (1,2) -- ++(0,-0.3) -| (eta);
		\draw [->-] (midR) to (2,0);
		\node at (-0.3,-1.2) {$ 1$};
		\node at (-0.3,0) {$ 2$};
		\node at (-0.5,1.3) {$ 3$};
		\node at (1,1) {$ 4$};
		\node at (-0.7,-0.5) {$ 5$};
		\node at (-1.3,0.5) {$ 6$};
		\node at (-1.3,1.5) {$ 7$};
		\node at (-0.7,-1) {$ 8$};
		\node at (-1.5,-1) {$ 9$};
		\node at (0.7,-0.5) {$10$};
		\node at (0.7,-1.5) {$11$};
		\node at (1.5,-0.3) {$12$};
	} \quad 
- \quad \tikzfig{							%
	\draw (-2,-2) rectangle (2,2);
	\node [vertex] (co) at (0,0) {$\co$};
	\node [rectangle,draw,thick] (eta) at (0,1) {$\eta$};
	\node [vertex] (alphaL) at (-1,0) {I};
	\node [vertex] (alphaR) at (1,-1) {II};
	\node [bullet] (topL) at (-1,1) {};
	\node [bullet] (botL) at (-1,-0.7) {};
	\node [vertex] (phi) at (-1,-1.5) {$\varphi$};
	\node [bullet] (midR) at (1,0) {};
	\draw [->-,cyan] (co) to (eta);
	\draw [->-] (co) to (alphaL);
	\draw [->-] (eta) to (topL);
	\draw [->-,bend left=45] (eta) to (midR);
	\draw [->-] (alphaL) to (topL);
	\draw [-w-] (alphaL) to (botL);
	\draw [->-] (topL) to (-1,2);
	\draw [->-=.9] (-1,-2) to (phi);
	\draw [->-] (phi) to (botL);
	\draw [->-] (botL) to (-2,-0.7);
	\draw [-w-] (alphaR) to (midR);
	\draw [->-] (alphaR) to (1,-2);
	\draw [->-,rounded corners=5] (1,2) -- ++(0,-0.3) -| (eta);
	\draw [->-] (midR) to (2,0);
	\node at (-0.5,0.3) {$ 1$};
	\node at (0.3,0.5) {$ 2$};
	\node at (-0.5,1.3) {$ 3$};
	\node at (1,1) {$ 4$};
	\node at (-0.7,-0.5) {$ 5$};
	\node at (-1.3,0.5) {$ 6$};
	\node at (-1.3,1.5) {$ 7$};
	\node at (-0.7,-1) {$ 8$};
	\node at (-1.5,-1) {$ 9$};
	\node at (0.7,-0.5) {$10$};
	\node at (0.7,-1.5) {$11$};
	\node at (1.5,-0.3) {$12$};
}\\
&+ \quad \tikzfig{							%
	\draw (-2,-2) rectangle (2,2);
	\node [vertex] (co) at (0,0) {$\co$};
	\node [rectangle,draw,thick] (eta) at (0,1) {$\eta$};
	\node [vertex] (alphaL) at (-1.2,0) {I};
	\node [vertex] (alphaR) at (1,-1) {II};
	\node [bullet] (topL) at (-1.2,1) {};
	\node [bullet] (botL) at (-1.2,-1) {};
	\node [bullet] (midR) at (1,0) {};
	\node [vertex] (phi) at (0,-1) {$\varphi$};
	\draw [->-,cyan] (co) to (eta);
	\draw [->-] (co) to (phi);
	\draw [->-] (phi) to (0,-2);
	\draw [->-,bend right=45] (0,2) to (eta);
	\draw [->-] (eta) to (topL);
	\draw [->-,bend left=45] (eta) to (midR);
	\draw [->-] (alphaL) to (topL);
	\draw [-w-] (alphaL) to (botL);
	\draw [->-] (topL) to (-1.2,2);
	\draw [->-] (-1.2,-2) to (botL);
	\draw [->-] (botL) to (-2,-1);
	\draw [-w-] (alphaR) to (midR);
	\draw [->-] (alphaR) to (1,-2);
	\draw [->-,rounded corners=5] (1,2) -- ++(0,-0.3) -| (eta);
	\draw [->-] (midR) to (2,0);
	\node at (-0.3,-0.5) {$ 1$};
	\node at (0.3,0.5) {$ 2$};
	\node at (-0.5,0.7) {$ 3$};
	\node at (1,1) {$ 4$};
	\node at (-1.5,-0.5) {$ 5$};
	\node at (-1.5,0.5) {$ 6$};
	\node at (-1.5,1.5) {$ 7$};
	\node at (-0.3,-1.5) {$ 8$};
	\node at (-1.6,-1.3) {$ 9$};
	\node at (0.7,-0.5) {$10$};
	\node at (0.7,-1.5) {$11$};
	\node at (1.5,-0.3) {$12$};
}  \quad 
- \quad  \tikzfig{							%
\draw (-2,-2) rectangle (2,2);
\node [vertex] (co) at (0,0) {$\co$};
\node [rectangle,draw,thick] (eta) at (0,1) {$\eta$};
\node [vertex] (alphaL) at (-1.2,0) {I};
\node [vertex] (alphaR) at (1,-1) {II};
\node [bullet] (topL) at (-1.2,1) {};
\node [bullet] (botL) at (-1.2,-1) {};
\node [bullet] (midR) at (1,0) {};
\node [vertex] (phi) at (0,-1) {$\varphi$};
\draw [->-,cyan] (co) to (eta);
\draw [->-] (co) to (phi);
\draw [->-] (phi) to (0,-2);
\draw [->-] (0,2) to (eta);
\draw [->-] (eta) to (topL);
\draw [->-,bend left=45] (eta) to (midR);
\draw [->-] (alphaL) to (topL);
\draw [-w-] (alphaL) to (botL);
\draw [->-] (topL) to (-1.2,2);
\draw [->-] (-1.2,-2) to (botL);
\draw [->-] (botL) to (-2,-1);
\draw [-w-] (alphaR) to (midR);
\draw [->-] (alphaR) to (1,-2);
\draw [->-] (1,2) to (eta);
\draw [->-] (midR) to (2,0);
\node at (-0.3,-0.5) {$ 1$};
\node at (0.3,0.5) {$ 2$};
\node at (-0.5,0.7) {$ 3$};
\node at (1,1) {$ 4$};
\node at (-1.5,-0.5) {$ 5$};
\node at (-1.5,0.5) {$ 6$};
\node at (-1.5,1.5) {$ 7$};
\node at (-0.3,-1.5) {$ 8$};
\node at (-1.6,-1.3) {$ 9$};
\node at (0.7,-0.5) {$10$};
\node at (0.7,-1.5) {$11$};
\node at (1.5,-0.3) {$12$};
} \quad 
+ \quad \tikzfig{							%
\draw (-2,-2) rectangle (2,2);
\node [vertex] (co) at (0,-1) {$\co$};
\node [rectangle,draw,thick] (eta) at (0,0) {$\eta$};
\node [vertex] (alphaL) at (-1.2,0) {I};
\node [vertex] (alphaR) at (1,-1) {II};
\node [bullet] (topL) at (-1.2,1) {};
\node [bullet] (botL) at (-1.2,-1) {};
\node [bullet] (midR) at (1,0) {};
\node [vertex] (phi) at (-0.2,1.4) {$\varphi$};
\draw [->-,cyan] (co) to (eta);
\draw [->-] (co) to (0,-2);
\draw [->-] (phi) to (eta.north west);
\draw [->-,bend right = 80] (0,2) to (eta.north west);
\draw [->-,rounded corners=5] (eta) -| ++(-0.9,0) |- (topL);
\draw [->-] (eta) to (midR);
\draw [->-] (alphaL) to (topL);
\draw [-w-] (alphaL) to (botL);
\draw [->-] (topL) to (-1.2,2);
\draw [->-] (-1.2,-2) to (botL);
\draw [->-] (botL) to (-2,-1);
\draw [-w-] (alphaR) to (midR);
\draw [->-] (alphaR) to (1,-2);
\draw [->-,rounded corners=5] (1,2) -- ++(0,-1) -| (eta);
\draw [->-] (midR) to (2,0);
\node at (-0.3,-1.5) {$ 1$};
\node at (-0.3,-0.5) {$ 2$};
\node at (-0.7,-0.3) {$ 3$};
\node at (0.5,0.3) {$ 4$};
\node at (-1.5,-0.5) {$ 5$};
\node at (-1.5,0.5) {$ 6$};
\node at (-1.5,1.5) {$ 7$};
\node at (-0.4,0.7) {$ 8$};
\node at (-1.6,-1.3) {$ 9$};
\node at (0.7,-0.5) {$10$};
\node at (0.7,-1.5) {$11$};
\node at (1.5,-0.3) {$12$};
}\\
&- \quad \tikzfig{						%
\draw (-2,-2) rectangle (2,2);
\node [vertex] (co) at (0,-0.5) {$\co$};
\node [rectangle,draw,thick] (eta) at (0,0.7) {$\eta$};
\node [vertex] (alphaL) at (-1.2,0) {I};
\node [vertex] (alphaR) at (1,-1) {II};
\node [bullet] (topL) at (-1.2,0.7) {};
\node [bullet] (botL) at (-1.2,-1) {};
\node [bullet] (midR) at (1,0) {};
\node [vertex] (phi) at (0.7,1.6) {$\varphi$};
\draw [->-,cyan] (co) to (eta);
\draw [->-] (co) to (0,-2);
\draw [->-] (0,2) to (eta);
\draw [->-] (eta) to (topL);
\draw [->-,bend left=45] (eta) to (midR);
\draw [->-] (alphaL) to (topL);
\draw [-w-] (alphaL) to (botL);
\draw [->-] (topL) to (-1.2,2);
\draw [->-] (-1.2,-2) to (botL);
\draw [->-] (botL) to (-2,-1);
\draw [-w-] (alphaR) to (midR);
\draw [->-] (alphaR) to (1,-2);
\draw [->-,bend left=80] (1,2) to (eta.north east);
\draw [->-] (midR) to (2,0);
\draw [->-] (phi) to (eta);
\node at (-0.3,-1.2) {$ 1$};
\node at (-0.3,0) {$ 2$};
\node at (-0.6,0.5) {$ 3$};
\node at (1,0.6) {$ 4$};
\node at (-1.5,-0.5) {$ 5$};
\node at (-1.5,0.5) {$ 6$};
\node at (-1.5,1.5) {$ 7$};
\node at (0.3,1.4) {$ 8$};
\node at (-1.6,-1.3) {$ 9$};
\node at (0.7,-0.5) {$10$};
\node at (0.7,-1.5) {$11$};
\node at (1.5,-0.3) {$12$};
} \quad 
- \quad  \tikzfig{							%
\draw (-2,-2) rectangle (2,2);
\node [vertex] (co) at (0,0) {$\co$};
\node [rectangle,draw,thick] (eta) at (0,1) {$\eta$};
\node [vertex] (alphaL) at (-1,-0.6) {I};
\node [vertex] (alphaR) at (1,-1) {II};
\node [bullet] (topL) at (-1,1) {};
\node [bullet] (midL) at (-1,0.2) {};
\node [vertex] (phi) at (-1,-1.5) {$\varphi$};
\node [bullet] (midR) at (1,0) {};
\draw [->-,cyan] (co) to (eta);
\draw [->-] (co) to (midL);
\draw [->-] (eta) to (topL);
\draw [->-,bend left=45] (eta) to (midR);
\draw [->-] (alphaL) to (midL);
\draw [->-] (midL) to (topL);
\draw [-w-] (alphaL) to (phi);
\draw [->-] (topL) to (-1,2);
\draw [->-=.9] (-1,-2) to (phi);
\draw [->-] (phi) to (-2,-1.5);
\draw [-w-] (alphaR) to (midR);
\draw [->-] (alphaR) to (1,-2);
\draw [->-,rounded corners=5] (1,2) -- ++(0,-0.3) -| (eta);
\draw [->-] (midR) to (2,0);
\node at (-0.5,0.3) {$ 1$};
\node at (0.3,0.5) {$ 2$};
\node at (-0.5,1.3) {$ 3$};
\node at (1,1) {$ 4$};
\node at (-0.7,-1) {$5$};
\node at (-1.3,-0.2) {$6$};
\node at (-1.3,0.5) {$7$};
\node at (-1.3,1.5) {$8$};
\node at (-1.5,-1.7) {$ 9$};
\node at (0.7,-0.5) {$10$};
\node at (0.7,-1.5) {$11$};
\node at (1.5,-0.3) {$12$};
} \quad 
+ \quad \tikzfig{						%
\draw (-2,-2) rectangle (2,2);
\node [vertex] (co) at (-0.3,-0.5) {$\co$};
\node [rectangle,draw,thick] (eta) at (-0.3,1) {$\eta$};
\node [vertex] (alphaL) at (-1.2,-1) {I};
\node [vertex] (alphaR) at (1,0) {II};
\node [bullet] (midL) at (-1.2,0) {};
\node [bullet] (midR) at (1,1) {};
\node [vertex] (phi) at (0.3,-1) {$\varphi$};
\node [bullet] (botR) at (1,-1) {};
\draw [->-,cyan] (co) to (eta);
\draw [->-] (co) to (-0.3,-2);
\draw [->-,bend right=45] (-0.3,2) to (eta.north west);
\draw [->-,bend right=45] (eta) to (midL);
\draw [->-] (eta) to (midR);
\draw [->-] (alphaL) to (midL);
\draw [->-,bend right=45] (-1.2,2) to (eta.north west);
\draw [-w-] (alphaL) to (-1.2,-2);
\draw [-w-] (alphaR) to (midR);
\draw [->-] (phi) to (botR);
\draw [->-] (alphaR) to (botR);
\draw [->-] (botR) to (1,-2);
\draw [->-,rounded corners=5] (1,2) -- ++(0,-0.3) -| (eta);
\draw [->-] (midR) to (2,1);
\draw [->-] (midL) to (-2,0);
\node at (-0.6,-1.2) {$1$};
\node at (-0.6,0) {$ 2$};
\node at (-1.4,0.7) {$ 3$};
\node at (0.5,0.7) {$ 4$};
\node at (-1.5,-1.5) {$ 5$};
\node at (-1.5,-0.5) {$ 6$};
\node at (1.3,-1.5) {$ 7$};
\node at (0.7,-1.3) {$ 8$};
\node at (-1.6,0.3) {$ 9$};
\node at (1.3,0.5) {$10$};
\node at (1.3,-0.5) {$11$};
\node at (1.5,1.3) {$12$};
}\\
&+ \quad \tikzfig{							%
	\draw (-2,-2) rectangle (2,2);
	\node [vertex] (co) at (0,0) {$\co$};
	\node [rectangle,draw,thick] (eta) at (0,1) {$\eta$};
	\node [vertex] (alphaL) at (-1,0) {I};
	\node [vertex] (alphaR) at (1,-1) {II};
	\node [bullet] (topL) at (-1,1) {};
	\node [bullet] (botL) at (-1,-1) {};
	\node [vertex] (phi) at (-0.6,1.6) {$\varphi$};
	\node [bullet] (midR) at (1,0) {};
	\draw [->-,cyan] (co) to (eta);
	\draw [->-] (co) to (alphaL);
	\draw [->-] (eta) to (topL);
	\draw [->-,bend left=45] (eta) to (midR);
	\draw [->-] (alphaL) to (topL);
	\draw [-w-] (alphaL) to (botL);
	\draw [->-] (topL) to (-1,2);
	\draw [->-=.9] (-1,-2) to (botL);
	\draw [->-] (botL) to (-2,-1);
	\draw [-w-] (alphaR) to (midR);
	\draw [->-] (alphaR) to (1,-2);
	\draw [->-,rounded corners=5] (1,2) -- ++(0,-0.3) -| (eta);
	\draw [->-] (midR) to (2,0);
	\draw [->-=1] (phi) to (eta.north west);
	\node at (-0.6,-0.2) {$ 1$};
	\node at (0.3,0.5) {$ 2$};
	\node at (-0.6,0.7) {$ 3$};
	\node at (1,1) {$ 4$};
	\node at (-1.3,-0.5) {$5$};
	\node at (-1.3,0.5) {$6$};
	\node at (-1.3,1.5) {$7$};
	\node at (-0.1,1.5) {$8$};
	\node at (-1.5,-1.3) {$ 9$};
	\node at (0.7,-0.5) {$10$};
	\node at (0.7,-1.5) {$11$};
	\node at (1.5,-0.3) {$12$};
} \quad 
- \quad \tikzfig{							%
\draw (-2,-2) rectangle (2,2);
\node [vertex] (co) at (-0.3,-0.5) {$\co$};
\node [rectangle,draw,thick] (eta) at (-0.3,1) {$\eta$};
\node [vertex] (alphaL) at (-1.2,-1) {I};
\node [vertex] (alphaR) at (1,0) {II};
\node [bullet] (midL) at (-1.2,0) {};
\node [bullet] (midR) at (1,1) {};
\node [vertex] (phi) at (0.3,-1) {$\varphi$};
\node [bullet] (botR) at (1,-1) {};
\draw [->-,cyan] (co) to (eta);
\draw [->-] (co) to (-0.3,-2);
\draw [->-] (-0.3,2) to (eta);
\draw [->-,bend right=45] (eta) to (midL);
\draw [->-] (eta) to (midR);
\draw [->-] (alphaL) to (midL);
\draw [->-] (-1.2,2) to (eta.north west);
\draw [-w-] (alphaL) to (-1.2,-2);
\draw [-w-] (alphaR) to (midR);
\draw [->-] (phi) to (botR);
\draw [->-] (alphaR) to (botR);
\draw [->-] (botR) to (1,-2);
\draw [->-] (1,2) to (eta.north east);
\draw [->-] (midR) to (2,1);
\draw [->-] (midL) to (-2,0);
\node at (-0.6,-1.2) {$1$};
\node at (-0.6,0) {$ 2$};
\node at (-1.4,0.7) {$ 3$};
\node at (0.5,0.7) {$ 4$};
\node at (-1.5,-1.5) {$ 5$};
\node at (-1.5,-0.5) {$ 6$};
\node at (1.3,-1.5) {$ 7$};
\node at (0.7,-1.3) {$ 8$};
\node at (-1.6,0.3) {$ 9$};
\node at (1.3,0.5) {$10$};
\node at (1.3,-0.5) {$11$};
\node at (1.5,1.3) {$12$};
} \quad 
- \quad \tikzfig{							%
\draw (-2,-2) rectangle (2,2);
\node [vertex] (co) at (-0.3,-0.5) {$\co$};
\node [rectangle,draw,thick] (eta) at (-0.3,1) {$\eta$};
\node [vertex] (alphaL) at (-1.2,-1) {I};
\node [vertex] (alphaR) at (1.2,-1) {II};
\node [bullet] (midL) at (-1.2,0) {};
\node [bullet] (topR) at (1.2,1) {};
\node [vertex] (phi) at (0.5,1) {$\varphi$};
\node [bullet] (midR) at (1.2,0) {};
\draw [->-,cyan] (co) to (eta);
\draw [->-] (co) to (-0.3,-2);
\draw [->-] (-0.3,2) to (eta);
\draw [->-,bend right=45] (eta) to (midL);
\draw [->-=.9] (eta) to (phi);
\draw [->-=.9] (phi) to (topR);
\draw [->-] (alphaL) to (midL);
\draw [->-] (-1.2,2) to (eta.north west);
\draw [-w-] (alphaL) to (-1.2,-2);
\draw [->-] (midR) to (topR);
\draw [-w-] (alphaR) to (midR);
\draw [->-] (alphaR) to (1.2,-2);
\draw [->-] (1.2,2) to (topR);
\draw [->-] (midR) to (2,0);
\draw [->-] (midL) to (-2,0);
\node at (-0.6,-1.2) {$1$};
\node at (-0.6,0.2) {$ 2$};
\node at (-1.4,0.7) {$ 3$};
\node at (1,0.5) {$4$};
\node at (-1.5,-1.5) {$ 5$};
\node at (-1.5,-0.5) {$ 6$};
\node at (0.2,1.3) {$ 7$};
\node at (0.9,1.3) {$ 8$};
\node at (-1.6,0.3) {$ 9$};
\node at (1.5,-0.5) {$10$};
\node at (1.5,-1.5) {$11$};
\node at (1.6,0.3) {$12$};
}\\
&+ \quad \tikzfig{						%
\draw (-2,-2) rectangle (2,2);
\node [vertex] (co) at (0,0) {$\co$};
\node [rectangle,draw,thick] (eta) at (0,1) {$\eta$};
\node [vertex] (alphaL) at (-0.8,-1.2) {I};
\node [vertex] (alphaR) at (1,-1) {II};
\node [bullet] (topL) at (-0.8,0.5) {};
\node [bullet] (midL) at (-0.8,-0.3) {};
\node [vertex] (phi) at (-1.4,-0.3) {$\varphi$};
\node [bullet] (midR) at (1,0) {};
\draw [->-,cyan] (co) to (eta);
\draw [->-] (co) to (topL);
\draw [->-,bend right=30] (eta) to (topL);
\draw [->-,bend left=45] (eta) to (midR);
\draw [->-] (alphaL) to (midL);
\draw [->-=.9] (midL) to (phi);
\draw [->-] (topL) to (midL);
\draw [-w-=.9] (alphaL) to (-0.8,-2);
\draw [->-=1] (phi) to (-2,-0.3);
\draw [-w-] (alphaR) to (midR);
\draw [->-] (alphaR) to (1,-2);
\draw [->-] (-0.8,2) to (eta.north west);
\draw [->-,rounded corners=5] (1,2) -- ++(0,-0.3) -| (eta);
\draw [->-] (midR) to (2,0);
\node at (-0.5,0) {$1$};
\node at (0.3,0.5) {$ 2$};
\node at (-0.45,0.65) {$ 3$};
\node at (1,1) {$ 4$};
\node at (-0.5,-1.7) {$5$};
\node at (-0.5,-0.8) {$6$};
\node at (-1.1,0.3) {$7$};
\node at (-1.1,-0.6) {$8$};
\node at (-1.8,-0.6) {$ 9$};
\node at (0.7,-0.5) {$10$};
\node at (0.7,-1.5) {$11$};
\node at (1.5,-0.3) {$12$};
} \quad 
+ \quad  \tikzfig{							%
\draw (-2,-2) rectangle (2,2);
\node [vertex] (co) at (0,0) {$\co$};
\node [rectangle,draw,thick] (eta) at (0,1) {$\eta$};
\node [vertex] (alphaL) at (-0.8,-1.2) {I};
\node [vertex] (alphaR) at (1,-1) {II};
\node [bullet] (topL) at (-0.8,0.5) {};
\node [bullet] (midL) at (-0.8,-0.3) {};
\node [vertex] (phi) at (-0.8,1.3) {$\varphi$};
\node [bullet] (midR) at (1,0) {};
\draw [->-,cyan] (co) to (eta);
\draw [->-] (co) to (topL);
\draw [->-,bend right=30] (eta) to (topL);
\draw [->-,bend left=45] (eta) to (midR);
\draw [->-] (alphaL) to (midL);
\draw [->-=.9] (midL) to (-2,-0.3);
\draw [->-] (topL) to (midL);
\draw [-w-=.9] (alphaL) to (-0.8,-2);
\draw [-w-] (alphaR) to (midR);
\draw [->-] (alphaR) to (1,-2);
\draw [->-] (-0.8,2) to (phi);
\draw [->-] (phi) to (eta.north west);
\draw [->-,rounded corners=5] (1,2) -- ++(0,-0.3) -| (eta);
\draw [->-] (midR) to (2,0);
\node at (-0.5,0) {$1$};
\node at (0.3,0.5) {$ 2$};
\node at (-0.45,0.65) {$ 3$};
\node at (1,1) {$ 4$};
\node at (-0.5,-1.7) {$5$};
\node at (-0.5,-0.8) {$6$};
\node at (-1.1,0.3) {$7$};
\node at (-0.3,1.5) {$8$};
\node at (-1.5,-0.6) {$ 9$};
\node at (0.7,-0.5) {$10$};
\node at (0.7,-1.5) {$11$};
\node at (1.5,-0.3) {$12$};
} \quad 
+ \quad \tikzfig{						%
	\draw (-2,-2) rectangle (2,2);
	\node [vertex] (co) at (-0.3,-0.5) {$\co$};
	\node [rectangle,draw,thick] (eta) at (-0.3,1) {$\eta$};
	\node [vertex] (alphaL) at (-1.2,-1) {I};
	\node [vertex] (alphaR) at (1,0) {II};
	\node [bullet] (midL) at (-1.2,0) {};
	\node [bullet] (midR) at (1,1) {};
	\node [vertex] (phi) at (0.3,-1) {$\varphi$};
	\node [bullet] (botR) at (1,-1) {};
	\draw [->-,cyan] (co) to (eta);
	\draw [->-] (co) to (alphaL);
	\draw [->-] (-1.2,2) to (eta.north west);
	\draw [->-,bend right=45] (eta) to (midL);
	\draw [->-] (eta) to (midR);
	\draw [->-] (alphaL) to (midL);
	\draw [-w-] (alphaL) to (-1.2,-2);
	\draw [-w-] (alphaR) to (midR);
	\draw [->-] (phi) to (botR);
	\draw [->-] (alphaR) to (botR);
	\draw [->-] (botR) to (1,-2);
	\draw [->-,rounded corners=5] (1,2) -- ++(0,-0.3) -| (eta);
	\draw [->-] (midR) to (2,1);
	\draw [->-] (midL) to (-2,0);
	\node at (-0.6,-1.2) {$1$};
	\node at (-0.6,0) {$ 2$};
	\node at (-0.8,0.7) {$ 3$};
	\node at (0.5,0.7) {$ 4$};
	\node at (-1.5,-1.5) {$ 5$};
	\node at (-1.5,-0.5) {$ 6$};
	\node at (1.3,-1.5) {$ 7$};
	\node at (0.7,-1.3) {$ 8$};
	\node at (-1.6,0.3) {$ 9$};
	\node at (1.3,0.5) {$10$};
	\node at (1.3,-0.5) {$11$};
	\node at (1.5,1.3) {$12$};
}
\end{align*}

\begin{align*}
	&+ \quad \tikzfig{
	\draw (-2,-2) rectangle (2,2);
	\node [vertex] (co) at (0,0) {$\co$};
	\node [rectangle,draw,thick] (eta) at (0,1) {$\eta$};
	\node [vertex] (alphaL) at (-1,-1.2) {I};
	\node [vertex] (alphaR) at (1,-1) {II};
	\node [bullet] (topL) at (-1,0.5) {};
	\node [bullet] (midL) at (-1,-0.2) {};
	\node [vertex] (phi) at (-0.6,1.6) {$\varphi$};
	\node [bullet] (midR) at (1,0) {};
	\draw [->-,cyan] (co) to (eta);
	\draw [->-] (co) to (topL);
	\draw [->-,bend right=30] (eta) to (topL);
	\draw [->-,bend left=45] (eta) to (midR);
	\draw [->-] (topL) to (midL);
	\draw [->-] (alphaL) to (midL);
	\draw [->-,rounded corners=5] (-1,2) |- (eta.north west);
	\draw [-w-] (alphaL) to (-1,-2);
	\draw [->-] (midL) to (-2,-0.2);
	\draw [-w-] (alphaR) to (midR);
	\draw [->-] (alphaR) to (1,-2);
	\draw [->-,rounded corners=5] (1,2) -- ++(0,-0.3) -| (eta);
	\draw [->-] (midR) to (2,0);
	\draw [->-=1] (phi) to (eta.north west);
	\node at (-0.6,0) {$1$};
	\node at (0.3,0.5) {$2$};
	\node at (-0.6,0.7) {$ 3$};
	\node at (1,1) {$ 4$};
	\node at (-0.7,-1.7) {$5$};
	\node at (-0.7,-0.7) {$6$};
	\node at (-1.3,0.4) {$7$};
	\node at (-0.2,1.5) {$8$};
	\node at (-1.6,-0.5) {$ 9$};
	\node at (0.7,-0.5) {$10$};
	\node at (0.7,-1.5) {$11$};
	\node at (1.5,-0.3) {$12$};} \quad 
	- \quad \tikzfig{
		\draw (-2,-2) rectangle (2,2);
	\node [vertex] (co) at (-0.3,-0.5) {$\co$};
	\node [rectangle,draw,thick] (eta) at (-0.3,1) {$\eta$};
	\node [vertex] (alphaL) at (-1.2,-1) {I};
	\node [bullet] (midL) at (-1.2,0) {};
	\node [vertex] (alphaR) at (1.2,-1) {II};
	\node [bullet] (topR) at (1.2,1) {};
	\node [vertex] (phi) at (0.6,1) {$\varphi$};
	\node [bullet] (midR) at (1.2,0) {};
	\draw [->-,cyan] (co) to (eta);
	\draw [->-] (co) to (alphaL);
	\draw [->-,bend right=45] (eta) to (midL);
	\draw [->-=1] (eta) to (phi);
	\draw [->-] (alphaL) to (midL);
	\draw [-w-] (alphaL) to (-1.2,-2);
	\draw [-w-] (alphaR) to (midR);
	\draw [->-] (phi) to (topR);
	\draw [->-] (alphaR) to (midR);
	\draw [->-] (alphaR) to (1.2,-2);
	\draw [->-] (1.2,2) to (topR);
	\draw [->-,rounded corners=5] (-1.2,2) |- ++(0,-0.5) -| (eta);
	\draw [->-] (midR) to (2,0);
	\draw [->-] (topR) to (midR);
	\draw [->-] (midL) to (-2,0);
	\node at (-0.6,-1.2) {$1$};
	\node at (-0.6,0) {$ 2$};
	\node at (-0.8,0.7) {$ 3$};
	\node at (1.5,0.4) {$ 4$};
	\node at (-1.5,-1.5) {$ 5$};
	\node at (-1.5,-0.5) {$ 6$};
	\node at (0.2,0.6) {$7$};
	\node at (0.8,0.6) {$8$};
	\node at (-1.6,0.3) {$ 9$};
	\node at (0.9,-0.5) {$10$};
	\node at (0.9,-1.5) {$11$};
	\node at (1.7,-0.3) {$12$};} \quad 
	- \quad \tikzfig{
	\draw (-2,-2) rectangle (2,2);
	\node [vertex] (co) at (-0.3,-0.5) {$\co$};
	\node [rectangle,draw,thick] (eta) at (-0.3,1) {$\eta$};
	\node [vertex] (alphaL) at (-1.2,-1) {I};
	\node [vertex] (alphaR) at (1,0) {II};
	\node [bullet] (midL) at (-1.2,0) {};
	\node [bullet] (midR) at (1,1) {};
	\node [vertex] (phi) at (0.3,-1) {$\varphi$};
	\node [bullet] (botR) at (1,-1) {};
	\draw [->-,cyan] (co) to (eta);
	\draw [->-] (co) to (alphaL);
	\draw [->-] (1.2,2) to (eta.north east);
	\draw [->-,bend right=45] (eta) to (midL);
	\draw [->-] (eta) to (midR);
	\draw [->-] (alphaL) to (midL);
	\draw [-w-] (alphaL) to (-1.2,-2);
	\draw [-w-] (alphaR) to (midR);
	\draw [->-] (phi) to (botR);
	\draw [->-] (alphaR) to (botR);
	\draw [->-] (botR) to (1,-2);
	\draw [->-,rounded corners=5] (-1.2,2) -- ++(0,-0.5) -| (eta);
	\draw [->-] (midR) to (2,1);
	\draw [->-] (midL) to (-2,0);
	\node at (-0.6,-1.2) {$1$};
	\node at (-0.6,0) {$ 2$};
	\node at (-0.8,0.7) {$ 3$};
	\node at (0.5,0.7) {$ 4$};
	\node at (-1.5,-1.5) {$ 5$};
	\node at (-1.5,-0.5) {$ 6$};
	\node at (1.3,-1.5) {$ 7$};
	\node at (0.7,-1.3) {$ 8$};
	\node at (-1.6,0.3) {$ 9$};
	\node at (1.3,0.5) {$10$};
	\node at (1.3,-0.5) {$11$};
	\node at (1.5,1.3) {$12$};
	}\\
	&+ \quad \tikzfig{
	\draw (-2,-2) rectangle (2,2);
	\node [vertex] (co) at (0,0) {$\co$};
	\node [rectangle,draw,thick] (eta) at (0,1) {$\eta$};
	\node [vertex] (alphaL) at (0,-1.2) {I};
	\node [vertex] (alphaR) at (1,-1) {II};
	\node [bullet] (topL) at (-0.8,0.5) {};
	\node [bullet] (midL) at (-0.8,-0.3) {};
	\node [vertex] (phi) at (-1.4,-0.3) {$\varphi$};
	\node [bullet] (midR) at (1,0) {};
	\draw [->-,cyan] (co) to (eta);
	\draw [->-] (co) to (topL);
	\draw [->-,bend right=30] (eta) to (topL);
	\draw [->-,bend left=45] (eta) to (midR);
	\draw [->-] (alphaL) to (midL);
	\draw [->-=.9] (midL) to (phi);
	\draw [->-] (topL) to (midL);
	\draw [-w-=.9] (alphaL) to (0,-2);
	\draw [->-=1] (phi) to (-2,-0.3);
	\draw [-w-] (alphaR) to (midR);
	\draw [->-] (alphaR) to (alphaL);
	\draw [->-] (0,2) to (eta);
	\draw [->-] (midR) to (2,0);
	\node at (-0.5,0) {$1$};
	\node at (0.3,0.5) {$ 2$};
	\node at (-0.45,0.65) {$ 3$};
	\node at (1,1) {$ 4$};
	\node at (-0.3,-1.7) {$5$};
	\node at (-0.7,-1) {$6$};
	\node at (-1.1,0.3) {$7$};
	\node at (-1.1,-0.6) {$8$};
	\node at (-1.8,-0.6) {$ 9$};
	\node at (0.7,-0.5) {$10$};
	\node at (0.5,-1.3) {$11$};
	\node at (1.5,-0.3) {$12$};} \quad 
	+ \quad \tikzfig{
	\draw (-2,-2) rectangle (2,2);
	\node [vertex] (co) at (0,0) {$\co$};
	\node [rectangle,draw,thick] (eta) at (0,1) {$\eta$};
	\node [vertex] (alphaL) at (-0.8,-1.2) {I};
	\node [vertex] (alphaR) at (1,-1) {II};
	\node [bullet] (topL) at (-0.8,0.5) {};
	\node [bullet] (midL) at (-0.8,-0.3) {};
	\node [vertex] (phi) at (-1.4,1.3) {$\varphi$};
	\node [bullet] (midR) at (1,0) {};
	\draw [->-,cyan] (co) to (eta);
	\draw [->-] (co) to (topL);
	\draw [->-,bend right=30] (eta) to (topL);
	\draw [->-,bend left=45] (eta) to (midR);
	\draw [->-] (alphaL) to (midL);
	\draw [->-=.9] (midL) to (-2,-0.3);
	\draw [->-] (topL) to (midL);
	\draw [-w-=.9] (alphaL) to (-0.8,-2);
	\draw [->-=1] (phi) to (eta.north west);
	\draw [-w-] (alphaR) to (midR);
	\draw [->-] (alphaR) to (1,-2);
	\draw [->-] (-0.8,2) to (eta.north west);
	\draw [->-,rounded corners=5] (1,2) -- ++(0,-0.3) -| (eta);
	\draw [->-] (midR) to (2,0);
	\node at (-0.5,0) {$1$};
	\node at (0.3,0.5) {$ 2$};
	\node at (-0.45,0.65) {$ 3$};
	\node at (1,1) {$ 4$};
	\node at (-0.5,-1.7) {$5$};
	\node at (-0.5,-0.8) {$6$};
	\node at (-1.1,0.3) {$7$};
	\node at (-0.8,1.5) {$8$};
	\node at (-1.4,-0.6) {$49$};
	\node at (0.7,-0.5) {$10$};
	\node at (0.7,-1.5) {$11$};
	\node at (1.5,-0.3) {$12$};	
	} \quad 
	+ \quad \tikzfig{
	\draw (-2,-2) rectangle (2,2);
	\node [vertex] (co) at (-0.3,-0.5) {$\co$};
	\node [rectangle,draw,thick] (eta) at (-0.3,1) {$\eta$};
	\node [vertex] (alphaL) at (-1.2,-1) {I};
	\node [bullet] (midL) at (-1.2,0) {};
	\node [bullet] (midR) at (1.3,0) {};
	\node [bullet] (topR) at (0.8,0.7) {};
	\node [vertex] (alphaR) at (1.3,-1) {II};
	\node [vertex] (phi) at (0.3,-0.2) {$\varphi$};
	\draw [->-,cyan] (co) to (eta);
	\draw [->-] (co) to (-0.3,-2);
	\draw [->-] (-0.3,2) to (eta);
	\draw [->-,bend right=45] (eta) to (midL);
	\draw [->-,bend left=30] (eta) to (topR);
	\draw [->-] (alphaL) to (midL);
	\draw [->-] (-1.2,2) to (eta.north west);
	\draw [-w-] (alphaL) to (-1.2,-2);
	\draw [-w-] (alphaR) to (midR);
	\draw [->-] (phi) to (topR);
	\draw [->-] (topR) to (midR);
	\draw [->-] (alphaR) to (1.3,-2);
	\draw [->-] (1,2) to (eta.north east);
	\draw [->-] (midR) to (2,0);
	\draw [->-] (midL) to (-2,0);
	\node at (-0.6,-1.2) {$1$};
	\node at (-0.6,0) {$ 2$};
	\node at (-1.4,0.7) {$ 3$};
	\node at (0.5,1.2) {$ 4$};
	\node at (-1.5,-1.5) {$ 5$};
	\node at (-1.5,-0.5) {$ 6$};
	\node at (1.2,0.5) {$ 7$};
	\node at (0.3,0.4) {$ 8$};
	\node at (-1.6,0.3) {$ 9$};
	\node at (0.9,-0.5) {$10$};
	\node at (0.9,-1.5) {$11$};
	\node at (1.7,0.3) {$12$};
	}\\
	&+ \quad \tikzfig{
	\draw (-2,-2) rectangle (2,2);
	\node [vertex] (co) at (0,0) {$\co$};
	\node [rectangle,draw,thick] (eta) at (0,1) {$\eta$};
	\node [vertex] (alphaL) at (-1,-1) {I};
	\node [vertex] (alphaR) at (1,-1) {II};
	\node [bullet] (topL) at (-0.9,0.5) {};
	\node [bullet] (midL) at (-1,0) {};
	\node [vertex] (phi) at (0,-1.2) {$\varphi$};
	\node [bullet] (midR) at (1,0) {};
	\draw [->-,cyan] (co) to (eta);
	\draw [->-] (co) to (topL);
	\draw [->-,bend right=30] (eta) to (topL);
	\draw [->-,bend left=45] (eta) to (midR);
	\draw [->-] (alphaL) to (midL);
	\draw [->-=.9] (midL) to (-2,0);
	\draw [->-] (topL) to (midL);
	\draw [->-=.9] (alphaL) to (phi);
	\draw [->-=1] (phi) to (0,-2);
	\draw [-w-] (alphaR) to (midR);
	\draw [->-] (alphaR) to (phi);
	\draw [->-] (0,2) to (eta);
	\draw [->-] (midR) to (2,0);
	\node at (-0.5,0) {$1$};
	\node at (0.3,0.5) {$ 2$};
	\node at (-0.45,1.2) {$ 3$};
	\node at (1,1) {$ 4$};
	\node at (-0.5,-1.4) {$5$};
	\node at (-0.7,-0.5) {$6$};
	\node at (-1.2,0.3) {$7$};
	\node at (-0.3,-1.7) {$8$};
	\node at (-1.5,-0.3) {$ 9$};
	\node at (0.7,-0.5) {$10$};
	\node at (0.5,-1.4) {$11$};
	\node at (1.5,-0.3) {$12$};
} \quad 
	+ \quad \tikzfig{
	\draw (-2,-2) rectangle (2,2);
	\node [vertex] (co) at (-0,-0.5) {$\co$};
	\node [rectangle,draw,thick] (eta) at (-0,1) {$\eta$};
	\node [vertex] (alphaL) at (-1.2,-1.3) {I};
	\node [bullet] (midL) at (-1.2,0.3) {};
	\node [bullet] (botL) at (-1.2,-0.5) {};
	\node [vertex] (alphaR) at (1.2,-1) {II};
	\node [vertex] (phi) at (1.2,1) {$\varphi$};
	\node [bullet] (midR) at (1.2,0) {};
	\draw [->-,cyan] (co) to (eta);
	\draw [->-] (co) to (botL);
	\draw [->-,bend right=45] (eta) to (midL);
	\draw [->-] (eta) to (phi);
	\draw [->-] (alphaL) to (botL);
	\draw [->-] (botL) to (midL);
	\draw [-w-] (alphaL) to (-1.2,-2);
	\draw [-w-] (alphaR) to (midR);
	\draw [->-] (alphaR) to (1.2,-2);
	\draw [->-] (1.2,2) to (phi);
	\draw [->-,rounded corners=5] (-1.2,2) |- ++(0,-0.5) -| (eta);
	\draw [->-] (midR) to (2,0);
	\draw [->-] (phi) to (midR);
	\draw [->-] (midL) to (-2,0.3);
	\node at (-0.5,-0.8) {$1$};
	\node at (0.3,0.2) {$ 2$};
	\node at (-0.8,0.7) {$ 3$};
	\node at (0.5,1.3) {$ 4$};
	\node at (-1.5,-1.7) {$ 5$};
	\node at (-1.5,-0.8) {$ 6$};
	\node at (-1.5,0) {$7$};
	\node at (0.9,0.5) {$8$};
	\node at (-1.6,0.6) {$ 9$};
	\node at (0.9,-0.5) {$10$};
	\node at (0.9,-1.5) {$11$};
	\node at (1.7,-0.3) {$12$};
} \quad 
	+ \quad \tikzfig{
	\draw (-2,-2) rectangle (2,2);
	\node [vertex] (co) at (0,0) {$\co$};
	\node [rectangle,draw,thick] (eta) at (0,1) {$\eta$};
	\node [vertex] (alphaL) at (0,-1.4) {I};
	\node [vertex] (alphaR) at (1,0) {II};
	\node [bullet] (topL) at (-0.8,0.5) {};
	\node [bullet] (midL) at (-0.8,-0.3) {};
	\node [vertex] (phi) at (-1.4,-0.3) {$\varphi$};
	\node [bullet] (midC) at (0,-0.6) {};
	\draw [->-,cyan] (co) to (eta);
	\draw [->-] (co) to (topL);
	\draw [->-,bend right=30] (eta) to (topL);
	\draw [->-,bend left=45] (eta) to (alphaR);
	\draw [->-] (midC) to (midL);
	\draw [->-=.9] (midL) to (phi);
	\draw [->-] (topL) to (midL);
	\draw [-w-=.9] (alphaL) to (0,-2);
	\draw [->-=1] (phi) to (-2,-0.3);
	\draw [->-] (alphaR) to (midC);
	\draw [->-] (alphaL) to (midC);
	\draw [->-] (0,2) to (eta);
	\draw [-w-] (alphaR) to (2,0);
	\node at (-0.5,0) {$1$};
	\node at (0.3,0.5) {$ 2$};
	\node at (-0.45,0.65) {$ 3$};
	\node at (1,1) {$ 4$};
	\node at (-0.3,1.7) {$5$};
	\node at (-0.5,-0.7) {$6$};
	\node at (-1.1,0.3) {$7$};
	\node at (-1.1,-0.6) {$8$};
	\node at (-1.8,-0.6) {$ 9$};
	\node at (-0.3,-1) {$10$};
	\node at (0.5,-0.6) {$11$};
	\node at (1.5,-0.3) {$12$};}\\
	&+ \quad \tikzfig{
	\draw (-2,-2) rectangle (2,2);
	\node [vertex] (co) at (-0.3,-0.5) {$\co$};
	\node [rectangle,draw,thick] (eta) at (-0.3,1) {$\eta$};
	\node [vertex] (alphaL) at (-1.2,-1.2) {I};
	\node [vertex] (alphaR) at (1,0) {II};
	\node [bullet] (topL) at (-1.2,0.3) {};
	\node [bullet] (midL) at (-1.2,-0.5) {};
	\node [bullet] (midR) at (1,1) {};
	\node [vertex] (phi) at (1,-1) {$\varphi$};
	\draw [->-,cyan] (co) to (eta);
	\draw [->-] (co) to (midL);
	\draw [->-] (1.2,2) to (eta.north east);
	\draw [->-,bend right=45] (eta) to (topL);
	\draw [->-] (topL) to (midL);
	\draw [->-] (eta) to (midR);
	\draw [->-] (alphaL) to (midL);
	\draw [-w-] (alphaL) to (-1.2,-2);
	\draw [-w-] (alphaR) to (midR);
	\draw [->-] (alphaR) to (phi);
	\draw [->-] (botR) to (1,-2);
	\draw [->-,rounded corners=5] (-1.2,2) -- ++(0,-0.5) -| (eta);
	\draw [->-] (midR) to (2,1);
	\draw [->-] (topL) to (-2,0.3);
	\node at (-0.6,-1.2) {$1$};
	\node at (-0.6,0) {$ 2$};
	\node at (-0.8,0.7) {$ 3$};
	\node at (0.5,0.7) {$ 4$};
	\node at (-1.5,-1.5) {$ 5$};
	\node at (-1.5,-0.8) {$ 6$};
	\node at (1.3,-1.5) {$ 7$};
	\node at (-1.5,-0.2) {$ 8$};
	\node at (-1.6,0.6) {$ 9$};
	\node at (1.3,0.5) {$10$};
	\node at (1.3,-0.5) {$11$};
	\node at (1.5,1.3) {$12$};
	} \quad 
	+ \quad \tikzfig{
	\draw (-2,-2) rectangle (2,2);
	\node [vertex] (co) at (0,0) {$\co$};
	\node [rectangle,draw,thick] (eta) at (0,1) {$\eta$};
	\node [vertex] (alphaL) at (0,-1.2) {I};
	\node [vertex] (alphaR) at (1,-1) {II};
	\node [bullet] (topL) at (-1,0.5) {};
	\node [bullet] (midL) at (-1,-0.3) {};
	\node [vertex] (phi) at (-1.2,1.5) {$\varphi$};
	\node [bullet] (midR) at (1,0) {};
	\draw [->-,cyan] (co) to (eta);
	\draw [->-] (co) to (topL);
	\draw [->-,bend right=30] (eta) to (topL);
	\draw [->-,bend left=45] (eta) to (midR);
	\draw [->-] (alphaL) to (midL);
	\draw [->-=.9] (midL) to (-2,-0.3);
	\draw [->-] (topL) to (midL);
	\draw [-w-=.9] (alphaL) to (0,-2);
	\draw [->-=1] (phi) to (eta.north west);
	\draw [-w-] (alphaR) to (midR);
	\draw [->-] (alphaR) to (alphaL);
	\draw [->-] (0,2) to (eta);
	\draw [->-] (midR) to (2,0);
	\node at (-0.5,0) {$1$};
	\node at (0.3,0.5) {$ 2$};
	\node at (-0.45,0.65) {$ 3$};
	\node at (1,1) {$ 4$};
	\node at (-0.3,-1.7) {$5$};
	\node at (-0.7,-1) {$6$};
	\node at (-1.3,0) {$7$};
	\node at (-0.5,1.6) {$8$};
	\node at (-1.5,-0.6) {$ 9$};
	\node at (0.7,-0.5) {$10$};
	\node at (0.5,-1.3) {$11$};
	\node at (1.5,-0.3) {$12$};
	} \quad 
	+ \quad \tikzfig{
	\draw (-2,-2) rectangle (2,2);
	\node [vertex] (co) at (0,0) {$\co$};
	\node [rectangle,draw,thick] (eta) at (0,1) {$\eta$};
	\node [vertex] (alphaL) at (0,-0.7) {I};
	\node [vertex] (alphaR) at (1,-0.5) {II};
	\node [bullet] (topL) at (-1,0.5) {};
	\node [bullet] (midL) at (-1,-0.3) {};
	\node [vertex] (phi) at (0,-1.5) {$\varphi$};
	\node [bullet] (midR) at (1,0.3) {};
	\draw [->-,cyan] (co) to (eta);
	\draw [->-] (co) to (topL);
	\draw [->-,bend right=30] (eta) to (topL);
	\draw [->-,bend left=45] (eta) to (midR);
	\draw [->-] (alphaL) to (midL);
	\draw [->-=.9] (midL) to (-2,-0.3);
	\draw [->-] (topL) to (midL);
	\draw [-w-=.9] (alphaL) to (phi);
	\draw [->-=1] (phi) to (0,-2);
	\draw [-w-] (alphaR) to (midR);
	\draw [->-] (alphaR) to (alphaL);
	\draw [->-] (0,2) to (eta);
	\draw [->-] (midR) to (2,0.3);
	\node at (-0.5,0) {$1$};
	\node at (0.3,0.5) {$ 2$};
	\node at (-0.45,0.65) {$ 3$};
	\node at (1,1) {$ 4$};
	\node at (-0.3,-1) {$5$};
	\node at (-0.7,-0.8) {$6$};
	\node at (-1.3,0) {$7$};
	\node at (-0.5,1.6) {$8$};
	\node at (-1.5,-0.6) {$ 9$};
	\node at (0.7,0) {$10$};
	\node at (0.5,-0.8) {$11$};
	\node at (1.5,0) {$12$};
	}\\
	&+ \quad \tikzfig{
	\draw (-2,-2) rectangle (2,2);
	\node [vertex] (co) at (-0.3,-0.5) {$\co$};
	\node [rectangle,draw,thick] (eta) at (-0.3,1) {$\eta$};
	\node [vertex] (alphaL) at (-1.2,-1) {I};
	\node [bullet] (midL) at (-1.2,0) {};
	\node [bullet] (midR) at (1.3,0) {};
	\node [bullet] (topR) at (0.8,0.7) {};
	\node [vertex] (alphaR) at (1.3,-1) {II};
	\node [vertex] (phi) at (0.3,-0.2) {$\varphi$};
	\draw [->-,cyan] (co) to (eta);
	\draw [->-] (co) to (alphaL);
	\draw [->-,rounded corners=5] (-1.2,2) |- ++(0,-0.5) -| (eta);
	\draw [->-,bend right=45] (eta) to (midL);
	\draw [->-,bend left=30] (eta) to (topR);
	\draw [->-] (alphaL) to (midL);
	\draw [-w-] (alphaL) to (-1.2,-2);
	\draw [-w-] (alphaR) to (midR);
	\draw [->-] (phi) to (topR);
	\draw [->-] (topR) to (midR);
	\draw [->-] (alphaR) to (1.3,-2);
	\draw [->-] (1,2) to (eta.north east);
	\draw [->-] (midR) to (2,0);
	\draw [->-] (midL) to (-2,0);
	\node at (-0.6,-1.2) {$1$};
	\node at (-0.6,0) {$ 2$};
	\node at (-1.4,0.7) {$ 3$};
	\node at (0.5,1.2) {$ 4$};
	\node at (-1.5,-1.5) {$ 5$};
	\node at (-1.5,-0.5) {$ 6$};
	\node at (1.2,0.5) {$ 7$};
	\node at (0.3,0.4) {$ 8$};
	\node at (-1.6,0.3) {$ 9$};
	\node at (0.9,-0.5) {$10$};
	\node at (0.9,-1.5) {$11$};
	\node at (1.7,0.3) {$12$};
	} \quad 
	+ \quad \tikzfig{
	\draw (-2,-2) rectangle (2,2);
	\node [vertex] (co) at (0,0) {$\co$};
	\node [rectangle,draw,thick] (eta) at (0,1) {$\eta$};
	\node [vertex] (alphaL) at (0,-1.4) {I};
	\node [vertex] (alphaR) at (1,0) {II};
	\node [bullet] (topL) at (-1,0.5) {};
	\node [bullet] (midL) at (-1,-0.3) {};
	\node [vertex] (phi) at (-1.2,1.5) {$\varphi$};
	\node [bullet] (midC) at (0,-0.6) {};
	\draw [->-,cyan] (co) to (eta);
	\draw [->-] (co) to (topL);
	\draw [->-,bend right=30] (eta) to (topL);
	\draw [->-,bend left=45] (eta) to (alphaR);
	\draw [->-] (alphaL) to (midC);
	\draw [->-] (midC) to (midL);
	\draw [->-=.9] (midL) to (-2,-0.3);
	\draw [->-] (topL) to (midL);
	\draw [-w-=.9] (alphaL) to (0,-2);
	\draw [->-=1] (phi) to (eta.north west);
	\draw [->-] (alphaR) to (midC);
	\draw [->-] (0,2) to (eta);
	\draw [-w-] (alphaR) to (2,0);
	\node at (-0.5,0) {$1$};
	\node at (0.3,0.5) {$ 2$};
	\node at (-0.45,0.65) {$ 3$};
	\node at (1,1) {$ 4$};
	\node at (-0.3,-1.7) {$5$};
	\node at (-0.7,-0.6) {$6$};
	\node at (-1.3,0) {$7$};
	\node at (-0.5,1.6) {$8$};
	\node at (-1.5,-0.6) {$ 9$};
	\node at (-0.3,-1) {$10$};
	\node at (0.6,-0.5) {$11$};
	\node at (1.5,-0.3) {$12$};
	} \quad 
	- \quad \tikzfig{
	\draw (-2,-2) rectangle (2,2);
	\node [vertex] (co) at (0,-0.5) {$\co$};
	\node [rectangle,draw,thick] (eta) at (0,1) {$\eta$};
	\node [vertex] (alphaL) at (-1,-1.2) {I};
	\node [vertex] (alphaR) at (1,-1) {II};
	\node [bullet] (topL) at (-1,1) {};
	\node [bullet] (midL) at (-1,-0.2) {};
	\node [vertex] (phi) at (1,0) {$\varphi$};
	\node [bullet] (topR) at (1,1) {};
	\draw [->-,cyan] (co) to (eta);
	\draw [->-] (co) to (0,-2);
	\draw [->-] (eta) to (topL);
	\draw [->-] (eta) to (topR);
	\draw [->-] (topL) to (midL);
	\draw [->-] (alphaL) to (midL);
	\draw [->-] (-1,2) to (topL);
	\draw [-w-] (alphaL) to (-1,-2);
	\draw [->-] (midL) to (-2,-0.2);
	\draw [-w-] (alphaR) to (phi);
	\draw [->-] (phi) to (topR);
	\draw [->-] (alphaR) to (1,-2);
	\draw [->-] (1,2) to (eta.north east);
	\draw [->-] (topR) to (2,1);
	\draw [->-] (0,2) to (eta);
	\node at (0.3,-1.4) {$1$};
	\node at (0.3,0) {$2$};
	\node at (-0.6,0.7) {$3$};
	\node at (0.6,0.7) {$4$};
	\node at (-0.7,-1.7) {$5$};
	\node at (-0.7,-0.7) {$6$};
	\node at (-1.3,0.4) {$7$};
	\node at (1.3,0.5) {$8$};
	\node at (-1.6,-0.5) {$ 9$};
	\node at (0.7,-0.5) {$10$};
	\node at (0.7,-1.5) {$11$};
	\node at (1.5,1.3) {$12$};
	}\\
	&- \quad \tikzfig{
	\draw (-2,-2) rectangle (2,2);
	\node [vertex] (co) at (-0.3,-0.5) {$\co$};
	\node [rectangle,draw,thick] (eta) at (-0.3,1) {$\eta$};
	\node [vertex] (alphaL) at (-1.2,-1) {I};
	\node [bullet] (midL) at (-1.2,0) {};
	\node [bullet] (midR) at (1.3,0) {};
	\node [bullet] (topR) at (0.8,0.7) {};
	\node [vertex] (alphaR) at (1.3,-1) {II};
	\node [vertex] (phi) at (0.3,-0.2) {$\varphi$};
	\draw [->-,cyan] (co) to (eta);
	\draw [->-] (co) to (-0.3,-2);
	\draw [->-,bend right=45] (-0.3,2) to (eta.north west);
	\draw [->-,bend right=45] (eta) to (midL);
	\draw [->-,bend left=30] (eta) to (topR);
	\draw [->-] (alphaL) to (midL);
	\draw [->-,bend right=45] (-1.2,2) to (eta.north west);
	\draw [-w-] (alphaL) to (-1.2,-2);
	\draw [-w-] (alphaR) to (midR);
	\draw [->-] (phi) to (topR);
	\draw [->-] (topR) to (midR);
	\draw [->-] (alphaR) to (1.3,-2);
	\draw [->-,rounded corners=5] (1.3,2) |- ++(0,-0.5) -| (eta);
	\draw [->-] (midR) to (2,0);
	\draw [->-] (midL) to (-2,0);
	\node at (-0.6,-1.2) {$1$};
	\node at (-0.6,0) {$ 2$};
	\node at (-1.4,0.7) {$ 3$};
	\node at (0.5,1.2) {$ 4$};
	\node at (-1.5,-1.5) {$ 5$};
	\node at (-1.5,-0.5) {$ 6$};
	\node at (1.2,0.5) {$ 7$};
	\node at (0.3,0.4) {$ 8$};
	\node at (-1.6,0.3) {$ 9$};
	\node at (0.9,-0.5) {$10$};
	\node at (0.9,-1.5) {$11$};
	\node at (1.7,0.3) {$12$};} \quad 
	+ \quad \tikzfig{
	\draw (-2,-2) rectangle (2,2);
	\node [vertex] (co) at (-0.3,-0.5) {$\co$};
	\node [rectangle,draw,thick] (eta) at (-0.3,1) {$\eta$};
	\node [vertex] (alphaL) at (-1.2,-1.2) {I};
	\node [bullet] (topL) at (-1.2,0.3) {};
	\node [bullet] (midL) at (-1.2,-0.5) {};
	\node [bullet] (midR) at (1.3,0) {};
	\node [vertex] (alphaR) at (1.3,-1) {II};
	\node [vertex] (phi) at (0.8,0.7) {$\varphi$};
	\draw [->-,cyan] (co) to (eta);
	\draw [->-] (co) to (midL);
	\draw [->-] (1.2,2) to (eta.north east);
	\draw [->-,bend right=45] (eta) to (topL);
	\draw [->-] (topL) to (midL);
	\draw [->-,bend left=10] (eta) to (phi);
	\draw [->-] (alphaL) to (midL);
	\draw [->-] (phi) to (midR);
	\draw [-w-] (alphaL) to (-1.2,-2);
	\draw [-w-] (alphaR) to (midR);
	\draw [->-] (alphaR) to (1.3,-2);
	\draw [->-,rounded corners=5] (-1.2,2) -- ++(0,-0.5) -| (eta);
	\draw [->-] (midR) to (2,0);
	\draw [->-] (topL) to (-2,0.3);
	\node at (-0.6,-1.2) {$1$};
	\node at (-0.6,0) {$ 2$};
	\node at (-0.8,0.7) {$ 3$};
	\node at (0.2,0.7) {$ 4$};
	\node at (-1.5,-1.5) {$ 5$};
	\node at (-1.5,-0.8) {$ 6$};
	\node at (1,-1.5) {$11$};
	\node at (-1.5,-0.2) {$7$};
	\node at (-1.6,0.6) {$ 9$};
	\node at (1.3,0.5) {$8$};
	\node at (1,-0.5) {$10$};
	\node at (1.6,-0.3) {$12$};
	} \quad 
	+ \quad \tikzfig{
	\draw (-2,-2) rectangle (2,2);
	\node [vertex] (co) at (0,-0.1) {$\co$};
	\node [rectangle,draw,thick] (eta) at (0,0.8) {$\eta$};
	\node [vertex] (alphaL) at (0,-1.4) {I};
	\node [vertex] (alphaR) at (1,0) {II};
	\node [bullet] (topL) at (-1,0.5) {};
	\node [bullet] (midL) at (-1,-0.3) {};
	\node [vertex] (phi) at (0,1.6) {$\varphi$};
	\node [bullet] (midC) at (0,-0.6) {};
	\draw [->-,cyan] (co) to (eta);
	\draw [->-] (co) to (topL);
	\draw [->-,bend right=30] (eta) to (topL);
	\draw [->-,bend left=45] (eta) to (alphaR);
	\draw [->-] (alphaL) to (midC);
	\draw [->-] (midC) to (midL);
	\draw [->-=.9] (midL) to (-2,-0.3);
	\draw [->-] (topL) to (midL);
	\draw [-w-=.9] (alphaL) to (0,-2);
	\draw [->-] (alphaR) to (midC);
	\draw [->-] (0,2) to (phi);
	\draw [->-] (phi) to (eta);
	\draw [-w-] (alphaR) to (2,0);
	\node at (-0.5,0) {$1$};
	\node at (0.3,0.4) {$ 2$};
	\node at (-0.45,0.65) {$ 3$};
	\node at (1,1) {$ 4$};
	\node at (-0.3,-1.7) {$5$};
	\node at (-0.7,-0.6) {$6$};
	\node at (-1.3,0) {$7$};
	\node at (-0.3,1.2) {$8$};
	\node at (-1.5,-0.6) {$ 9$};
	\node at (-0.3,-1) {$10$};
	\node at (0.6,-0.5) {$11$};
	\node at (1.5,-0.3) {$12$};}\\
\end{align*}

\begin{align*}
	&- \quad \tikzfig{
	\draw (-2,-2) rectangle (2,2);
	\node [vertex] (co) at (-0,-0.5) {$\co$};
	\node [rectangle,draw,thick] (eta) at (-0,1) {$\eta$};
	\node [vertex] (alphaL) at (-1.2,-1.3) {I};
	\node [bullet] (midL) at (-1.2,0.3) {};
	\node [bullet] (botL) at (-1.2,-0.5) {};
	\node [vertex] (alphaR) at (1.2,-1) {II};
	\node [vertex] (phi) at (1.5,1) {$\varphi$};
	\node [bullet] (midR) at (1.2,0) {};
	\draw [->-,cyan] (co) to (eta);
	\draw [->-] (co) to (botL);
	\draw [->-,bend right=45] (eta) to (midL);
	\draw [->-,bend left=45] (eta) to (midR);
	\draw [->-] (alphaL) to (botL);
	\draw [->-] (botL) to (midL);
	\draw [-w-] (alphaL) to (-1.2,-2);
	\draw [-w-] (alphaR) to (midR);
	\draw [->-] (alphaR) to (midR);
	\draw [->-] (alphaR) to (1.2,-2);
	\draw [->-] (1.2,2) to (eta.north east);
	\draw [->-,rounded corners=5] (-1.2,2) |- ++(0,-0.5) -| (eta);
	\draw [->-] (midR) to (2,0);
	\draw [->-] (phi) to (eta.north east);
	\draw [->-] (midL) to (-2,0.3);
	\node at (-0.5,-0.8) {$1$};
	\node at (0.3,0.2) {$ 2$};
	\node at (-0.8,0.7) {$ 3$};
	\node at (0.5,1.6) {$4$};
	\node at (-1.5,-1.7) {$ 5$};
	\node at (-1.5,-0.8) {$ 6$};
	\node at (-1.5,0) {$7$};
	\node at (0.9,1.3) {$8$};
	\node at (-1.6,0.6) {$ 9$};
	\node at (0.9,-0.5) {$10$};
	\node at (0.9,-1.5) {$11$};
	\node at (1.7,-0.3) {$12$};
	} \quad
	+ \quad \tikzfig{
	\draw (-2,-2) rectangle (2,2);
	\node [vertex] (co) at (0,-0.5) {$\co$};
	\node [rectangle,draw,thick] (eta) at (0,1) {$\eta$};
	\node [vertex] (alphaL) at (-1,-1.2) {I};
	\node [vertex] (alphaR) at (1,-1) {II};
	\node [bullet] (topL) at (-1,1) {};
	\node [bullet] (midL) at (-1,-0.2) {};
	\node [vertex] (phi) at (1,0) {$\varphi$};
	\node [bullet] (topR) at (1,1) {};
	\draw [->-,cyan] (co) to (eta);
	\draw [->-] (co) to (0,-2);
	\draw [->-] (eta) to (topL);
	\draw [->-] (eta) to (topR);
	\draw [->-] (topL) to (midL);
	\draw [->-] (alphaL) to (midL);
	\draw [->-] (-1,2) to (topL);
	\draw [-w-] (alphaL) to (-1,-2);
	\draw [->-] (midL) to (-2,-0.2);
	\draw [-w-] (alphaR) to (phi);
	\draw [->-] (phi) to (topR);
	\draw [->-] (alphaR) to (1,-2);
	\draw [->-,rounded corners=5] (1,2) |- ++(0,-0.5) -| (eta);
	\draw [->-] (topR) to (2,1);
	\draw [->-,bend right=45] (0,2) to (eta.north west);
	\node at (0.3,-1.4) {$1$};
	\node at (0.3,0) {$2$};
	\node at (-0.6,0.7) {$3$};
	\node at (0.6,0.7) {$4$};
	\node at (-0.7,-1.7) {$5$};
	\node at (-0.7,-0.7) {$6$};
	\node at (-1.3,0.4) {$7$};
	\node at (1.3,0.5) {$8$};
	\node at (-1.6,-0.5) {$ 9$};
	\node at (0.7,-0.5) {$10$};
	\node at (0.7,-1.5) {$11$};
	\node at (1.5,1.3) {$12$};
	} \quad 
	+ \quad \tikzfig{
	\draw (-2,-2) rectangle (2,2);
	\node [vertex] (co) at (0,-0.5) {$\co$};
	\node [rectangle,draw,thick] (eta) at (0,1) {$\eta$};
	\node [vertex] (alphaL) at (-1,-1.2) {I};
	\node [vertex] (alphaR) at (1,-1) {II};
	\node [bullet] (topL) at (-1,1) {};
	\node [bullet] (midL) at (-1,-0.2) {};
	\node [vertex] (phi) at (1,0) {$\varphi$};
	\node [bullet] (topR) at (1,1) {};
	\draw [->-,cyan] (co) to (eta);
	\draw [->-] (co) to (alphaL);
	\draw [->-] (eta) to (topL);
	\draw [->-] (eta) to (topR);
	\draw [->-] (topL) to (midL);
	\draw [->-] (alphaL) to (midL);
	\draw [->-] (-1,2) to (topL);
	\draw [-w-] (alphaL) to (-1,-2);
	\draw [->-] (midL) to (-2,-0.2);
	\draw [-w-] (alphaR) to (phi);
	\draw [->-] (phi) to (topR);
	\draw [->-] (alphaR) to (1,-2);
	\draw [->-,rounded corners=5] (1,2) |- ++(0,-0.5) -| (eta);
	\draw [->-] (topR) to (2,1);
	\node at (-0.5,-1) {$1$};
	\node at (0.3,0) {$2$};
	\node at (-0.6,0.7) {$3$};
	\node at (0.6,0.7) {$4$};
	\node at (-0.7,-1.7) {$5$};
	\node at (-1.3,-0.7) {$6$};
	\node at (-1.3,0.4) {$7$};
	\node at (1.3,0.5) {$8$};
	\node at (-1.6,-0.5) {$ 9$};
	\node at (0.7,-0.5) {$10$};
	\node at (0.7,-1.5) {$11$};
	\node at (1.5,1.3) {$12$};
	}\\
	&- \quad \tikzfig{
	\draw (-2,-2) rectangle (2,2);
	\node [vertex] (co) at (-0.3,-0.5) {$\co$};
	\node [rectangle,draw,thick] (eta) at (-0.3,1) {$\eta$};
	\node [vertex] (alphaL) at (-1.2,-1) {I};
	\node [bullet] (midL) at (-1.2,0) {};
	\node [bullet] (midR) at (1.3,0) {};
	\node [bullet] (topR) at (0.8,0.7) {};
	\node [vertex] (alphaR) at (1.3,-1) {II};
	\node [vertex] (phi) at (0.3,-0.2) {$\varphi$};
	\draw [->-,cyan] (co) to (eta);
	\draw [->-] (co) to (alphaL);
	\draw [->-,bend right=45] (eta) to (midL);
	\draw [->-,bend left=30] (eta) to (topR);
	\draw [->-] (alphaL) to (midL);
	\draw [->-,rounded corners=5] (1.2,2) |- ++(0,-0.5) -| (eta);
	\draw [-w-] (alphaL) to (-1.2,-2);
	\draw [-w-] (alphaR) to (midR);
	\draw [->-] (phi) to (topR);
	\draw [->-] (topR) to (midR);
	\draw [->-] (alphaR) to (1.3,-2);
	\draw [->-] (-1,2) to (eta.north west);
	\draw [->-] (midR) to (2,0);
	\draw [->-] (midL) to (-2,0);
	\node at (-0.6,-1.2) {$1$};
	\node at (-0.6,0) {$ 2$};
	\node at (-1.4,0.7) {$ 3$};
	\node at (0.5,1.2) {$ 4$};
	\node at (-1.5,-1.5) {$ 5$};
	\node at (-1.5,-0.5) {$ 6$};
	\node at (1.2,0.5) {$ 7$};
	\node at (0.3,0.4) {$ 8$};
	\node at (-1.6,0.3) {$ 9$};
	\node at (0.9,-0.5) {$10$};
	\node at (0.9,-1.5) {$11$};
	\node at (1.7,0.3) {$12$};} \quad 
	+ \quad \tikzfig{
	\draw (-2,-2) rectangle (2,2);
	\node [vertex] (co) at (-0,-0.5) {$\co$};
	\node [rectangle,draw,thick] (eta) at (-0,1) {$\eta$};
	\node [vertex] (alphaL) at (-1.2,-1.3) {I};
	\node [bullet] (midL) at (-1.2,0.3) {};
	\node [bullet] (botL) at (-1.2,-0.5) {};
	\node [vertex] (alphaR) at (1.2,-1) {II};
	\node [vertex] (phi) at (1.2,0) {$\varphi$};
	\node [bullet] (topR) at (1.2,1) {};
	\draw [->-,cyan] (co) to (eta);
	\draw [->-] (co) to (botL);
	\draw [->-,bend right=45] (eta) to (midL);
	\draw [->-] (eta) to (topR);
	\draw [->-] (alphaL) to (botL);
	\draw [->-] (botL) to (midL);
	\draw [-w-] (alphaL) to (-1.2,-2);
	\draw [-w-] (alphaR) to (phi);
	\draw [->-] (phi) to (topR);
	\draw [->-] (alphaR) to (1.2,-2);
	\draw [->-] (-1.2,2) to (eta.north west);
	\draw [->-,rounded corners=5] (1,2) |- ++(0,-0.5) -| (eta);
	\draw [->-] (topR) to (2,1);
	\draw [->-] (midL) to (-2,0.3);
	\node at (-0.5,-0.8) {$1$};
	\node at (-0.3,0.2) {$ 2$};
	\node at (-0.8,0.6) {$ 3$};
	\node at (0.6,0.7) {$4$};
	\node at (-1.5,-1.7) {$ 5$};
	\node at (-1.5,-0.8) {$ 6$};
	\node at (-1.5,-0.2) {$7$};
	\node at (1.5,0.5) {$8$};
	\node at (-1.6,0.6) {$ 9$};
	\node at (1.5,-0.5) {$10$};
	\node at (1.5,-1.5) {$11$};
	\node at (1.7,1.3) {$12$};
	} \quad 
	- \quad \tikzfig{
	\draw (-2,-2) rectangle (2,2);
	\node [vertex] (co) at (0,0) {$\co$};
	\node [rectangle,draw,thick] (eta) at (0,1) {$\eta$};
	\node [vertex] (alphaL) at (-1,-1) {I};
	\node [vertex] (alphaR) at (1,-1) {II};
	\node [bullet] (topL) at (-0.9,0.5) {};
	\node [bullet] (midL) at (-1,0) {};
	\node [bullet] (botC) at (0,-1.2) {};
	\node [vertex] (phi) at (1,0) {$\varphi$};
	\draw [->-,cyan] (co) to (eta);
	\draw [->-] (co) to (topL);
	\draw [->-,bend right=30] (eta) to (topL);
	\draw [->-,bend left=45] (eta) to (phi);
	\draw [->-] (alphaL) to (midL);
	\draw [->-=.9] (midL) to (-2,0);
	\draw [->-] (topL) to (midL);
	\draw [-w-=.9] (alphaL) to (botC);
	\draw [->-=1] (botC) to (0,-2);
	\draw [-w-] (alphaR) to (phi);
	\draw [->-] (alphaR) to (botC);
	\draw [->-] (0,2) to (eta);
	\draw [->-] (phi) to (2,0);
	\node at (-0.5,0) {$1$};
	\node at (0.3,0.5) {$ 2$};
	\node at (-0.45,1.2) {$ 3$};
	\node at (1,1) {$ 4$};
	\node at (-0.5,-1.4) {$5$};
	\node at (-0.7,-0.5) {$6$};
	\node at (-1.2,0.3) {$7$};
	\node at (-0.3,-1.7) {$8$};
	\node at (-1.5,-0.3) {$ 9$};
	\node at (0.7,-0.5) {$10$};
	\node at (0.5,-1.4) {$11$};
	\node at (1.5,-0.3) {$12$};
	d}\\
	&+ \quad \tikzfig{
	\draw (-2,-2) rectangle (2,2);
	\node [vertex] (co) at (0,0) {$\co$};
	\node [rectangle,draw,thick] (eta) at (0,1) {$\eta$};
	\node [vertex] (alphaL) at (0,-1.2) {I};
	\node [vertex] (alphaR) at (1,-1) {II};
	\node [bullet] (topL) at (-1,0.5) {};
	\node [bullet] (midL) at (-1,-0.3) {};
	\node [vertex] (phi) at (1.2,1.5) {$\varphi$};
	\node [bullet] (midR) at (1,0) {};
	\draw [->-,cyan] (co) to (eta);
	\draw [->-] (co) to (topL);
	\draw [->-,bend right=30] (eta) to (topL);
	\draw [->-,bend left=45] (eta) to (midR);
	\draw [->-] (alphaL) to (midL);
	\draw [->-=.9] (midL) to (-2,-0.3);
	\draw [->-] (topL) to (midL);
	\draw [-w-=.9] (alphaL) to (0,-2);
	\draw [->-] (phi) to (eta.north east);
	\draw [-w-] (alphaR) to (midR);
	\draw [->-] (alphaR) to (alphaL);
	\draw [->-] (0,2) to (eta);
	\draw [->-] (midR) to (2,0);
	\node at (-0.5,0) {$1$};
	\node at (0.3,0.5) {$ 2$};
	\node at (-0.45,0.65) {$ 3$};
	\node at (1,1) {$ 4$};
	\node at (-0.3,-1.7) {$5$};
	\node at (-0.7,-1) {$6$};
	\node at (-1.3,0) {$7$};
	\node at (0.5,1.6) {$8$};
	\node at (-1.5,-0.6) {$ 9$};
	\node at (0.7,-0.5) {$10$};
	\node at (0.5,-1.3) {$11$};
	\node at (1.5,-0.3) {$12$};
	} \quad 
	+ \quad \tikzfig{
	\draw (-2,-2) rectangle (2,2);
	\node [vertex] (co) at (0,0) {$\co$};
	\node [rectangle,draw,thick] (eta) at (0,1) {$\eta$};
	\node [vertex] (alphaL) at (0,-1.2) {I};
	\node [vertex] (alphaR) at (0.8,-0.8) {II};
	\node [bullet] (topL) at (-1,0.5) {};
	\node [bullet] (midL) at (-1,-0.3) {};
	\node [vertex] (phi) at (1.5,0) {$\varphi$};
	\node [bullet] (midR) at (0.8,0) {};
	\draw [->-,cyan] (co) to (eta);
	\draw [->-] (co) to (topL);
	\draw [->-,bend right=30] (eta) to (topL);
	\draw [->-,bend left=30] (eta) to (midR);
	\draw [->-] (alphaL) to (midL);
	\draw [->-=.9] (midL) to (-2,-0.3);
	\draw [->-] (topL) to (midL);
	\draw [-w-=.9] (alphaL) to (0,-2);
	\draw [-w-] (alphaR) to (midR);
	\draw [->-] (alphaR) to (alphaL);
	\draw [->-] (0,2) to (eta);
	\draw [->-] (midR) to (phi);
	\draw [->-=1] (phi) to (2,0);
	\node at (-0.5,0) {$1$};
	\node at (0.3,0.5) {$ 2$};
	\node at (-0.45,0.65) {$ 3$};
	\node at (0.7,1) {$ 4$};
	\node at (-0.3,-1.7) {$5$};
	\node at (-0.7,-1) {$6$};
	\node at (-1.3,0) {$7$};
	\node at (1.1,-0.3) {$8$};
	\node at (-1.5,-0.6) {$ 9$};
	\node at (0.5,-0.4) {$10$};
	\node at (0.5,-1.3) {$11$};
	\node at (1.7,-0.5) {$12$};
	} \quad 
	- \quad \tikzfig{
	\draw (-2,-2) rectangle (2,2);
	\node [vertex] (co) at (0,0) {$\co$};
	\node [rectangle,draw,thick] (eta) at (0,1) {$\eta$};
	\node [vertex] (alphaL) at (0,-1.4) {I};
	\node [vertex] (alphaR) at (1,0) {II};
	\node [bullet] (topL) at (-1,0.5) {};
	\node [bullet] (midL) at (-1,-0.3) {};
	\node [vertex] (phi) at (1.2,1.5) {$\varphi$};
	\node [bullet] (midC) at (0,-0.6) {};
	\draw [->-,cyan] (co) to (eta);
	\draw [->-] (co) to (topL);
	\draw [->-,bend right=30] (eta) to (topL);
	\draw [->-,bend left=45] (eta) to (alphaR);
	\draw [->-] (alphaL) to (midC);
	\draw [->-] (midC) to (midL);
	\draw [->-=.9] (midL) to (-2,-0.3);
	\draw [->-] (topL) to (midL);
	\draw [-w-=.9] (alphaL) to (0,-2);
	\draw [->-=1] (phi) to (eta.north east);
	\draw [->-] (alphaR) to (midC);
	\draw [->-] (0,2) to (eta);
	\draw [-w-] (alphaR) to (2,0);
	\node at (-0.5,0) {$1$};
	\node at (0.3,0.5) {$ 2$};
	\node at (-0.45,0.65) {$ 3$};
	\node at (1,1) {$ 4$};
	\node at (-0.3,-1.7) {$5$};
	\node at (-0.7,-0.6) {$6$};
	\node at (-1.3,0) {$7$};
	\node at (0.5,1.6) {$8$};
	\node at (-1.5,-0.6) {$ 9$};
	\node at (-0.3,-1) {$10$};
	\node at (0.6,-0.5) {$11$};
	\node at (1.5,-0.3) {$12$};}\\
	&- \quad \tikzfig{
	\draw (-2,-2) rectangle (2,2);
	\node [vertex] (co) at (0,0) {$\co$};
	\node [rectangle,draw,thick] (eta) at (0,1) {$\eta$};
	\node [vertex] (alphaL) at (0,-1.4) {I};
	\node [vertex] (alphaR) at (1,0) {II};
	\node [bullet] (topL) at (-1,0.5) {};
	\node [bullet] (midL) at (-1,-0.3) {};
	\node [vertex] (phi) at (1.5,1.2) {$\varphi$};
	\node [bullet] (midC) at (0,-0.6) {};
	\draw [->-,cyan] (co) to (eta);
	\draw [->-] (co) to (topL);
	\draw [->-,bend right=30] (eta) to (topL);
	\draw [->-,bend left=45] (eta) to (alphaR);
	\draw [->-] (alphaL) to (midC);
	\draw [->-] (midC) to (midL);
	\draw [->-=.9] (midL) to (-2,-0.3);
	\draw [->-] (topL) to (midL);
	\draw [-w-=.9] (alphaL) to (0,-2);
	\draw [->-=1] (phi) to (alphaR);
	\draw [->-] (alphaR) to (midC);
	\draw [->-] (0,2) to (eta);
	\draw [-w-] (alphaR) to (2,0);
	\node at (-0.5,0) {$1$};
	\node at (0.3,0.5) {$ 2$};
	\node at (-0.45,0.65) {$ 3$};
	\node at (0.6,0.6) {$ 4$};
	\node at (-0.3,-1.7) {$5$};
	\node at (-0.7,-0.6) {$6$};
	\node at (-1.3,0) {$7$};
	\node at (1.5,0.6) {$8$};
	\node at (-1.5,-0.6) {$ 9$};
	\node at (-0.3,-1) {$10$};
	\node at (0.6,-0.5) {$11$};
	\node at (1.5,-0.3) {$12$};
	} \quad 
	- \quad \tikzfig{
	\draw (-2,-2) rectangle (2,2);
	\node [vertex] (co) at (-0.2,0) {$\co$};
	\node [rectangle,draw,thick] (eta) at (-0.2,1) {$\eta$};
	\node [vertex] (alphaL) at (-0.2,-1.4) {I};
	\node [vertex] (alphaR) at (0.6,0) {II};
	\node [bullet] (topL) at (-1.2,0.5) {};
	\node [bullet] (midL) at (-1.2,-0.3) {};
	\node [vertex] (phi) at (1.4,0) {$\varphi$};
	\node [bullet] (midC) at (-0.2,-0.6) {};
	\draw [->-,cyan] (co) to (eta);
	\draw [->-] (co) to (topL);
	\draw [->-,bend right=30] (eta) to (topL);
	\draw [->-,bend left=30] (eta) to (alphaR);
	\draw [->-] (alphaL) to (midC);
	\draw [->-] (midC) to (midL);
	\draw [->-=.9] (midL) to (-2,-0.3);
	\draw [->-] (topL) to (midL);
	\draw [-w-=.9] (alphaL) to (-0.2,-2);
	\draw [->-] (alphaR) to (midC);
	\draw [->-] (0,2) to (eta);
	\draw [-w-] (alphaR) to (phi);
	\draw [->-] (phi) to (2,0);
	\node at (-0.7,0) {$1$};
	\node at (0.1,0.5) {$ 2$};
	\node at (-0.65,0.65) {$ 3$};
	\node at (0.6,0.8) {$ 4$};
	\node at (-0.5,-1.7) {$5$};
	\node at (-0.9,-0.6) {$6$};
	\node at (-1.5,0) {$7$};
	\node at (1,-0.3) {$8$};
	\node at (-1.7,-0.6) {$ 9$};
	\node at (-0.5,-1) {$10$};
	\node at (0.4,-0.5) {$11$};
	\node at (1.7,-0.3) {$12$};
	} \quad 
	+ \quad \tikzfig{
	\draw (-2,-2) rectangle (2,2);
	\node [vertex] (alphaL) at (-0,1.2) {I};
	\node [vertex] (alphaR) at (1,-1) {II};
	\node [vertex] (midL) at (0,0) {$\beta$};
	\node [bullet] (botC) at (0,-1) {};
	\node [bullet] (botL) at (-0.6,-1) {};
	\node [vertex] (phi) at (-1.4,-1) {$\varphi$};
	\draw [-w-] (alphaL) to (midL);
	\draw [->-] (midL) to (botC);
	\draw [->-] (alphaL) to (-0.4,2);
	\draw [->-] (-0.4,-2) to (botL);
	\draw [->-] (botL) to (phi);
	\draw [->-] (phi) to (-2,-1);
	\draw [->-] (alphaR) to (botC);
	\draw [->-] (botC) to (botL);
	\draw [-w-] (alphaR) to (2,-1);
	\node at (0.3,0.6) {$2$};
	\node at (-0.4,1.5) {$1$};
	\node at (0.5,-1.3) {$ 3$};
	\node at (1.5,-1.3) {$ 4$};
	\node at (0.3,-0.6) {$ 5$};
	\node at (-0.3,-1.3) {$ 6$};
	\node at (-1,-0.7) {$7$};
	\node at (-1.6,-0.7) {$8$};
	}\\
	&- \quad \tikzfig{
	\draw (-2,-2) rectangle (2,2);
	\node [vertex] (alphaL) at (-0,1.2) {I};
	\node [vertex] (alphaR) at (1,-1) {II};
	\node [vertex] (midL) at (0,0) {$\beta$};
	\node [bullet] (botC) at (0,-1) {};
	\node [bullet] (botL) at (-1,-1) {};
	\node [vertex] (phi) at (-1,0) {$\varphi$};
	\draw [-w-] (alphaL) to (midL);
	\draw [->-] (midL) to (botC);
	\draw [->-] (alphaL) to (-0.4,2);
	\draw [->-] (-0.4,-2) to (botL);
	\draw [->-] (botL) to (-2,-1);
	\draw [->-] (alphaR) to (botC);
	\draw [->-] (botC) to (botL);
	\draw [-w-] (alphaR) to (2,-1);
	\draw [->-] (phi) to (midL);
	\node at (0.3,0.6) {$2$};
	\node at (-0.4,1.5) {$1$};
	\node at (0.5,-1.3) {$ 3$};
	\node at (1.5,-1.3) {$ 4$};
	\node at (0.3,-0.6) {$ 5$};
	\node at (-0.3,-1.3) {$ 6$};
	\node at (-1.5,-1.3) {$7$};
	\node at (-0.5,-0.3) {$8$};
	} \quad 
	+ \quad \tikzfig{
	\draw (-2,-2) rectangle (2,2);
	\node [vertex] (alphaL) at (-0,1.4) {I};
	\node [vertex] (alphaR) at (1,-1) {II};
	\node [vertex] (midL) at (0,-0.3) {$\beta$};
	\node [bullet] (botC) at (0,-1) {};
	\node [bullet] (botL) at (-1,-1) {};
	\node [vertex] (phi) at (0,0.5) {$\varphi$};
	\draw [-w-] (alphaL) to (phi);
	\draw [->-] (midL) to (botC);
	\draw [->-] (alphaL) to (-0.4,2);
	\draw [->-] (-0.4,-2) to (botL);
	\draw [->-] (botL) to (-2,-1);
	\draw [->-] (alphaR) to (botC);
	\draw [->-] (botC) to (botL);
	\draw [-w-] (alphaR) to (2,-1);
	\draw [->-] (phi) to (midL);
	\node at (0.3,1) {$2$};
	\node at (-0.4,1.5) {$1$};
	\node at (0.5,-1.3) {$ 3$};
	\node at (1.5,-1.3) {$ 4$};
	\node at (0.4,0) {$ 5$};
	\node at (0.3,-0.7) {$ 6$};
	\node at (-0.5,-1.3) {$7$};
	\node at (-1.5,-1.3) {$8$};
	} \quad 
	- \quad \tikzfig{
	\draw (-2,-2) rectangle (2,2);
	\node [vertex] (alphaL) at (-0,1.2) {I};
	\node [vertex] (alphaR) at (1,-1) {II};
	\node [vertex] (midL) at (0,0) {$\beta$};
	\node [bullet] (botC) at (0,-1) {};
	\node [bullet] (botL) at (-1,-1) {};
	\node [vertex] (phi) at (1,0) {$\varphi$};
	\draw [-w-] (alphaL) to (midL);
	\draw [->-] (midL) to (botC);
	\draw [->-] (alphaL) to (-0.4,2);
	\draw [->-] (-0.4,-2) to (botL);
	\draw [->-] (botL) to (-2,-1);
	\draw [->-] (alphaR) to (botC);
	\draw [->-] (botC) to (botL);
	\draw [-w-] (alphaR) to (2,-1);
	\draw [->-] (phi) to (midL);
	\node at (-0.3,0.6) {$2$};
	\node at (-0.4,1.5) {$1$};
	\node at (0.5,-1.3) {$ 3$};
	\node at (1.5,-1.3) {$ 4$};
	\node at (-0.3,-0.6) {$ 5$};
	\node at (-0.3,-1.3) {$ 6$};
	\node at (-1.5,-1.3) {$7$};
	\node at (0.5,-0.3) {$8$};
	}\\
	&- \quad \tikzfig{
	\draw (-2,-2) rectangle (2,2);
	\node [vertex] (alphaL) at (-0,1.2) {I};
	\node [vertex] (alphaR) at (1,-1) {II};
	\node [vertex] (midL) at (0,0) {$\beta$};
	\node [bullet] (botC) at (0,-1) {};
	\node [bullet] (botL) at (-1,-1) {};
	\node [vertex] (phi) at (1,0) {$\varphi$};
	\draw [-w-] (alphaL) to (midL);
	\draw [->-] (midL) to (botC);
	\draw [->-] (alphaL) to (-0.4,2);
	\draw [->-] (-0.4,-2) to (botL);
	\draw [->-] (botL) to (-2,-1);
	\draw [->-] (alphaR) to (botC);
	\draw [->-] (botC) to (botL);
	\draw [-w-] (alphaR) to (2,-1);
	\draw [->-] (phi) to (alphaR);
	\node at (-0.3,0.6) {$2$};
	\node at (-0.4,1.5) {$1$};
	\node at (0.5,-1.3) {$ 3$};
	\node at (1.5,-1.3) {$ 4$};
	\node at (-0.3,-0.6) {$ 5$};
	\node at (-0.3,-1.3) {$ 6$};
	\node at (-1.5,-1.3) {$7$};
	\node at (1.3,-0.5) {$8$};
	} \quad 
	+ \quad \tikzfig{
		\draw (-2,-2) rectangle (2,2);
		\node [vertex] (alphaL) at (-0.4,1.2) {I};
		\node [vertex] (alphaR) at (0.4,-1) {II};
		\node [vertex] (midL) at (-0.4,0) {$\beta$};
		\node [bullet] (botC) at (-0.4,-1) {};
		\node [bullet] (botL) at (-1.4,-1) {};
		\node [vertex] (phi) at (1.3,-1) {$\varphi$};
		\draw [-w-] (alphaL) to (midL);
		\draw [->-] (midL) to (botC);
		\draw [->-] (alphaL) to (-0.8,2);
		\draw [->-] (-0.8,-2) to (botL);
		\draw [->-] (botL) to (-2,-1);
		\draw [->-] (alphaR) to (botC);
		\draw [->-] (botC) to (botL);
		\draw [-w-] (alphaR) to (phi);
		\draw [->-] (phi) to (2,-1);
		\node at (-0.7,0.6) {$2$};
		\node at (-0.8,1.5) {$1$};
		\node at (0,-1.3) {$ 3$};
		\node at (0.8,-1.3) {$ 4$};
		\node at (-0.7,-0.6) {$ 5$};
		\node at (-0.7,-1.3) {$ 6$};
		\node at (-1.7,-1.3) {$7$};
		\node at (1.6,-1.3) {$8$};
	} \quad 
 \end{align*}

\subsubsection{Homotopies for compatibility}\label{app:compatibility}
Here we give the expressions that we omitted from \cref{sec:compatibility}. To simplify these pictures, which involve the pairing between elements in $C_*(A,A^\vee)$ and $C^*(A,A)$, we will make the abbreviation
\[ \tikzfig{
\node [vertex] (ev) at (0,0) {$\ev_i$};
\draw [->-] (1,0) to (ev);
} = \tikzfig{\node (x) at (0,0) {$x_i$};
\node [vertex] (ev) at (1,0) {$\ev$};
\draw [->-,red] (x) to (ev);
\draw [->-] (2,0) to (ev);} \]
for some element $x_i \in C^*(A,A^\vee)$. \footnote{As for the orientation, we will not be switching the order of evaluation of $x_1,x_2$, so we can just fix some ordering of the edges connecting $x_i$ to $\ev$.}

The following combination of diagrams gives a map
\[ \Lambda: C^*_{(2)}(A)^{\otimes 2} \otimes C_*(A,A^\vee)^{\otimes 2} \otimes C^*(A,A) \to \kk \]
which when evaluated on $\alpha\otimes \alpha\otimes x_1\otimes x_2\otimes \varphi$ gives the element $\Lambda_\alpha(x_1,x_2,\varphi)$ of \cref{lem:chainToCochainHomotopy}:
\begin{align*}
	&+ \quad \tikzfig{
		\draw [->-] (0,1.2) arc (90:0:1.2);
		\draw [-w-] (0,-1.2) arc (-90:0:1.2);
		\draw [-w-] (0,1.2) arc (90:180:1.2);
		\draw [->-] (0,-1.2) arc (270:180:1.2);
		\node (v1) at (-2.4,0) {$\ev_1$};
		\node [bullet] (v4) at (1.2,0) {};
		\node [vertex,fill=white] (v3) at (0,1.2) {I};
		\node (v2) at (0,0) {$\ev_2$};
		\node [vertex,fill=white] (v5) at (0,-1.2) {II};
		\node [vertex] (v6) at (0,-2.4) {$\varphi$};
		\node [bullet] (v7) at (-1.2,0) {};
		\draw [->-] (v7) to (v1);
		\draw [->-] (v4) to (v2);
		\draw [->-] (v6) to (v5);
		\node at (-1.8,-0.3) {$1$};
		\node at (0.8, -0.3) {$2$};
		\node at (-1.1,1.1) {$3$};
		\node at (1.1,1.1) {$4$};
		\node at (-1.1,-1.1) {$5$};
		\node at (1.1,-1.1) {$6$};
		\node at (0.3,-1.8) {$7$};
	} \quad + \tikzfig{
		\draw [->-] (0,1.2) arc (90:0:1.2);
		\draw [->-] (0,-1.2) arc (-90:0:1.2);
		\draw [-w-] (0,1.2) arc (90:180:1.2);
		\draw [-w-] (-1.2,0) arc (180:270:1.2);
		\node (v1) at (-2.4,0) {$\ev_1$};
		\node [bullet] (v4) at (1.2,0) {};
		\node [vertex,fill=white] (v3) at (0,1.2) {I};
		\node (v2) at (0,0) {$\ev_2$};
		\node [bullet] (v5) at (0,-1.2) {};
		\node [vertex] (v6) at (0,-2.4) {$\varphi$};
		\node [vertex,fill=white] (v7) at (-1.2,0) {II};
		\draw [->-] (v7) to (v1);
		\draw [->-] (v4) to (v2);
		\draw [->-] (v6) to (v5);
		\node at (-1.8,-0.3) {$1$};
		\node at (0.8, -0.3) {$2$};
		\node at (-1.1,1.1) {$3$};
		\node at (1.1,1.1) {$4$};
		\node at (-1.1,-1.1) {$5$};
		\node at (1.1,-1.1) {$6$};
		\node at (0.3,-1.8) {$7$};
	} \quad + \tikzfig{
		\draw [->-] (0,1.2) arc (90:0:1.2);
		\draw [-w-] (0,1.2) arc (90:180:1.2);
		\draw [->-] (1.2,0) arc (0:-90:1.2);
		\draw [->-] (0,-1.2) arc (-90:-180:1.2);
		\node (v1) at (-2.4,0) {$\ev_1$};
		\node [vertex,fill=white] (v4) at (1.2,0) {II};
		\node [vertex,fill=white] (v3) at (0,1.2) {I};
		\node (v2) at (0,0) {$\ev_2$};
		\node [bullet] (v5) at (0,-1.2) {};
		\node [vertex] (v6) at (0,-2.4) {$\varphi$};
		\node [bullet] (v7) at (-1.2,0) {};
		\draw [->-] (v7) to (v1);
		\draw [-w-] (v4) to (v2);
		\draw [->-] (v6) to (v5);
		\node at (-1.8,-0.3) {$1$};
		\node at (0.8, -0.3) {$2$};
		\node at (-1.1,1.1) {$3$};
		\node at (1.1,1.1) {$4$};
		\node at (-1.1,-1.1) {$5$};
		\node at (1.1,-1.1) {$6$};
		\node at (0.3,-1.8) {$7$};
	} \\
	&+ \tikzfig{
		\draw [->-] (-1.2,0) arc (180:0:1.2);
		\draw [-w-] (0,-1.2) arc (-90:0:1.2);
		\draw [->-] (0,-1.2) arc (270:180:1.2);
		\node (ev1) at (-3.2,0) {$\ev_1$};
		\node [bullet] (right) at (1.2,0) {};
		\node [bullet] (left) at (-1.2,0) {};
		\node [vertex,fill=white] (alphaI) at (-2,0) {I};
		\node (ev2) at (0,0) {$\ev_2$};
		\node [vertex,fill=white] (alphaII) at (0,-1.2) {II};
		\node [vertex] (phi) at (0,-2.4) {$\varphi$};
		\draw [-w-] (alphaI) to (ev1);
		\draw [->-] (phi) to (alphaII);
		\draw [->-] (right) to (ev2);
		\draw [->-] (alphaI) to (left);
		\node at (-2.7,-0.3) {$1$};
		\node at (0.8, -0.3) {$2$};
		\node at (0,1.4) {$3$};
		\node at (-1.1,-1.1) {$4$};
		\node at (1.1,-1.1) {$5$};
		\node at (0.3,-1.8) {$6$};
		\node at (-1.5,-0.3) {$7$};
	} \quad + \quad \tikzfig{
		\draw [->-] (-1.2,0) arc (180:0:1.2);
		\draw [-w-] (0,-1.2) arc (-90:0:1.2);
		\draw [->-] (0,-1.2) arc (270:180:1.2);
		\node (ev1) at (-4.2,0) {$\ev_1$};
		\node [bullet] (right) at (1.2,0) {};
		\node [bullet] (left) at (-1.2,0) {};
		\node [vertex,fill=white] (alphaI) at (-3,0) {I};
		\node (ev2) at (0,0) {$\ev_2$};
		\node [vertex,fill=white] (alphaII) at (0,-1.2) {II};
		\node [vertex] (phi) at (-2,0) {$\varphi$};
		\draw [-w-] (alphaI) to (ev1);
		\draw [->-] (phi) to (left);
		\draw [->-] (right) to (ev2);
		\draw [->-] (alphaI) to (phi);
		\node at (-3.5,-0.3) {$1$};
		\node at (0.8, -0.3) {$2$};
		\node at (-2.4,-0.3) {$3$};
		\node at (0,1.4) {$4$};
		\node at (-1.1,-1.1) {$5$};
		\node at (1.1,-1.1) {$6$};
		\node at (-1.5,-0.3) {$7$};
	}\\
	&+ \quad  \tikzfig{
		\draw [->-] (0,1.2) arc (90:0:1.2);
		\draw [-w-] (0,1.2) arc (90:180:1.2);
		\draw [->-] (0,-1.2) arc (-90:-180:1.2);
		\draw [->-] (1.2,0) arc (0:-90:1.2);
		\node [vertex,fill=white] (alphaI) at (-2,0) {I};
		\node (ev1) at (-3.2,0) {$\ev_1$};
		\node [bullet] (left) at (-1.2,0) {};
		\node [bullet] (bot) at (0,-1.2) {};
		\node [vertex,fill=white] (phi) at (1.2,0) {$\varphi$};
		\node [vertex,fill=white] (alphaII) at (0,1.2) {II};
		\node (ev2) at (0,0) {$\ev_2$};
		\draw [-w-] (alphaI) to (ev1);
		\draw [->-] (alphaI) to (left);
		\draw [->-] (bot) to (ev2);
		\node at (-2.7,-0.3) {$1$};
		\node at (0.3, -0.6) {$2$};
		\node at (-1.5,-0.3) {$3$};
		\node at (1.1,1.1) {$4$};
		\node at (-1.1,1.1) {$7$};
		\node at (-1.1,-1.1) {$5$};
		\node at (1.1,-1.1) {$6$};
	} \quad - \quad  \tikzfig{
		\draw [->-] (0,1.2) arc (90:0:1.2);
		\draw [-w-] (0,1.2) arc (90:180:1.2);
		\draw [->-] (0,-1.2) arc (-90:-180:1.2);
		\draw [->-] (1.2,0) arc (0:-90:1.2);
		\node [vertex,fill=white] (alphaI) at (-2,0) {I};
		\node (ev1) at (-3.2,0) {$\ev_1$};
		\node [bullet] (left) at (-1.2,0) {};
		\node [bullet] (right) at (1.2,0) {};
		\node [vertex,fill=white] (phi) at (0.5,0) {$\varphi$};
		\node [vertex,fill=white] (alphaII) at (0,1.2) {II};
		\node (ev2) at (-0.6,0) {$\ev_2$};
		\draw [-w-] (alphaI) to (ev1);
		\draw [->-] (alphaI) to (left);
		\draw [->-] (right) to (phi);
		\draw [->-] (phi) to (ev2);
		\node at (-2.7,-0.3) {$1$};
		\node at (0, -0.3) {$2$};
		\node at (-1.5,-0.3) {$3$};
		\node at (1.1,1.1) {$4$};
		\node at (-1.1,1.1) {$7$};
		\node at (0.9,-0.3) {$5$};
		\node at (0,-1.4) {$6$};
	} \\
	&- \quad \tikzfig{
		\draw [->-] (-1.2,0) arc (180:0:1.2);
		\draw [-w-] (-1.2,0) arc (-180:0:1.2);
		\node [vertex,fill=white] (alphaI) at (-2.2,0) {I};
		\node (ev1) at (-3.2,0) {$\ev_1$};
		\node [bullet] (right) at (1.2,0) {};
		\node [vertex,fill=white] (phi) at (0.2,-0.8) {$\varphi$};
		\node [vertex,fill=white] (alphaII) at (-1.2,0) {II};
		\node [bullet] (mid) at (0.4,0) {};
		\node (ev2) at (-0.6,0) {$\ev_2$};
		\draw [-w-=0.8] (alphaI) to (ev1);
		\draw [->-] (alphaI) to (alphaII);
		\draw [->-] (right) to (mid);
		\draw [->-] (phi) to (mid);
		\draw [->-] (mid) to (ev2);
		\node at (-2.7,-0.3) {$1$};
		\node at (0, -0.3) {$2$};
		\node at (-1.8,-0.3) {$3$};
		\node at (0.8,-0.3) {$5$};
		\node at (0,1.4) {$4$};
		\node at (0.6,-0.5) {$7$};
		\node at (0,-1.4) {$6$};
	}  +  \tikzfig{
		\draw [->-] (1.2,0) arc (0:180:1.2);
		\draw [->-] (0,-1.2) arc (-90:-180:1.2);
		\draw [-w-] (0,-1.2) arc (-90:0:1.2);
		\node [vertex,fill=white] (phi) at (-2,0) {$\varphi$};
		\node (ev1) at (-3.2,0) {$\ev_1$};
		\node [bullet] (left) at (-1.2,0) {};
		\node [bullet] (right) at (1.2,0) {};
		\node [vertex,fill=white] (alphaI) at (0.4,0) {I};
		\node [vertex,fill=white] (alphaII) at (0,-1.2) {II};
		\node (ev2) at (-0.6,0) {$\ev_2$};
		\draw [->-] (phi) to (ev1);
		\draw [->-] (left) to (phi);
		\draw [-w-] (alphaI) to (right);
		\draw [->-] (alphaI) to (ev2);
		\node at (-2.7,-0.3) {$1$};
		\node at (0, 0.3) {$2$};
		\node at (-1.5,-0.3) {$3$};
		\node at (0.8,0.3) {$7$};
		\node at (0,1.4) {$4$};
		\node at (-1.1,-1.1) {$5$};
		\node at (1.1,-1.1) {$6$};
	} - \tikzfig{
		\draw [-w-] (1.2,0) arc (0:180:1.2);
		\draw [->-] (1.2,0) arc (0:-90:1.2);
		\draw [->-] (0,-1.2) arc (-90:-180:1.2);
		\node [vertex,fill=white] (phi) at (0,-2) {$\varphi$};
		\node (ev1) at (-2.2,0) {$\ev_1$};
		\node [bullet] (left) at (-1.2,0) {};
		\node [bullet] (bottom) at (0,-1.2) {};
		\node [vertex,fill=white] (alphaI) at (0.4,0) {I};
		\node [vertex,fill=white] (alphaII) at (1.2,0) {II};
		\node (ev2) at (-0.6,0) {$\ev_2$};
		\draw [->-] (phi) to (bottom);
		\draw [-w-=.8] (alphaI) to (alphaII);
		\draw [->-] (alphaI) to (ev2);
		\draw [->-] (left) to (ev1);
		\node at (0, 0.3) {$2$};
		\node at (-1.5,-0.3) {$1$};
		\node at (0.8,0.3) {$7$};
		\node at (0,1.4) {$3$};
		\node at (-1.1,-1.1) {$4$};
		\node at (1.1,-1.1) {$5$};
		\node at (0.3,-1.5) {$6$};
	}
\end{align*}

Let us now give the proof of \cref{lem:chainToCochainHomotopy} Since $g_\alpha$ is a quasi-isomorphism, it is enough to prove the converse statement, namely, that given $(h_1,h_2)$ we can find appropriate $(\widetilde{h}_1,\widetilde{h}_2)$. The expression $\langle \pi_{h_1,h_2}(x_1,x_2), \varphi \rangle$ is given by evaluating the following combination of diagrams:
\begin{align*}
	&+ \tikzfig{
		\draw [->-] (0,1.2) arc (90:0:1.2);
		\draw [-w-] (0,-1.2) arc (-90:0:1.2);
		\draw [-w-] (0,1.2) arc (90:180:1.2);
		\draw [->-] (0,-1.2) arc (270:180:1.2);
		\node (v1) at (-2.4,0) {$\ev_1$};
		\node [bullet] (v4) at (1.2,0) {};
		\node [vertex,fill=white] (v3) at (0,1.2) {I};
		\node (v2) at (0,0) {$\ev_2$};
		\node [vertex,fill=white] (v5) at (0,-1.2) {II};
		\node [vertex] (v6) at (0,-2.4) {$\varphi$};
		\node [bullet] (v7) at (-1.2,0) {};
		\draw [->-] (v7) to (v1);
		\draw [->-] (v4) to (v2);
		\draw [->-] (v6) to (v5);
		\node at (-1.8,-0.3) {$1$};
		\node at (0.8, -0.3) {$2$};
		\node at (-1.1,1.1) {$3$};
		\node at (1.1,1.1) {$4$};
		\node at (-1.1,-1.1) {$5$};
		\node at (1.1,-1.1) {$6$};
		\node at (0.3,-1.8) {$7$};
	} \quad + \quad \tikzfig{
		\draw [->-] (0,1.2) arc (90:0:1.2);
		\draw [->-] (0,-1.2) arc (-90:0:1.2);
		\draw [-w-] (0,1.2) arc (90:180:1.2);
		\draw [-w-] (-1.2,0) arc (180:270:1.2);
		\node (v1) at (-2.4,0) {$\ev_1$};
		\node [bullet] (v4) at (1.2,0) {};
		\node [vertex,fill=white] (v3) at (0,1.2) {I};
		\node (v2) at (0,0) {$\ev_2$};
		\node [bullet] (v5) at (0,-1.2) {};
		\node [vertex] (v6) at (0,-2.4) {$\varphi$};
		\node [vertex,fill=white] (v7) at (-1.2,0) {II};
		\draw [->-] (v7) to (v1);
		\draw [->-] (v4) to (v2);
		\draw [->-] (v6) to (v5);
		\node at (-1.8,-0.3) {$1$};
		\node at (0.8, -0.3) {$2$};
		\node at (-1.1,1.1) {$3$};
		\node at (1.1,1.1) {$4$};
		\node at (-1.1,-1.1) {$5$};
		\node at (1.1,-1.1) {$6$};
		\node at (0.3,-1.8) {$7$};
	} \quad + \quad \tikzfig{
		\draw [->-] (0,1.2) arc (90:0:1.2);
		\draw [-w-] (0,1.2) arc (90:180:1.2);
		\draw [->-] (1.2,0) arc (0:-90:1.2);
		\draw [->-] (0,-1.2) arc (-90:-180:1.2);
		\node (v1) at (-2.4,0) {$\ev_1$};
		\node [vertex,fill=white] (v4) at (1.2,0) {II};
		\node [vertex,fill=white] (v3) at (0,1.2) {I};
		\node (v2) at (0,0) {$\ev_2$};
		\node [bullet] (v5) at (0,-1.2) {};
		\node [vertex] (v6) at (0,-2.4) {$\varphi$};
		\node [bullet] (v7) at (-1.2,0) {};
		\draw [->-] (v7) to (v1);
		\draw [-w-] (v4) to (v2);
		\draw [->-] (v6) to (v5);
		\node at (-1.8,-0.3) {$1$};
		\node at (0.8, -0.3) {$2$};
		\node at (-1.1,1.1) {$3$};
		\node at (1.1,1.1) {$4$};
		\node at (-1.1,-1.1) {$5$};
		\node at (1.1,-1.1) {$6$};
		\node at (0.3,-1.8) {$7$};
	} \\
	&- \tikzfig{
		\draw [->-] (0,1.2) arc (90:0:1.2);
		\draw [->-] (-1.2,0) arc (-180:-90:1.2);
		\draw [-w-] (0,1.2) arc (90:180:1.2);
		\draw [->-] (0,-1.2) arc (270:360:1.2);
		\node (h1) at (-2.8,0) {$h_1(x_1)$};
		\node [bullet] (right) at (1.2,0) {};
		\node [vertex,fill=white] (top) at (0,1.2) {$\alpha$};
		\node (ev2) at (0,0) {$\ev_2$};
		\node [bullet] (bottom) at (0,-1.2) {};
		\node [vertex] (phi) at (0,-2.4) {$\varphi$};
		\node [bullet] (left) at (-1.2,0) {};
		\draw [->-] (h1) to (left);
		\draw [->-] (right) to (ev2);
		\draw [->-] (phi) to (bottom);
		\node at (-1.8,-0.3) {$1$};
		\node at (0.8, -0.3) {$2$};
		\node at (-1.1,1.1) {$3$};
		\node at (1.1,1.1) {$4$};
		\node at (-1.1,-1.1) {$5$};
		\node at (1.1,-1.1) {$6$};
		\node at (0.3,-1.8) {$7$};
	} \quad + \quad \tikzfig{
		\draw [->-] (0,1.2) arc (90:0:1.2);
		\draw [->-] (0,-1.2) arc (-90:-180:1.2);
		\draw [-w-] (0,1.2) arc (90:180:1.2);
		\draw [->-] (0,-1.2) arc (-90:0:1.2);
		\node (ev1) at (-2.4,0) {$\ev_1$};
		\node [bullet] (right) at (1.2,0) {};
		\node [vertex,fill=white] (top) at (0,1.2) {$\alpha$};
		\node (h2) at (-0.2,0) {$h_2(x_2)$};
		\node [bullet] (bottom) at (0,-1.2) {};
		\node [vertex] (phi) at (0,-2.4) {$\varphi$};
		\node [bullet] (left) at (-1.2,0) {};
		\draw [->-] (left) to (ev1);
		\draw [->-] (h2) to (right);
		\draw [->-] (phi) to (bottom);
		\node at (-1.8,-0.3) {$1$};
		\node at (0.8, -0.3) {$2$};
		\node at (-1.1,1.1) {$3$};
		\node at (1.1,1.1) {$4$};
		\node at (-1.1,-1.1) {$5$};
		\node at (1.1,-1.1) {$6$};
		\node at (0.3,-1.8) {$7$};
	}
\end{align*}

We replace the term containing $h_1(x_1)$ using the following homotopy:
\begin{align*} 
	&+\tikzfig{
		\draw [-w-] (0,1.2) arc (90:0:1.2);
		\draw [->-] (-1.2,0) arc (-180:0:1.2);
		\draw [->-] (0,1.2) arc (90:180:1.2);
		\node (h1) at (-2.6,0) {$h_1(x_1)$};
		\node [bullet] (right) at (1.2,0) {};
		\node [vertex,fill=white] (top) at (0,1.2) {$\alpha$};
		\node (ev2) at (0,0) {$\ev_2$};
		\node [vertex] (phi) at (0,2.4) {$\varphi$};
		\node [bullet] (left) at (-1.2,0) {};
		\draw [->-] (h1) to (left);
		\draw [->-] (phi) to (top);
		\draw [->-] (right) to (ev2);
		\node at (-1.6,-0.3) {$1$};
		\node at (0.8, -0.3) {$2$};
		\node at (-1.1,1.1) {$3$};
		\node at (1.1,1.1) {$4$};
		\node at (0,-1.4) {$5$};
		\node at (0.3,1.8) {$6$};
	} \quad - \tikzfig{
		\draw [-w-] (-1.2,0) arc (180:0:1.2);
		\draw [->-] (0,-1.2) arc (-90:0:1.2);
		\draw [->-] (0,-1.2) arc (270:180:1.2);
		\node (h1) at (-3.5,0) {$h_1(x_1)$};
		\node [bullet] (right) at (1.2,0) {};
		\node [bullet] (left) at (-1.2,0) {};
		\node (ev2) at (0,0) {$\ev_2$};
		\node [vertex,fill=white] (alphaII) at (0,-1.2) {II};
		\node [vertex] (phi) at (-2,0) {$\varphi$};
		\draw [->-] (phi) to (left);
		\draw [->-] (right) to (ev2);
		\draw [->-] (h1) to (phi);
		\node at (0.8, -0.3) {$2$};
		\node at (-2.4,-0.3) {$1$};
		\node at (0,1.4) {$5$};
		\node at (-1.1,-1.1) {$4$};
		\node at (1.1,-1.1) {$3$};
		\node at (-1.5,-0.3) {$6$};
	} \\
	&- \quad \tikzfig{
		\draw [-w-] (0,1.2) arc (90:0:1.2);
		\draw [->-] (0,1.2) arc (90:180:1.2);
		\draw [->-] (1.2,0) arc (0:-90:1.2);
		\draw [->-] (-1.2,0) arc (180:270:1.2);
		\node (h1) at (-2.6,0) {$h_1(x_1)$};
		\node [bullet] (left) at (-1.2,0) {};
		\node [bullet] (right) at (1.2,0) {};
		\node [vertex,fill=white] (alpha) at (0,1.2) {$\alpha$};
		\node (ev2) at (0,0) {$\ev_2$};
		\node [vertex,fill=white] (phi) at (1.2,0) {$\varphi$};
		\node [bullet] (bottom) at (0,-1.2) {};
		\draw [->-] (h1) to (left);
		\draw [->-] (bottom) to (ev2);
		\node at (-1.6,-0.3) {$1$};
		\node at (-0.3, -0.6) {$2$};
		\node at (-1.1,1.1) {$3$};
		\node at (1.1,1.1) {$4$};
		\node at (1.1,-1.1) {$6$};
		\node at (-1.1,-1.1) {$5$};
	} \quad + \quad \tikzfig{
	\draw [->-] (-1.2,0) arc (180:0:1.2);
	\draw [-w-] (0,-1.2) arc (-90:-180:1.2);
	\draw [->-] (0,-1.2) arc (-90:0:1.2);
	\node (h1) at (-2.5,0) {$h_1(x_1)$};
	\node [bullet] (left) at (-1.2,0) {};
	\node [bullet] (right) at (1.2,0) {};
	\node [vertex,fill=white] (phi) at (0.4,0) {$\varphi$};
	\node [vertex,fill=white] (alpha) at (0,-1.2) {$\alpha$};
	\node (ev2) at (-0.6,0) {$\ev_2$};
	\draw [->-] (h1) to (left);
	\draw [->-] (right) to (phi);
	\draw [->-] (phi) to (ev2);
	\node at (0, 0.3) {$2$};
	\node at (-1.5,-0.3) {$1$};
	\node at (0.8,0.3) {$5$};
	\node at (0,1.4) {$3$};
	\node at (-1.1,-1.1) {$4$};
	\node at (1.1,-1.1) {$6$};
	} \\
	&- \quad \tikzfig{
	\draw [-w-] (-1.2,0) arc (180:0:1.2);
	\draw [->-] (-1.2,0) arc (-180:0:1.2);
	\node (h1) at (-2.5,0) {$h_1(x_1)$};
	\node [bullet] (right) at (1.2,0) {};
	\node [vertex,fill=white] (phi) at (0.2,-0.8) {$\varphi$};
	\node [vertex,fill=white] (alpha) at (-1.2,0) {$\alpha$};
	\node [bullet] (mid) at (0.4,0) {};
	\node (ev2) at (-0.6,0) {$\ev_2$};
	\draw [->-] (h1) to (alpha);
	\draw [->-] (right) to (mid);
	\draw [->-] (phi) to (mid);
	\draw [->-] (mid) to (ev2);
	\node at (-1.5,-0.3) {$1$};
	\node at (0, -0.3) {$2$};
	\node at (0.8,-0.3) {$5$};
	\node at (0,1.4) {$3$};
	\node at (0.6,-0.5) {$6$};
	\node at (0,-1.4) {$4$};
	}
\end{align*}
And the term containing $h_2(x_2)$ using the following homotopy:
\[ - \quad \tikzfig{
\draw [-w-] (1.2,0) arc (0:180:1.2);
\draw [->-] (1.2,0) arc (0:-90:1.2);
\draw [->-] (0,-1.2) arc (-90:-180:1.2);
\node [vertex,fill=white] (phi) at (0,-2) {$\varphi$};
\node (ev1) at (-2.2,0) {$\ev_1$};
\node [bullet] (left) at (-1.2,0) {};
\node [bullet] (bottom) at (0,-1.2) {};
\node (h2) at (-0.1,0) {$h_2(x_2)$};
\node [vertex,fill=white] (alpha) at (1.2,0) {$\alpha$};
\draw [->-] (phi) to (bottom);
\draw [->-=0.8] (h2) to (alpha);
\draw [->-] (left) to (ev1);
\node at (-1.5,-0.3) {$1$};
\node at (0.8,-0.3) {$2$};
\node at (0,1.4) {$3$};
\node at (-1.1,-1.1) {$4$};
\node at (1.1,-1.1) {$5$};
\node at (0.3,-1.5) {$6$};
} \quad + \quad \tikzfig{
\draw [->-] (1.2,0) arc (0:180:1.2);
\draw [->-] (0,-1.2) arc (-90:-180:1.2);
\draw [-w-] (0,-1.2) arc (-90:0:1.2);
\node [vertex,fill=white] (phi) at (-2,0) {$\varphi$};
\node (ev1) at (-3.2,0) {$\ev_1$};
\node [bullet] (left) at (-1.2,0) {};
\node [bullet] (right) at (1.2,0) {};
\node (h2) at (-0.1,0) {$h_2(x_2)$};
\node [vertex,fill=white] (alpha) at (0,-1.2) {$\alpha$};
\draw [->-] (phi) to (ev1);
\draw [->-] (left) to (phi);
\draw [->-] (h2) to (right);
\node at (-2.7,-0.3) {$1$};
\node at (-1.5,-0.3) {$6$};
\node at (0.8,-0.3) {$2$};
\node at (0,1.4) {$3$};
\node at (-1.1,-1.1) {$4$};
\node at (1.1,-1.1) {$5$};
}\]
Counting the remaining terms gives a combination of diagrams, which when written in terms of cup products, gives the desired result.

We now give the last homotopy that was missing, for the proof of \cref{lem:compatibilityDeltaTilde}:
\begin{align*}
	&+ \quad \tikzfig{
		\draw [->-] (-1.2,0) arc (180:0:1.2);
		\draw [-w-] (0,-1.2) arc (-90:0:1.2);
		\draw [->-] (0,-1.2) arc (270:180:1.2);
		\node (ev1) at (-2.4,0) {$\ev_1$};
		\node [bullet] (right) at (1.2,0) {};
		\node [vertex,fill=white] (alphaI) at (-1.2,0) {I};
		\node (ev2) at (0,0) {$\ev_2$};
		\node [vertex,fill=white] (alphaII) at (0,-1.2) {II};
		\node [vertex] (phi) at (0,-2.4) {$\varphi$};
		\draw [-w-] (alphaI) to (ev1);
		\draw [->-] (phi) to (alphaII);
		\draw [->-] (right) to (ev2);
		\node at (-1.8,-0.3) {$1$};
		\node at (0.8, -0.3) {$2$};
		\node at (0,1.4) {$3$};
		\node at (-1.1,-1.1) {$4$};
		\node at (1.1,-1.1) {$5$};
		\node at (0.3,-1.8) {$6$};
	} + \tikzfig{
		\draw [->-] (-1.2,0) arc (180:0:1.2);
		\draw [-w-] (0,-1.2) arc (-90:0:1.2);
		\draw [->-] (0,-1.2) arc (270:180:1.2);
		\node (ev1) at (-2.4,0) {$\ev_1$};
		\node [bullet] (right) at (1.2,0) {};
		\node [vertex,fill=white] (alphaI) at (-1.2,0) {I};
		\node (ev2) at (0,0) {$\ev_2$};
		\node [vertex,fill=white] (alphaII) at (0,-1.2) {II};
		\node [vertex] (phi) at (-1.8,-1.5) {$\varphi$};
		\draw [-w-] (alphaI) to (ev1);
		\draw [->-] (phi) to (alphaI);
		\draw [->-] (right) to (ev2);
		\node at (-1.8,-0.3) {$1$};
		\node at (0.8, -0.3) {$2$};
		\node at (0,1.4) {$3$};
		\node at (-0.7,-0.7) {$4$};
		\node at (1.1,-1.1) {$5$};
		\node at (-1.4,-1) {$6$};
	} + \tikzfig{
		\draw [->-] (-1.2,0) arc (180:0:1.2);
		\draw [-w-] (0,-1.2) arc (-90:0:1.2);
		\draw [->-] (0,-1.2) arc (270:180:1.2);
		\node (ev1) at (-3.4,0) {$\ev_1$};
		\node [bullet] (right) at (1.2,0) {};
		\node [vertex,fill=white] (alphaI) at (-1.2,0) {I};
		\node (ev2) at (0,0) {$\ev_2$};
		\node [vertex,fill=white] (alphaII) at (0,-1.2) {II};
		\node [vertex] (phi) at (-2.2,0) {$\varphi$};
		\draw [-w-] (alphaI) to (phi);
		\draw [->-] (phi) to (ev1);
		\draw [->-] (right) to (ev2);
		\node at (-2.8,-0.3) {$1$};
		\node at (-1.8,-0.3) {$6$};
		\node at (0.8, -0.3) {$2$};
		\node at (0,1.4) {$3$};
		\node at (-1.1,-1.1) {$4$};
		\node at (1.1,-1.1) {$5$};
	} \\
	&- \quad \tikzfig{
		\draw [-w-] (1.2,0) arc (0:90:1.2);
		\draw [-w-] (0,1.2) arc (90:180:1.2);
		\draw [->-] (1.2,0) arc (0:-90:1.2);
		\draw [->-] (0,-1.2) arc (-90:-180:1.2);
		\node (ev1) at (-3.4,0) {$\ev_1$};
		\node [vertex,fill=white] (right) at (1.2,0) {II};
		\node [vertex,fill=white] (alphaI) at (0,1.2) {I};
		\node (ev2) at (0,0) {$\ev_2$};
		\node [vertex] (phi) at (-2.2,0) {$\varphi$};
		\node [bullet] (left) at (-1.2,0) {};
		\draw [->-] (phi) to (ev1);
		\draw [->-] (left) to (phi);
		\draw [->-] (alphaI) to (ev2);
		\node at (-2.8,-0.3) {$1$};
		\node at (0.3, 0.8) {$2$};
		\node at (-1.1,1.1) {$3$};
		\node at (1.1,1.1) {$4$};
		\node at (0,-1.4) {$5$};
		\node at (-1.8,-0.3) {$6$};
	} \quad - \quad \tikzfig{
	\draw [->-] (-1.2,0) arc (180:0:1.2);
	\draw [->-] (0,-1.2) arc (-90:0:1.2);
	\draw [-w-] (-1.2,0) arc (-180:-90:1.2);
	\node (ev1) at (-3.4,0) {$\ev_1$};
	\node [bullet] (right) at (1.2,0) {};
	\node [vertex,fill=white] (alphaII) at (-1.2,0) {II};
	\node (ev2) at (0,0) {$\ev_2$};
	\node [vertex,fill=white] (phi) at (0,1.2) {$\varphi$};
	\node [vertex,fill=white] (alphaI) at (-2.2,0) {I};
	\draw [-w-] (alphaI) to (ev1);
	\draw [->-] (alphaI) to (alphaII);
	\draw [->-] (right) to (ev2);
	\node at (-2.8,-0.3) {$1$};
	\node at (-1.8,-0.3) {$3$};
	\node at (0.8, -0.3) {$2$};
	\node at (-1.1,1.1) {$4$};
	\node at (0,-1.4) {$5$};
	\node at (1.1,1.1) {$6$};
	} \\
	&- \quad \tikzfig{
	\draw [->-] (-1.2,0) arc (180:0:1.2);
	\draw [-w-] (-1.2,0) arc (-180:0:1.2);
	\node [vertex,fill=white] (alphaI) at (-2.2,0) {I};
	\node (ev1) at (-3.2,0) {$\ev_1$};
	\node [bullet] (right) at (1.2,0) {};
	\node [vertex,fill=white] (phi) at (0.4,0) {$\varphi$};
	\node [vertex,fill=white] (alphaII) at (-1.2,0) {II};
	\node (ev2) at (-0.5,0) {$\ev_2$};
	\draw [-w-=0.8] (alphaI) to (ev1);
	\draw [->-] (alphaI) to (alphaII);
	\draw [->-] (right) to (phi);
	\draw [->-] (phi) to (ev2);
	\node at (-2.7,-0.3) {$1$};
	\node at (0, -0.3) {$2$};
	\node at (-1.8,-0.3) {$3$};
	\node at (0.8,-0.3) {$5$};
	\node at (0,1.4) {$4$};
	\node at (0,-1.4) {$6$};
	} \quad + \quad \tikzfig{
	\draw [->-] (-1.2,0) arc (180:0:1.2);
	\draw [-w-] (0,-1.2) arc (-90:0:1.2);
	\draw [->-] (0,-1.2) arc (270:180:1.2);
	\node (ev1) at (-3.2,0) {$\ev_1$};
	\node [bullet] (right) at (1.2,0) {};
	\node [vertex,fill=white] (alphaI) at (-2,0) {I};
	\node (ev2) at (0,0) {$\ev_2$};
	\node [vertex,fill=white] (alphaII) at (0,-1.2) {II};
	\node [vertex,fill=white] (phi) at (-1.2,0) {$\varphi$};
	\draw [-w-] (alphaI) to (ev1);
	\draw [->-] (right) to (ev2);
	\draw [->-] (alphaI) to (phi);
	\node at (-3.5,-0.3) {$1$};
	\node at (0.8, -0.3) {$2$};
	\node at (-2.4,-0.3) {$3$};
	\node at (0,1.4) {$4$};
	\node at (-1.1,-1.1) {$5$};
	\node at (1.1,-1.1) {$6$};
	\node at (-1.5,-0.3) {$7$};
	} \\
	&+ \quad \tikzfig{
	\draw [->-] (-1.2,0) arc (180:0:1.2);
	\draw [->-] (0,-1.2) arc (-90:0:1.2);
	\draw [->-] (-1.2,0) arc (180:270:1.2);
	\node (ev1) at (-2.4,0) {$\ev_1$};
	\node [bullet] (right) at (1.2,0) {};
	\node [vertex,fill=white] (tau) at (-1.2,0) {$\tau$};
	\node (ev2) at (0,0) {$\ev_2$};
	\node [bullet] (bot) at (0,-1.2) {};
	\node [vertex] (phi) at (0,-2.4) {$\varphi$};
	\draw [-w-] (tau) to (ev1);
	\draw [->-] (phi) to (bot);
	\draw [->-] (right) to (ev2);
	\node at (-1.8,-0.3) {$1$};
	\node at (0.8, -0.3) {$2$};
	\node at (0,1.4) {$3$};
	\node at (-1.1,-1.1) {$4$};
	\node at (1.1,-1.1) {$5$};
	\node at (0.3,-1.8) {$6$};
	} \quad - \quad \tikzfig{
	\draw [->-] (1.2,0) arc (0:180:1.2);
	\draw [->-] (0,-1.2) arc (-90:-180:1.2);
	\draw [->-] (1.2,0) arc (0:-90:1.2);
	\node (ev1) at (-2.4,0) {$\ev_1$};
	\node [bullet] (left) at (-1.2,0) {};
	\node [vertex,fill=white] (tau) at (1.2,0) {$\tau$};
	\node (ev2) at (0,0) {$\ev_2$};
	\node [bullet] (bot) at (0,-1.2) {};
	\node [vertex] (phi) at (0,-2.4) {$\varphi$};
	\draw [-w-] (tau) to (ev2);
	\draw [->-] (phi) to (bot);
	\draw [->-] (left) to (ev1);
	\node at (-1.8,-0.3) {$1$};
	\node at (0.8, -0.3) {$2$};
	\node at (0,1.4) {$3$};
	\node at (-1.1,-1.1) {$4$};
	\node at (1.1,-1.1) {$5$};
	\node at (0.3,-1.8) {$6$};
	}
\end{align*}

\printbibliography
    
\Addresses
\end{document}